\documentclass{amsart}

\usepackage{amsmath, amssymb, amsthm}
\usepackage{hyperref}
\usepackage[a4paper, bottom=2cm]{geometry}
\usepackage{tikz}
\usepackage{float}
\usepackage{multirow}

\usepackage{enumitem}
\usepackage{xcolor}

\DeclareMathOperator{\Hom}{Hom}
\DeclareMathOperator{\Aut}{Aut}
\DeclareMathOperator{\Inn}{Inn}
\DeclareMathOperator{\Out}{Out}
\DeclareMathOperator{\Syl}{Syl}
\DeclareMathOperator{\Sym}{Sym}

\DeclareMathOperator{\Alt}{Alt}

\DeclareMathOperator{\Sol}{Sol}

\renewcommand{\L}{\operatorname{L}}
\renewcommand{\O}{\operatorname{O}}

\DeclareMathOperator{\U}{U}

\DeclareMathOperator{\Spin}{Spin}

\DeclareMathOperator{\GL}{GL}
\DeclareMathOperator{\SL}{SL}
\DeclareMathOperator{\GO}{GO}
\DeclareMathOperator{\SO}{SO}
\DeclareMathOperator{\Sp}{Sp}

\DeclareMathOperator{\SU}{SU}

\DeclareMathOperator{\SDih}{SDih}

\DeclareMathOperator{\GF}{GF}

\DeclareMathOperator{\Suz}{Suz}
\DeclareMathOperator{\Fi}{Fi}
\DeclareMathOperator{\Co}{Co}
\DeclareMathOperator{\HN}{HN}
\DeclareMathOperator{\HS}{HS}
\newcommand{\ON}{\mathrm{O}'\mathrm{N}}
\DeclareMathOperator{\Ru}{Ru}
\DeclareMathOperator{\Th}{Th}
\DeclareMathOperator{\He}{He}
\DeclareMathOperator{\Bm}{B}
\DeclareMathOperator{\Jn}{J}
\DeclareMathOperator{\Mn}{M}
\DeclareMathOperator{\Mt}{M}
\DeclareMathOperator{\Ly}{Ly}
\DeclareMathOperator{\McL}{McL}

\newcommand{\TF}{{^2 \mathrm{F}_4(2)'}}

\newcommand{\bfU}{\mathbf{U}}
\newcommand{\bfT}{\mathbf{T}}

\newcommand{\bfW}{\mathbf{W}}
\newcommand{\bfV}{\mathbf{V}}

\newcommand{\Z}{\mathbb{Z}}

\newcommand{\calE}{\mathcal{E}}
\newcommand{\calF}{\mathcal{F}}
\newcommand{\calG}{\mathcal{G}}
\newcommand{\calH}{\mathcal{H}}

\newcommand{\normalIn}{\trianglelefteq}

\theoremstyle{definition}

\newtheorem{definition}{Definition}[section]
\newtheorem{notation}[definition]{Notation}

\newtheorem{algorithm}[definition]{Algorithm}

\theoremstyle{plain}
\newtheorem{theorem}[definition]{Theorem}
\newtheorem{lemma}[definition]{Lemma}
\newtheorem{proposition}[definition]{Proposition}
\newtheorem{corollary}[definition]{Corollary}

\title{Fusion Systems on Sylow $3$-subgroups of Fischer and Monster sporadic groups -- II}
\author{Pete Gautam}
\address{Department of Mathematics, University of Manchester, Manchester, M13 9PL, United Kingdom}
\email{pratyush.gautam@manchester.ac.uk}

\keywords{Exotic fusion systems; sporadic groups}
\subjclass[2020]{20D20, 20D05, 20D08, 20E42}

\begin{document}
    
    \begin{abstract}
        We classify all corefree fusion systems on a Sylow 3-subgroup of the sporadic groups $\Fi_{24}'$ and $\Mn$. The program to classify  of all corefree fusion systems on Sylow $p$-subgroups of sporadic groups has been running for many years, comprising of work done in several papers by numerous authors. We complete this program for $p$ odd. In the process, we expand the theory of proto-essential subgroups and describe algorithms that make the process of finding all corefree fusion systems on a $p$-group better by several orders of magnitude.
    \end{abstract}
    
    \maketitle

    \section{Introduction}
    For a finite group $G$ and a $p$-subgroup $S$ of $G$, the \emph{fusion category} $\calF_S(G)$ is a category on the subgroups of $S$ with morphisms the conjugation maps coming from $G$. A \emph{fusion system} $\calF$ is a generalisation of fusion categories defined solely in terms of the $p$-group $S$. We are particularly interested in \emph{saturated fusion systems}, which include all \emph{realizable} fusion systems $\calF_S(G)$, where $S$ is further assumed to be a Sylow $p$-subgroup of $G$. Much interest is in the case when a saturated fusion system is not realized by any finite group $G$, called \emph{exotic} fusion systems (for a discussion, see \cite[III.7.4]{ako}).

    The classification of all `interesting' fusion systems on Sylow $p$-subgroups of sporadic groups has been an ongoing project for the last $20$ years, dating back to the initial work by Ruiz and Viruel in \cite{rv}. This project has resulted in the discovery of many exotic fusion systems. To complete the classification for $p$ odd, it remained to consider Sylow $3$-subgroups of the Fischer groups $\Fi_{22}$, $\Fi_{23}$, $\Fi_{24}'$, the Baby Monster group $\Bm$ and the Monster group $\Mn$. In \cite{paper:paper-1}, the author classifies all \emph{corefree} fusion systems $\calF$ on Sylow $3$-subgroup of $\Fi_{22}$, $\Fi_{23}$ and $\Bm$, which are precisely the fusion systems satisfying $O_3(\calF) = 1$. In this paper, we determine all corefree fusion systems on a Sylow $3$-subgroup of $\Fi_{24}'$ and $\Mn$, thus completing the project.
    \begin{theorem} \label{thm-1}
        Let $\calF$ be a saturated fusion system on a Sylow $3$-subgroup of $\Fi_{24}'$. If $O_3(\calF) = 1$, then $\calF$ is realized by $\Fi_{24}'$ or $\Fi_{24}$.
    \end{theorem}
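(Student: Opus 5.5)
The plan is to determine every possible set of essential subgroups of a saturated fusion system on $S \in \Syl_3(\Fi_{24}')$, together with the automizers those subgroups can carry, and then to match the surviving configurations against $\calF_S(\Fi_{24}')$ and $\calF_S(\Fi_{24})$. By the Alperin--Goldschmidt theorem, $\calF$ is generated by $\Aut_\calF(S)$ and the groups $\Aut_\calF(E)$ for $E$ ranging over the $\calF$-essential subgroups of $S$. So the first step is to list, up to $\Aut(S)$-conjugacy, all candidate essential subgroups. Recall that $|S| = 3^{16}$.

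\textbf{Step 1: reduce to proto-essential candidates.} A subgroup $E \le S$ is a candidate only if it satisfies several conditions:
\begin{enumerate}[label=(\roman*)]
\item $E$ is centric, so $C_S(E) \le E$;
\item $\Out_S(E) = N_S(E)/E$ is a nontrivial $3$-subgroup of $\Out(E)$;
\item $\Out_S(E)$ is contained in a strongly $3$-embedded subgroup of some subgroup of $\Out(E)$;
\item the chain $E < N_S(E) < N_S(N_S(E)) < \cdots$ behaves as required.
\end{enumerate}
Condition (iii) forces $\Out_S(E)$ either to be cyclic of order $3$ or to act in a way recognisable from the list of groups with strongly $p$-embedded subgroups, that is, rank one Lie type groups in characteristic $3$ and a few small exceptions. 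I would run a computer search that descends the lattice of centric subgroups from $S$, using normalizer towers. Both the search and the pruning of subgroups with $|\Out_S(E)|$ large and non-cyclic would rely on the proto-essential machinery developed earlier in the paper and in \cite{paper:paper-1}. The output should be a short list of candidates: $O_3$ of the $3$-local maximal subgroups of $\Fi_{24}'$ (the normalizers of $3$A, $3$B, $3^{1+10}$, $3^7$, $3^{3}$-type and $3^2$-type subgroups), together with possibly a few spurious ones.

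\textbf{Step 2: determine the automizers.} For each candidate $E$, I would compute $\Out(E)$ (or its action on $E/\Phi(E)$). I would then find the subgroups $X$ with $\Out_S(E) \in \Syl_3(X)$ and $X$ having a strongly $3$-embedded subgroup, up to $N_{\Out(E)}(\Out_S(E))$-conjugacy. Typically $O^{3'}(X)$ is forced to be $\SL_2(3)$, $\L_2(9)$, $\U_3(3)$ or $2\cdot\Alt_6$-like, and the saturation condition at $N_S(E)$ restricts the choice further. I would also compute $\Aut_\calF(S)$: since $\Out(S)$ is a $\{2,3\}$-group, we have $\Aut_\calF(S) = \Inn(S)H$ for a $2$-subgroup $H$ of $\Aut(S)$ of order at most $4$. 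The extensions to $S$ of automorphisms of the essential subgroups then constrain $H$.

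\textbf{Step 3: eliminate configurations and identify the survivors.} Each admissible assignment of essentials and automizers gives a candidate fusion system. Corefreeness eliminates configurations in which every essential contains a common nontrivial subgroup that is strongly closed and normal in each automizer; for instance, a single essential type with nothing else gives $O_3(\calF) \neq 1$. Saturation is verified by the usual generation criteria, namely checking receptiveness along the normalizer chains. For the survivors, I would show they coincide with $\calF_S(\Fi_{24})$ or $\calF_S(\Fi_{24}')$: both fusion systems contain the same essentials, and $\Aut_\calF(S)$ distinguishes the index-two subsystem. Uniqueness follows because the automizers are determined up to $\Aut(S)$-conjugacy.

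The main obstacle is Step 1. The centric subgroup lattice of a group of order $3^{16}$ is enormous, so an exhaustive search is infeasible. The pruning via normalizer towers and the structural constraints on $\Out_S(E)$ must be strong enough to make the search finish, and the paper's improved algorithms are the tool I would rely on there. The case analysis in Step 2 for $E = O_3(3^{1+10}{:}\U_5(2){:}2)$, where $\Out(E)$ is large, is the second delicate point. There I would use that $\Out_S(E)$ has rank more than one, so that $X$ must be a rank one Lie type group containing a large unipotent subgroup. That forces $O^{3'}(X)\cong \U_5(2)$-type behaviour to be excluded, leaving only the subgroup realized in $\Fi_{24}$ (or none).
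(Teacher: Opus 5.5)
\textbf{Gap in the identification step.} Your argument breaks at the last sentence of Step 3, the claim that uniqueness follows because the automizers are determined up to $\Aut(S)$-conjugacy. The fusion system is generated by the whole tuple $\bigl(\Aut_\calF(S),\Aut_\calF(P),\Aut_\calF(Q),\Aut_\calF(R)\bigr)$. Knowing each entry separately up to conjugacy does not give a single $\alpha\in\Aut(S)$ that conjugates all of them at once, and incompatible gluings of individually ``correct'' automizers are exactly where exotic systems come from. Your Step 2 only determines $\Out_\calF(E)$ up to $N_{\Out(E)}(\Out_S(E))$-conjugacy, and such conjugations need not extend to automorphisms of $S$.

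The paper supplies the missing gluing through constrained normalizer subsystems:
\begin{enumerate}
\item It shows that $\Aut_\calF(S)$ is $\Aut(S)$-conjugate to $\Aut_\calG(S)$, where $\calG$ is realized by $\Fi_{24}'$ or $\Fi_{24}$ (Lemma \ref{lm:fi24-autfs}).
\item With $\Aut_\calF(S)$ fixed, the three admissible $\Out_\calF(R)$ are $\Aut(S)$-conjugate. So one may take $\Aut_\calF(R)=\Aut_\calG(R)$, and the Model Theorem gives $N_\calF(R)=N_\calG(R)$.
\item It shows $O^{3'}(\Out_\calF(\bfV_1))\cong\SO_7(3)$ (Lemma \ref{lm:aut-sl7}). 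Together with Lemma \ref{lm:weak-closure-equality}, an $H^1$-vanishing computation on $\bfV_1$ and \cite[Proposition 2.11]{todd-modules}, this forces $N_\calF(\bfV_1)=N_\calG(\bfV_1)$.
\item Finally $\calF=\langle N_\calF(R),N_\calF(\bfV_1)\rangle_S$. This holds because $O^{3'}(\Aut_\calF(P))$ and $O^{3'}(\Aut_\calF(R))$ normalize $\bfV_1$, while $O^{3'}(\Out_{N_\calF(\bfV_1)}(Q))\cong\Alt(4)$ is maximal in $\Alt(5)$ and the extra involution comes from $\Aut_\calF(S)$.
\end{enumerate}

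\textbf{Errors in Steps 1 and 2.} Your predicted candidate essentials are wrong, and this hides the real tools.
\begin{enumerate}
\item The $O_3$'s of the maximal $3$-locals ($\bfW=3^{1+10}_+$, $\bfU$, $\bfT$, $\bfV_i=3^7$) are never essential in any saturated $\calF$ on $S$. In $\Fi_{24}'$ their automizers involve $\U_5(2)$, $\SL_2(3)\times\Alt(5)$, $\SL_3(3)$ and $\SO_7(3)$, none of which has a strongly $3$-embedded subgroup.
\item The only possible essentials are the three maximal subgroups $P,Q,R$ of $S$, the $O_3$'s of pairwise intersections of maximal $3$-locals. For these, $O^{3'}(\Out_\calF(P))\cong O^{3'}(\Out_\calF(R))\cong\SL_2(3)$ and $O^{3'}(\Out_\calF(Q))\in\{\Alt(4),\Alt(5)\}$.
\item So $\bfW$ and $\bfV_i$ are not things to exclude; their automizers must be recognised. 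Lemma \ref{lm:aut-sp10} (recognising $\U_5(2)$ inside $\Sp_{10}(3)$) drives the dichotomy at $Q$.
\item The non-corefree case that matters is not the ``single essential type'' degeneracy you describe. It is the configuration where $P,Q,R$ are all essential but $O^{3'}(\Out_\calF(Q))\cong\Alt(4)$; there some $\bfV_i$ is normal in $\calF$.
\item Your bound $|H|\le 4$ is false. $\Out_\calF(S)$ is elementary abelian of order $2^3$ for $\Fi_{24}'$ and $2^4$ for $\Fi_{24}$, so that bound would wrongly rule out both target systems.
\end{enumerate}
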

    
    \begin{theorem} \label{thm-2}
        Let $\calF$ be a saturated fusion system on a Sylow $3$-subgroup of $\Mn$. If $O_3(\calF) = 1$, then $\calF$ is realized by $\Mn$.
    \end{theorem}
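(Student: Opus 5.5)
The plan for Theorem~\ref{thm-2} follows the standard strategy for such classifications: determine every subgroup that could be essential, list the possible automizers, and then show that each resulting amalgam gives the $3$-fusion of $\Mn$.

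Let $S \in \Syl_3(\Mn)$, so $|S| = 3^{20}$. By the Alperin--Goldschmidt theorem, $\calF$ is generated by $\Aut_\calF(S)$ together with $\Aut_\calF(E)$ for $E$ running over $\calF$-essential subgroups. So the first step is to determine all candidate essential subgroups up to $\Aut(S)$-conjugacy.

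\emph{Step 1: find the candidate essential subgroups.} Every essential subgroup is in particular proto-essential. That is, $E$ is $S$-centric, and $\Out_S(E)$ is a nontrivial $3$-subgroup of $\Out(E)$ that is strongly $3$-embedded in some subgroup. This forces $\Out_S(E)$ to meet the known short list of $3$-groups admitting strongly $3$-embedded overgroups. For $p = 3$ and small rank, these include the cases where $\Out_S(E)$ is cyclic of order $3$, elementary abelian of order $9$ in $\SL_2(9)$-like configurations, extraspecial $3^{1+2}_+$ in $\SL_3(3)$- or $\mathrm{L}_3(3)$-type settings, and so on. I would run the computer search over subgroups $E$ of $S$ with $N_S(E)/E$ small. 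I would prune using these structural restrictions, notably:
\begin{itemize}
\item $E$ must be characteristic-closed under the relevant normaliser chain;
\item the action of $N_S(E)$ on $E/\Phi(E)$ must contain a strongly $3$-embedded configuration, testable on the Frattini quotient.
\end{itemize}
The output should be that the only candidates are (up to $\Aut(S)$) the $3$-radical subgroups of $\Mn$ that are centric. These are $O_3$ of the $3$-local maximal subgroups $3^{1+12}.2\Suz{:}2$, $3^{2+5+10}.(M_{11}\times 2S_4)$, $3^{3+2+6+6}{:}(L_3(3)\times SD_{16})$, and $3^8.O_8^-(3).2$, possibly together with a few others.

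\emph{Step 2: determine the automizers.} For each candidate $E$, compute $\Out(E)$ (or its action on the chief factors) and determine which subgroups $L$ with $\Out_S(E) \in \Syl_3(L)$ and $O^{3'}(L)$ having a strongly $3$-embedded subgroup are possible. I expect each case to reduce to the automizer appearing in $\Mn$, because the relevant modules are rigid:
\begin{itemize}
\item the $12$-dimensional module for $2\Suz$ over $\GF(3)$;
\item the natural module for $\O_8^-(3)$;
\item the modules arising for $M_{11}$ and $L_3(3)$.
\end{itemize}
Next I would use the saturation axioms to compare $\Aut_\calF(S)$ across the essentials. I would restrict morphisms from $\Aut_\calF(E)$ to $N_S(E)$ and extend them to $S$, which pins down $\Aut_\calF(S)$ as the $3'$-part realized in $\Mn$, namely a group of order $2^2$ or thereabouts.

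\emph{Step 3: the corefree condition and identification.} Subsystems generated by proper subsets of the essentials have nontrivial $O_3$. Such a subsystem is constrained, or $O_3$ contains an $\Aut_\calF(S)$-invariant normal subgroup contained in all chosen essentials. So $O_3(\calF) = 1$ forces enough essentials to be present. I would then show that the resulting fusion system is uniquely determined. One route is to check that it coincides with $\calF_S(\Mn)$ because the generating automizers agree with those of $\Mn$ up to $\Aut(S)$-conjugacy. This uses that the possible extensions are controlled by $\Aut(S)$, a computation of $N_{\Aut(S)}$ of the relevant automizers. The argument for $\Fi_{24}'$ in Theorem~\ref{thm-1} would be identical in shape, with the extra possibility of the outer automorphism giving $\Fi_{24}$.

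I expect Step~1 to be the main obstacle. With $|S| = 3^{20}$, a naive enumeration of subgroups is infeasible. I would first try to bound $E$ from below by showing that $E$ contains $Z_2(S)$, or a large characteristic subgroup, using the fact that essentials are centric and radical. Then I would search only over $N_S(E)$-chains downward from maximal subgroups with the proto-essential test applied at each stage, which is presumably what the improved algorithms announced in the abstract achieve.
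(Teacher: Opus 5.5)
Your outline has the right global shape, but its concrete content is wrong exactly where the work happens.

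\textbf{The essential subgroups are misidentified.} The subgroups you expect Step~1 to return are the $O_3$'s of the maximal $3$-locals ($\bfW$, $\bfU$, $\bfT$, $\bfV_i$). These are centric-radical but cannot be essential. Their automizers in $\Mn$ are $2\ldotp\Suz$, $\Mt_{11}\times\GL_2(3)$, $\L_3(3)\times\SDih(16)$ and $\O_8^-(3)$, and none of these has a strongly $3$-embedded subgroup. Nor does $\L_3(3)$ itself, although you list it among the admissible configurations. The essentials of $\calF_S(\Mn)$ are $P$, $Q$, $R$ (the $O_3$'s of the pairwise intersections of the maximal $3$-locals) together with the non-normal class $X^{\Aut(S)}$. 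Their $O^{3'}$-automizers are $\SL_2(3)$, $\Mt_{11}$, $\SL_2(3)$ and $\SL_2(9)$ respectively.

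\textbf{Step~2 therefore has no mechanism for the large automizers.} In the paper, $2\ldotp\Suz$ lives on the non-essential subgroup $\bfW$.
\begin{enumerate}
\item One shows $\bfW$ is weakly closed, using $\bfW/Z(S)=J(S/Z(S))$.
\item Under the extra hypothesis $O_3(N_\calF(\bfW))=\bfW$, Lemma~\ref{lm:aut-sp12} classifies the relevant subgroups of $\Sp_{12}(3)$ from Sylow data and $|C_V(S)|=3$. This gives $O^{3'}(\Out_\calF(\bfW))\cong 2\ldotp\Suz$.
\item Only then are $O^{3'}(\Out_\calF(Q))\cong\Mt_{11}$ and the essentiality of $X$ read off.
\end{enumerate}
Calling the $12$-dimensional module ``rigid'' presupposes this identification rather than proving it.

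\textbf{Step~3 misses the critical case.} A priori $O^{3'}(\Out_\calF(Q))$ can be $\Alt(6)$, $\SL_2(9)$ or $\Mt_{11}$. In the first two cases $P$, $Q$, $R$ may all be essential while some $\bfV_i=C_A(Z_2(A))$, with $A=O_3(N_\calF(\bfW))$, is normal in $\calF$; this is the $3^8\ldotp\O_8^-(3)$ configuration. Proving this needs the computation of $A$ and the fact that $O^{3'}(\Aut_\calF(R))$ normalizes both $\bfV_i$. Your criterion, an $\Aut_\calF(S)$-invariant subgroup lying in all essentials, is not sufficient for normality. One needs invariance under every $\Aut_\calF(E)$, and for $\bfV_i$ under $\Aut_\calF(Q)$ this depends on which of the three groups occurs.

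\textbf{The order of $\Aut_\calF(S)$ is wrong.} $|\Aut_\calF(S)|_{3'}$ is $2^6=|\Aut(S)|_{3'}$, with $\Out_\calF(S)\cong 2^2\times\SDih(16)$, not about $2^2$. This equality is what lets the paper conjugate $\Aut_\calF(S)$ onto $\Aut_{\Mn}(S)$ by Sylow's theorem. That gives $N_\calF(S)=N_\calG(S)$ for $\calG:=\calF_S(\Mn)$.

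\textbf{The identification step needs simultaneous conjugation.} Agreement of the automizers ``up to $\Aut(S)$-conjugacy'' one at a time is not enough, since a single automorphism must work for all pieces at once. The paper keeps $\Aut_\calF(S)$ fixed and proceeds as follows.
\begin{enumerate}
\item Lemma~\ref{lm:weak-closure-equality}, with uniqueness up to conjugacy of $\GL_2(3)\times\SDih(16)$ in $\Out(R)$ and of $2\ldotp\Suz$ in $\Sp_{12}(3)$, gives $\Aut_\calF(R)=\Aut_\calG(R)$ and $\Aut_\calF(\bfW)=\Aut_\calG(\bfW)$.
\item $H^1(\Aut_{N_\calG(S)}(\bfW);Z(\bfW))=0$ and \cite[Proposition 2.11]{todd-modules} upgrade this to $N_\calF(\bfW)=N_\calG(\bfW)$.
\item Finally $\calF=\langle N_\calF(\bfW),\Aut_\calF(R)\rangle_S=\calG$.
\end{enumerate}

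\textbf{The search in Step~1 is not sound as described.} Proto-essentiality is not inherited along normal chains. So pruning a downward search from maximal subgroups can discard the ancestors of a genuine essential: for example $X\normalIn N_S(X)\normalIn P$, where $N_S(X)$ is not proto-essential. Not pruning, on the other hand, means enumerating the subgroups of a group of order $3^{20}$. What makes this step work in the paper is Theorem~\ref{thm:algo}. Take a chain $1=Z_0<\dots<Z_n=S'$ of normal subgroups of $S$ with factors of order $3$. Then every essential subgroup is $\calF$-conjugate to a proto-essential full preimage of some $C_{S/Z_i}(x)$ with $x$ of order $3$.
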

    \noindent Theorem \ref{thm-1} is proven as Theorem \ref{thm:fi24}, while Theorem \ref{thm-2} is proven as Theorem \ref{thm:m}.

    Although the completion of this classification has been a natural, and an important, objective, the sizes of the remaining $3$-groups had been a major obstacle to achieve this. Indeed, it was inaccessible to do so using computational methods that had been used previously. The most efficient algorithm to do so was defined by Parker and Semeraro in \cite{algorithms}. The process they describe first computes all subgroups of $S/Z(S)$, which is not tractable for such large groups. The aim of the process is to find all the subgroups of $S$ that are potentially essential in some saturated fusion system $\calF$ on $S$, the so-called \emph{proto-essential subgroups}. In practice, the number of proto-essential subgroups for a given $p$-group is in single digits (see, for example, \cite[Appendix A]{algorithms}). Moreover, the process of checking whether a subgroup is proto-essential is rather expensive, especially for large subgroups. As such, one would like to run this test on as few subgroups as possible.

    In this paper, we expand the theory of proto-essential subgroups. This allows us to define a new algorithm that computes all proto-essential subgroups of $S$ without requiring all the subgroups of $S/Z(S)$. This, in turn, allows us to find all proto-essential subgroups for relatively large groups by only considering an incredibly few number of subgroups. For instance, using the new algorithm, we consider around $160$ subgroups to find all the proto-essential subgroups of a Sylow $3$-subgroup of $\Fi_{23}$ (of order $3^{13}$). In contrast, we tested about $180 \ 000$ subgroups of $S$ using the GAP implementation of the Parker-Semeraro algorithm in \cite{paper:paper-1}. Clearly, the new algorithm is a major improvement to the status quo, and allows us to tackle problems in fusion systems that were previously intractable.

    Further to this algorithm, we present a result that allows us to infer that a given $p$-group does not support corefree fusion systems by considering an even fewer number of subgroups. For groups of order up to $p^6$, this typically involves checking whether less than $5$ subgroups of $S$ can be proto-essential (see Appendix \ref{sec:algorithm-appendix}), making it incredibly fast. Efficient methods at $p=2$ were described in \cite[Proposition 2.2]{aov} to determine whether a group supports `interesting' fusion systems, but there was nothing equivalent for $p$ odd. This algorithm provides us with tests for every prime $p$. 
    
    Using the tests in \cite[Proposition 2.2]{aov}, Andersen, Oliver and Ventura were able to show that every fusion system on a group of order up to $2^9$ is realizable in \cite[Theorem B]{aov}. We know that there is an exotic fusion system on a group of order $2^{10}$, namely the \emph{Benson-Solomon} fusion system $\calF_{\Sol}(3)$. It is widely believed that this is the only exotic fusion system on groups of order $2^{10}$. However, this problem has remained open for many years due to the sheer number of groups we would need to check to confirm this result. We believe that this problem is now tractable by combining the tests in \cite[Proposition 2.2]{aov} with the algorithm we describe here.
    
    Sporadic groups have paved the way in finding exotic fusion systems. Indeed, the aforementioned Benson-Solomon fusion systems were discovered by Solomon in \cite{sol} long before fusion systems were defined. Solomon constructed these groups by studying the $2$-local structure of $\Co_3$ and $\Spin_7(3)$. In particular, Solomon showed that a Sylow $2$-subgroup of $\Spin_7(q)$ (for $q$ odd) cannot be a Sylow $2$-subgroup of any finite group where all the involutions are conjugate. In \cite{lo-spin7q}, Levi and Oliver made use of this construction to define the Benson-Solomon fusion systems $\calF_{\Sol}(q)$ on a Sylow $2$-subgroup of $\Spin_7(q)$ where all the involutions are conjugate (see also \cite{annals-paper} for a group-theoretic construction). To date, these are the only known simple exotic fusion systems at $p = 2$.
    
    For $p$ odd, one of the earliest works in fusion theory was to classify all fusion systems on the extraspecial group $p^{1+2}_+$ by Ruiz and Viruel in \cite{rv}. In this monumental paper, they found that a corefree fusion system may be realized by some almost simple group $G$, where $O^{p'}(G) = F^*(G)$ is isomorphic to $\L_3(p)$ or a sporadic group. Furthermore, they found that the group $7^{1+2}_+$ supports three exotic fusion systems. These were one of the earliest examples of exotic fusion systems at $p$ odd.

    The work by Oliver and his collaborators led to the discovery of further exotic fusion systems on $p$-groups that have an abelian subgroup of index $p$ in \cite{abelian-1, abelian-2, abelian-3}. This includes some related to sporadic groups, such as a Sylow $5$-subgroup of $\Co_1$. Similarly, the work by Parker and his collaborators in \cite{pp-1, ps18} found exotic fusion systems on $p$-groups of order $p^{p-1}$ for $p \geq 7$, including the Sylow $7$-subgroup of $\Mn$, where the $p$-group has an extraspecial group of index $p$. More notably, in \cite{thesis:mm}, Moragues Moncho showed that these are the only fusion systems on a $p$-group with an extraspecial subgroup of index $p$. This result, combined with the previous two paragraphs, suggests that the presence of certain exotic fusion systems is deeply intertwined with the existence of sporadic groups.

    In \cite{pearls}, Grazian found an exotic fusion system on a $7$-subgroup of $\Mn$ of order $7^5$. This group is the smallest $p$-group that supports exotic fusion systems and does not have an abelian or an extraspecial subgroup of index $p$. This $7$-subgroup of $\Mn$, as well as the Sylow $7$-subgroup, are groups of maximal class. Following this work, Parker and Grazian have found many exotic fusion systems on $p$-groups of maximal class in \cite{maximal-class}.

    Although the sporadic groups mentioned above have led to the discovery of many exotic fusion systems, this is not the case for all of them. The work of Parker and Semeraro in \cite{algorithms} classified corefree fusion systems on small $p$-groups. Rather surprisingly, they did not find any further exotic fusion systems, including on those related to sporadic groups. This suggests that the structure of exotic fusion systems is very rigid, and might be closely related to \emph{pearls} (as defined by Grazian in \cite{pearls}), which have been the largest source of exotic fusion systems. Indeed, in \cite{vb-sporadics}, van Beek finds the remaining exotic fusion systems related to sporadic groups, and remarks that the exotic fusion systems he describes are closely related to fusion systems containing pearls.

    Up to that point, the remaining $p$-groups to consider were the Sylow $3$-subgroups of $\Fi_{22}$, $\Fi_{23}$, $\Fi_{24}'$, $\Bm$ and $\Mn$. In \cite{paper:paper-1}, the author shows that the Sylow $3$-subgroups of $\Fi_{22}$, $\Fi_{23}$ and $\Bm$ do not support exotic fusion systems. Indeed, as indicated by the main theorems, we find that the remaining two groups do not support exotic fusion systems. The Sylow $3$-subgroup of $\Fi_{24}'$ and $\Mn$ have such a rich structure that the only simple fusion systems are the ones realized by $\Fi_{24}'$ and $\Mn$. This is in contrast with the result of the author on a Sylow $3$-subgroup of $\Fi_{22}$ in \cite{paper:paper-1}.
    
    The following table, adapted from \cite[Table 1]{vb-sporadics}, summarizes the number of corefree, simple and exotic fusion systems supported on a Sylow $p$-subgroup of a sporadic group.
    \begin{table}[H]
        \centering
        \resizebox{\textwidth}{!}{\begin{tabular}{c|c|c|c|c|c}
            Sporadic Group & $|S|$ & Corefree & Simple & Exotic & Reference(s) \\
            \hline
            $\TF$, $\Jn_2$, $\Jn_4$, $\Mt_{12}$, $\Mt_{24}$, $\Ru$, $\He$ & $3^3$ & $4$ & $2$ & $0$ & \cite[Table 1.2]{rv} \\
            $\Jn_3$ & $3^5$ & $0$ & $0$ & $0$ & \cite{algorithms} \\
            $\HN$ & $3^6$ & $10$ & $3$ & $0$ & \cite[A.5.7]{algorithms} \\
            $\Co_2$, $\McL$ & $3^6$ & $13$ & $4$ & $0$ & \cite{mcl}, \cite[A.5.8]{algorithms} \\
            $\Suz$, $\Ly$ & $3^7$ & $3$ & $2$ & $0$ & \cite[A.6.3]{algorithms} \\
            $\Co_3$ & $3^7$ & $1$ & $1$ & $0$ & \cite[A.6.8]{algorithms} \\
            $\Fi_{22}$ & $3^9$ & $6$ & $3$ & $0$ & \cite[Section 5]{paper:paper-1} \\
            $\Co_1$ & $3^9$ & $3$ & $2$ & $0$ &  \cite{todd-modules}, \cite[Section 4]{vb-sporadics} \\
            $\Th$ & $3^{10}$ & $3$ & $3$ & $2$ & \cite[Section 5]{vb-sporadics} \\
            $\Fi_{23}$, $\Bm$ & $3^{13}$ & $6$ & $2$ & $0$ & \cite[Section 7]{paper:paper-1} \\
            $\Fi_{24}'$ & $3^{16}$ & $2$ & $1$ & $0$ & Section \ref{sec:fi24} \\
            $\Mn$ & $3^{20}$ & $1$ & $1$ & $0$ & Section \ref{sec:m} \\
            \hline
            $\Co_2$, $\Co_3$, $\Th$, $\HS$, $\McL$, $\Ru$ & $5^3$ & $3$ & $2$ & $0$ & \cite[Table 1.2]{rv} \\
            $\Co_1$ & $5^4$ & $30$ & $13$ & $24$ & \cite{abelian-1}, \cite{abelian-2}, \cite[A.3.4]{algorithms} \\
            $\HN$, $\Ly$, $\Bm$ & $5^6$ & $5$ & $4$ & $0$ & \cite[A.5.26]{ps18} \\
            $\Mn$ & $5^9$ & $4$ & $3$ & $3$ & \cite[Section 6]{vb-sporadics} \\
            \hline
            $\Fi_{24}'$, $\He$, $\ON$ & $7^3$ & $13$ & $6$ & $3$ & \cite[Table 1.2]{rv} \\
            $\Mn$ & $7^6$ & $26$ & $21$ & $27$ & \cite{pp-1}, \cite[Table 1]{ps18}  \\
            \hline
            $\Jn_4$ & $11^3$ & $2$ & $1$ & $0$ & \cite[Table 1.2]{rv} \\
            \hline
            $\Mn$ & $13^3$ & $5$ & $2$ & $0$ & \cite[Table 1.2]{rv}
        \end{tabular}}
        \caption{Corefree, simple and exotic fusion systems supported by a Sylow $p$-subgroup $S$ of a sporadic group ($p$ odd).}
    \end{table}

    We now briefly go through the techniques used in the paper. Starting with a Sylow $3$-subgroup, we construct local subsystems corresponding to the $3$-local subgroups we see in the relevant sporadic group. This requires the understanding of the module structure of the essential subgroups using local methods in group theory such as coprime action. Using this information, we understand the possibilities for a corefree fusion system. At this point, we show that we require all the local subsystems we see in the sporadic groups to generate a corefree fusion system. We complete the classification by then showing that an arbitrary corefree fusion system is isomorphic to one realized by $\Fi_{24}'$, $\Fi_{24}$ or $\Mn$. This typically makes use of \cite[Proposition 2.11]{todd-modules}, which gives us sufficient conditions to show that local subsystems are equal.

    Because the $3$-groups we are working with are rather large, we will make use of both GAP \cite{GAP4} and MAGMA \cite{magma} throughout the paper. This code can be found in the GitHub repository \cite{sporadics-code}. If a result makes use of code, we highlight a specific file associated that can be found in the repository. We further remark that much of the code illustrates properties about a Sylow $3$-subgroup of $\Fi_{24}'$ and $\Mn$ for the reader less familiar with the structure of these $3$-groups.

    We next describe the structure of the paper. Section \ref{sec:background} goes through some background on group theory and fusion systems used in this paper. In Section \ref{sec:alg}, we expand the theory of proto-essential subgroups. We characterize some relevant subgroups of $\SL_{7}(3)$, $\Sp_{10}(3)$ and $\Sp_{12}(3)$ in Section \ref{sec:automizer}. We classify all corefree fusion systems on a Sylow $3$-subgroup of $\Fi_{24}'$ and $\Mn$ in sections \ref{sec:fi24} and \ref{sec:m} respectively. Finally, Appendix \ref{sec:algorithm-appendix} looks at some applications of the algorithms described in the paper.

    The notation used in this paper is mostly standard. We write maps on the right. Groups of Lie type are referred to by their name in the ATLAS \cite{atlas}. We mainly follow the notations in \cite{ks06} for group theory, and \cite{cra} or \cite{ako} for fusion systems.

    \section{Background} \label{sec:background}
    In this section, we go through some background in group theory and fusion systems that we shall make use of in this paper. We also establish some further results that we need. 
    
    \subsection{Group Theory} We make use of standard results in group theory, as can be found in \cite{ks06}. We list the key results that we continually use in this work. Throughout, let $G$ be a finite group.
    
    \begin{lemma}[Three Subgroups Lemma]
        Let $X, Y, Z \leq G$ be such that $[X,Y,Z] = 1$ and $[Y,Z,X]=1$. Then $[Z,X,Y]=1$.
    \end{lemma}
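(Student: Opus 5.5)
The plan is to derive the lemma from the Hall--Witt identity. I take the convention that commutators are left-normed, $[a,b,c] = [[a,b],c]$ with $[a,b]=a^{-1}b^{-1}ab$, and that for subgroups $[X,Y,Z] = [[X,Y],Z]$. The Hall--Witt identity states that for all $x,y,z \in G$,
\[
[x, y^{-1}, z]^{y}\,[y, z^{-1}, x]^{z}\,[z, x^{-1}, y]^{x} = 1,
\]
which one verifies by expanding each factor. Each factor is a word of length ten in $x,y,z$, and the product telescopes to the identity. I regard this as routine and would only indicate the cancellation.

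The first step reduces the claim to generators. Write $N$ for the normal closure in $G$ of $[Z,X,Y]$; it would suffice to prove $[[Z,X],Y] = 1$ directly, but the elementwise argument gives more naturally that $[z,x^{-1},y]=1$ for all $x\in X$, $y\in Y$, $z\in Z$. Given $x\in X$, $y\in Y$, $z\in Z$, the hypotheses give $[x,y^{-1}]\in[X,Y]$, so $[x,y^{-1},z]\in[X,Y,Z]=1$. Likewise $[y,z^{-1},x]\in[Y,Z,X]=1$. The Hall--Witt identity then forces $[z,x^{-1},y]^x = 1$, hence $[z,x^{-1},y]=1$. Since $x^{-1}$ ranges over $X$ as $x$ does, every commutator $[z,x]$ with $z\in Z$, $x\in X$ commutes with every element of $Y$.

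The final step passes from generators to the subgroup. The commutators $[z,x]$ generate $[Z,X]$ by definition. Each of them lies in $C_G(Y)$, which is a subgroup, so $[Z,X]\le C_G(Y)$. This gives $[[Z,X],Y]=1$, that is, $[Z,X,Y]=1$.

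The main obstacle is making sure the Hall--Witt identity is stated in the exact form matching the commutator conventions, including the inverses and conjugating elements. With the opposite convention ($[a,b]=aba^{-1}b^{-1}$, or maps acting on the right differently), the identity changes shape and the inverses must be placed accordingly. Apart from that, the argument is formal. Note that no normality of the subgroups is needed here, because the conclusion $[Z,X,Y]=1$ is strictly trivial rather than "contained in a normal subgroup $N$". Using the centraliser $C_G(Y)$ avoids any need for normal closures.
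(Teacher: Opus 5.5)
Your proof is correct: with your conventions the Hall--Witt identity holds as stated, the first two factors vanish by hypothesis, and passing from the commutators $[z,x]$ to $[Z,X] \le C_G(Y)$ is exactly right. The paper gives no argument of its own and only cites \cite[1.5.6]{ks06}, where the lemma is derived from the Hall--Witt identity in the same way, so your approach is essentially the paper's; the sentence introducing the normal closure $N$ is unused and can be dropped.
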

    \begin{proof}
        This is proven in \cite[1.5.6]{ks06}.
    \end{proof}

    \begin{definition}
        Let $S$ be a finite $p$-group. We define the \emph{Thompson subgroup} of $S$, denoted $J(S)$, to be the subgroup of $S$ generated by elementary abelian subgroups of largest order.
    \end{definition}

    \begin{lemma} \label{lm:grp-thompson}
        Let $S$ be a finite $p$-group. Then the following hold:
        \begin{enumerate}
            \item $J(S)$ is a characteristic subgroup of $S$.
            \item If $J(S) \leq P \leq S$, then $J(P) = J(S)$.
        \end{enumerate}
    \end{lemma}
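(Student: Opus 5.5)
Both parts follow directly from the definition of $J(S)$ as the subgroup generated by the elementary abelian subgroups of largest order, so the plan is to argue from that definition alone. Let $d$ be the maximal order of an elementary abelian subgroup of $S$, and let $\mathcal{A}(S)$ be the set of elementary abelian subgroups of $S$ of order $d$. Then $J(S) = \langle \mathcal{A}(S) \rangle$.

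For part (1), take any $\alpha \in \Aut(S)$. An automorphism sends an elementary abelian subgroup to an elementary abelian subgroup of the same order. It also preserves the set of all elementary abelian subgroups of $S$, so the maximal order $d$ does not change. Hence $\alpha$ permutes $\mathcal{A}(S)$, and therefore
\[
J(S)\alpha = \langle A\alpha : A \in \mathcal{A}(S) \rangle = \langle \mathcal{A}(S) \rangle = J(S).
\]
So $J(S)$ is characteristic in $S$.

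For part (2), suppose $J(S) \leq P \leq S$. Every $A \in \mathcal{A}(S)$ lies in $J(S)$, and so in $P$. Thus $P$ contains elementary abelian subgroups of order $d$. Since $P \leq S$, no elementary abelian subgroup of $P$ has order larger than $d$, so the maximal order for $P$ is exactly $d$. It follows that $\mathcal{A}(P)$ is the set of elementary abelian subgroups of $P$ of order $d$, which equals $\mathcal{A}(S)$: one inclusion is because every member of $\mathcal{A}(S)$ lies in $P$, and the other is because every member of $\mathcal{A}(P)$ is an elementary abelian subgroup of $S$ of order $d$. Hence $J(P) = \langle \mathcal{A}(P) \rangle = \langle \mathcal{A}(S) \rangle = J(S)$.

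There is no real obstacle here. The only step that needs care is in part (2): checking that the maximal order does not drop when passing to $P$, which is exactly what the hypothesis $J(S) \leq P$ guarantees. Alternatively, both facts can be cited from the standard reference \cite{ks06}.
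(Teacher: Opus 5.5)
Your proof is correct. The paper proves this lemma only by citing \cite[9.2.8]{ks06}, and your direct argument from the definition is exactly the standard proof behind that reference: an automorphism permutes the elementary abelian subgroups of maximal order, and if $J(S) \leq P$ then $P$ contains every member of $\mathcal{A}(S)$, so the maximal order is unchanged and $\mathcal{A}(P) = \mathcal{A}(S)$.
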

    \begin{proof}
        This is \cite[9.2.8]{ks06}.
    \end{proof}

    We shall make use of the following results throughout the paper without explicit reference.
    \begin{lemma}[Coprime Action]
        Let $G$ act coprimely on $A$, and let $B$ be a $G$-invariant subgroup of $A$. Then the following hold:
        \begin{enumerate}
            \item $C_{A/B}(G) = C_A(G)B/B$;
            \item if $G$ acts trivially on $A/B$ and $B$, then $G$ acts trivially on $A$;
            \item if $G$ acts trivially on $A/\Phi(A)$, then $G$ acts trivially on $A$;
            \item $A = [A,G] C_A(G)$ and if $A$ is abelian, $A = [A,G] \times C_A(G)$.
        \end{enumerate}
    \end{lemma}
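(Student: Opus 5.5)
The statement to prove is the Coprime Action lemma. Throughout, $G$ acts coprimely on $A$: $\gcd(|G|,|A|)=1$ and $G$ acts on $A$ by automorphisms. I would prove the four parts in order, with each part feeding into the next. The only external input is the Schur--Zassenhaus theorem, specifically its conjugacy part, which is available for coprime actions because $G$ or $A$ is solvable by Feit--Thompson. Alternatively one can cite \cite[8.2.2--8.2.7]{ks06}, where these statements appear.

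\emph{Part (1).} The inclusion $C_A(G)B/B \le C_{A/B}(G)$ is clear. For the reverse inclusion, let $aB$ be fixed by $G$.
\begin{itemize}
\item The coset $aB$ is then a $G$-invariant set, so $G$ acts on it.
\item Form the semidirect product $H = B \rtimes G$. The group $H$ acts on the coset $aB$: $B$ acts by right multiplication and $G$ by its given action. The action of $B$ is transitive.
\item Pick $x \in aB$. Its stabiliser $H_x$ satisfies $|H:H_x| = |B|$, since $B$ is transitive and acts regularly. Hence $|H_x| = |G|$, so $H_x$ is a complement to $B$ in $H$.
\item By Schur--Zassenhaus, $H_x$ is conjugate in $H$ to $G$, say $G = H_x^{h}$. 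Then $G$ fixes the point $x\cdot h \in aB$.
\item So some element of $aB$ lies in $C_A(G)$, which gives the reverse inclusion.
\end{itemize}
This conjugacy step is the main obstacle of the whole lemma. Everything else is formal.

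\emph{Parts (2) and (3).}
\begin{itemize}
\item For (2), it suffices to show that the image $\bar G$ of $G$ in $\mathrm{Aut}(A)$ is trivial. Let $g \in \bar G$ have prime order $q$; coprimeness gives $q \nmid |A|$.
\item Take $a \in A$. Since $G$ is trivial on $A/B$, we have $a^g = ab$ for some $b \in B$. As $g$ centralises $B$, induction gives $a^{g^i} = ab^i$.
\item Then $a = a^{g^q} = ab^q$, so $b^q = 1$. Since $q \nmid |B|$, this forces $b = 1$. Hence $g$ is trivial, and so $\bar G = 1$.
\item A cleaner route is via (1) applied to the order-$q$ group $\langle g\rangle$, but the direct computation above is fine.
\item For (3), the trivial case is $A = 1$. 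Otherwise $G$ acts on $A/\Phi(A)$ and on $\Phi(A)$. If the image $\bar G$ of $G$ in $\mathrm{Aut}(A)$ is nontrivial, then it contains an element $g$ of prime order $q$ coprime to $|A|$.
\item Such a $g$ centralises $A/\Phi(A)$, so $A = \langle C_A(g)\rangle\Phi(A)$. This uses (1) applied to $\langle g\rangle$ with $B = \Phi(A)$, which is $G$-invariant since it is characteristic.
\item Hence $C_A(g) = A$ by the non-generator property of $\Phi(A)$. So $g$ is trivial, a contradiction. Therefore $\bar G = 1$.
\end{itemize}

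\emph{Part (4).} The subgroup $[A,G]$ is normal in $A$ and $G$-invariant, and $G$ acts trivially on $A/[A,G]$. By (1), $A/[A,G] = C_A(G)[A,G]/[A,G]$, which gives $A = [A,G]C_A(G)$.

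For $A$ abelian, it remains to show $[A,G] \cap C_A(G) = 1$. Consider the map
\[
\pi \colon A \to A, \quad a \mapsto \prod_{g\in G} a^g.
\]
\begin{itemize}
\item Since $A$ is abelian, $\pi$ is a homomorphism.
\item It kills every commutator $[a,h]$, because $\prod_{g} a^{hg} = \prod_{g} a^g$. Since $\pi$ is a homomorphism on the abelian group $A$, it therefore kills all of $[A,G]$.
\item On $C_A(G)$, $\pi$ is the map $a \mapsto a^{|G|}$. This is injective because $|G|$ is coprime to $|A|$.
\end{itemize}
Therefore the intersection $[A,G] \cap C_A(G)$ is trivial, and $A = [A,G] \times C_A(G)$.
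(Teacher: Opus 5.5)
Your proof is correct and takes essentially the paper's approach. The paper simply cites Kurzweil--Stellmacher 8.2.2, 8.2.7 and 8.2.9, and your arguments are the standard proofs behind those results: Schur--Zassenhaus conjugacy of complements to $B$ in $B \rtimes G$ acting on a $G$-fixed coset for (1), (2)--(4) derived from it or by direct computation, and the norm map $a \mapsto \prod_{g \in G} a^g$ for the abelian splitting.
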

    \begin{proof}
        (1) and (2) are given in \cite[8.2.2]{ks06}. (3) is proven in \cite[8.2.9]{ks06}. (4) is proven in \cite[8.2.7]{ks06}.
    \end{proof}

    We can strengthen (3) from the previous lemma by the following result of Burnside.
    
    \begin{lemma}[Burnside] \label{lm:burnside}
        Let $S$ be a finite $p$-group. Then $C_{\Aut(S)}(S/\Phi(S))$ is a normal $p$-subgroup of $\Aut(S)$.
    \end{lemma}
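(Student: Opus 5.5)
Let $A = C_{\Aut(S)}(S/\Phi(S))$, the kernel of the natural homomorphism $\Aut(S) \to \Aut(S/\Phi(S))$. Since $\Phi(S)$ is characteristic in $S$, every automorphism of $S$ induces an automorphism of $S/\Phi(S)$, so this map is well defined. Normality of $A$ in $\Aut(S)$ is then immediate, because $A$ is the kernel of a homomorphism. The remaining task is to show that $A$ is a $p$-group, which amounts to showing that $|A|$ has no prime divisor $q \neq p$.

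The plan is as follows. Suppose $\alpha \in A$ has order coprime to $p$, and set $G = \langle \alpha \rangle$. Then $G$ acts coprimely on $S$ and acts trivially on $S/\Phi(S)$. Part (3) of the Coprime Action lemma then says that $G$ acts trivially on $S$, so $\alpha = 1$. Consequently $A$ contains no nontrivial elements of order coprime to $p$. By Cauchy's theorem, any prime $q \neq p$ dividing $|A|$ would produce an element of order $q$, which is impossible. Hence $A$ is a $p$-group.

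The only substantive input is part (3) of the Coprime Action lemma, and we may take it as given. If it had to be proved directly, the standard route is the following. First apply part (1) with $B = \Phi(S)$ to get $S = C_S(G)\Phi(S)$. Since $\Phi(S)$ consists of non-generators, this forces $S = C_S(G)$. I expect no real obstacle, as the lemma essentially restates coprime action through the kernel of the action on the Frattini quotient.
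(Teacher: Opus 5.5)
Your proof is correct. The paper gives no argument of its own and only cites Gorenstein; your route is the standard argument behind that citation: normality because the group is the kernel of $\Aut(S) \to \Aut(S/\Phi(S))$, part (3) of the Coprime Action lemma to show the kernel has no nontrivial $p'$-elements, then Cauchy. Incidentally, your argument shows this lemma is equivalent to part (3) of the Coprime Action lemma rather than a genuine strengthening of it.
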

    \begin{proof}
        See \cite[Theorem I.5.1.4]{gor07}.
    \end{proof}

    We will also make use of the following two results liberally without reference.
    \begin{lemma}
        Let $G$ be a finite group that acts on $p$-group $A$, and assume that $B \leq A$ is also $G$-invariant. If $[A : B] = p$, then $[O^{p'}(G), A] \leq B$. In particular, $O^{p'}(G)$ centralizes $A/B$.
    \end{lemma}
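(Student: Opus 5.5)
The plan is to pass to the action of $G$ on the quotient $A/B$ and use that a group of order $p$ has automorphism group of order prime to $p$. Since $B$ is $G$-invariant, $G$ acts on $A/B$, which gives a homomorphism $\varphi \colon G \to \Aut(A/B)$. Because $[A:B] = p$, the group $A/B$ is cyclic of order $p$, so
\[
\Aut(A/B) \cong (\Z/p\Z)^\times,
\]
a cyclic group of order $p-1$. In particular $G\varphi$ is a $p'$-group.

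The second step uses the definition of $O^{p'}(G)$: it is the smallest normal subgroup of $G$ whose quotient is a $p'$-group. The normal subgroup $\ker\varphi$ satisfies $G/\ker\varphi \cong G\varphi$, which is a $p'$-group. Hence $O^{p'}(G) \leq \ker\varphi$, so $O^{p'}(G)$ acts trivially on $A/B$.

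Finally we translate this into the commutator statement. Let $g \in O^{p'}(G)$ and $a \in A$. Since $g$ fixes the coset $aB$, we have $a^g \in aB$, and so $[a,g] = a^{-1}a^g \in B$. Thus all generators of $[O^{p'}(G), A]$ lie in $B$, and since $B$ is a subgroup, $[O^{p'}(G),A] \leq B$. The "in particular" statement is exactly the conclusion of the second step.

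No step is a real obstacle. The only point needing care is that $B$ need not be normal in $A$ a priori. It is, because a subgroup of index $p$ in a $p$-group is maximal and hence normal; this makes $A/B$ a group, so $\Aut(A/B)$ makes sense. Alternatively one can avoid this by letting $G$ act on the $p$ cosets of $B$ as a set. Then $G$ permutes these $p$ cosets and fixes $B$, so its image lies in $\Sym(p-1)$, and the argument instead uses the weaker fact that $O^{p'}$ kills the action. I would use the normality route, which is cleaner.
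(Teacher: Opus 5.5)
Your proof is correct and rests on the same key fact as the paper's proof, namely that $\Aut(A/B)$ is a $p'$-group when $[A:B]=p$. The only difference is in how this fact is passed to $O^{p'}(G)$. The paper applies it to a Sylow $p$-subgroup $T$, showing $T$ centralizes $A/B$, and then uses that $O^{p'}(G)$ is generated by the Sylow $p$-subgroups. You apply it to the whole image of $G$ and use that $O^{p'}(G)$ lies in every normal subgroup with $p'$-quotient, which is slightly more direct and also makes the normality of $B$ and the commutator formulation explicit.
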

    \begin{proof}
        Let $T \in \Syl_p(G)$. Consider the quotient $X := T/C_T(A/B)$. Then $X$ is a $p$-group that acts faithfully on $A/B$ of order $p$. In other words, $X$ is isomorphic to a subgroup of $\Aut(A/B)$. But $\Aut(A/B)$ is a $p'$-group, meaning that $X = 1$. Thus, $T = C_T(A/B)$. Taking closure in $G$, we infer that $O^{p'}(G) = C_{O^{p'}(G)}(A/B)$. We conclude that $O^{p'}(G)$ centralizes $A/B$.
    \end{proof}

    \begin{lemma} \label{lm:centric-faithful}
        Let $E$ be a finite $p$-group, with $A \normalIn E$. Let $G$ be a subgroup of $\Aut(E)$ such that $A$ is $G$-invariant. If $C_E(A) \leq A$, then $C_G(A) \leq O_p(G)$.
    \end{lemma}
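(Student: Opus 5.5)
The plan is to show that $C_G(A)$ is a normal $p$-subgroup of $G$; it is then contained in $O_p(G)$. Normality is immediate. Since $A$ is $G$-invariant, every $g \in G$ normalizes the subgroup $\Aut_E(A)$-stabilizer structure, and in particular, for $c \in C_G(A)$ and $a \in A$, we have
\[
a (g^{-1} c g) = a.
\]
Indeed, $a g^{-1} \in A$ is fixed by $c$, and applying $g$ returns $a$. So $C_G(A) \normalIn G$, and the remaining work is to show that $C_G(A)$ is a $p$-group.

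To prove this, I would take an element $\alpha \in C_G(A)$ of order coprime to $p$ and show $\alpha = 1$. Set $X := \langle \alpha \rangle$, a $p'$-group acting coprimely on $E$ and centralizing $A$. Since $A \normalIn E$ is $X$-invariant, I apply the standard lemma that a $p'$-group centralizing a normal subgroup $A$ with $C_E(A) \leq A$ acts trivially on $E$ (e.g.\ \cite[8.2.11]{ks06}). Its proof is via the Three Subgroups Lemma: we have $[A, X, E] = [1, E] = 1$, and also
\[
[E, A, X] \leq [A, X] = 1,
\]
using $A \normalIn E$. Hence $[X, E, A] = 1$, that is, $[E, X] \leq C_E(A) \leq A$. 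Here the commutators are taken in the semidirect product $E \rtimes X$, where everything makes sense.

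It follows that $X$ acts trivially on $A$ and also on $E/A$, because $[E,X] \leq A$. By part (2) of the coprime action lemma, applied with $B = A$, the group $X$ acts trivially on $E$. Since $X \leq \Aut(E)$, this gives $\alpha = 1$. Therefore every element of $C_G(A)$ has $p$-power order, so $C_G(A)$ is a normal $p$-subgroup of $G$ and hence lies in $O_p(G)$.

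The only delicate point is justifying the Three Subgroups step inside $E \rtimes X$. In particular, one must check that $[E, A, X] = 1$ uses the normality of $A$ in $E$. I expect no real obstacle there.
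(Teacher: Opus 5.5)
Your proof is correct. $C_G(A)$ is normal because it is the kernel of the restriction map $G \to \Aut(A)$, so the garbled phrase about an ``$\Aut_E(A)$-stabilizer structure'' can be dropped and your displayed computation suffices; the Three Subgroups step giving $[E,X] \leq C_E(A) \leq A$, followed by coprime action, then correctly shows every $p'$-element of $C_G(A)$ is trivial. The paper gives no argument here beyond citing \cite[Lemma 2.6]{paper:paper-1}, and what you wrote is the standard proof of that fact.
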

    \begin{proof}
        See \cite[Lemma 2.6]{paper:paper-1}.
    \end{proof}

    Let $G$ be a group and $A \leq H \leq G$. We say that $A$ is \emph{weakly closed} in $H$ with respect to $G$ if for all $x \in G$ with $A^x \leq H$, we have that $A^x = A$.
    \begin{lemma} \label{lm:weak-closure-equality}
        Let $G$ be a finite group and let $X, Y, Z \leq G$ be such that:
        \begin{enumerate}
            \item $X$ is weakly closed in $S \in \Syl_p(Y) \cap \Syl_p(Z)$ with respect to $G$;
            \item there exists a $g \in G$ such that $Y^g = Z$; 
            \item $N_Y(X) = N_Z(X)$; and
            \item $N_G(N_Z(X)) \leq N_G(Z)$.
        \end{enumerate}
        Then $Y = Z$.
    \end{lemma}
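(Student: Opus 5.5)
We argue by a Frattini-type comparison: use the element $g$ from (2) to move $Y$ onto $Z$, and then correct $g$ by an element of $Z$ so that the corrected element normalizes $N_Z(X)$. Hypothesis (4) then shows that this element normalizes $Z$, which forces $Y = Z$.

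First, by (2), $S^g \leq Y^g = Z$ and $|S^g| = |S|$. Since $S \in \Syl_p(Y)$, conjugation gives $S^g \in \Syl_p(Y^g) = \Syl_p(Z)$. By Sylow's theorem in $Z$ there is $z \in Z$ with $S^{gz} = S$. Put $h := gz$. Then $Y^h = Z^z = Z$ and $S^h = S$. Next, $X \leq S$, so $X^h \leq S^h = S$. Hypothesis (1) says $X$ is weakly closed in $S$ with respect to $G$, hence $X^h = X$, i.e. $h \in N_G(X)$.

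Since $h$ normalizes $X$ and maps $Y$ onto $Z$, we obtain
\[ N_Y(X)^h = N_{Y^h}(X^h) = N_Z(X). \]
By (3), $N_Y(X) = N_Z(X)$, so $N_Z(X)^h = N_Z(X)$, i.e. $h \in N_G(N_Z(X))$. Hypothesis (4) gives $h \in N_G(Z)$. Therefore $Z = Z^h$. On the other hand $Z = Y^h$, so $Y^h = Z^h$, and conjugating back by $h^{-1}$ yields $Y = Z$.

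Every step is routine. The only point needing care is to replace $g$ by $h = gz$ before applying weak closure. The original $g$ need not fix $S$, so weak closure cannot be applied to it directly; once $S^h = S$, we get $X^h \leq S$, and weak closure does the rest.
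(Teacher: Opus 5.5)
Your proof is correct and complete: replacing $g$ by $h = gz$ with $z \in Z$ chosen so that $S^h = S$, using weak closure to get $X^h = X$, and then deducing $h \in N_G(N_Z(X)) \le N_G(Z)$ is exactly the Frattini-type argument this lemma calls for. The paper proves nothing new here (it just cites Lemma 2.7 of the preceding paper in the series), and your argument is essentially the standard proof being imported.
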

    \begin{proof}
        See \cite[Lemma 2.7]{paper:paper-1}.
    \end{proof}

    \subsection{Fusion Systems} We refer the reader to \cite{ako} and \cite{cra} for background on fusion systems. Throughout, let $S$ be a finite $p$-group.

    Let $\calF$ be a fusion system on $S$, and let $\calE(\calF)$ denote the essential subgroups of $\calF$. The following result illustrates the importance of $\calE(\calF)$.
    \begin{theorem}[Alperin-Goldschmidt]
        Let $\calF$ be a saturated fusion system on $S$. Then 
        \[\calF = \langle \Aut_\calF(S), \Aut_\calF(E) \mid E \in \calE(\calF) \rangle_S.\]
    \end{theorem}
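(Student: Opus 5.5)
The plan is to prove the Alperin--Goldschmidt theorem by induction on the index of subgroups. Write $\calF_0 := \langle \Aut_\calF(S), \Aut_\calF(E) \mid E \in \calE(\calF) \rangle_S$. Clearly $\calF_0 \subseteq \calF$. It therefore suffices to show that every $\calF$-isomorphism $\varphi \colon P \to Q$ is a composite of restrictions of morphisms in $\calF_0$. Since $\varphi$ factors as an isomorphism onto a fully normalized conjugate followed by the inverse of another such isomorphism, we may assume $Q$ is fully normalized in $\calF$. We argue by downward induction on $|P|$. The base case $P = S$ holds because $\varphi \in \Aut_\calF(S)$.

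For $P < S$ with $Q$ fully normalized, saturation says that $Q$ is fully automized and receptive. The first step is the case where $Q$ is not $\calF$-centric, or where $\Out_\calF(Q)$ has no strongly $p$-embedded subgroup. We reduce to a strictly larger subgroup. By the Sylow property, choose $\chi \in \Aut_\calF(Q)$ such that $\Aut_S(P)^{\varphi\chi} \leq \Aut_S(Q)$. If $\Aut_\calF(Q)$ is generated by $N$-extendable elements, we can write $\varphi$, up to such corrections, as a product of maps that extend to subgroups $N_\psi > Q$. Because $N_S(Q) > Q$ when $Q < S$, receptivity extends each such map to an $\calF$-morphism on a strictly larger subgroup, and induction places it in $\calF_0$.

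The key group-theoretic input is the following standard fact. Let $G = \Aut_\calF(Q)$ with Sylow $p$-subgroup $T = \Aut_S(Q)$. Let $H$ be the subgroup generated by $N_G(U)$ over all $1 \neq U \leq T$ with $U \not\leq \Inn(Q)$; for non-centric $Q$, use instead the $U$ arising from $C_S(Q)Q > Q$. Then either $H = G$, or $H/\Inn(Q)$ is strongly $p$-embedded in $\Out_\calF(Q)$. Each $\alpha \in N_G(U)$ extends, by receptivity, to $N_\alpha \geq$ the preimage of $U$ in $N_S(Q)$, which strictly contains $Q$. Hence $\alpha$ is in $\calF_0$ by induction. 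For centric $Q$ the obstruction is exactly that $Q$ is essential, and in that case $\Aut_\calF(Q) \leq \calF_0$ by definition. For non-centric fully normalized $Q$, a nontrivial $p$-element of $C_S(Q)Q$ outside $Q$ is normalized by all of $\Aut_\calF(Q)$ modulo the relevant action, so every automorphism extends and no strongly embedded obstruction arises.

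The main obstacle is handling the non-centric and non-fully-normalized bookkeeping cleanly. In practice I would simply cite the standard proof (\cite[Theorem I.3.5]{ako} or \cite[Theorem 4.51]{cra}) rather than reprove it.
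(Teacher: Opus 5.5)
Your sketch is the standard proof of Alperin--Goldschmidt, and the paper gives no argument of its own, only the citation of \cite[Theorem I.3.5]{ako} and Craven, so your approach matches. You reduce to a fully normalized target, use the Sylow axiom to push $\Aut_S(P)$ into $\Aut_S(Q)$ and extend, and show that $\Aut_\calF(Q)$ is generated by automorphisms extending to strictly larger subgroups unless $Q$ is essential. That is the argument in the cited sources.

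The one step to restate is the non-centric case. No ``$U$'' is needed there: every $\alpha \in \Aut_\calF(Q)$ satisfies $QC_S(Q) \leq N_\alpha$. Receptivity of the fully normalized $Q$ then extends $\alpha$ to $QC_S(Q)$, and $QC_S(Q) > Q$ because $Q$ is not centric.
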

    \begin{proof}
        This is given in \cite[Theorem I.3.5]{ako} and \cite[Proposition 7.25]{cra}.
    \end{proof}
    
    \begin{lemma} \label{lm:fs-next}
        Let $\calF$ be a saturated fusion system on a finite $p$-group $S$ and let $R$ be fully $\calF$-normalized. Then for any $Q \in R^\calF$, there exists a map $\phi \colon N_S(Q) \to S$ such that $Q\phi = R$.
    \end{lemma}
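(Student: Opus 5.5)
This is the standard extension property for fully normalized subgroups, and I would prove it directly from the saturation axioms. Fix $Q \in R^\calF$ and an $\calF$-isomorphism $\psi \colon Q \to R$. Since $R$ is fully $\calF$-normalized, it is fully $\calF$-automized and receptive; I will use the definition of saturation in which every $\calF$-conjugacy class contains such a member (as in \cite{ako}). The aim is to replace $\psi$ by a composite $\psi\alpha$ with $\alpha \in \Aut_\calF(R)$, chosen so that $N_S(Q) \le N_{\psi\alpha}$. Here
\[N_{\phi} = \{ g \in N_S(Q) \mid \phi^{-1} c_g \phi \in \Aut_S(R)\}.\]
Receptivity of $R$ then extends $\psi\alpha$ to the map $\phi \colon N_S(Q) \to S$ with $Q\phi = R$.

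First, consider $\Aut_S(Q)$. Conjugating it by $\psi$ gives $\psi^{-1}\Aut_S(Q)\psi$, which is a $p$-subgroup of $\Aut_\calF(R)$. Since $R$ is fully automized, $\Aut_S(R) \in \Syl_p(\Aut_\calF(R))$. Sylow's theorem therefore gives $\alpha \in \Aut_\calF(R)$ with
\[\alpha^{-1}\psi^{-1}\Aut_S(Q)\psi\alpha \le \Aut_S(R).\]
Next, set $\psi' = \psi\alpha \colon Q \to R$. For every $g \in N_S(Q)$, the map $(\psi')^{-1} c_g \psi'$ lies in $\Aut_S(R)$, so $N_{\psi'} = N_S(Q)$. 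Receptivity of $R$ now provides $\phi \in \Hom_\calF(N_S(Q), S)$ extending $\psi'$, and then $Q\phi = Q\psi' = R$, as required.

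The only real subtlety is the definitional one: ensuring that a fully normalized subgroup is both fully automized and receptive. With the Roberts–Shpectorov/\cite{ako} definition of saturation, this is a standard consequence; I would cite \cite[Proposition I.2.5]{ako} or the corresponding statement in \cite{cra}, or alternatively cite the lemma directly as \cite[Lemma I.2.6(c)]{ako}. Once that is granted, the argument above is just the Sylow conjugation step, so I expect no genuine obstacle.
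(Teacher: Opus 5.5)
Your argument is correct and is the standard proof behind \cite[Lemma I.2.6(c)]{ako}. The paper gives no argument of its own here; it simply cites that lemma (or \cite[Proposition 4.17]{cra}). Your steps are exactly the ones behind that citation: first use Sylow's theorem in $\Aut_\calF(R)$ to conjugate $\psi^{-1}\Aut_S(Q)\psi$ into $\Aut_S(R)$, so that $N_{\psi\alpha} = N_S(Q)$. Then apply receptivity of the fully normalized subgroup $R$ to extend $\psi\alpha$ to $N_S(Q)$.
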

    \begin{proof}
        See \cite[Lemma I.2.6 (c)]{ako} or \cite[Proposition 4.17]{cra}.
    \end{proof}

    \begin{lemma} \label{lm:nfa-sat}
        If $A$ is fully $\calF$-normalized and $\calF$ is saturated, then $N_\calF(A)$ is also saturated. 
    \end{lemma}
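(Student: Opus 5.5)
This is a standard result, and I would prove it directly from the saturation axioms in the Sylow-style formulation (as in \cite[Definition I.2.2]{ako}), using Lemma \ref{lm:fs-next}. Recall that $N_\calF(A)$ is the fusion system on $N_S(A)$ whose morphisms $\phi\colon P\to Q$ are those that extend to some $\bar\phi\in\Hom_\calF(PA,QA)$ with $A\bar\phi=A$. We must check two axioms for subgroups $P\le N_S(A)$.
\begin{enumerate}
\item The Sylow axiom: $\Aut_{N_S(A)}(N_S(A))$ is Sylow in $\Aut_{N_\calF(A)}(N_S(A))$.
\item The extension axiom for receptive subgroups.
\end{enumerate}

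The main step is a reduction. I would show that every $P\le N_S(A)$ is $N_\calF(A)$-conjugate to a subgroup $P^*$ with two properties. First, $P^*A$ is fully $\calF$-normalized (and hence fully centralized and receptive in $\calF$). Second, $N_{N_S(A)}(P^*)$ is as large as possible within its $N_\calF(A)$-class. To construct $P^*$, put $K=\Aut_{N_S(A)}(P)$, viewed as a subgroup of automorphisms of $PA$ that normalize $A$. I would then use the $K$-normalizer technique \cite[Proposition I.5.2]{ako}. Choose $R\in (PA)^\calF$ fully normalized, and apply Lemma \ref{lm:fs-next} to get $\phi\colon N_S(PA)\to S$ with $(PA)\phi=R$. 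The key point is that $A\phi$ is $\calF$-conjugate to $A$. Since $A$ is fully normalized, I can compose with a map defined on $N_S(A\phi)$ sending $A\phi$ back to $A$. Because $N_{N_S(A)}(P)\le N_S(PA)\cap N_S(A)$, the composite is defined on $N_{N_S(A)}(P)$ and sends $A$ to $A$. Arranging that the image of $PA$ is still fully normalized is the delicate part. The cleanest route is to work with $N_S^K(PA)$ and use the fact that saturation implies $K$-saturation. This is the step I expect to be the main obstacle, and I would follow the argument of \cite[Theorem I.5.5]{ako} closely rather than reinvent it.

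With such $P^*$ in hand, I would verify the two axioms as follows.

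For the Sylow axiom at $N_S(A)$, take $P=N_S(A)$. Then $\Aut_{N_\calF(A)}(N_S(A))$ is the image of $N_{\Aut_\calF(N_S(A))}(A)$. Because $A$ is fully normalized, $N_S(A)$ is $N_\calF(A)$-conjugate to no larger normalizer. A Frattini/Sylow count inside $\Aut_\calF(N_S(A))$, using that $\Aut_S(N_S(A))$ is Sylow after conjugating $N_S(A)$ to a fully normalized subgroup via Lemma \ref{lm:fs-next}, then gives the Sylow property.

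For the extension axiom, let $\phi\in\Hom_{N_\calF(A)}(Q,P^*)$, and extend it to $\bar\phi\colon QA\to P^*A$ with $A\bar\phi=A$. Consider $N_{\bar\phi}$ computed in $\calF$ for $\bar\phi$. It contains every $g\in N_{N_S(A)}(Q)$ with ${}^{\phi}c_g\in\Aut_{N_S(A)}(P^*)$, since such $g$ normalize $QA$ and $A$. Because $P^*A$ is receptive in $\calF$, $\bar\phi$ extends to $N_{\bar\phi}$. This extension still sends $A$ to $A$, so its restriction lies in $N_\calF(A)$ and extends $\phi$ on $N_\phi$.

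Finally, I would transfer receptivity from $P^*$ to every fully $N_\calF(A)$-normalized subgroup of its class by composing isomorphisms. This completes the saturation of $N_\calF(A)$.
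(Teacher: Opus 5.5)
Your plan follows the standard $K$-normalizer route; the paper itself just cites \cite[I.5.6]{ako} and \cite[Theorem 4.27]{cra}. However, two concrete claims in your plan are false, and they sit exactly where the content of the theorem lies.

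\textbf{The reduction target cannot be achieved.} In general no $N_\calF(A)$-conjugate $P^*$ of $P$ has $P^*A$ fully $\calF$-normalized. For example, take $p=2$ and $\calF=\calF_S(\Sym(6))$ with $S=\langle (1234),(13)\rangle\times\langle(56)\rangle$. Let $A=\langle (56)\rangle$, which is central in $S$ and hence fully normalized, and let $P=\langle(13)\rangle$. Since $N_\calF(A)$ is realized by $\Sym(4)\times\Sym(2)$, the $N_\calF(A)$-class of $P$ is $\{\langle(13)\rangle,\langle(24)\rangle\}$. Both $\langle(13),(56)\rangle$ and $\langle(24),(56)\rangle$ have normalizer of order $8$ in $S$. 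Yet their $\calF$-conjugate $\langle(13),(24)\rangle$ (via $(25)(46)$) is normal in $S$.

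What is true is a weaker statement. Put $\hat K=\{\alpha\in\Aut(P^*A) : A\alpha=A,\ P^*\alpha=P^*,\ \alpha|_{P^*}\in\Aut_{N_S(A)}(P^*)\}$, so that $N_S^{\hat K}(P^*A)=N_{N_S(A)}(P^*)$. Then $P^*A$ is fully $\hat K$-normalized, and this holds for every $P^*$ that is fully normalized in $N_\calF(A)$. To see it, let $\psi\colon P^*A\to R$ be in $\calF$. The elements of $S$ that normalize $R$ and induce automorphisms in $\psi^{-1}\hat K\psi$ also normalize $A\psi$. A map from Lemma \ref{lm:fs-next} sending $A\psi$ back to $A$ therefore embeds them into $N_{N_S(A)}(P')$ for some $P'\in(P^*)^{N_\calF(A)}$.

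The $K$-normalizer results of \cite[Section I.5]{ako} then give two things. First, $P^*A$ is fully centralized, hence receptive. Second, $\Aut_{N_{N_S(A)}(P^*)}(P^*A)$ is a Sylow $p$-subgroup of $\Aut_\calF^{\hat K}(P^*A)$. This Sylow statement does not follow from full $\calF$-normalization of $P^*A$, and you need it in the next step.

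\textbf{The extension step is invalid as written.} You claim $N_\phi\subseteq N_{\bar\phi}$ for an arbitrary extension $\bar\phi$ with $A\bar\phi=A$, ``since such $g$ normalize $QA$ and $A$''. That is not sufficient. For $g\in N_\phi$, the automorphism $\bar\phi^{-1}c_g\bar\phi$ of $P^*A$ agrees on $P^*$ with an element of $\Aut_{N_S(A)}(P^*)$. It need not be induced by an element of $S$ on all of $P^*A$.

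Already for $Q=P^*=1$, your claim says that every $\alpha\in\Aut_\calF(A)$ normalizes $\Aut_S(A)$. This fails for $\calF_S(\Sym(4))$ with $A=O_2(\Sym(4))$ and $\alpha$ of order $3$. Here $P^*=1$ satisfies both of your requirements, and $N_{\bar\phi}=C_S(A)<S=N_\phi$.

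The missing idea is a correction of $\bar\phi$, done as follows.
\begin{enumerate}
\item The $p$-group $\{\bar\phi^{-1}c_g\bar\phi : g\in N_\phi\}$ lies in $\Aut_\calF^{\hat K}(P^*A)$. By the Sylow property, some $\chi\in\Aut_\calF^{\hat K}(P^*A)$ conjugates it into $\Aut_{N_{N_S(A)}(P^*)}(P^*A)$.
\item Then $N_\phi\le N_{\bar\phi\chi}$. Receptivity of $P^*A$ extends $\bar\phi\chi$ to a map on $N_\phi A$ that fixes $A$ and has image in $N_S(A)$.
\item Choose $h\in N_{N_S(A)}(P^*)$ inducing $\chi|_{P^*}$. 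Composing with $c_h^{-1}$ gives the required extension of $\phi$ in $N_\calF(A)$.
\end{enumerate}
Without the $\hat K$-normalizer in place of full $\calF$-normalization, and without this choice of $\chi$, your argument does not go through.
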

    \begin{proof}
        See \cite[I.5.6]{ako} or \cite[Theorem 4.27]{cra}.
    \end{proof}

    We say that $A \leq S$ is \emph{weakly $\calF$-closed} if for every $\phi \in \Hom_\calF(A, S)$, we have $A\phi = A$. We say that $A \leq S$ is \emph{strongly $\calF$-closed} if every $\phi \colon P \to S$ in $\calF$ with $P \leq A$ satisfies $P\phi \leq A$.
    \begin{lemma} \label{lm:weak-closure-to-normality}
        Let $\calF$ be a saturated fusion system on $S$, and let $A \leq S$. Then $A \normalIn \calF$ if and only if $A$ is weakly closed in $\calF$, and for every $E \in \calE(\calF)$, we have that $A \leq E$.
    \end{lemma}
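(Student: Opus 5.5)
We prove the two implications separately, using the definition of normality: $A \normalIn \calF$ means every morphism $\phi\colon P \to Q$ in $\calF$ extends to a morphism $\bar\phi\colon PA \to QA$ with $A\bar\phi = A$. We also use the Alperin--Goldschmidt Theorem stated above.

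\emph{Forward direction.} Suppose $A \normalIn \calF$. For weak closure, let $\phi \in \Hom_\calF(A,S)$. By normality $\phi$ extends to $\bar\phi$ on $A$ itself with $A\bar\phi = A$, so $A\phi = A$. For containment, let $E \in \calE(\calF)$. Every $\alpha \in \Aut_\calF(E)$ extends to an $\calF$-morphism on $EA$ that normalizes $A$. Hence $\Aut_{EA}(E)$ is contained in a normal subgroup of $\Aut_\calF(E)$ of the form $\Aut_{N}(E)$ for a $p$-subgroup. I would make this precise by the standard argument: the group $K = \{\alpha \in \Aut_\calF(E) : \alpha$ extends to $EA\}$ is all of $\Aut_\calF(E)$, and restriction gives $\Aut_{EA}(E)$ as a normal $p$-subgroup of $\Aut_\calF(E)$. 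This uses the conjugation formula $c_x^{\bar\alpha} = c_{x\bar\alpha}$, and $EA\bar\alpha = EA$.

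So $\Aut_{EA}(E) \le O_p(\Aut_\calF(E)) = \Inn(E)$, since $E$ is essential. Because $E$ is $\calF$-centric, $C_S(E) \le E$, and therefore $N_{EA}(E) = E$. In a $p$-group a proper subgroup is properly contained in its normalizer, so $EA = E$, that is, $A \le E$.

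\emph{Reverse direction.} Suppose $A$ is weakly closed and is contained in every essential subgroup. First, $A$ is normal in $S$, since conjugation by $s \in S$ maps $A$ into $S$ and hence onto $A$. By Alperin--Goldschmidt, every morphism of $\calF$ is a composite of restrictions of automorphisms in $\Aut_\calF(S)$ and in $\Aut_\calF(E)$ for $E \in \calE(\calF)$. Each such generator $\alpha$ is defined on $S$ or on some $E \ge A$, so it is already defined on a subgroup containing $A$. By weak closure it maps $A$ to $A$.

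Now take a restriction $\alpha|_P$ of a generator, with $P$ contained in the domain $D$ of $\alpha$. Then $PA \le D$, and $\alpha|_{PA}$ extends $\alpha|_P$ and normalizes $A$. A composite of morphisms each extending to one normalizing $A$ again extends: the extensions compose, since $(PA)\bar\phi_1 = (P\phi_1)A$. This gives $A \normalIn \calF$.

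\emph{Main obstacle.} The delicate step is the forward containment $A \le E$. One must check that $\Aut_{EA}(E)$ is normal in $\Aut_\calF(E)$, which requires every automorphism of $E$ to extend to $EA$. An alternative route is the known characterization $O_p(\calF) = \bigcap$ of the essential subgroups and $S$ under weak closure, citing \cite[Proposition I.4.5]{ako}. If needed, I would simply invoke that reference.
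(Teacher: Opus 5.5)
Your proof is correct. The paper gives no argument of its own and simply cites \cite[I.4.5]{ako}, and what you write is essentially the standard proof behind that reference: Alperin--Goldschmidt plus weak closure for the reverse direction, and for the forward direction the fact that $\Aut_{N_{EA}(E)}(E)$ is a normal $p$-subgroup of $\Aut_\calF(E)$, hence lies in $O_p(\Aut_\calF(E))=\Inn(E)$, so centricity gives $N_{EA}(E)=E$ and thus $EA=E$. The only thing to tidy is notation. Since $E$ need not be normal in $EA$, write $\Aut_{N_{EA}(E)}(E)$ rather than $\Aut_{EA}(E)$, and drop the garbled sentence about ``a normal subgroup of the form $\Aut_N(E)$''.
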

    \begin{proof}
        This is a restatement of \cite[Lemma I.4.5]{ako}.
    \end{proof}

    \begin{lemma} \label{lm:autfe-decomposition}
        Assume that $\calF$ is a saturated fusion system on $S$, $A \leq S$ is fully $\calF$-normalized. If $\alpha \in N_{\Aut_\calF(A)}(\Aut_S(A))$, then $\alpha$ has an extension $\hat{\alpha} \in \Aut_\calF(N_S(A))$.
    \end{lemma}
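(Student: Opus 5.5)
The plan is to use the extension axiom of saturation, applied to $A$ itself. Because $A$ is fully $\calF$-normalized and $\calF$ is saturated, $A$ is fully automized and receptive. The definition of receptive says the following: for any $\phi \in \Hom_\calF(A, S)$, form the group
\[N_\phi = \{ g \in N_S(A) \mid \phi^{-1} c_g \phi \in \Aut_S(A\phi) \}.\]
Then $\phi$ extends to a morphism $\bar\phi \in \Hom_\calF(N_\phi, S)$.

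First I apply this with $\phi = \alpha \in \Aut_\calF(A)$, so that $A\phi = A$. Next I compute $N_\alpha$. Since $\alpha$ normalizes $\Aut_S(A)$, for every $g \in N_S(A)$ we have $\alpha^{-1} c_g|_A \alpha \in \Aut_S(A)$. This gives $N_\alpha = N_S(A)$. Receptivity therefore produces some $\hat\alpha \in \Hom_\calF(N_S(A), S)$ with $\hat\alpha|_A = \alpha$.

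It remains to show that $\hat\alpha$ actually lands in $N_S(A)$ and is an automorphism. For $x \in N_S(A)$, the element $x\hat\alpha$ normalizes $A\hat\alpha = A\alpha = A$. This holds because $\hat\alpha$ is an injective homomorphism whose restriction to $A$ is $\alpha$, so conjugation is respected. Hence $N_S(A)\hat\alpha \leq N_S(A)$. Since $\hat\alpha$ is injective and $N_S(A)$ is finite, $\hat\alpha$ is a bijection onto $N_S(A)$. So $\hat\alpha \in \Aut_\calF(N_S(A))$.

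The only point needing care is the identification $N_\alpha = N_S(A)$, which is exactly where the hypothesis $\alpha \in N_{\Aut_\calF(A)}(\Aut_S(A))$ is used; the rest is formal. Depending on the convention, receptivity may be stated only for fully centralized subgroups. In that case we note that fully normalized implies fully centralized in a saturated fusion system, as in \cite[I.2]{ako}. Alternatively, this lemma is standard and one may simply cite \cite[Proposition I.2.5]{ako} or \cite{cra}.
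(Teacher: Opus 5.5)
Your proof is correct and follows the paper's argument: $A$ is receptive because it is fully normalized, and $\alpha$ normalizes $\Aut_S(A)$, so $N_\alpha = N_S(A)$ and $\alpha$ extends to $N_S(A)$. Your extra check that $\hat\alpha$ maps $N_S(A)$ onto itself fills in a step the paper leaves implicit.
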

    \begin{proof}
        As $A$ is fully $\calF$-normalized, we know that $A$ is $\calF$-receptive by \cite[Theorem 4.21]{cra}. In particular, the map $\alpha$ lifts to 
        \[N_\alpha := \{x \in N_S(A) \mid c_x^\alpha \in \Aut_S(A)\}.\]
        We have chosen $\alpha \in \Aut_\calF(A)$ to normalize $\Aut_S(A)$. This implies that $N_\alpha = N_S(A)$. Thus, $\alpha$ has an extension $\hat{\alpha} \colon N_S(A) \to N_S(A)$ in $\calF$.
    \end{proof}

    \begin{theorem}[Model Theorem]
        Let $\calF$ be a saturated fusion system on a finite $p$-group $S$. Let $Q \normalIn S$ be such that $Q$ is both normal and centric in $\calF$. Then the following hold:
        \begin{enumerate}
            \item There exists a model for $\calF$. That is, there exists a group $G$ with $S \in \Syl_p(G)$ such that $\calF = \calF_S(G)$ and $F^*(G) = O_p(G)$.
            \item If $G_1$ and $G_2$ are models for $\calF$, then there exists a group isomorphism $\phi \colon G_1 \to G_2$ such that $\phi|_S = 1$.
            \item If $G$ is a finite group such that $Q = F^*(G)$ and $\Aut_\calF(Q) = \Aut_G(Q)$, then there exists a $\beta \in \Aut(S)$ such that $\beta|_Q = 1$ and $\calF^\beta = \calF_S(G)$. That is, there is a model for $\calF$ that is isomorphic to $G$.
        \end{enumerate}
    \end{theorem}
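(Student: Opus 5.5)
The Model Theorem is standard, and I would prove it by reduction to the existence and uniqueness of constrained models in \cite[Theorem III.5.10]{ako} (see also \cite[Theorem 5.10]{cra}). Part (1) is existence and part (2) is uniqueness; part (3) then follows from these two together with an identification argument.

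For (1), since $Q \normalIn \calF$ is $\calF$-centric, $C_S(Q) \leq Q$ and $\calF$ is constrained. I would build $G$ as an extension. Set $\Delta := \Aut_\calF(Q)$, which has $\Inn(Q)$ as a normal subgroup with $\Aut_S(Q)$ Sylow, since $Q$ is fully normalized. Using Gaschütz's theorem and the vanishing of suitable cohomology of $\Delta/\Inn(Q)$ relative to its Sylow $p$-subgroup, I would realize an extension
\[1 \to Z(Q) \to G \to \Delta \to 1\]
containing $Q$ and $S$ compatibly. Then $C_G(Q) = Z(Q)$, so $F^*(G) = O_p(G) = Q$.

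Next I would check $\calF = \calF_S(G)$. Because $Q$ is normal in both systems, every morphism $\phi \in \Hom_\calF(P,R)$ extends to a morphism between $PQ$ and $RQ$ (extension axiom, $Q$ receptive). Its restriction to $Q$ lies in $\Delta$, hence comes from some $g \in G$. The two maps then differ by an element centralizing $Q$. A standard cohomological argument shows such elements are conjugations by $Z(Q)$, using $H^1$ vanishing of $Z(Q)$-valued cocycles on $p$-groups up to inner ones. This gives $\calF \subseteq \calF_S(G)$; the reverse inclusion is the analogous check.

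For (2), given models $G_1,G_2$, both are extensions of $\Aut_\calF(Q)$ by $Z(Q)$ with the same restriction to $S$. Uniqueness reduces to $H^1$ and $H^2$ of $\Aut_\calF(Q)$ with coefficients in $Z(Q)$ being controlled by $S$, a $p'$-index argument. I expect this to be the main obstacle: one must show the extension class is determined by its restriction to $\Aut_S(Q)$ and that the resulting isomorphism can be chosen to be the identity on $S$. Restriction to a Sylow subgroup is injective on $p$-primary cohomology, which handles both points.

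For (3), the hypotheses give $C_G(Q) \leq Q$ and $\Aut_G(Q) = \Aut_\calF(Q)$, so $G$ is a model for some constrained system $\calF_T(G)$ with $T \in \Syl_p(G)$. After conjugating, $\Aut_T(Q) = \Aut_S(Q)$. Since $C_S(Q) = Z(Q) = C_T(Q)$, this yields an identification $\beta\colon S \to T$ that is the identity on $Q$, because both groups are determined by $Q$ together with the same preimage of $\Aut_S(Q)$ in $\Aut(Q)$ up to $Z(Q)$. Then $\calF^\beta$ and $\calF_S(G)$ are both constrained systems on $S$ with the same $\Aut(Q)$-data. By the uniqueness in (2) applied to their models, $\calF^\beta = \calF_S(G)$. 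The only delicacy in this step is choosing $\beta$ with $\beta|_Q = 1$, which again rests on the same cohomological vanishing.
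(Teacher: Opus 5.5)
The paper gives no argument of its own here; it simply cites \cite[Theorem I.4.9]{ako}, so your opening sentence alone would match it. The problem is the sketch you add, and its key step in (1) is false. After matching $\phi$ and $c_g$ on $Q$, you claim the discrepancy (an automorphism centralizing $Q$) is conjugation by an element of $Z(Q)$, because $Z(Q)$-valued $1$-cocycles on $p$-groups are inner. They are not. Automorphisms of $P\geq Q$ that are trivial on $Q$ correspond to $Z^1(P/Q;Z(Q))$, and $H^1(P/Q;Z(Q))$ is typically nonzero when a $p$-group acts on a $p$-group.

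This cannot be patched locally, because $\calF$ is not determined by $S$, $Q$ and $\Aut_\calF(Q)$; that is exactly why (3) needs the twist $\beta$. Concretely, let $G=3^2:\SL_2(3)$, $Q=O_3(G)$ and $S=Q\langle u\rangle\cong 3^{1+2}_+$. Let $\beta\in\Aut(S)$ fix $Q$ pointwise and send $u\mapsto ub$ with $b\in Q\setminus Z(S)$. Put $\calF:=\calF_S(G)^\beta$. Then $\calF$ is saturated, $Q$ is normal and centric in $\calF$, and $\Aut_\calF(Q)=\Aut_G(Q)$. Let $t$ be the central involution of $\SL_2(3)$. The $\beta$-conjugate of $c_t|_S$ lies in $\Aut_\calF(S)$ and agrees with $c_t$ on $Q$. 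It differs from $c_t|_S$ by $\beta^{\pm1}$, which centralizes $Q$ but is not conjugation by an element of $Q$. The only elements of $\Aut_G(S)$ centralizing $Q$ are conjugations by $Q$, so $\calF\not\subseteq\calF_S(G)$. Yet your check, applied to this $G$, would give $\calF\subseteq\calF_S(G)$.

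So the missing idea is how to choose the group, and the embedding of $S$ in it, so that all of $\calF$ is realized and not only $\Aut_\calF(Q)$. Relatedly, your construction of $G$ uses saturation only through Sylow-ness of $\Aut_S(Q)$. Injectivity of restriction (or relative vanishing) does not by itself produce an extension of $\Aut_\calF(Q)$ whose restriction is $S$. You also need the class of $S$ to be $\Aut_\calF(Q)$-stable, and that is exactly the extension axiom.

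Parts (2) and (3) have related gaps. In (2), injectivity of restriction on $H^2$ gives an isomorphism $G_1\to G_2$ that is the identity on $Q$ and induces the identity on $\Out_\calF(Q)$. Its restriction to $S$ is trivial on $Q$ and on $S/Q$, but need not be $\mathrm{id}_S$. Correcting it means extending a cocycle in $Z^1(\Out_S(Q);Z(Q))$ to $\Out_\calF(Q)$ up to coboundaries. That needs surjectivity onto stable classes, plus a stability check that uses $\calF_S(G_1)=\calF_S(G_2)$; injectivity does not supply it.

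In (3), $S\in\Syl_p(G)$ is implicit, so no identification $S\to T$ is needed. Your final inference (same $\Aut(Q)$-data, hence $\calF^\beta=\calF_S(G)$ by (2)) is invalid. Part (2) compares two models of one fusion system, and the example above shows the $\Aut(Q)$-data do not determine the fusion system. The correct route is to compare $G$ with a model $G_\calF$ of $\calF$ from (1). Both are extensions of $Q$ by $\Out_\calF(Q)$ with the same outer action, and both have the same restriction $S$ over $\Out_S(Q)$. Injectivity of $H^2(\Out_\calF(Q);Z(Q))\to H^2(\Out_S(Q);Z(Q))$ then gives $\Phi\colon G_\calF\to G$ that is the identity on $Q$ and induces the identity on $\Out_\calF(Q)$. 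Then $\beta:=\Phi|_S$ is the required automorphism. The twist comes out of $\Phi$; it cannot be chosen in advance.
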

    \begin{proof}
        See \cite[Theorem I.4.9]{ako}.
    \end{proof}

    We shall use the following result to compare essential subgroups in the entire fusion system with those in normalizer subsystems (cf. \cite[Chapter 4.4]{cra}).
    
    \begin{lemma} \label{lm:essentials-in-normalizer}
        Let $\calF$ be a saturated fusion system on $S$, and let $A \leq S$ be weakly $\calF$-closed. Then 
        \[\calE(N_\calF(A)) = \{E \in \calE(\calF) \mid A \leq E\}.\]
    \end{lemma}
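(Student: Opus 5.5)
Since $A$ is weakly $\calF$-closed, it is in particular normal in $S$ (conjugation by $s\in S$ is a morphism in $\calF$) and is the unique member of its $\calF$-conjugacy class. Hence $A$ is fully $\calF$-normalized with $N_S(A)=S$, and by Lemma \ref{lm:nfa-sat} the subsystem $N_\calF(A)$ is a saturated fusion system on $S$. So both sides of the claimed equality are subsets of the subgroups of $S$. The plan is to compare, for each $E\leq S$, the three conditions defining essentiality: centricity, full normalization, and the presence of a strongly $p$-embedded subgroup in $\Out(E)$. We compare them in $\calF$ and in $N_\calF(A)$.

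First we show that every essential subgroup $E$ of $N_\calF(A)$ contains $A$ and is essential in $\calF$. Since $A$ is normal in $N_\calF(A)$, Lemma \ref{lm:weak-closure-to-normality}, applied to the saturated system $N_\calF(A)$, gives $A\leq E$. Next, every $\phi\in\Hom_\calF(E,S)$ maps $A$ to $A\phi$, and weak closure forces $A\phi=A$. Then $\phi$ extends to $EA=E$ normalizing $A$, so $\phi$ is a morphism of $N_\calF(A)$. Consequently
\[
E^\calF=E^{N_\calF(A)}\quad\text{and}\quad \Aut_\calF(E)=\Aut_{N_\calF(A)}(E).
\]
Centricity in the two systems is therefore equivalent, since both conditions concern the same conjugates of $E$, and so is full normalization. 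The outer automorphism groups agree, so the strongly $p$-embedded condition transfers as well, and $E\in\calE(\calF)$.

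For the converse, take $E\in\calE(\calF)$ with $A\leq E$. The same argument applies verbatim, because it only used $A\leq E$ and the weak closure of $A$. Hence $E$ is essential in $N_\calF(A)$, and the two sets coincide.

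The main subtle step is the identification of morphisms, $\Hom_\calF(E,S)=\Hom_{N_\calF(A)}(E,S)$ whenever $A\leq E$. One must check that the definition of $N_\calF(A)$ (morphisms $\phi\colon P\to Q$ that extend to $PA\to QA$ with $A\phi=A$) is met by $\phi$ itself when $A\leq P$. This holds, and everything else is formal.
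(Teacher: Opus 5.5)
Your proof is correct: $A$ is normal in the saturated system $N_\calF(A)$, which puts $A$ inside every member of $\calE(N_\calF(A))$. Weak closure gives $\Hom_\calF(E,S)=\Hom_{N_\calF(A)}(E,S)$ for every $E\geq A$, so centricity, full normalization and the strongly $p$-embedded condition on $\Out(E)$ all agree in the two systems. The paper gives no argument of its own here and simply cites \cite[Lemma 3.11]{paper:paper-1}, so your self-contained argument supplies the standard proof of that cited fact rather than following a different route.
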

    \begin{proof}
        See \cite[Lemma 3.11]{paper:paper-1}.
    \end{proof}
        
    \begin{definition}
        Let $\calF$ be a saturated fusion system on $S$, and let $A \normalIn \calF$. Define the quotient map $\pi \colon S \to S/A$. We define the \emph{quotient fusion system} $\calF/A$ on $S/A$, where for subgroups $X$ and $Y$ of $S$ containing $A$, we set
        \[\Hom_{\calF/A}(X/A, Y/A) := \{\psi^\pi \mid \psi \in \Hom_\calF(X,Y)\}.\]
    \end{definition}

    \begin{lemma}
        Let $\calF$ be a saturated fusion system on $S$, and let $A \normalIn \calF$. Then $\calF/A$ is saturated.
    \end{lemma}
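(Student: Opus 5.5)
This is a standard result (Puig; see \cite[Proposition II.5.4]{ako} and \cite[Theorem 5.20]{cra}). If I were to prove it directly, I would first check that $\calF/A$ is a fusion system on $S/A$, and then verify the two saturation axioms: the Sylow axiom and the extension axiom. The key tool is the following observation. Since $A \normalIn \calF$, every morphism $\phi \in \Hom_\calF(P,Q)$ extends to a morphism $\bar\phi \in \Hom_\calF(PA, QA)$ with $A\bar\phi = A$. Consequently, every morphism between subgroups of $S/A$ arising from $\calF$ comes from a morphism between subgroups of $S$ containing $A$. This also gives closure under composition and restriction, and shows that the inclusions and conjugations by $S/A$ lie in $\calF/A$. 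So $\calF/A$ is a fusion system.

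For the Sylow axiom, I take $X \le S$ with $A \le X$, and consider the surjection
\[
\Aut_\calF(X) \to \Aut_{\calF/A}(X/A).
\]
Its kernel $K = C_{\Aut_\calF(X)}(X/A)$ need not be a $p$-group, so I cannot compare Sylow subgroups directly. Instead I choose $X$ within its $\calF$-class to be fully normalized, which is possible because $A$ is strongly closed. Then I claim $X/A$ is fully $\calF/A$-normalized and that
\[
\Aut_{S/A}(X/A) \in \Syl_p(\Aut_{\calF/A}(X/A)).
\]
The preimage of $N_{S/A}(X/A)$ in $S$ is $N_S(X)$ up to the elements acting trivially modulo $A$, so $N_S(X)A/A = N_{S/A}(X/A)$. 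Hence
\[
\Aut_{S/A}(X/A) = \Aut_S(X)K/K.
\]
Since $\Aut_S(X)$ is Sylow in $\Aut_\calF(X)$, its image is Sylow in the quotient $\Aut_\calF(X)/K$. The only delicate point is showing that full normalization is preserved under the quotient. For this I compare $|N_{S/A}(Y/A)|$ across $\calF/A$-conjugates, using that conjugates in $\calF/A$ lift to $\calF$-conjugates via the extension property above.

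For the extension axiom, I take $\bar\phi = \phi^\pi$ with $X/A$ fully normalized and $\phi \in \Hom_\calF(X,S)$, and set
\[
N_{\bar\phi} = \{ gA \mid {}^{\bar\phi}c_{gA} \in \Aut_{S/A}(X\phi/A)\}.
\]
I need a preimage in $\calF$ of the form $\phi\circ\alpha$, with $\alpha \in K$, such that $N_{\phi\alpha}$ maps onto $N_{\bar\phi}$; then the extension in $\calF$ descends. The approach is to take $R$ to be the full preimage of $N_{\bar\phi}$ in $S$. The group $\Aut_R(X)K$ contains conjugates of $\Aut_R(X)^{\phi^{-1}}$-type Sylow subgroups, and I would adjust $\phi$ by an element of $K$ via a Frattini/Sylow argument so that $\Aut_R(X)^\phi \le \Aut_S(X\phi)$. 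This step is the main obstacle, because one must make the Sylow correction inside $K$ rather than in the whole of $\Aut_\calF(X)$. I expect to handle it by working in $\Aut_\calF(X)$ with the fully normalized choice of $X$.

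Given the length, in the paper I would simply cite \cite[Proposition II.5.4]{ako} for this statement.
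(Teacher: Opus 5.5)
Your proposal matches the paper, which does not prove this statement either and simply cites the standard result (there \cite[Proposition 5.11]{cra}). If you want your sketch to stand on its own, two fixes are needed in the extension step. First, the fully normalized hypothesis belongs on the target $Y := X\phi$, not on $X$. Second, the correction you flag as the main obstacle is routine. Since $K_Y := C_{\Aut_\calF(Y)}(Y/A)$ is normal in $\Aut_\calF(Y)$, the group $\Aut_S(Y)$ is Sylow in $K_Y\Aut_S(Y)$, which contains $\Aut_R(X)^\phi$. Writing the Sylow-conjugating element as $ks$ with $k \in K_Y$ and $s \in \Aut_S(Y)$ shows that $\phi k$ still induces $\bar\phi$ and satisfies $R \leq N_{\phi k}$. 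Receptivity of $Y$ in $\calF$ then finishes the argument.
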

    \begin{proof}
        See \cite[Proposition 5.11]{cra}.
    \end{proof}
    
    \section{Finding all proto-essential subgroups of a $p$-group} \label{sec:alg}
    In this section, we present a novel algorithm to find all proto-essential subgroups of a finite $p$-group $S$. A subgroup $E$ of $S$ is said to be \emph{proto-essential} if there could exist a saturated fusion system $\calF$ on $S$ such that $E \in \calE(\calF)$ (see \cite[Appendix A]{paper:paper-1} for a definition). As we make use of proto-essential subgroups to classify all fusion systems $\calF$ on $S$, the definition is independent of $\calF$.
    
    In the MAGMA implementation of fusion systems described in \cite{algorithms}, the authors start by computing all subgroups of $S/Z(S)$ and then filter those that are proto-essential. Most of the subgroups that are computed are not proto-essential. Indeed, the process of finding all subgroups is a limiting factor in the classification of fusion systems on a given $p$-group. Moreover, testing that a subgroup is proto-essential is rather expensive as it requires the construction of its automorphism group (which is typically not solvable if $p \geq 5$).

    The algorithm we describe roughly works as follows. We fix a normal series of $S'$
    \[1 = Z_0 \normalIn Z_1 \normalIn \dots \normalIn Z_n = S',\]
    where $[Z_{i+1} : Z_i] = p$ for each $0 \leq i < n$, where each $Z_i$ is normal in $S$. We identify subgroups in $S/Z_i$ of the form $A := C_{S/Z_i}(x)$ for some $x \in S/Z_i$ of order $p$, and consider the full preimage $\hat{A}$ of $A$ in $S$. We show that in any fusion system $\calF$ on $S$ and $E \in \calE(\calF)$, there is some choice of $\hat{A}$ that is proto-essential in $S$ such that $E$ and $\hat{A}$ are $\calF$-conjugate. Using this information, we can find all proto-essential subgroups of $S$. 
    
    This method considers much fewer subgroups than the Parker-Semeraro algorithm. As a result, the algorithm runs much faster in practice. We further expand this algorithm to show that if there is no proto-essential subgroup of the form $C_S(x)$ for some $x \in S$ of order $p$, then $S$ cannot support corefree fusion systems.

    \begin{lemma} \label{lm:cfz-nfs-nfz}
        Let $\calF$ be a saturated fusion system on a finite $p$-group $S$, and let $Z \leq Z(S)$. Then
        \[C_\calF(Z) \subseteq N_\calF(Z) \subseteq \langle N_\calF(S), C_\calF(Z) \rangle.\]
    \end{lemma}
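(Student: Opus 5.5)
The first inclusion is immediate from the definitions, so I would record it in one line: every morphism in $C_\calF(Z)$ is a morphism $\phi \colon P \to Q$ with $Z \leq P$ that extends to a map on $PZ$ restricting to the identity on $Z$. In particular it extends to a map that sends $Z$ to itself, so it lies in $N_\calF(Z)$. Both subsystems are defined on $N_S(Z) = C_S(Z) = S$, because $Z \leq Z(S)$. The content of the lemma is therefore the second inclusion.

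For the second inclusion, note that $Z$ is fully $\calF$-normalized, since $N_S(Z) = S$. By Lemma \ref{lm:nfa-sat}, $N_\calF(Z)$ is saturated. I would apply the Alperin--Goldschmidt theorem to $N_\calF(Z)$. This writes $N_\calF(Z)$ as generated by $\Aut_{N_\calF(Z)}(S)$ together with $\Aut_{N_\calF(Z)}(E)$ for $E$ essential in $N_\calF(Z)$. Two facts then finish the argument.

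\emph{First}, $\Aut_{N_\calF(Z)}(S) \leq \Aut_\calF(S) = \Aut_{N_\calF(S)}(S)$. This is trivial.

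\emph{Second}, for each essential $E$ of $N_\calF(Z)$ we have $Z \leq E$, since $E$ is centric and $Z \leq Z(S)$ gives $Z \leq C_S(E) \leq E$. Hence $\Aut_{N_\calF(Z)}(E)$ is the group $A$ of automorphisms of $E$ in $\calF$ that normalize $Z$. Let $C := C_A(Z)$; this is the group of automorphisms of $E$ in $C_\calF(Z)$, and it is normal in $A$. The aim is to show $A = C \cdot N_A(\Aut_S(E))$ by a Frattini argument. Since $\Aut_S(E)$ centralizes $Z$, it lies in $C$. I would also check that $\Aut_S(E)$ is a Sylow $p$-subgroup of $C$, and indeed of $A$, by using that $E$ is fully normalized in the saturated system $N_\calF(Z)$; this is harmless because Alperin--Goldschmidt lets us take fully normalized representatives of each conjugacy class. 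The Frattini argument then gives $A = C\, N_A(\Aut_S(E))$.

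Each $\alpha \in N_A(\Aut_S(E))$ extends, by Lemma \ref{lm:autfe-decomposition} applied in $\calF$, to an $\calF$-morphism of $N_S(E)$. Iterating up the normalizer tower, as in the usual proof that $\Aut_\calF(S)$ together with the extensions generates, is the step I expect to be the main obstacle. The cleaner route is to avoid iteration. Instead I would use saturation directly to extend $\alpha$ to $N_\alpha$, and then note that the composite lies in $\langle N_\calF(S), C_\calF(Z)\rangle$ by induction on $[S:E]$, taking the largest overgroups first. Concretely, I would argue by downward induction on $|P|$ that every morphism of $N_\calF(Z)$ with source $P$ lies in $\langle N_\calF(S), C_\calF(Z)\rangle$. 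The base case $P = S$ is the first fact. In the inductive step, $\alpha$ extends to $N_S(E) > E$, and by induction that extension lies in the generated subsystem, so its restriction $\alpha$ does too. Combining with $C \subseteq C_\calF(Z)$ gives $A \subseteq \langle N_\calF(S), C_\calF(Z)\rangle$, which completes the proof.
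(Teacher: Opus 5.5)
Your argument is correct, but it takes a much longer route than the paper, which never passes to essential subgroups of $N_\calF(Z)$.

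\textbf{The paper's route.} Given $\gamma\colon A \to B$ in $N_\calF(Z)$, take the extension $\alpha\colon AZ \to BZ$ with $\alpha|_Z \in \Aut_\calF(Z)$. Since $C_S(Z) = S$, the subgroup $Z$ is fully centralized and hence receptive, and $\Aut_S(Z) = 1$. So $\alpha|_Z$ extends to some $\hat\alpha \in \Aut_\calF(S)$. Now $\alpha$ and $\hat\alpha|_{AZ}$ agree on $Z$, so $\alpha$ is $\hat\alpha|_{AZ}$ composed with a morphism that is the identity on $Z$, i.e.\ a morphism of $C_\calF(Z)$. Restricting to $A$ finishes the proof.

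\textbf{What each approach buys.} The paper applies the extension axiom to $Z$ itself rather than to each essential subgroup $E$. This gives an explicit one-step factorization of every morphism of $N_\calF(Z)$ into a $C_\calF(Z)$-morphism and the restriction of an element of $\Aut_\calF(S)$. It needs no saturation of $N_\calF(Z)$, no Alperin--Goldschmidt, no Frattini argument and no induction. Your proof is the generic template: Alperin--Goldschmidt, then Frattini, then extension up the normalizer tower. It does close. Every morphism of the generated system is a composite of restrictions of generators whose domains have order at least that of the source, so your downward induction on $|P|$ works.

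\textbf{One repair.} Lemma \ref{lm:autfe-decomposition} requires $E$ to be fully normalized in the fusion system where you apply it. You only know this in $N_\calF(Z)$, not in $\calF$. So apply the lemma in the saturated system $N_\calF(Z)$. This also shows directly that the extension of $\alpha$ to $N_S(E)$ is a morphism of $N_\calF(Z)$, which your induction hypothesis needs.
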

    \begin{proof}
        By definition, we see that $C_\calF(Z) \subseteq N_\calF(Z)$. Let $\calF_0 := \langle N_\calF(S), C_\calF(Z) \rangle$ (which need not be saturated) and $\gamma \colon A \to B$ in $N_\calF(Z)$. By definition, there exists a map $\alpha \colon AZ \to BZ$ extending $\gamma$ such that $\alpha|_Z \in \Aut_\calF(Z)$. Since $C_S(Z) = S$, there exists a map $\hat{\alpha} \in \Aut_\calF(S)$ that extends $\alpha|_Z$. We note that the map $\hat{\alpha}$ lies in $\calF_0$. Moreover, the composition $\alpha^{-1} \circ \hat{\alpha} \colon BZ \to S$ lies in $C_\calF(Z) \subseteq \calF_0$. We infer that $\alpha$ itself lies in $\calF_0$. The result now follows since $\gamma = \alpha|_A$.
    \end{proof}

    \begin{lemma} \label{lm:alg_two_gen} 
        Let $\calF$ be a saturated fusion system on a finite $p$-group $S$, and let $Z \leq Z(S)$ have order $p$. Let $\calG$ be a fusion system (not necessarily saturated) on $S$ satisfying 
        \[\langle N_\calF(S), C_\calF(Z) \rangle \subseteq \calG \subseteq \calF.\]
        If $\calF \neq \calG$, then there exists an $E \in \calE(\calF)$ such that $\Aut_\calF(E) \neq \Aut_\calG(E)$. Moreover, if $E \in \calE(\calF)$ has maximal order amongst all subgroups $E_0 \in \calE(\calF)$ satisfying $\Aut_\calG(E_0) \neq \Aut_\calF(E_0)$, then there exists an $x \in S$ of order $p$ such that $E = C_S(x)$.
    \end{lemma}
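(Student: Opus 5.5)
The first assertion will follow from the Alperin--Goldschmidt Theorem, and the second from choosing an automorphism $\alpha \in \Aut_\calF(E) \setminus \Aut_\calG(E)$ and factoring $\alpha^{-1}$ through a morphism defined on a strictly larger subgroup.

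\emph{First assertion.} By the Alperin--Goldschmidt Theorem,
\[\calF = \langle \Aut_\calF(S), \Aut_\calF(E) \mid E \in \calE(\calF) \rangle_S.\]
Since $\Aut_\calF(S) \subseteq N_\calF(S) \subseteq \calG$, suppose that $\Aut_\calF(E) = \Aut_\calG(E)$ for every $E \in \calE(\calF)$. Then every generator of $\calF$ lies in $\calG$, so $\calF \subseteq \calG$ and hence $\calF = \calG$. This proves the first claim.

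\emph{Second assertion: reduction to $\alpha$ moving $Z$.} Let $E$ be as in the statement. Fix $\alpha \in \Aut_\calF(E) \setminus \Aut_\calG(E)$, and let $z$ generate $Z$. Since $E$ is essential, it is $\calF$-centric, so $Z \leq Z(S) \leq E$.

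Suppose $Z\alpha = Z$. Then $\alpha$ extends to $EZ = E$ with restriction lying in $\Aut_\calF(Z)$, so $\alpha$ is a morphism of $N_\calF(Z)$. By Lemma \ref{lm:cfz-nfs-nfz},
\[N_\calF(Z) \subseteq \langle N_\calF(S), C_\calF(Z)\rangle \subseteq \calG,\]
so $\alpha \in \Aut_\calG(E)$, a contradiction. Hence $Z\alpha \neq Z$.

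\emph{Choice of $x$.} Put $x := z\alpha$, which has order $p$. Since $z$ is central in $S$, it is central in $E$, and therefore $x$ is central in $E\alpha = E$. Thus $E \leq C_S(x)$, and it remains to rule out $E < C_S(x)$. Suppose, for a contradiction, that $E < C_S(x)$.

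\emph{Extending along $C_S(x)$.} Consider the isomorphism $\langle x\rangle \to Z$ given by $x \mapsto z$; it is the restriction of $\alpha^{-1}$. Since $Z \leq Z(S)$, the subgroup $Z$ is fully $\calF$-centralized. By the extension axiom, this map extends to a morphism $\varphi$ whose domain contains $C_S(x)$; let $\varphi \colon C_S(x) \to S$ be the restriction, so $x\varphi = z$.

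Now factor
\[\alpha^{-1} = \varphi|_E \circ \psi, \qquad \psi := (\varphi|_E)^{-1}\alpha^{-1} \colon E\varphi \to E.\]
Because $z \mapsto x \mapsto z$ under $\psi$, the map $\psi$ centralizes $Z$. As $Z \leq E\varphi$, this gives $\psi \in C_\calF(Z) \subseteq \calG$.

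\emph{The main obstacle: showing $\varphi \in \calG$.} Apply the Alperin--Goldschmidt Theorem to $\varphi$. It factors as a composite of restrictions of elements of $\Aut_\calF(S)$ and of $\Aut_\calF(E_i)$ for essential subgroups $E_i$. In each step, the domain is an $\calF$-conjugate of $C_S(x)$ and is contained in the relevant $E_i$. Hence each such $E_i$ satisfies
\[|E_i| \geq |C_S(x)| > |E|.\]
By the maximality of $|E|$, we get $\Aut_\calF(E_i) = \Aut_\calG(E_i)$ for each $i$, while $\Aut_\calF(S) \subseteq \calG$. Therefore $\varphi$ is a morphism of $\calG$, and so is its restriction $\varphi|_E$.

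\emph{Conclusion.} From the factorization, $\alpha^{-1} = \varphi|_E \circ \psi$ lies in $\Aut_\calG(E)$, and hence $\alpha \in \Aut_\calG(E)$. This contradicts the choice of $\alpha$, so $E = C_S(x)$ with $x = z\alpha$ of order $p$.
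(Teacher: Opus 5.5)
Your proof is correct and follows essentially the paper's argument: Alperin--Goldschmidt gives the first claim, and for the second you move a generator $x$ of $Z\alpha$ back into $Z$ by a morphism defined on the centralizer of $\langle x\rangle$, then apply Alperin--Goldschmidt and the maximality of $|E|$ to that morphism. The differences are minor. The paper takes an arbitrary map $N_S(\langle x\rangle)\to S$ sending $\langle x\rangle$ to $Z$ (Lemma \ref{lm:fs-next}) and uses Lemma \ref{lm:cfz-nfs-nfz} to put its composite with $\alpha$ in $\calG$, concluding directly that this map is not in $\calG$. You instead extend $\alpha^{-1}|_{\langle x\rangle}$ itself, so the correcting map lies in $C_\calF(Z)$, and you argue by contradiction; this also makes your preliminary step showing $Z\alpha\neq Z$ unnecessary.
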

    \begin{proof}
        We adapt the proof in \cite[Lemma 2.3]{todd-modules}. Since $\Aut_\calF(S) = \Aut_\calG(S)$, we see that if $\calF \neq \calG$, then there exists an $E \in \calE(\calF)$ such that $\Aut_\calF(E) \neq \Aut_\calG(E)$ by Alperin-Goldschmidt. Assume now that $E$ has maximal order amongst all subgroups $E_0 \in \calE(\calF)$ satisfying $\Aut_\calG(E_0) \neq \Aut_\calF(E_0)$.
        
        Let $\alpha \in \Aut_\calF(E)$ be a map that does not lie in $\calG$. Set $X := Z \alpha$. We know that $Z$ is contained in $Z(S) \leq Z(E)$. As $\alpha$ normalizes $Z(E)$, we infer that $X$ is a central subgroup of $E$. In particular, 
        \[E = C_E(X) \leq C_S(X) \leq N_S(X).\]
        Set $x$ to be a generator of $X$, a group of order $p$.

        Since $Z \leq Z(S)$, we find that $Z$ is fully normalized in $\calF$. By Lemma \ref{lm:fs-next}, we can find a map $\psi \colon N_S(X) \to S$ in $\calF$ such that $X\psi = Z$. Consider now the map $\gamma \colon E \to S$ given by $\gamma := \alpha \circ \psi|_E$. Since $\gamma$ normalizes $Z \leq Z(S)$, we know by Lemma \ref{lm:cfz-nfs-nfz} that $\gamma$ lies in $\calG$. On the other hand, $\alpha$ does not lie inside $\calG$. We deduce that $\psi|_E$, and so $\psi$, is also not an element of $\calG$.

        Applying Alperin-Goldschmidt to $\calF$, we can write
        \[\psi = \theta_1 \circ \theta_2 \dots \circ \theta_n,\]
        with $\theta_i \in \Aut_\calF(E_i)$ for $E_i \in \calE(\calF) \cup \{S\}$. Since $\psi$ is not an element of $\calG$, we infer that some $\theta_i$ does not lie in $\calG$. Since $N_\calF(S) \subseteq \calG$, we have that $E_i \in \calE(\calF)$. That is, $E_i$ lies in $\calE(\calF)$ and satisfies $\theta_i \in \Aut_\calF(E_i) \setminus \Aut_\calG(E_i)$. By the maximality of $E$, we conclude that
        \[|E| \leq |N_S(X)| \leq |E_i| \leq |E|.\]
        Thus, $E = N_S(X) = C_S(X) = C_S(x)$.
    \end{proof}    
    
    \begin{corollary} \label{cor:csx}
        Let $\calF$ be a saturated fusion system on a finite $p$-group $S$, and let $Z \leq Z(S)$ be of order $p$. For $E \in \calE(\calF)$, if $E^\calF \cap \calE(C_\calF(Z)) = \varnothing$, then there exists an $x \in S$ of order $p$ such that $E = C_S(x)$.
    \end{corollary}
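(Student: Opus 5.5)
The plan is to derive the corollary from Lemma \ref{lm:alg_two_gen}. We choose an intermediate fusion system $\calG$ so that the only essential subgroups on which $\calF$ and $\calG$ can disagree are the $\calF$-conjugates of $E$. Concretely, set
\[\calG := \langle N_\calF(S),\ C_\calF(Z),\ \Aut_\calF(P) \mid P \in \calE(\calF),\ P \notin E^\calF \rangle_S.\]
Then $\langle N_\calF(S), C_\calF(Z)\rangle \subseteq \calG \subseteq \calF$. For every $P \in \calE(\calF)$ outside $E^\calF$ we have $\Aut_\calG(P) = \Aut_\calF(P)$. Hence, once we know $\calG \neq \calF$, Lemma \ref{lm:alg_two_gen} produces an essential subgroup on which $\calF$ and $\calG$ differ. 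That subgroup necessarily lies in $E^\calF$, and all members of $E^\calF$ have the same order, so it is automatically of maximal order. The lemma then gives $E' = C_S(x')$ for some $E' \in E^\calF \cap \calE(\calF)$ and some $x'$ of order $p$.

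The first real step is to show that $\Aut_\calG(E') \neq \Aut_\calF(E')$ for some essential $E'\in E^\calF$; this is exactly where the hypothesis $E^\calF \cap \calE(C_\calF(Z)) = \varnothing$ enters. We argue as in the proof of Alperin--Goldschmidt. Any $\calG$-automorphism of $E'$ is a composite of morphisms from three sources: restrictions of $\calF$-automorphisms of $S$; restrictions of automorphisms of essential subgroups $P\notin E^\calF$; and morphisms of $C_\calF(Z)$. For the third source we use that no conjugate of $E$ is essential in $C_\calF(Z)$. We apply Alperin--Goldschmidt inside the saturated system $C_\calF(Z)$, which is saturated by Lemma \ref{lm:nfa-sat} since $Z \le Z(S)$ is fully normalized. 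This shows that every morphism of $C_\calF(Z)$ between conjugates of $E$ factors through restrictions from strictly larger subgroups. Tracking the composite then shows that each element of $\Aut_\calG(E')$ is a product of restrictions of $\calF$-morphisms between subgroups strictly containing conjugates of $E$. By saturation, such elements lie in the subgroup of $\Aut_\calF(E')$ generated by the normalizers $N_{\Aut_\calF(E')}(\Aut_Q(E'))$ for $E' < Q \le N_S(E')$. Because $\Aut_\calF(E')$ has a strongly $p$-embedded subgroup, this subgroup is proper, so $\Aut_\calG(E') \neq \Aut_\calF(E')$ and in particular $\calG \neq \calF$.

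It remains to transport the conclusion from $E'$ back to $E$. Since $E' = C_S(x')$, we choose $\varphi\in\Hom_\calF(E',E)$ and set $y := x'\varphi \in Z(E)$, so that $E \le C_S(y)$. Both $E$ and $E'$ are fully normalized, being essential. By Lemma \ref{lm:fs-next} we obtain a map $\chi\colon N_S(E) \to S$ with $E\chi = E'$, adjusted if necessary by an element of $\Aut_\calF(E')$ so that it sends $y$ to $x'$. Then $\chi$ maps $N_{C_S(y)}(E)$ into $N_S(E') \cap C_S(x') = E'$. Hence $N_{C_S(y)}(E) = E$, which forces $C_S(y) = E$ because a proper subgroup of a $p$-group is properly contained in its normalizer. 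Taking $x := y$ completes the argument.

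I expect the main obstacle to be the middle step, proving $\Aut_\calG(E')\neq\Aut_\calF(E')$ rigorously. Keeping track of how composites through $C_\calF(Z)$, through other essentials and through $N_\calF(S)$ restrict to $E'$ requires care. The first thing I would try is to formulate a claim that every $\calG$-morphism between members of $E^\calF$ extends, piece by piece, to strictly larger subgroups, and to prove it by induction on the length of the decomposition.
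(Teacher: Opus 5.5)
Your construction of $\calG$ and your middle step are essentially the paper's proof, and that part is fine. The paper puts $\Aut_\calF(E_0)$ for $E_0 \in \calE(C_\calF(Z))$ directly into the generating set instead of all of $C_\calF(Z)$, which is only cosmetically different. It then cites \cite[Proposition I.3.3(b)]{ako} for exactly the ``composites of restrictions from strictly larger subgroups'' fact you planned to prove by induction, so that is not a real obstacle. One minor point: saturation of $C_\calF(Z)$ is the centralizer version of Lemma \ref{lm:nfa-sat}, and it uses that $Z$ is fully centralized.

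The trouble is your last paragraph. First, it is unnecessary. Your middle argument applies verbatim to $E' = E$, since $E$ is itself essential. Every essential subgroup on which $\calF$ and $\calG$ differ lies in $E^\calF$ and has order $|E|$. So Lemma \ref{lm:alg_two_gen} gives $E = C_S(x)$ directly, which is how the paper finishes.

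Second, as written the transport step fails. You cannot compose $\chi \colon N_S(E) \to S$ with an arbitrary $\beta \in \Aut_\calF(E')$ and still have a map on $N_S(E)$. That requires $\beta$ to extend to $N_S(E')$, i.e. to normalize $\Aut_S(E')$. Nothing puts $y\chi$ and $x'$ in the same orbit of such $\beta$, because you chose $y$ via an arbitrary $\varphi$. The conclusion can genuinely fail for such $y$. Take $E' = E$ elementary abelian of order $p^2$ in $S \cong p^{1+2}_+$ with $\SL_2(p) \le \Aut_\calF(E)$, as in $\SL_3(p)$ for $p$ odd. A $\varphi \in \Aut_\calF(E)$ moving $x'$ into $Z(S)$ gives $C_S(y) = S \neq E$.

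The repair is to choose $y$ after $\chi$: set $y := x'(\chi|_E)^{-1}$. Then $\chi$ maps $N_{C_S(y)}(E)$ into $N_S(E') \cap C_S(x') = E' = E\chi$, and the rest of your argument goes through.
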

    \begin{proof}
        Let $\calE_0 := \calE(C_\calF(Z)) \cup (\calE(\calF) \setminus E^\calF)$, and define
        \[\calG := \langle \Aut_\calF(S), \Aut_\calF(E_0) \mid E_0 \in \calE_0 \rangle.\]
        Assume, for a contradiction, that $\Aut_\calF(E) = \Aut_\calG(E)$. Then every $\alpha \in \Aut_\calF(E)$ is of the form
        \[\alpha = \theta_1 \circ \dots \circ \theta_n,\]
        where $\theta_i \in \Aut_\calF(E_i)$ for $E_i \in \calE_0 \cup \{S\}$. By construction, we have that $E^\calF \cap \calE_0 = \varnothing$. This implies that $|E_i| > |E|$ for each $1 \leq i \leq n$. By \cite[Proposition I.3.3 (b)]{ako}, we conclude that $E \not\in \calE(\calF)$, a contradiction. 
        
        Since $\Aut_\calF(E) \neq \Aut_\calG(E)$, we know that $\calF \neq \calG$. As $\calE(C_\calF(Z)) \subseteq \calE_0$, we see that $\langle C_\calF(Z), N_\calF(S) \rangle \subseteq \calG$. Now if $E_0 \in \calE(\calF)$ is not $\calF$-conjugate to $E$, then we have $E_0 \in \calE_0$ by construction. This implies that $\Aut_\calF(E_0) = \Aut_\calG(E_0)$. In other words, if $E_0 \in \calE(\calF)$ is such that $\Aut_\calF(E_0) \neq \Aut_\calG(E_0)$, then $E_0 \in E^\calF$. Since every $\calF$-conjugate of $E$ has the same order, we may apply Lemma \ref{lm:alg_two_gen} to deduce that $E = C_S(x)$ for some $x \in S$ of order $p$.
    \end{proof}

    Let $\calF$ be a fusion system on a finite $p$-group, and let $Z \normalIn S$. We define the subsystem $SC_\calF(Z)$ of $\calF$ on $S$, where a map $\alpha \in \Hom_\calF(X, Y)$ lies in $\Hom_{SC_\calF(Z)}(X, Y)$ if and only if there exists an extension $\hat{\alpha} \colon AZ \to BZ$ of $\alpha$ in $\calF$ such that $\hat{\alpha}|_Z \in \Aut_S(Z)$. We know by \cite[Corollary 4.38]{cra} that $SC_\calF(Z)$ is saturated whenever $\calF$ is saturated. Moreover, if $Y \leq Z \leq S$ are subgroups such that both $Y$ and $Z$ are normal in $S$, then $SC_\calF(Y) \subseteq SC_\calF(Z)$.

    \begin{lemma} \label{lm:essential-central}
        Let $\calF$ be a saturated fusion system on $S$, and let $Z \normalIn S$ be such that $\calF = SC_\calF(Z)$. Set $\overline{S} := S/Z$ and $\overline{\calF} := SC_\calF(Z)/Z$. If $Z \leq E \leq S$ is a subgroup, then $E \in \calE(SC_\calF(Z))$ if and only if $\overline{E} \in \calE(\overline{\calF})$.
    \end{lemma}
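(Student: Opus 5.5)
We prove both implications by comparing, for $Z \leq E \leq S$, the three ingredients of essentiality in $SC_\calF(Z)$ and in $\overline{\calF}$: being centric, being fully normalized, and $\Out$ having a strongly $p$-embedded subgroup. Since $\calF = SC_\calF(Z)$, every morphism in $\calF$ extends to one containing $Z$ in its domain, acting on $Z$ as an element of $\Aut_S(Z)$. In particular $Z$ is strongly closed, indeed normal, in $\calF$, so $\overline{\calF}$ is saturated. Morphisms between overgroups of $Z$ in $\calF$ map onto morphisms of $\overline{\calF}$.

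\textbf{Step 1: the kernel is a $p$-group.} Let $K$ be the kernel of the surjection $\Aut_\calF(E) \to \Aut_{\overline{\calF}}(\overline{E})$. We show that $K$ is a $p$-group. An element of $K$ acts trivially on $E/Z$. By the hypothesis $\calF = SC_\calF(Z)$, it restricts to $Z$ as some $c_s|_Z$ with $s \in S$. A $p'$-element $\alpha \in K$ therefore acts on $Z$ as a $p'$-element of the $p$-group $\Aut_S(Z)$, so trivially. By coprime action, $\alpha$ is then trivial on $E$. Hence $K$ is a $p$-group, and since it is normal, $K \leq O_p(\Aut_\calF(E))$.

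\textbf{Step 2: the two $\Out$ groups agree modulo $p$-groups.} The image of $\Inn(E)$ is $\Inn(\overline{E})$. So $\Out_{\overline{\calF}}(\overline{E})$ is a quotient of $\Out_\calF(E)$ by a normal $p$-subgroup. A group has a strongly $p$-embedded subgroup only if $O_p = 1$, and passing to a quotient by a normal $p$-subgroup preserves whether a strongly $p$-embedded subgroup exists. We therefore need $\overline{K} := K\Inn(E)/\Inn(E)$ to be trivial in either direction. If one side has a strongly $p$-embedded subgroup, its $O_p$ is trivial, and via the quotient $O_p(\Out_\calF(E))$ maps into $O_p(\Out_{\overline{\calF}}(\overline{E}))$ with kernel $\overline{K}$. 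This step requires identifying $\overline{K}$ with $1$, i.e.\ $K \leq \Inn(E)$, when $E$ or $\overline{E}$ is centric.

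\textbf{Step 3: centricity and full normalization.} Fully normalized transfers since $N_S(E)/Z = N_{\overline{S}}(\overline{E})$ and $\calF$-conjugates of $E$ correspond to $\overline{\calF}$-conjugates of $\overline{E}$. For centricity, we show $C_{\overline{S}}(\overline{E}) \leq \overline{E}$ if and only if $C_S(E) \leq E$, for all conjugates. One direction, from $\overline{E}$ centric to $E$ centric, is immediate. The other uses that elements of $S$ centralizing $E/Z$ induce $p$-automorphisms in $K$. For this we use the Model Theorem on $N_\calF(E)$, or the standard fact that for $E$ centric, $C_{\Aut_\calF(E)}(E/Z)\cap C(Z)$ lies in $O_p$ and is realized inside $\Aut_S(E)$ by saturation and receptivity. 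Combined with Step 1, this also gives $K \leq O_p(\Aut_\calF(E))$, which settles Step 2.

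\textbf{Main obstacle.} The main difficulty is matching centricity and the $O_p$ issue exactly. Concretely, we must show that an element $s \in S$ with $[E,s] \leq Z$ lies in $E$ when $E$ is essential. I would handle this by arguing that $\Aut_{C_S(E/Z)}(E) \leq O_p(\Aut_\calF(E))$. This holds since it is normalized by $\Aut_\calF(E)$ up to conjugation, being the intersection of $K$ with $\Aut_S(E)$. Essentiality forces $O_p(\Out_\calF(E)) = 1$, hence $C_S(E/Z) \leq E\,C_S(E) = E$. That gives $\overline{E}$ centric, and the remaining implications follow from Steps 1--3.
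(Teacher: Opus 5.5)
The paper gives no argument of its own here; it defers to \cite[Section 3]{kl-centralext} and \cite[Proposition 5.68]{cra}. Your direct strategy is the right one: show the kernel $K$ of $\Aut_\calF(E)\to\Aut_{\overline{\calF}}(\overline{E})$ is a normal $p$-subgroup, then compare centricity, full normalization and $\Out$. Step 1 is correct. Your last paragraph also settles the forward direction. If $E\in\calE(\calF)$, then $\Aut_{C_{N_S(E)}(E/Z)}(E)\le K\le O_p(\Aut_\calF(E))=\Inn(E)$. This gives $\Out_\calF(E)\cong\Out_{\overline{\calF}}(\overline{E})$. Repeating the argument for every $\calF$-conjugate $E'$ (each is centric with $\Out_\calF(E')\cong\Out_\calF(E)$) shows that $\overline{E}$ is centric.

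However, Step 3 as written claims $C_{\overline{S}}(\overline{E})\le\overline{E}$ if and only if $C_S(E)\le E$, and this is false. Take $S=D_8$, $Z=Z(S)$, $\calF=\calF_S(S)$ and $E$ a four-subgroup. Then $E$ is centric, while $\overline{E}$ has order $2$ in the abelian group $\overline{S}$. Centricity of $\overline{E}$ really requires $O_p(\Out_\calF(E))=1$. Similarly, the claim in Step 2 is false: quotienting by a nontrivial normal $p$-subgroup does not preserve the existence of a strongly $p$-embedded subgroup. That is precisely why $\overline{K}$ must be shown to be trivial.

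The genuine gap is in the converse, $\overline{E}\in\calE(\overline{\calF})\Rightarrow E\in\calE(\calF)$. Here you need $K\le\Inn(E)$, and you say this is settled by $K\le O_p(\Aut_\calF(E))$. But that was already known from Step 1 and says nothing about $\overline{K}$. If $\overline{K}\neq 1$, then $\Out_\calF(E)$ has a nontrivial normal $p$-subgroup and hence no strongly $p$-embedded subgroup. Your appeal to $C_{\Aut_\calF(E)}(E/Z)\cap C(Z)$ does not help either, since for non-central $Z$ this subgroup can be smaller than $K$.

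The missing step uses full normalization:
\begin{itemize}
\item Since $\overline{E}$ is fully normalized, so is $E$. Hence $\Aut_S(E)\in\Syl_p(\Aut_\calF(E))$, and the normal $p$-subgroup $K$ lies in $\Aut_S(E)$.
\item So each element of $K$ is $c_s|_E$ with $s\in N_S(E)$ and $[E,s]\le Z$. That is, $sZ\in C_{\overline{S}}(\overline{E})\le\overline{E}$, whence $s\in E$ and $K\le\Inn(E)$.
\end{itemize}
With this inserted, $\Out_\calF(E)\cong\Out_{\overline{\calF}}(\overline{E})$ holds in both directions. You then have a complete self-contained proof that works for $Z$ merely normal in $S$, which is the generality in which the lemma is used in Theorem~\ref{thm:algo}.
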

    \begin{proof}
        See \cite[Section 3]{kl-centralext} or \cite[Proposition 5.68]{cra}.
    \end{proof}

    \begin{lemma} \label{lm:trunc-chain}
        Let $S$ be a finite $p$-group, and let $\calF$ be a saturated fusion system on $S$. Let
        \[1 = Z_0 \leq Z_1 \leq \dots \leq Z_n = S\]
        be a central series of $S$. Define $\calF_i := SC_\calF(Z_i)$ for $0 \leq i \leq n$. Then
        \[\calF_{i+1}/Z_{i+1} = C_{\calF_i/Z_i}(Z_{i+1}/Z_i)/(Z_{i+1}/Z_i).\]
    \end{lemma}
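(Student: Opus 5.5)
We show that the two fusion systems on $S/Z_{i+1}$ have the same morphisms, by unwinding the definitions of $SC$, $C$ and the quotient construction. Write $\calF_i = SC_\calF(Z_i)$. Note that $Z_i \normalIn \calF_i$: every morphism of $SC_\calF(Z_i)$ extends to a morphism that acts on $Z_i$ as an element of $\Aut_S(Z_i)$. Hence $\calF_i/Z_i$ is a saturated fusion system on $S/Z_i$, by the lemma on quotients.

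Since the series is central, $Z_{i+1}/Z_i \leq Z(S/Z_i)$. Therefore $C_{\calF_i/Z_i}(Z_{i+1}/Z_i)$ makes sense, and $Z_{i+1}/Z_i$ is normal in it. So the right-hand side is a fusion system on $S/Z_{i+1}$, identified with $(S/Z_i)/(Z_{i+1}/Z_i)$, as is the left-hand side. It suffices to compare morphisms between subgroups $X/Z_{i+1}$ and $Y/Z_{i+1}$ with $Z_{i+1} \leq X, Y$. Any morphism in either quotient is induced by a morphism defined on a subgroup containing $Z_{i+1}$, and all such morphisms can be extended to contain $Z_{i+1}$.

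For "$\supseteq$", take $\alpha \in \Hom_{\calF_{i+1}}(X,Y)$. It extends to $\hat\alpha$ in $\calF$ on $XZ_{i+1}=X$ with $\hat\alpha|_{Z_{i+1}} = c_s|_{Z_{i+1}}$ for some $s \in S$. Then $\beta := \hat\alpha \circ c_s^{-1}$ is a morphism of $\calF$ that restricts to the identity on $Z_{i+1}$. In particular $\beta$ lies in $\calF_i$, since it acts on $Z_i$ as the identity. Its image $\bar\beta$ in $\calF_i/Z_i$ acts trivially on $Z_{i+1}/Z_i$, so $\bar\beta$ lies in $C_{\calF_i/Z_i}(Z_{i+1}/Z_i)$. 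Because $c_s$ induces a map of $S/Z_{i+1}$ that lies in both quotient systems, the map induced by $\alpha$ on $X/Z_{i+1}$ lies on the right-hand side.

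For "$\subseteq$", take a morphism on the right. It is induced by $\bar\phi \colon X/Z_i \to Y/Z_i$ in $C_{\calF_i/Z_i}(Z_{i+1}/Z_i)$ centralizing $Z_{i+1}/Z_i$, after extending to subgroups containing $Z_{i+1}$. Here $\bar\phi$ is the image of some $\phi \in \Hom_{\calF_i}(X,Y)$. As $\phi \in \calF_i$, it extends to $\hat\phi$ in $\calF$ with $\hat\phi|_{Z_i} = c_s|_{Z_i}$ for some $s\in S$. Replacing $\phi$ by $\hat\phi \circ c_s^{-1}$, which changes nothing modulo $Z_{i+1}$ up to an inner map, we may assume $\phi$ is the identity on $Z_i$. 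Also $\phi$ acts trivially on $Z_{i+1}/Z_i$.

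\textbf{The key point, and the main obstacle,} is to show that such a $\phi|_{Z_{i+1}}$ lies in $\Aut_S(Z_{i+1})$, or at least that $\phi$ can be modified to achieve this without changing the induced map modulo $Z_{i+1}$. The automorphism $\phi|_{Z_{i+1}}$ centralizes both $Z_i$ and $Z_{i+1}/Z_i$, so it lies in the $p$-group $C_{\Aut_\calF(Z_{i+1})}(Z_i, Z_{i+1}/Z_i)$. Since $Z_{i+1} \normalIn S$ is fully normalized, Sylow's theorem in $\Aut_\calF(Z_{i+1})$ places this $p$-group in a conjugate of $\Aut_S(Z_{i+1})$. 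However, we need it inside $\Aut_S(Z_{i+1})$ itself, and an arbitrary $\calF$-automorphism could be a non-inner $p$-element. I would try to resolve this by showing that the subgroup of elements of $\Aut_\calF(Z_{i+1})$ centralizing $Z_i$ and $Z_{i+1}/Z_i$ is a normal $p$-subgroup, hence contained in $O_p(\Aut_\calF(Z_{i+1})) \leq \Aut_S(Z_{i+1})$. Normality comes from the series being $S$-invariant, and this argument works in $N_\calF(Z_i)$. I would therefore first reduce to that setting, using that $\calF_i \subseteq N_\calF(Z_i)$. The $p$-group property is the standard stability argument for $Z_i \leq Z_{i+1}$. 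Once $\phi|_{Z_{i+1}} \in \Aut_S(Z_{i+1})$, we get $\phi \in \calF_{i+1}$, giving the reverse inclusion. If the normality step fails in general, I would instead cite \cite[Proposition 5.68]{cra}, the quotient-by-central-subgroup correspondence, applied iteratively.
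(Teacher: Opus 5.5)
Your proof is correct. The step you flag as the main obstacle does go through in the form you finally settle on, though not as first stated. The group $C$ of elements of $\Aut_\calF(Z_{i+1})$ centralizing $Z_i$ and $Z_{i+1}/Z_i$ need not be normal in $\Aut_\calF(Z_{i+1})$, because $Z_i$ need not be $\Aut_\calF(Z_{i+1})$-invariant.

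It is, however, automatically normal in $N := N_{\Aut_\calF(Z_{i+1})}(Z_i) = \Aut_{N_\calF(Z_i)}(Z_{i+1})$. What $Z_i \normalIn S$ buys you is that the Sylow $p$-subgroup $\Aut_S(Z_{i+1})$ of $\Aut_\calF(Z_{i+1})$ lies in $N$. Hence $C \leq O_p(N) \leq \Aut_S(Z_{i+1})$, and the fallback to \cite[Proposition 5.68]{cra} is not needed.

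The paper handles this step without renormalizing $\phi$ to be trivial on $Z_i$ and without passing to $N_\calF(Z_i)$. Since $\phi \in \calF_i$ already acts on $Z_i$ as an element of $\Aut_S(Z_i)$, it acts trivially on every factor $Z_{j+1}/Z_j$ with $j < i$, and on $Z_{i+1}/Z_i$ by hypothesis. The subgroup of $\Aut_\calF(Z_{i+1})$ acting trivially on all factors of $Z_0 \leq \dots \leq Z_{i+1}$ is a $p$-group by coprime action. It contains $\Aut_S(Z_{i+1})$ because the series is central, so it must equal that Sylow subgroup.

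The paper's version is shorter but uses centrality of the entire series below $Z_{i+1}$. Yours needs only $Z_i, Z_{i+1} \normalIn S$ with $Z_{i+1}/Z_i \leq Z(S/Z_i)$. The price is the extra composition with $c_s$, which you handle correctly, since $c_s \in \calF_{i+1}$ and $c_{sZ_i}$ centralizes $Z_{i+1}/Z_i$.

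For the other inclusion, the paper simply observes that $\overline{\phi}|_{\overline{Z_{i+1}}}$ lies in $\Aut_{\overline{S}}(\overline{Z_{i+1}}) = 1$. This is a shortcut for your factorization through $c_s$.
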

    \begin{proof}
        As $Z_0 = 1$, we see that
        \[C_{\calF_0/Z_0}(Z_1/Z_0)/(Z_1/Z_0) = C_\calF(Z_1)/Z_1 = \calF_1/Z_1.\]
        
        Let $\phi \colon A \to B$ be a map $\calF_{i+1}$ with $Z_{i+1} \leq A$. By definition, we have that $\phi|_{Z_{i+1}} \in \Aut_S(Z_{i+1})$. Write $\overline{S} := S/Z_i$ and $\overline{\calF_i} := \calF_i/Z_i$. Then $\overline{\phi}$ is such that $\overline{\phi}|_{\overline{Z_{i+1}}} \in \Aut_{\overline{S}}(\overline{Z_{i+1}})$. By assumption, we have that $\overline{Z_{i+1}} \leq Z(\overline{S})$. Thus, $\overline{\phi}$ centralizes $\overline{Z_{i+1}}$. We deduce that $\overline{\phi}$ lies in $C_{\overline{\calF_i}}(\overline{Z_{i+1}})$. In particular,
        \[\calF_{i+1}/Z_{i+1} \subseteq C_{\overline{\calF_i}}(\overline{Z_{i+1}})/\overline{Z_{i+1}}.\]

        Now let $\phi \colon A \to B$ be a map in $\calF_i$ with $Z_{i+1} \leq A$ such that $\overline{\phi} \colon \overline{A} \to \overline{B}$ lies in $C_{\overline{\calF_i}}(\overline{Z_{i+1}})$. Since $\calF_i \subseteq \calF_j$ for $j < i$, we deduce by induction that $\phi$ centralizes $Z_{j+1}/Z_j$ for $0 \leq j < i$. Furthermore, $\phi$ centralizes $Z_{i+1}/Z_i$ by assumption. Let $X$ be the subgroup of $\Aut_\calF(Z_{i+1})$ consisting of maps that centralize $Z_{j+1}/Z_j$ for each $0 \leq j \leq i$. By coprime action, we know that $X$ is a $p$-group. Since $S$ centralizes $Z_{j+1}/Z_j$ for each $0 \leq j \leq i$, we infer that $\Aut_S(Z_{i+1}) \leq X$. Moreover, $Z_{i+1}$ is fully normalized in $\calF$, meaning that $\Aut_S(Z_{i+1})$ is a Sylow $p$-subgroup of $\Aut_\calF(Z_{i+1})$. This forces $X = \Aut_S(Z_{i+1})$. As $\phi|_{Z_{i+1}} \in X$, we conclude that $\phi$ lies in $SC_\calF(Z_{i+1})$. Thus,
        \[C_{\overline{\calF_i}}(\overline{Z_{i+1}})/\overline{Z_{i+1}} \subseteq \calF_{i+1}/Z_{i+1}\]
        as well, and the result holds.
    \end{proof}
    
    Let $S$ be a finite $p$-group. We denote by $\mathfrak{P}(S)$ the set of proto-essential subgroups of $S$. Note that for any fusion system $\calF$ on $S$, we have that $\calE(\calF) \subseteq \mathfrak{P}(S)$.
    \begin{theorem} \label{thm:algo}
        Let $\calF$ be a saturated fusion system on a finite $p$-group $S$. Take a central series of $S'$:
        \[1 = Z_0 \leq Z_1 \leq \dots \leq Z_n = S'\]
        such that $[Z_{i+1} : Z_i] = p$ for $0 \leq i < n$, where each $Z_i$ is normal in $S$. Define the projection maps $\pi_i \colon S \to S/Z_i$ for $0 \leq i < n$, and
        \[\mathfrak{C}_i := \{\pi_i^{-1}(E) \mid E := C_{S/Z_i}(x), x \in S/Z_i \textrm{ order } p \} \cap \mathfrak{P}(S).\]
        For $E \in \calE(\calF)$, there exists a $\calF$-conjugate $E_0$ of $E$ such that $E_0 \in \mathfrak{C}_i$ for some $0 \leq i < n$.
    \end{theorem}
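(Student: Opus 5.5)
The plan is to walk down the chain $\calF = \calF_0 \supseteq \calF_1 \supseteq \dots \supseteq \calF_n$, where $\calF_i := SC_\calF(Z_i)$. Each of these is saturated by \cite[Corollary 4.38]{cra}. We track an $\calF$-conjugate of $E$ that is essential in the current $\calF_i$. The process must terminate, because the last system has no essential subgroups. At the step where it terminates, Corollary \ref{cor:csx} will force the conjugate to have the shape $\pi_i^{-1}(C_{S/Z_i}(x))$.

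The basic facts used are these.

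\begin{enumerate}
\item $\calF_i = SC_{\calF_i}(Z_i)$.
\item $Z_i$ is normal in $\calF_i$. Every morphism of $\calF_i$ extends to one restricting to an element of $\Aut_S(Z_i)$, so $Z_i$ is weakly closed and contained in every essential subgroup of $\calF_i$. Normality then follows from Lemma \ref{lm:weak-closure-to-normality}, so $\calF_i/Z_i$ is defined and saturated.
\item By Lemma \ref{lm:essential-central}, a subgroup $Z_i \leq E' \leq S$ is essential in $\calF_i$ if and only if $E'/Z_i$ is essential in $\calG_i := \calF_i/Z_i$.
\end{enumerate}

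Fact 2 is the bookkeeping step I expect to need the most care. If the containment $Z_i \leq E'$ fails in general, I would instead use that essential subgroups of $SC_\calF(Z_i)$ contain $Z_i$ directly from the definition of $SC$.

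Now the induction. Suppose $E_i$ is an $\calF$-conjugate of $E$ with $E_i \in \calE(\calF_i)$; at $i=0$ take $E_0 = E$. Work in $\calG_i$ on $S/Z_i$. Since $Z_i \leq S'$, the series refines so that $\overline{Z} := Z_{i+1}/Z_i$ has order $p$ and lies in $Z(S/Z_i)$. By fact 3, $\overline{E_i} \in \calE(\calG_i)$. Apply Corollary \ref{cor:csx} to $\calG_i$ and $\overline{Z}$. There are two cases.

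\begin{enumerate}
\item $\overline{E_i}$ is not $\calG_i$-conjugate into $\calE(C_{\calG_i}(\overline{Z}))$. Then $\overline{E_i} = C_{S/Z_i}(x)$ for some $x$ of order $p$, so $E_i = \pi_i^{-1}(C_{S/Z_i}(x))$. Since $E_i$ is essential in $\calF_i$, hence $\calF$-conjugate to the essential subgroup $E$ and itself essential in $\calF$ (essentiality is preserved under conjugacy in $\calF$ up to full normalisation), it is proto-essential. So $E_i \in \mathfrak{C}_i$ and we stop.
\item Some $\calG_i$-conjugate $\overline{E'}$ of $\overline{E_i}$ is essential in $C_{\calG_i}(\overline{Z})$. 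Morphisms of $\calG_i$ are images of morphisms of $\calF_i$, so this conjugacy lifts to an $\calF_i$-conjugacy, and hence an $\calF$-conjugacy, $E_i \to E'$ with $Z_i \leq E'$. An essential subgroup of $C_{\calG_i}(\overline{Z})$ contains $\overline{Z}$, so $Z_{i+1} \leq E'$. By Lemma \ref{lm:trunc-chain}, $C_{\calG_i}(\overline{Z})/\overline{Z} = \calF_{i+1}/Z_{i+1}$. Applying Lemma \ref{lm:essential-central} twice (once for the central quotient by $\overline{Z}$, once for $\calF_{i+1}$ and $Z_{i+1}$), we get $E_{i+1} := E' \in \calE(\calF_{i+1})$, and we continue.
\end{enumerate}

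The induction cannot run through all $i < n$. If it did, we would obtain an essential subgroup of $\calF_n$, hence one of $\calF_n/S'$ on the abelian group $S/S'$. This is impossible: in an abelian group a proper subgroup has centralizer $S/S'$ strictly larger than itself, so it is not centric, and essential subgroups are proper and centric. Therefore case 1 occurs at some $0 \leq i < n$, giving the required $\calF$-conjugate $E_0 := E_i \in \mathfrak{C}_i$.
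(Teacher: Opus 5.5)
Your proposal is correct and is essentially the paper's proof. The paper fixes the index $i$ with $\calE(\calF_i) \cap E^\calF \neq \varnothing$ but $\calE(\calF_{i+1}) \cap E^\calF = \varnothing$, and then applies Corollary \ref{cor:csx} to $\calF_i/Z_i$ and $Z_{i+1}/Z_i$, using Lemmas \ref{lm:trunc-chain} and \ref{lm:essential-central} exactly as in your Case 2; your explicit descent is the same argument phrased without choosing the stopping index in advance.

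One small correction in Case 1: do not argue that $E_i$ is essential in $\calF$. An $\calF$-conjugate of $E$ need not be fully $\calF$-normalized, so that claim can fail. Proto-essentiality follows directly, as in the paper, from $E_i \in \calE(\calF_i)$ with $\calF_i = SC_\calF(Z_i)$ saturated on $S$, both of which you had already established.
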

    \begin{proof}
        For $0 \leq i \leq n$, set $\calF_i := SC_\calF(Z_i)$. Then we have the following series of subsystems of $\calF$:
        \[\calF_n \subseteq \calF_{n-1} \subseteq \dots \subseteq \calF_1 \subseteq \calF_0 = \calF.\]
        Set $\calE_i := \calE(\calF_i)$. As $\calF_n/Z_n$ is a fusion system on an abelian group $S/Z_n$, we know by Lemma \ref{lm:essential-central} that $\calE_n = \varnothing$. Let $E \in \calE_0 = \calE(\calF)$. Then there exists an $0 \leq i < n$ such that $\calE_i \cap E^\calF \neq \varnothing$ but $\calE_{i+1} \cap E^\calF = \varnothing$.

        Let $\overline{\calF_i} := \calF_i/Z_i$ and $\overline{S} := S/Z_i$. We know by Lemma \ref{lm:trunc-chain} that 
        \[C_{\overline{\calF_i}}(\overline{Z_{i+1}})/\overline{Z_{i+1}} = \calF_{i+1}/Z_{i+1}.\]
        By the definition of $i$, there exists some $E_0 \in \calE_i \cap E^\calF$. Since $E^\calF \cap \calE_{i+1} = \varnothing$, we can apply Lemma \ref{lm:essential-central} twice to deduce that 
        \[\overline{E_0}^{\overline{\calF_i}} \cap \calE(C_{\overline{\calF_i}}(\overline{Z_{i+1}})) = \varnothing.\]
        Applying Corollary \ref{cor:csx}, we see that there exists an $x \in \overline{S}$ such that $\overline{E_0} = C_{\overline{S}}(x)$. Since $E_0 \in \calE_i = \calE(SC_\calF(Z_i))$, we know that $Z_i \leq E_0$. Furthermore, $SC_\calF(Z_i)$ is a saturated fusion system on $S$, meaning that $E_0 \in \mathfrak{P}(S)$. We conclude that $E_0 \in \mathfrak{C}_i$, as desired.
    \end{proof}
    
    We let $\mathfrak{C}$ be the union of $\mathfrak{C}_i$ for $0 \leq i < n$. The set $\mathfrak{C}$ can be computed as follows.
    \begin{algorithm} \label{algorithm}
        Let $S$ be a finite $p$-group, and take a central series of $S'$:
        \[1 = Z_0 \leq Z_1 \leq \dots \leq Z_n = S'\]
        such that $[Z_{i+1} : Z_i] = p$ for $0 \leq i < n$, where each $Z_i$ is normal in $S$. Define the projection maps $\pi_i \colon S \to S/Z_i$. We construct the set $\mathfrak{C}$ as follows:
        \begin{enumerate}
            \item For $0 \leq i < n$, let $L_i$ denote the set of elements of $S/Z_i$.
            \item For each $x \in L_i$ of order $p$, compute $E_0 := C_{S/Z_i}(x)$, and add $E := \pi_i^{-1}(E_0)$ to $\mathfrak{C}'$.
            \item For a subgroup $E \in \mathfrak{C}'$, if $E$ is proto-essential in $S$, we add $E$ to $\mathfrak{C}$.
        \end{enumerate}
    \end{algorithm}

    An immediate consequence of Theorem \ref{thm:algo} is the following corollary.
    \begin{corollary} \label{cor:alg}
        Let $\calF$ be a saturated fusion system on a finite $p$-group $S$. If $E \in \calE(\calF)$, then a $\calF$-conjugate of $E$ lies in the set $\mathfrak{C}$ computed by Algorithm \ref{algorithm}.
    \end{corollary}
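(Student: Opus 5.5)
The plan is to reduce Corollary \ref{cor:alg} directly to Theorem \ref{thm:algo}. The first step is to check that the set $\mathfrak{C}$ produced by Algorithm \ref{algorithm} equals $\bigcup_{0 \leq i < n} \mathfrak{C}_i$, where the $\mathfrak{C}_i$ are as defined in Theorem \ref{thm:algo} for the same central series of $S'$.

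For a fixed $i$, steps (1) and (2) of the algorithm run over every element of $S/Z_i$. For each $x$ of order $p$ they record the full preimage $\pi_i^{-1}(C_{S/Z_i}(x))$ in $\mathfrak{C}'$. So $\mathfrak{C}'$ is exactly
\[\bigcup_{0 \leq i < n}\{\pi_i^{-1}(C_{S/Z_i}(x)) \mid x \in S/Z_i \text{ of order } p\}.\]
Step (3) then keeps exactly those members of $\mathfrak{C}'$ that lie in $\mathfrak{P}(S)$. Since intersecting with $\mathfrak{P}(S)$ commutes with taking the union over $i$, we get $\mathfrak{C} = \bigcup_i \mathfrak{C}_i$.

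Now let $E \in \calE(\calF)$. Theorem \ref{thm:algo} supplies an $\calF$-conjugate $E_0$ of $E$ with $E_0 \in \mathfrak{C}_i$ for some $0 \leq i < n$. Hence $E_0 \in \mathfrak{C}$, which proves the corollary.

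There is no real obstacle here; the substantive work is all in Theorem \ref{thm:algo}. The only point to watch is that the theorem's conclusion holds for the particular central series fixed in the algorithm. This is immediate, since the theorem holds for any central series of $S'$ with factors of order $p$ and each $Z_i$ normal in $S$, and the algorithm imposes exactly these hypotheses.
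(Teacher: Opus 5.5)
Your proposal is correct and matches the paper's approach. The paper defines $\mathfrak{C}$ as $\bigcup_{0 \leq i < n} \mathfrak{C}_i$ just before stating the algorithm and calls the corollary an immediate consequence of Theorem \ref{thm:algo}; your check that the algorithm's output equals this union is exactly the step the paper leaves implicit.
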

    
    We note that any subgroup $C := C_{S/Z_i}(x)$ is self-centralizing in $S/Z_i$. This means that the full preimage $C_0$ must also be self-centralizing in $S$. As such, we do not need to perform this test when checking whether a subgroup in $\mathfrak{C}$ is proto-essential (see \cite[Appendix A]{paper:paper-1} for all the tests we perform when checking whether a subgroup is proto-essential). Appendix \ref{sec:algorithm-appendix} illustrates how the new algorithm runs on a Sylow $3$-subgroup of $\Fi_{22}$, $\Fi_{23}$, $\Fi_{24}'$ and $\Mn$, and also compares it to the previous algorithm in the first two cases.

    Let $\calF$ be a saturated fusion system on a finite $p$-group $S$, and let $\calF_i := SC_\calF(Z_i)$. For $0 \leq j \leq i$, we have that $\calF_i = SC_{\calF_i}(Z_j)$. This implies that, if $E \in \calE(\calF_i)$, then $E/Z_j \in \calE(\calF_i/Z_j)$ by Lemma \ref{lm:essential-central}. As such, in addition to step (3), we can check whether $E/Z_j$ is proto-essential in $S/Z_j$. We perform this test only if we believe that step (3) is too expensive. We remark that this can only allow us to say that $E$ is not proto-essential in $S$. In other words, if $E/Z_j$ is proto-essential in $S/Z_j$, this does not imply that $E$ is proto-essential in $S$. For example, if $S \cong 3 \wr 3$ and $j = 1$, then we have $Z_j = Z(S)$. In particular, all four maximal subgroups of $S/Z_j$ are proto-essential in $S/Z_j \cong 3^{1+2}_+$. However, the two maximal subgroups of $S$ of exponent $9$ are not proto-essential in $S$ (see, for example, \cite[Lemma 3.2]{thesis:mm}).

    We further remark that an argument similar to Theorem \ref{thm:algo} does not hold for radical subgroups. For example, let $\calF$ be the $3$-fusion category of $\O_7(3)$ (see \cite[Section 5]{paper:paper-1}). Then there exists a subgroup $A := 3^{3+3}$ that is $\calF$-centric radical. But for any normal subgroup $N$ of $S$ contained in $A$, we find that $A/N \neq C_{S/N}(x)$ for any element $x \in S/N$ (not necessarily of order $p$). 

    We can compute the set of proto-essential subgroups of $S$ using $\mathfrak{C}$. The set $\mathfrak{C}$ finds the subgroups up to $\calF$-conjugacy, but we want to find all the proto-essential subgroups $\mathfrak{P}(S)$, which is independent of $\calF$.
    \begin{algorithm} \label{alg:all-proto}
        Let $S$ be a finite $p$-group, and let $\mathfrak{C}$ be a subset of proto-essential subgroups of $S$, as computed by Algorithm \ref{algorithm}. We construct the set $\mathfrak{C}^*$ as follows:
        
        While $\mathfrak{C}$ is non-empty:
        \begin{enumerate}
            \item remove $E_0 \in \mathfrak{C}$ of largest order, and add every subgroup in $E_0^{\Aut(S)}$ to $\mathfrak{C}^*$ if $E_0$ is proto-essential;
            \item for each $X_i \in \mathfrak{C}^*$ containing $E_0$, add the set $E_0^{\Aut(X_i)}$ to $\mathfrak{C}$.
        \end{enumerate}
    \end{algorithm}
    An implementation of Algorithms \ref{algorithm} and \ref{alg:all-proto} in GAP can be found in the file \texttt{find-proto-essentials.g}. In both cases, we adapt the algorithms given above to compute the relevant subgroups up to $\Aut(S)$-conjugacy as being proto-essential is an $\Aut(S)$-invariant property. This ensures that the proto-essential test is run for as few subgroups as possible. Moreover, this is typically not an expensive process if $\Aut(S)$ is solvable.
    
    It is important that in step (1), the subgroup $E_0$ from $\mathfrak{C}$ is chosen to have largest order. This ensures that, if at any point when the algorithm is running on $E_0$, any subgroup of larger order that lies in $\mathfrak{C}^*$ at the end of the algorithm has already been added to $\mathfrak{C}^*$. This, in particular, includes subgroups that properly contain $E_0$.

    \begin{corollary} \label{cor:main-alg}
        Let $S$ be a finite $p$-group. Then the set $\mathfrak{C}^*$ computed in Algorithm \ref{alg:all-proto} consists precisely of the proto-essential subgroups of $S$.
    \end{corollary}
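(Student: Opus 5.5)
The inclusion $\mathfrak{C}^* \subseteq \mathfrak{P}(S)$ is straightforward. In step (1) a subgroup $E_0$ contributes to $\mathfrak{C}^*$ only if $E_0$ is proto-essential, and then only through its $\Aut(S)$-orbit. Being proto-essential is an $\Aut(S)$-invariant property: if $E \in \calE(\calF)$ and $\beta \in \Aut(S)$, then $E\beta \in \calE(\calF^\beta)$. Hence every member of $E_0^{\Aut(S)}$ is proto-essential. The real work is the reverse inclusion. For it I would prove, by downward induction on $k$, the statement: every proto-essential subgroup of order $k$ is added to $\mathfrak{C}^*$ by the time the algorithm has finished processing all members of $\mathfrak{C}$ of order $k$.

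Fix $E \in \mathfrak{P}(S)$ of order $k$, and choose a saturated $\calF$ on $S$ with $E \in \calE(\calF)$. By Corollary \ref{cor:alg} there is an $\calF$-conjugate $E_0$ of $E$ in the initial set $\mathfrak{C}$. Every $\calF$-conjugate of an essential subgroup is essential, so $E_0$ and all intermediate conjugates below are proto-essential. I would then use the refined form of Alperin--Goldschmidt \cite[Theorem I.3.5]{ako}. It writes an $\calF$-isomorphism $E_0 \to E$ as a composite of restrictions of automorphisms $\alpha_j \in \Aut_\calF(X_j)$, with $X_j \in \calE(\calF) \cup \{S\}$, through a chain
\[E_0,\ E_1 := E_0\alpha_1,\ \dots,\ E_m := E_{m-1}\alpha_m = E,\]
where $E_{j-1} \leq X_j$. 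By induction on $j$ I would show that each $E_j$ is placed into $\mathfrak{C}$ before any subgroup of order $k$ is processed, or at least before the algorithm terminates. Since $E_j$ is proto-essential, step (1) then puts it into $\mathfrak{C}^*$.

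For the inductive step there are three cases.
\begin{enumerate}
\item If $X_j = S$, then $E_j \in E_{j-1}^{\Aut(S)}$. Because $E_{j-1}$ is proto-essential, step (1) adds $E_j$ directly to $\mathfrak{C}^*$ when $E_{j-1}$ is removed.
\item If $X_j = E_{j-1}$, then $E_j = E_{j-1}$ and there is nothing to show.
\item Otherwise $X_j$ is essential with $|X_j| > k$. By the outer induction hypothesis, $X_j \in \mathfrak{C}^*$ already once every element of $\mathfrak{C}$ of order $|X_j|$ has been processed.
\end{enumerate}
In case (3), the ordering in step (1) says subgroups of largest order are removed first. So when $E_{j-1}$, of order $k < |X_j|$, is removed, $X_j$ already lies in $\mathfrak{C}^*$ and contains $E_{j-1}$. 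Step (2) then adds $E_{j-1}^{\Aut(X_j)} \ni E_{j-1}\alpha_j = E_j$ to $\mathfrak{C}$. Since $E_j$ has order $k$, it is processed later and enters $\mathfrak{C}^*$ by step (1).

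The main obstacle I expect is the bookkeeping of the ordering. One must check that newly added members of $\mathfrak{C}$ of order $k$ are still processed after all larger subgroups. This holds because $|E_j| = k$ is never larger than the order currently being processed. One must also check that the algorithm terminates, which follows because $\mathfrak{C}^*$ and $\mathfrak{C}$ only ever contain subgroups of the finite group $S$. A cleaner formulation handles the base case, the largest order of a proto-essential subgroup, in the same way. There case (3) cannot occur, since an essential $X_j$ properly containing $E_{j-1}$ would be a proto-essential subgroup of larger order, so only automorphisms of $S$ are used.
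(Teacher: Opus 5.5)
Your outer induction on the order $k$, the ordering bookkeeping, and your use of the refined Alperin--Goldschmidt chain with $E_{j-1} \leq X_j$ are all sound. Inducting on order is in fact cleaner than the paper's hypothesis ``every essential subgroup properly containing $E$''. The gap is in the inner induction on $j$. Its invariant is that $E_j$ is \emph{placed into} $\mathfrak{C}$, because case (3) at the next link needs $E_j$ to be \emph{removed} from $\mathfrak{C}$ so that step (2) fires with $X_{j+1}$. Case (1) does not deliver this. Step (1) puts the orbit $E_{j-1}^{\Aut(S)}$ into $\mathfrak{C}^*$, not into $\mathfrak{C}$, and nothing forces $E_j$ itself ever to be processed. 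So your argument stops as soon as an $\Aut_\calF(S)$-link is followed by a link through a proper essential subgroup $X_{j+1} \geq E_j$, which need not contain $E_{j-1}$. Relatedly, you should not claim that the intermediate $E_j$ are proto-essential. In this paper, as in \cite{ako}, essential subgroups are fully normalized, so $\calF$-conjugates of essential subgroups need not be essential.

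The missing idea is to track $E_j$ only up to $\Aut(S)$. This is exactly the paper's conjugation by $\alpha = \phi_1 \circ \dots \circ \phi_{j-1}$.
\begin{itemize}
\item Carry $\beta_j \in \Aut(S)$, starting from $\beta_0 = 1$, such that $E_j\beta_j$ is added to $\mathfrak{C}$ during the phase of order $k$.
\item In case (1), let $\beta_j$ be $\alpha_j^{-1}$ followed by $\beta_{j-1}$. Then $E_j\beta_j = E_{j-1}\beta_{j-1}$.
\item In case (3), $X_j\beta_{j-1}$ lies in $\mathfrak{C}^*$, since $\mathfrak{C}^*$ is always a union of $\Aut(S)$-orbits. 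It contains $E_{j-1}\beta_{j-1}$. The map $\beta_{j-1}^{-1}$, followed by $\alpha_j$, followed by $\beta_{j-1}$, is an automorphism of $X_j\beta_{j-1}$ sending $E_{j-1}\beta_{j-1}$ to $E_j\beta_{j-1}$. So step (2) adds $E_j\beta_{j-1}$ to $\mathfrak{C}$, and you take $\beta_j := \beta_{j-1}$.
\item At the end, $E\beta_m$ is removed from $\mathfrak{C}$ and is proto-essential because $E$ is. Step (1) then puts $E \in (E\beta_m)^{\Aut(S)}$ into $\mathfrak{C}^*$.
\end{itemize}
Step (2) runs whether or not the removed subgroup is proto-essential, so no intermediate proto-essentiality is needed. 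Finally, your termination argument is incomplete. Finiteness of the set of subgroups of $S$ does not by itself make a worklist terminate when it removes and re-adds elements; for instance, step (2) with $X_i = E_0$ re-adds $E_0$. You need to read the algorithm as never re-adding a subgroup that has already been processed.
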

    \begin{proof}
        Note that we only add subgroups that are proto-essential in $S$ to the set $\mathfrak{C}^*$. This implies that $\mathfrak{C}^* \subseteq \mathfrak{P}(S)$. Assume now that $\mathfrak{C}^* \subsetneq \mathfrak{P}(S)$. Then there exists some finite $p$-group $S$, a saturated fusion system $\calF$ on $S$, and an essential subgroup $E \in \calE(\calF)$ such that $E \not\in \mathfrak{C}^*$. We may further assume that every essential subgroup of $\calF$ that properly contains $E$ lies in $\mathfrak{C}^*$.

        By Theorem \ref{thm:algo}, we know that there exists an isomorphism $\phi \colon E_0 \to E$ in $\calF$, for some $E_0 \in \mathfrak{C}$. We appeal to Alperin-Goldschmidt to decompose $\phi$:
        \[\phi = \phi_1 \circ \dots \circ \phi_n,\]
        where each $\phi_i \in \Aut_\calF(F_i)$ for some $F_i \in \calE(\calF) \cup \{S\}$. If each $F_i$ is equal to $S$, then $E$ and $E_0$ are $\Aut(S)$-conjugate. But then $E \in E_0^{\Aut(S)}$, where $E_0 \in \mathfrak{C}$. This means that $E$ gets added to $\mathfrak{C}^*$, which is a contradiction.

        We write $E_i := E_0 \phi_1 \circ \dots \circ \phi_i$ for $1 \leq i \leq n$, so that $E = E_n$. Fix $j \geq 1$ to be minimal such that $F_j \neq S$. Then $E_0$ and $E_{j-1}$ are $\Aut(S)$-conjugate. If $\alpha = \phi_1 \circ \dots \circ \phi_{j-1}$, then we have the map $\psi := \phi^{\alpha^{-1}}$ in $\calF$ that maps $E_0$ to $E_j^{\alpha^{-1}}$. This is well-defined as $\alpha \in \Aut_\calF(S)$. As $\psi \in \Aut(F_j)$ and $F_j$ lies in $\mathfrak{C}^*$, we see that $E_j^{\alpha^{-1}}$ gets added to $\mathfrak{C}$ in step (3). If $E_j^{\alpha^{-1}}$ is $\Aut(S)$-conjugate to $E$, then we get a contradiction. Otherwise, there exists some $k > j$ minimal such that $F_k \neq S$ as well. Then we find that, during the iteration on $E_j^{\alpha^{-1}}$, some $\Aut_\calF(S)$-conjugate of $E_k$ gets added to $\mathfrak{C}$. Continuing the argument, we will see that eventually, some $\Aut_\calF(S)$-conjugate $E_*$ of $E_n = E$ gets added to $\mathfrak{C}$. As $E$ is proto-essential, $E_*$ is also proto-essential. This implies that $E \in E_*^{\Aut(S)}$ gets added to $\mathfrak{C}^*$, giving us the final contradiction. We conclude that $\mathfrak{C}^* = \mathfrak{P}(S)$.
    \end{proof}

    We remark that step (2) in Algorithm \ref{alg:all-proto} can be quite expensive. Nonetheless, we can avoid this step in many cases and still find $\mathfrak{P}(S)$. We consider two cases when this holds:
    \begin{itemize}
        \item if $E_0$ is maximal amongst the proto-essential subgroups in $S$ with respect to inclusion; or
        \item if $E_0$ is the unique subgroup of $S$ satisfying some property, up to $\Aut(S)$-conjugacy.
    \end{itemize}
    In these cases, there is no choice for the subgroup $X_i$ in step (2). We consider one further optimisation when $E_0$ is normal in $S$. In that case, as $E_0$ is $\calF$-conjugate to some essential subgroup $E$. As $E$ is fully $\calF$-normalized, we deduce that $E$ must also be normal in $S$. Thus, it suffices to consider normal subgroups of $S$ that can be $\Aut(X_i)$-conjugate to $E_0$ in step (2).

    We end this section by considering how we can use $\mathfrak{C}_0$ to detect whether a $p$-group can support corefree fusion systems.
    \begin{lemma} \label{lm:f-ozs}
        Let $\calF$ be a saturated fusion system on $S$, and let $\calG := N_\calF(\Omega(Z(S)))$. If $\calF \neq \calG$, then there exists an $E \in \calE(\calF)$ such that $\Aut_\calF(E) \neq \Aut_\calG(E)$ and $E = C_S(x)$ for some $x \in S$ of order $p$.
    \end{lemma}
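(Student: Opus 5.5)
Set $\Omega := \Omega(Z(S))$, an elementary abelian subgroup of $Z(S)$ that is characteristic in $S$. The plan is to reprove Lemma \ref{lm:alg_two_gen} with $\Omega$ in place of $Z$, using that normalizing $\Omega$ is what $\calG = N_\calF(\Omega)$ records. Two facts are needed. First, $\Omega$ is fully $\calF$-normalized, since $N_S(\Omega) = S$. Second, $\calG$ is saturated by Lemma \ref{lm:nfa-sat}, and it contains $N_\calF(S)$: every $\calF$-automorphism of $S$ preserves the characteristic subgroup $\Omega$, so it extends to $S = S\Omega$ normalizing $\Omega$.

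If $\calF \neq \calG$, Alperin--Goldschmidt gives some $E \in \calE(\calF)$ with $\Aut_\calF(E) \neq \Aut_\calG(E)$, because $\Aut_\calF(S) = \Aut_\calG(S)$. Choose such an $E$ of maximal order, and pick $\alpha \in \Aut_\calF(E) \setminus \Aut_\calG(E)$. Since $\Omega \leq Z(S) \cap E \leq Z(E)$, the map $\alpha$ does not normalize $\Omega$, for otherwise $\alpha$ would lie in $\calG$ (as $\Omega \leq E$). Hence $\Omega\alpha \neq \Omega$, and since $|\Omega\alpha| = |\Omega|$ we have $\Omega\alpha \not\leq \Omega$. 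So there is an element $y \in \Omega$ with $x := y\alpha \notin \Omega$. This $x$ has order $p$ and lies in $Z(E)$, since $\alpha$ preserves $Z(E)$. Therefore $E \leq C_S(x) = N_S(\langle x\rangle)$.

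The key step is to push $\langle x\rangle$ back into $\Omega$. Set $X := \langle x\rangle$. As in the proof of Lemma \ref{lm:alg_two_gen}, I would take a fully $\calF$-normalized conjugate $R$ of $X$ and a map $\psi \colon N_S(X) \to S$ with $X\psi = R$ (Lemma \ref{lm:fs-next}). Ideally one wants $R \leq Z(S)$, and since $X$ has order $p$ this gives $R \leq \Omega$. This is plausible because $Y := \langle y \rangle = X\alpha^{-1}$ is central, hence fully centralized. Then $X$ is $\calF$-conjugate to the fully normalized $Y$, so Lemma \ref{lm:fs-next} gives $\psi \colon N_S(X) \to S$ with $X\psi = Y$.

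Now consider $\gamma := \alpha \circ \psi|_E$. It sends $y \mapsto x \mapsto y$, but I need $\gamma$ to normalize all of $\Omega$, not just $Y$. This is the main obstacle. I would handle it by modifying $\psi$: compose with an element of $\Aut_\calF(C_S(Y)) = \Aut_\calF(S)$ fixing $Y$. Alternatively, I would induct on $|\Omega \cap \Omega\alpha|$, choosing $\alpha$ among the non-$\calG$ maps with the maximal such intersection, and arrange $\gamma$ either to lie in $\calG$ or to increase that intersection.

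Granting that $\gamma \in \calG$, it follows that $\psi|_E \notin \calG$, so $\psi \notin \calG$. Decomposing $\psi$ by Alperin--Goldschmidt then produces $E_i \in \calE(\calF)$ with $\Aut_\calF(E_i) \neq \Aut_\calG(E_i)$ and $|E_i| \geq |N_S(X)|$. Maximality of $|E|$ forces $E = N_S(X) = C_S(x)$, with $x$ of order $p$, as required.
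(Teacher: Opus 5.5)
There is a genuine gap at the step you flag yourself. You never show that $\gamma = \alpha\circ\psi|_E$ lies in $\calG$, and in general it need not. Since $\Omega \leq E$, membership of $\gamma$ in $\calG = N_\calF(\Omega)$ means $\Omega\gamma = \Omega$, that is, $(\Omega\alpha)\psi = \Omega$. But $\psi$ is pinned down only by $X\psi = Y$, so nothing controls the image of the rest of $\Omega\alpha$.

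Your first proposed repair cannot work. Every element of $\Aut_\calF(S)$ normalizes the characteristic subgroup $\Omega$, so composing $\psi$ with such a map never changes whether $(\Omega\alpha)\psi = \Omega$. The induction on $|\Omega \cap \Omega\alpha|$ is not carried out, and it is unclear what it would give. As written, the conclusion $\psi \notin \calG$ is unsupported.

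The missing observation is that the detour through $\gamma$ is unnecessary here, because you can see directly that $\psi \notin \calG$. The domain $N_S(X) = C_S(X)$ of $\psi$ contains $Z(S) \geq \Omega$. So if $\psi \in N_\calF(\Omega)$, the required extension to $N_S(X)\Omega = N_S(X)$ is $\psi$ itself, which forces $\Omega\psi = \Omega$. But $X \not\leq \Omega$ while $X\psi = Y \leq \Omega = \Omega\psi$, contradicting injectivity of $\psi$.

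This is exactly the paper's argument. There one takes $\hat{\beta}\colon C_S(X) \to S$ to be an extension of $\alpha^{-1}|_X$, which exists because $X\alpha^{-1}$ is central and hence fully centralized. With this in place, the rest of your proof goes through unchanged: the Alperin--Goldschmidt decomposition of $\psi$ and the maximality of $|E|$.

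The $\gamma$-trick was needed in Lemma \ref{lm:alg_two_gen} only because there $\calG$ is merely sandwiched between $\langle N_\calF(S), C_\calF(Z)\rangle$ and $\calF$, so membership of $\psi$ cannot be tested directly. Here $\calG$ is itself a normalizer subsystem, and membership can be read off from $\Omega\psi$.
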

    \begin{proof}
        Since $\Aut_\calF(S)$ normalizes $\Omega(Z(S))$, we know that $\Aut_\calF(S) = \Aut_\calG(S)$. As such, if $\calF \neq \calG$, then there exists an $E \in \calE(\calF)$ such that $\Aut_\calF(E) \neq \Aut_\calG(E)$ by Alperin-Goldschmidt. Assume now that $E$ has maximal order amongst all subgroups $E_0 \in \calE(\calF)$ satisfying $\Aut_\calG(E_0) \neq \Aut_\calF(E_0)$.
        
        Let $\alpha \in \Aut_\calF(E)$ be a map that does not lie in $\calG$, and set $A := \Omega(Z(S)) \alpha$. By construction, we know that $A \neq \Omega(Z(S))$. But $\alpha$ does normalize $\Omega(Z(E))$, meaning that $A$ is a central subgroup of $E$ of exponent $p$. We now take some $x \in A \setminus \Omega(Z(S))$, which has order $p$. Set $X := \langle x \rangle$, $Z := \langle x\alpha^{-1} \rangle \leq \Omega(Z(S))$ and $\beta := \alpha^{-1}|_X$, so that $X\beta = Z$. As $x \in Z(E)$, we infer that
        \[E = C_E(x) \leq C_S(x) = C_S(X).\]
        
        The map $\beta$ lies in $\calF$ and maps $X$ to $Z \leq Z(S)$. Thus, there exists a map $\hat{\beta} \colon C_S(X) \to S$ in $\calF$ such that $\hat{\beta}|_X = \beta$. We further see that $X \nleq \Omega(Z(S))$ satisfies $X \hat{\beta} = X\beta = Z \leq \Omega(Z(S))$. In other words, $\hat{\beta}$ cannot lie in $\calG = N_\calF(\Omega(Z(S)))$.

        By Alperin-Goldschmidt, we can write
        \[\hat{\beta} = \theta_1 \circ \theta_2 \dots \circ \theta_n,\]
        with $\theta_i \in \Aut_\calF(E_i)$ for $E_i \in \calE(\calF) \cup \{S\}$. Since $\hat{\beta}$ is not an element of $\calG$, we infer that some $\theta_i$ does not lie in $\calG$. As $\Aut_\calF(S) = \Aut_\calG(S)$, we have that $E_i \in \calE(\calF)$. That is, $E_i$ lies in $\calE(\calF)$ and satisfies $\theta_i \in \Aut_\calF(E_i) \setminus \Aut_\calG(E_i)$. By the maximality of $E$, we conclude that
        \[|E| \leq |C_S(X)| \leq |E_i| \leq |E|.\]
        Thus, $E = C_S(X) = C_S(x)$.
    \end{proof}

    \noindent The following is an immediate consequence of Lemma \ref{lm:f-ozs}.
    \begin{corollary} \label{cor:opf}
        Let $S$ be a finite $p$-group. If $\mathfrak{C}_0 = \varnothing$, then for any saturated fusion system $\calF$ on $S$, we have that $\Omega(Z(S)) \normalIn \calF$.
    \end{corollary}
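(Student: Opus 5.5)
We argue by contraposition and reduce everything to Lemma \ref{lm:f-ozs}. Fix a saturated fusion system $\calF$ on $S$ and set $\calG := N_\calF(\Omega(Z(S)))$. The goal is to show $\calF = \calG$. Once this holds, every morphism of $\calF$ extends to a morphism normalizing $\Omega(Z(S))$, which is exactly the definition of $\Omega(Z(S)) \normalIn \calF$. Alternatively, we can get normality from Lemma \ref{lm:weak-closure-to-normality}. Since $\calF = N_\calF(\Omega(Z(S)))$, the subgroup $\Omega(Z(S))$ is weakly $\calF$-closed. Moreover, every essential subgroup contains $Z(S)$, because essential subgroups are $\calF$-centric and so contain $C_S(E) \geq Z(S)$.

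So suppose, for a contradiction, that $\calF \neq \calG$. By Lemma \ref{lm:f-ozs}, there is some $E \in \calE(\calF)$ with $\Aut_\calF(E) \neq \Aut_\calG(E)$ and $E = C_S(x)$ for some $x \in S$ of order $p$.

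Next we place $E$ in $\mathfrak{C}_0$. Recall that $Z_0 = 1$, so $\pi_0$ is the identity map on $S$. Hence $\mathfrak{C}_0$ is precisely the set of subgroups $C_S(x)$, with $x \in S$ of order $p$, that lie in $\mathfrak{P}(S)$. Our $E$ is essential in the saturated fusion system $\calF$ on $S$, so $E \in \calE(\calF) \subseteq \mathfrak{P}(S)$. Therefore $E \in \mathfrak{C}_0$, which contradicts the hypothesis $\mathfrak{C}_0 = \varnothing$.

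There is no real obstacle here, since Lemma \ref{lm:f-ozs} does all the work. The only point needing care is the final passage from $\calF = N_\calF(\Omega(Z(S)))$ to $\Omega(Z(S)) \normalIn \calF$. This is immediate from the definition of normality as $\calF = N_\calF(Q)$. If one prefers, the route through Lemma \ref{lm:weak-closure-to-normality} described above gives the same conclusion.
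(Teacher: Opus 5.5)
Your proposal is correct and is exactly the argument the paper has in mind. The paper records the corollary as an immediate consequence of Lemma \ref{lm:f-ozs}, and your observations supply the missing one-line justification: since $\pi_0$ is the identity, $\mathfrak{C}_0$ consists of the proto-essential subgroups $C_S(x)$ with $x$ of order $p$, and $\calE(\calF) \subseteq \mathfrak{P}(S)$. The final step from $\calF = N_\calF(\Omega(Z(S)))$ to $\Omega(Z(S)) \normalIn \calF$ is indeed just the definition, using that $\Omega(Z(S))$ is characteristic and hence normal in $S$.
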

    Corollary \ref{cor:opf} gives us quite an efficient way of checking whether a $p$-group $S$ can support corefree fusion systems -- we check whether $\mathfrak{C}_0 \neq \varnothing$. To do so, we check whether subgroups of the form $C_S(x)$ can be proto-essential in $S$, where $x \in S$ has order $p$. In practice, if $|S| \leq p^6$, then this means that we only need to check whether less than $5$ subgroups of $S$ are proto-essential to deduce whether it can support corefree fusion systems (cf. Appendix \ref{sec:algorithm-appendix} for a more detailed analysis). 
    
    Note however that $\mathfrak{C}_0 \neq \varnothing$ does not imply that $S$ supports corefree fusion systems. For example, if $S := \mathbf{SmallGroup}(5^4, 8)$, then $S$ is a Sylow $5$-subgroup of $G := \SL_2(\Z/25\Z)$. The $5$-fusion category of $G$ on $S$ has $R := C_S(Z_2(S))$ (which is elementary abelian of order $5^3$) as the unique essential subgroup. This subgroup lies in $\mathfrak{C}_0$. But we compute that $\mathfrak{P}(S) = \{R\}$, meaning that for any fusion system $\calF$ on $S$, we have that $R \leq O_5(\calF)$.

    We finally note some subgroups we typically see in $\mathfrak{C}_0$. Let $R := C_S(Z_2(S))$. For small groups, we typically see that if $\calF$ is a corefree fusion system on $S$, then $\Aut_\calF(R)$ does not centralize $Z(S)$ (see, for example, the fusion systems given in \cite[Appendix A]{algorithms}). In many cases, we further have that $R = C_S(x)$ for some $x \in Z_2(S) \setminus Z(S)$ of order $p$. Another class of subgroups lying in $\mathfrak{C}_0$ are abelian essential subgroups $V$, in which case there typically exists some $x \in V$ of order $p$ such that $V = C_S(x)$. This includes abelian pearls (as defined by Grazian in \cite{pearls}).

    \section{Some subgroups of $\SL_{n}(3)$ and $\Sp_{2n}(3)$} \label{sec:automizer}
    In this section, we will analyse certain subgroups of $\SL_n(3)$ and $\Sp_{2n}(3)$ based on their Sylow $3$-subgroup and some further assumptions. The determination of the subgroups satisfying these properties is important to understand the structure of the automizers of radical subgroups in both $\Fi_{24}'$ and $\Mn$.
    
    \begin{lemma} \label{lm:aut-sp10}
        Let $Q := 3^{1+10}_+$ be the extraspecial group of exponent $3$ and order $3^{11}$, $G \leq \Out(Q)$ and $V := Q/Z(Q)$. Set $S \in \Syl_3(G)$. Assume that:
        \begin{enumerate}
            \item $O^{3'}(G) = G$ and $O_3(G) = 1$;
            \item $S \cong 3 \times 3 \wr 3$; 
            \item $|C_V(S)| = 3$; and 
            \item there exists a normal subgroup $N \cong 3^{1+2}_+$ of $S$ such that $|C_V(A)| = 3^4$ for some complement $A$ of $N$ in $S$.
        \end{enumerate}
        Then $G \cong \U_5(2)$, and $G$ acts irreducibly on $V$.
    \end{lemma}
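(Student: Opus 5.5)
We will realise $G$ inside $\Sp_{10}(3)$ and then identify it via its $3$-local structure. Since $Q$ is extraspecial of exponent $3$, $\Out(Q) \cong \Sp_{10}(3) \times 2$ (more precisely $\mathrm{CSp}_{10}(3)$), acting on $V$ as a $10$-dimensional symplectic $\GF(3)$-module. Condition (1) gives $O^{3'}(G)=G$, so $G$ lies in the derived part $\Sp_{10}(3)$, and we may regard $G \leq \Sp_{10}(3)$ acting faithfully on $V$. Irreducibility will come first. Take a minimal $G$-submodule $W \leq V$ and consider $W \cap W^\perp$, which is either $0$ or $W$. The fixed-point condition (3), $|C_V(S)| = 3$, says $V$ has a unique $S$-fixed line. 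Each nonzero $G$-submodule contains a nonzero $S$-fixed vector, so every nonzero submodule contains this line. The same applies to $W^\perp$ and to the duals, so we expect a contradiction unless $W=V$.

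We must rule out $W$ totally isotropic or a proper nondegenerate subspace. If $W$ is nondegenerate then $V = W \perp W^\perp$, and both summands contain the fixed line, which is impossible. If $W \leq W^\perp$ is totally isotropic, then $V/W^\perp \cong W^*$ as $G$-modules. A composition series through $W \le W^\perp \le V$ then has to be reconciled with condition (4) and $O_3(G)=1$. The key point is that the kernel of the action on the factors of a $G$-invariant flag is a normal $3$-subgroup by coprime action, so $O_3(G)=1$ forces $G$ to act faithfully on the sum of the factors. We then use dimension counting with $C_V(A)$ of order $3^4$.

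The main step is identifying $G \cong \U_5(2)$. The plan is to pin down $N_G(S)$ and then recognise $G$ from its Sylow $3$-subgroup. The order $|S| = 3^5$ matches $|\U_5(2)|_3 = 3^5$, and a Sylow $3$-subgroup of $\U_5(2)$ is indeed $3 \times 3\wr 3$. Since $G$ has no normal $3$-subgroup and $O^{3'}(G)=G$, the $3$-fusion of $G$ is corefree on $S$. Here we use the existing classification of saturated fusion systems on $3\times 3\wr 3$ (a group of order $3^5$, covered by Parker--Semeraro \cite{algorithms}) to list the possible $3$-fusion systems. The candidates are those of $\U_5(2)$, $\U_4(3)$-type groups, $\L_3(3)\times \L_2(9)$-type products, and similar groups. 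For each candidate we then determine the possible $G$, using known classification of groups with given $3$-local structure or simply a list of subgroups of $\Sp_{10}(3)$ (maximal subgroups, computable in MAGMA). Finally we eliminate all but $\U_5(2)$ using conditions (3) and (4).

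Concretely, condition (4) distinguishes the $\U_5(2)$ embedding, which is $\U_5(2) \leq \Sp_{10}(3)$ via the $10$-dimensional Weil-type representation. There the complement $A$ to the extraspecial $N$ should fix a $4$-space. I expect this final elimination to be the main obstacle, because it requires explicit module computations for each candidate group. For example, a direct product such as $\L_2(9)\times \L_3(3)$-type groups acting on tensor or sum modules must be checked to violate $|C_V(S)|=3$ or $|C_V(A)|=3^4$. In practice this is a finite computation in $\Sp_{10}(3)$: enumerate conjugacy classes of subgroups $3\times 3\wr 3$ satisfying (3) and (4), compute $\langle N_{\Sp_{10}(3)}(\text{essentials})\rangle$-type overgroups with $O^{3'}=G$ and $O_3=1$, and verify only $\U_5(2)$ survives. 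Once $G \cong \U_5(2)$ is established, irreducibility also follows from the fact that its minimal faithful $\GF(3)$-module has dimension $10$.
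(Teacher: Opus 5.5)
\textbf{The gap is the reduction to corefree fusion systems.} You infer that $O_3(G)=1$ and $O^{3'}(G)=G$ make $\calF_S(G)$ corefree. This is false, and on $S\cong 3\times 3\wr 3$ it fails for exactly the groups that have to be excluded.

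\begin{itemize}
\item $\SL_2(3)\wr\Alt(4)$ satisfies (1) and (2). Modulo its normal $3'$-subgroup $Q_8^4$ it is $3^4{:}\Alt(4)$, so $3^4$ is normal in its fusion system.
\item $\SL_2(3)\times\Sp_4(3)$, $\Alt(4)\times\Sp_4(3)$ and $13{:}3\times\Sp_4(3)$ each have a direct factor whose Sylow $3$-subgroup has order $3$. That subgroup is then normal in the fusion system.
\end{itemize}

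These are among the candidates the paper finds inside the Levi quotients $O^{3'}(M/O_3(M))$ of the parabolic maximal subgroups $M$ of $\Sp_{10}(3)$. In fact all nine candidates there fail to be corefree in one of these two ways. The paper excludes them only through the fixed-point conditions (3) and (4). A classification of corefree fusion systems on $3\times3\wr3$ never sees these groups, so your argument leaves them standing.

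Even in the corefree case, a fusion system does not determine $G$; for a start it cannot see $O_{3'}(G)$. So ``for each candidate determine the possible $G$'' is not yet an argument. Your sample candidates also have the wrong Sylow subgroup: $\L_3(3)\times\L_2(9)$ has Sylow $3$-subgroup $3^{1+2}_+\times 3^2$, and $\U_4(3)$ has one of order $3^6$.

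\textbf{What is needed instead} is a systematic search through the overgroups of $S$ in $\Sp_{10}(3)$. Taking ``overgroups generated by normalizers of essentials'' is not such a search, because $G$ is not recovered from the normalizers in $\Sp_{10}(3)$ of its local subgroups. The paper descends through maximal subgroups:
\begin{itemize}
\item Put $G\le M$ with $M$ maximal in $\Sp_{10}(3)$. Since $S\le T\in\Syl_3(M)$ forces $|C_V(T)|\le|C_V(S)|=3$, conditions (2) and (3) cut the list to seven maximals.
\item For a parabolic $M$, $G\cap O_3(M)\le O_3(G)=1$ embeds $G$ in $O^{3'}(M/O_3(M))$; this is your flag-kernel remark. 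The possible $G$ there are listed and eliminated using (3).
\item The tensor subgroup $\Sp_2(3)\circ\GO_5(3)$ passes (3) and is eliminated by (4).
\item A proper $G<\U_5(2)$ would lie in a maximal $M$ with $[M:O_3(M)]_3<3^5$. This contradicts $|G|_3=3^5$ together with $G\cap O_3(M)=1$.
\end{itemize}

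Your irreducibility sketch stops exactly at the totally isotropic case, which is this parabolic analysis, so it cannot replace it. Irreducibility itself is fine once $G\cong\U_5(2)$ is known, by the cross-characteristic degree bound $10$.

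One part of your sketch is worth keeping. Since every nonzero $G$-submodule contains $C_V(S)$, $V$ is indecomposable. So any candidate with a central involution $t$ that does not act as $\pm1$ on $V$ is ruled out at once by $V=C_V(t)\oplus[V,t]$. This cleanly replaces the paper's computation of $C_V$ inside involution centralizers.
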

    \begin{proof}
        All code used in this proof can be found in \texttt{fi24/sp10.m} and \texttt{fi24/sp10.g}. Since $O^{3'}(G) = G$, we know by \cite{extraspecial-autgrp-sp} that $G$ is isomorphic to a subgroup of $\Sp_{10}(3)$. The maximal subgroups of $\Sp_{10}(3)$ are given in \cite[Tables 8.64 and 8.65]{maximals}. Using MAGMA, we compute those maximal subgroups $M$ such that $|C_V(T)| = 3$ and $T$ has a subgroup isomorphic to $3 \times 3 \wr 3$ for some $T \in \Syl_3(M)$. These are given below:
        \begin{table}[H]
            \centering
            \begin{tabular}{ccc}
                $M_1 := 2 \times 3^{1+8}_+ : \Sp_8(3)$ & $M_2 := 3^{3+12} : (\GL_2(3) \times \Sp_6(3))$ & $M_3 := 3^{15} : \GL_5(3)$ \\
                $M_4 := 3^{10+8} : (\GL_4(3) \times \Sp_2(3))$ & $M_5 := 3^{6+12} : (\GL_3(3) \times \Sp_4(3))$ & $M_6 := 2 \times \U_5(2)$ \\
                & $M_7 := \Sp_2(3) \circ \GO_5(3)$.
            \end{tabular}
        \end{table}

        Since $O^{3'}(G) = G$ and $O_3(G) = 1$, we find that if $G$ is contained in some subgroup $M_j$ for $1 \leq j \leq 5$, then $G$ is isomorphic to a subgroup of $O^{3'}(M_i/O_3(M_i))$. We make use of GAP and \cite{maximals} to compute all such choices, which are listed below:
        \begin{table}[H]
            \centering
            \begin{tabular}{lll}
                $A_1 := \Sp_2(3) \wr \Alt(4)$, & $A_2 := \Sp_2(3) \wr 3 \times \SL_2(3)$, & $A_3 := \SL_2(3) \times \Sp_4(3)$, \\
                $A_4 := 2^{1+4}_- \ldotp \Alt(5) \times \Sp_4(3)$, & $A_5 := \SL_2(5) \times \Sp_4(3)$, & $A_6 := (Q_8 \times Q_8) : 3 \times \Sp_4(3)$, \\
                $A_7 := \SL_2(3^3) : 3 \times \SL_2(3)$, & $A_8 := \Alt(4) \times \Sp_4(3)$, & $A_9 := 13 : 3 \times \Sp_4(3)$.
            \end{tabular}
        \end{table}
        
        Assume first that $G$ is isomorphic to a subgroup of $M_j$ for $j \in \{1,3,5\}$. As $|Z(A_i)| \geq 2$ for any $1 \leq i \leq 9$, we see that $G$ is a subgroup of $T := C_{M_j}(t)$ for some involution $t \in O^{3'}(M_j)$. We compute using MAGMA that $|C_V(S_0)| > 3$ for $S_0 \in \Syl_3(T)$ in each case, which is a contradiction.
        
        We next consider whether $G$ can be isomorphic to a subgroup of $M_2$, which satisfies $|Z(O^{3'}(M_2))| = 2$. Then $G$ is isomorphic to a subgroup of $O^{3'}(M_2/O_3(M_2)) \cong \SL_2(3) \times \Sp_6(3)$. This implies that $G$ is isomorphic to one of the following groups -- $A_2$, $A_3$, $A_6$ and $A_7$. In each case, we see that $|Z(A_i)| \geq 4$. We deduce again that $G$ is a subgroup of $T := C_{M_2}(t)$ for some non-central involution $t \in O^{3'}(M_2)$. We compute using MAGMA that $|C_V(S_0)| > 3$ for $S_0 \in \Syl_3(T)$ in each case, contradicting our hypothesis again.
        
        Next assume that $G$ is isomorphic to a subgroup of $M_4$, which also satisfies $|Z(O^{3'}(M_4))| = 2$. Then $G$ is isomorphic to a subgroup of $O^{3'}(M_4/O_3(M_4)) \cong \SL_4(3) \times \Sp_2(3)$. As such, $G$ is isomorphic to $A_3$, meaning that $|Z(G)| \geq 4$. We see again that $G$ is isomorphic to a subgroup of $T := C_{M_4}(t)$ for some non-central involution $t \in O^{3'}(M_4)$. However, we have that $|C_V(S_0)| > 3$ for $S_0 \in \Syl_3(T)$ in each case, which is a contradiction.

        We next rule out $M_7$. In this case, we find that $M_7$ satisfies the first two conditions and that $|C_V(S_0)| = 3$ for $S_0 \in \Syl_3(M_7)$. But we find that for every complement $A$ of a subgroup isomorphic to $3^{1+2}_+$ in such $S_0$, we have that $|C_V(A)| = 3^3$, a contradiction.

        This leaves us with $M_6$. We note that $O^{3'}(M_6) \cong \U_5(2)$ satisfies all the conditions. Also, we appeal to \cite[Tables 8.20 and 8.21]{maximals} to see that every maximal subgroup $M$ of $O^{3'}(M_6)$ satisfies $[M : O_3(M)]_3 < |S|$. As such, we find that $G$ must be isomorphic to $\U_5(2)$. We further compute that $G$ acts irreducibly on $V$.
    \end{proof}

    \begin{lemma} \label{lm:aut-sl7}
        Let $V$ be an elementary abelian group of order $3^7$ and $G \leq \Aut(V)$. Set $S \in \Syl_3(G)$. Assume that:
        \begin{enumerate}
            \item $O^{3'}(G) = G$ and $O_3(G) = 1$; and
            \item $S$ is isomorphic to a Sylow $3$-subgroup of $\SO_7(3)$.
        \end{enumerate}
        Then $G \cong \SO_7(3)$.
    \end{lemma}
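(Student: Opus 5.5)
We follow the strategy of Lemma \ref{lm:aut-sp10}, using MAGMA and the tables of maximal subgroups in \cite{maximals}. Condition (1) gives $G = O^{3'}(G)$, generated by $3$-elements, so $G \leq \SL_7(3) = \Omega(\GL_7(3))$. The Sylow $3$-subgroup of $\SO_7(3)$ has order $3^9$, while $|\SL_7(3)|_3 = 3^{21}$. So the plan is to descend through the maximal subgroups of $\SL_7(3)$ in \cite[Tables 8.35 and 8.36]{maximals} and, at each stage, discard every subgroup that cannot contain a subgroup isomorphic to $S$.

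The maximal subgroups of $\SL_7(3)$ fall into two groups.
\begin{itemize}
\item The parabolic subgroups $3^{k(7-k)} : (\GL_k(3) \times \GL_{7-k}(3))\cap \SL_7(3)$ for $1 \leq k \leq 6$.
\item The non-parabolic maximal subgroups: the orthogonal group $\SO_7(3)$ (in one or two classes), together with small groups such as $\SL_7(3)$-normalisers of tori ($\frac{3^7-1}{2}:7$ and similar) and possibly a $\U$- or $\Sp$-type class. Most of these have Sylow $3$-subgroups too small or too shallow to contain $S$.
\end{itemize}

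\textbf{Parabolic case.} Suppose $G \leq M$ with $M$ parabolic. Since $O_3(G) = 1$ and $G = O^{3'}(G)$, the subgroup $G\cap O_3(M)$ is a normal $3$-subgroup of $G$ and so is trivial. Hence $G$ embeds in a subgroup of $O^{3'}(M/O_3(M))$, a central product of $\SL_k(3)$ and $\SL_{7-k}(3)$ modulo a small torus. We then need two things: (a) the $3$-rank of $S$ is too large, or its nilpotency class too high, for $S$ to embed in a Levi factor; or (b) any embedding forces $G$ to act reducibly, with $G$ a product of smaller groups. In case (b) the Sylow $3$-subgroup of $G$ would be a direct product of Sylows of subgroups of $\SL_k(3)$ and $\SL_{7-k}(3)$. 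This is impossible because the Sylow $3$-subgroup of $\SO_7(3)$ has cyclic centre of order $3$ and is not a nontrivial direct product, which we check by computer (recall also that $\SO_7(3)$ has the form $\O_7(3)$, which is simple). Concretely, we will compute in MAGMA which Levi factors $L_k$ contain a subgroup isomorphic to $S$; we expect none, since $|L_k|_3 \leq |\SL_6(3)|_3 = 3^{15}$, and we then test isomorphism with the small number of candidate $3$-subgroups of order $3^9$.

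\textbf{Non-parabolic case.} Now suppose $G$ lies in some maximal subgroup $M$ with $M$ not isomorphic to $\SO_7(3)$. The order bound $|M|_3 \geq 3^9$ rules out all of these except possibly some class-$\mathcal{C}_2$ or $\mathcal{C}_9$ groups, and these we rule out by direct computation. This leaves $G \leq M \cong \SO_7(3)$, where $O^{3'}(M) \cong \Omega_7(3)$; note that $\SO_7(3) \cong 2\times\Omega_7(3)$ does not occur in $\SL_7(3)$, since $-1$ has determinant $-1$. Thus $G \leq \Omega_7(3)$ with $|G|_3 = |\Omega_7(3)|_3$. Using \cite[Tables 8.39 and 8.40]{maximals}, every maximal subgroup $X$ of $\Omega_7(3)$ satisfies either $|X|_3 < 3^9$ or $X$ is parabolic. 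In the parabolic case, the same $O_3(G)=1$ argument shows that $G$ embeds in a Levi factor $O^{3'}(X/O_3(X))$, whose $3$-part is strictly smaller than $3^9$. The exceptional maximal subgroups $\Sp_6(2)$ and $\Sym(9)$ have Sylow $3$-subgroups of order $3^4$, so they pose no difficulty. Hence $G = \Omega_7(3)$, which is the group the paper denotes $\SO_7(3)$ in ATLAS notation.

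The main obstacle is the parabolic step in $\SL_7(3)$, namely ruling out that a Levi factor such as $\SL_6(3)$ or $\SL_3(3)\times \SL_4(3)$ contains a copy of $S$. This must be settled by an explicit MAGMA search for subgroups isomorphic to $S$ in Sylow $3$-subgroups of the Levi factors, for example by comparing invariants such as the centre, the $3$-rank, $J(S)$ and the class. We would first try the invariant comparison, because it is cheap.
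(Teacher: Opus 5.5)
\textbf{Where the plan breaks: the parabolics $3^6:\GL_6(3)$.} Much of your plan matches the paper. You reduce to $G \le \SL_7(3)$. You use $O_3(G)=1$ to push $G$ into $O^{3'}(M/O_3(M))$ for a parabolic $M$. Your endgame inside $\Omega_7(3)$ is fine. For the parabolics $3^{12}:(\SL_3(3)\times\SL_4(3)):2$ and $3^{10}:(\SL_2(3)\times\SL_5(3)):2$, the class comparison you mention works: the Levi Sylow $3$-subgroups have class $3$ and $4$, while $S$ has class $5$.

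The gap is in the parabolics $3^6:\GL_6(3)$, whose Levi factor has $O^{3'}$ equal to $\SL_6(3)$. You plan to show that no Levi factor contains a copy of $S$, but $\SL_6(3)$ does. The group $S$ fixes a nonzero isotropic vector $e$ of the natural orthogonal module $V$ and acts faithfully on the $6$-dimensional space $e^{\perp}$. To see this, suppose $g \in S$ is trivial on $e^\perp$, and pick $f$ with $(e,f)=1$. Then $fg = f+w$ with $w \in e^\perp$ and $(w,u)=0$ for all $u \in e^\perp$, so $w = ce$. Now $(fg,fg)=(f,f)$ forces $2c=0$, hence $g=1$.

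So $S$ is isomorphic to a subgroup of a Sylow $3$-subgroup of $\SL_6(3)$, which also has class $5$. Because the embedding genuinely exists, no comparison of order, class or $3$-rank can exclude this case; the bound $3^{15}$ you cite is not an argument. Your proposed MAGMA search would return an embedding. Your fallback (b) is vacuous here, since the other Levi factor is $\SL_1(3)=1$.

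\textbf{The missing step.} You need a second round of the same reduction, now inside $\SL_6(3)$, which is what the paper does.
\begin{itemize}
\item If $G$ lies in a parabolic of $\SL_6(3)$, then $O_3(G)=1$ pushes $G$ into $\SL_5(3)$, $\SL_2(3)\times\SL_4(3)$ or $\SL_3(3)\times\SL_3(3)$. Their Sylow $3$-subgroups have class at most $4$, so this is impossible.
\item Among the non-parabolic maximal subgroups of $\SL_6(3)$, only the $\Sp_6(3)$ class has $3$-part at least $3^9$, and its $3$-part is exactly $3^9$. So $G$ would contain a full Sylow $3$-subgroup of $\Sp_6(3)$.
\item One must then verify, as the paper does by computer, that a Sylow $3$-subgroup of $\Sp_6(3)$ is not isomorphic to $S$.
\end{itemize}
This comparison between the Sylow $3$-subgroups of $\Omega_7(3)$ and $\Sp_6(3)$ is the substantive step of the lemma, and your proposal does not contain it.

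A minor point: $\SO_7(3)$ is $\Omega_7(3).2$ and has no central involution, so it is not $2\times\Omega_7(3)$. Your conclusion $O^{3'}(M)=\Omega_7(3)$ is unaffected.
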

    \begin{proof}
        All code used in this proof can be found in \texttt{fi24/sl7.g}. Since $O^{3'}(G) = G$, we know that $G$ is isomorphic to a subgroup of $\SL_7(3)$ and that $|S| = 3^9$. The maximal subgroups of $\SL_7(3)$ are given in \cite[Tables 8.35 and 8.36]{maximals}, which are:
        \begin{table}[H]
            \centering
            \resizebox{\textwidth}{!}{\begin{tabular}{ccc}
                $M_1 := 3^{12} : (\SL_3(3) \times \SL_4(3)) : 2$, & $M_2 := 3^{12} : (\SL_3(3) \times \SL_4(3)) : 2$, &
                $M_3 := 3^{10} : (\SL_2(3) \times \SL_5(3)) : 2$ \\
                $M_4 := 3^{10} : (\SL_2(3) \times \SL_5(3)) : 2$, & $M_5 := 3^6 : \GL_6(3)$, & $M_6 := 3^6 : \GL_6(3)$, \\
                $M_7 := \SO_7(3)$, & & $M_8 := 1093 : 7$
            \end{tabular}}
        \end{table}
        \noindent Clearly, $G$ is not isomorphic to a subgroup of $M_8$. We show that $G$ also cannot be isomorphic to a subgroup of the maximal subgroups $M_i$ for $1 \leq i \leq 6$.

        Assume first that $G$ is isomorphic to a subgroup of $M_1$ (or $M_2$). Since $O_3(G) = 1$ and $O^{3'}(G) = G$, we find that $G$ is isomorphic to a subgroup of $A := \SL_3(3) \times \SL_4(3)$. We compute that a Sylow $3$-subgroup of $A$ has class $3$. But $S$ has nilpotency class $5$, meaning that $S$ cannot be isomorphic to a subgroup of $M_1$ (or $M_2$). Moreover, $G$ cannot be isomorphic to a subgroup of $M_3$ (or $M_4$) as the subgroup $T \in \Syl_3(M_3/O_3(M_3))$ has class $4$. Finally, if $G$ is isomorphic to a subgroup of $M_5$ (or $M_6$), then we appeal to \cite[Tables 8.24 and 8.25]{maximals} to find that $G$ must be isomorphic to a subgroup $A := \Sp_6(3)$. We compute that $T \in \Syl_3(A)$ has order $|S| = 3^9$ but is not isomorphic to $S$. Thus, $G$ cannot be isomorphic to a subgroup of $M_5$ (or $M_6$) either.

        We finally consider $M_7$. The group $M_7 = \SO_7(3)$ satisfying all the hypotheses. Using \cite[Tables 8.39 and 8.40]{maximals}, we see that if $M$ is a maximal subgroup of $M_7$ such that $|M|_3 \geq 3^9$, then $O_3(M) \neq 1$. As such, we are forced to have that $G \cong \SO_7(3)$.
    \end{proof}
    
    \begin{lemma} \label{lm:aut-sp12}
        Let $Q := 3^{1+12}_+$ be the extraspecial group of exponent $3$ and order $3^{13}$, $G \leq \Out(Q)$ and $V := Q/Z(Q)$. Set $S \in \Syl_3(G)$. Assume that:
        \begin{enumerate}
            \item $O^{3'}(G) = G$ and $O_3(G) = 1$; 
            \item $|C_V(S)| = 3$; and
            \item $S$ is isomorphic to a Sylow $3$-subgroup of $2 \ldotp \Suz$.
        \end{enumerate}
        Then $G \cong 2 \ldotp \Suz$, and $G$ acts irreducibly on $V$.
    \end{lemma}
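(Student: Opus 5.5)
The argument runs parallel to Lemma \ref{lm:aut-sp10}. Since $O^{3'}(G) = G$, the result of \cite{extraspecial-autgrp-sp} lets us identify $G$ with a subgroup of $\Sp_{12}(3)$ acting on $V$. Because $|S| = |2 \ldotp \Suz|_3 = 3^7$, while a Sylow $3$-subgroup of $\Sp_{12}(3)$ has order $3^{36}$, I first run through the maximal subgroups $M$ of $\Sp_{12}(3)$ from \cite[Tables 8.80 and 8.81]{maximals}. Using MAGMA, I keep only those for which some $T \in \Syl_3(M)$ has a subgroup $S_0$ isomorphic to a Sylow $3$-subgroup of $2\ldotp\Suz$ with $|C_V(S_0)| = 3$. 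I expect the survivors to be the parabolic subgroups $3^{a+b} : (\GL_k(3) \times \Sp_{12-2k}(3))$, the subgroup $2 \ldotp \Suz$ itself (which is maximal in $\Sp_{12}(3)$), and possibly some tensor-product or extension-field subgroups such as $\Sp_2(3) \circ \GO_6^\pm(3)$, $\Sp_4(3) \circ \GO_3(3)$ and $\Sp_6(3^2):2$ or $\Sp_4(3^3):3$.

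For each parabolic $M_j$ I use $O_3(G) = 1$ and $O^{3'}(G) = G$. Together these force $G$ to embed in $O^{3'}(M_j/O_3(M_j))$, which is a central product of groups $\SL_k(3)$ and $\Sp_{12-2k}(3)$. I would then argue as before. Either a Sylow $3$-subgroup of this Levi factor has too small a nilpotency class, or the wrong isomorphism type, to contain $S$; or $G$ has a nontrivial central involution, so $G \leq C_{M_j}(t)$ for some involution $t$. In the latter case I compute that a Sylow $3$-subgroup $S_0$ of $C_{M_j}(t)$ satisfies $|C_V(S_0)| > 3$, contradicting hypothesis (2); note that $C_V(S)\geq C_V(S_0)$ whenever $S\le S_0$. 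The Levi factors have small $3$-rank relative to $\Suz$, so the class or isomorphism-type check on Sylow subgroups should rule out most cases quickly. The non-parabolic survivors are handled directly in MAGMA: either they contain no subgroup isomorphic to $S$ with one-dimensional fixed space, or, as with $M_7$ in Lemma \ref{lm:aut-sp10}, they fail (2) once the correct conjugacy class of $S$ is taken.

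What remains is $G \leq 2\ldotp\Suz$. From the list of maximal subgroups of $\Suz$ in the ATLAS \cite{atlas}, every maximal subgroup $M$ of $\Suz$ satisfies $[M : O_3(M)]_3 < 3^7$. This is clear for the $3$-local ones, namely $3^{2+4}:2(\Alt(4)\times 2^2).2$, $3^5:\mathrm{M}_{11}$ and $3\ldotp\U_4(3)$, as well as for $\G_2(4)$, $\U_5(2)$ and the others. The same bound holds in $2\ldotp\Suz$, since the central $2$ plays no role here. So a proper subgroup $G$ of $2\ldotp\Suz$ with $O_3(G)=1$ cannot have Sylow $3$-subgroup of order $3^7$. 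Hence $G = 2\ldotp\Suz$, and because $O^{3'}(G)=G$ we do not pass to the quotient $\Suz$. Finally I check in MAGMA that the $12$-dimensional module over $\GF(3)$ is irreducible, which is the reduction modulo $3$ of the Leech lattice representation.

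The main obstacle is the parabolic case analysis, which cannot be settled by order considerations alone. Its most delicate part is $M = 3^{1+10}:(\GL_1(3)\times\Sp_{10}(3))$, whose Levi $\Sp_{10}(3)$ has a huge Sylow $3$-subgroup; there I would first compare nilpotency class and rank of $S$ with subgroups of $\Sp_{10}(3)$, and fall back on Lemma \ref{lm:aut-sp10}-style maximal-subgroup recursion inside $\Sp_{10}(3)$ if needed.
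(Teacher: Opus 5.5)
Your framework matches the paper's: embed $G$ in $\Sp_{12}(3)$, discard maximal subgroups via a fixed-point condition on a Sylow $3$-subgroup, eliminate everything except $2\ldotp\Suz$, then observe that every maximal subgroup $M$ of $2\ldotp\Suz$ has $[M:O_3(M)]_3<3^7$ (your ATLAS check is correct), and verify irreducibility by computer. The gap is in the elimination step, which is the real content of the lemma. Your dichotomy for a parabolic $M_j$ is this: \emph{either} the Levi Sylow subgroup is too small in class or of the wrong type to contain $S$, \emph{or} $G$ has a central involution. It is asserted rather than proved. Nothing forces a candidate $G\le O^{3'}(M_j/O_3(M_j))$ to have non-trivial centre. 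In Lemma~\ref{lm:aut-sp10} the central involutions only became visible after the candidates $A_1,\dots,A_9$ had been listed, and listing them is exactly the hard part. Deciding whether $S$ embeds in a Levi Sylow subgroup of order up to $3^{25}$ is not a quick isomorphism check; that becomes tractable only once the order is down to exactly $3^7$. Your remark that the Levi factors have small $3$-rank relative to $\Suz$ is false: $\Suz$ has $3$-rank $5$, while $\Sp_{10}(3)$ contains an elementary abelian subgroup of order $3^{15}$. Moreover, the involution step breaks down when the involution is $-1_V$. For the parabolics with $k$ even, namely $3^{3+16}:(\GL_2(3)\times\Sp_8(3))$, $3^{10+16}:(\GL_4(3)\times\Sp_4(3))$ and $3^{21}:\GL_6(3)$, the scalar $-1_V$ lies in $O^{3'}(M_j)$. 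Then $C_{M_j}(-1_V)=M_j$ and no contradiction arises; note that $2\ldotp\Suz$ itself has $-1_V$ as its central involution. You would need a central involution of $G$ other than $-1_V$, which is why Lemma~\ref{lm:aut-sp10} requires $|Z(A_i)|\ge 4$ and a non-central $t$ for its even-$k$ parabolics.

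What actually closes the argument in the paper is a systematic descent through maximal subgroups, using \cite{maximals} and MAGMA, that terminates at two kinds of base case. The first kind are groups such as $\Sp_4(9)$, $\SL_5(3)$, $\SU_5(3)$, $\Sp_6(3)$ and $\Sp_4(3)\times\Sp_4(3)$, all of whose maximal subgroups $A_0$ with $|A_0|_3\ge 3^7$ satisfy $O_3(A_0)\ne 1$. The second kind are groups whose Sylow $3$-subgroup has order exactly $3^7$. Eleven of the thirteen such groups arising in the paper are direct products of factors of order divisible by $3$, for instance $\SL_2(3)\times\SL_4(3)$, $\SL_3(3)\times\Sp_4(3)$ and $\SL_2(3^3)\times\Sp_4(3)$. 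Their Sylow $3$-subgroups therefore decompose as direct products, while $S$ does not; only $\Sp_2(3)\circ\SO_6^{\pm}(3)$ needs a separate GAP check. Indecomposability of $S$ is the key tool you are missing, and with it no central-involution or class arguments are needed. Your fallback recursion for $3^{1+10}:\Sp_{10}(3)$ is the right instinct, but it is needed for every parabolic. It is also needed for the non-parabolic survivors $\Sp_4(3)\wr\Sym(3)$ (absent from your list) and $\Sp_4(3^3):3$, whose Sylow $3$-subgroups have order $3^{13}$. Finally, your initial filter searches a Sylow subgroup of order up to $3^{36}$ for subgroups isomorphic to $S$, which is not feasible in practice; the paper only tests $|C_V(T)|=3$ and $[M:O_3(M)]_3\ge 3^7$.
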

    \begin{proof}
        All code used in this proof can be found in \texttt{m/sp12-subs.m} and \texttt{m/sp12.g}. Since $O^{3'}(G) = G$, we know by \cite{extraspecial-autgrp-sp} that $G$ is isomorphic to a subgroup of $\Sp_{12}(3)$. The maximal subgroups of $\Sp_{12}(3)$ are given in \cite[Tables 8.80 and 8.81]{maximals}. Using MAGMA, we compute those maximal subgroups $M$ such that $|C_V(T)| = 3$ and $[M : O_3(M)]_3 \geq |S|$ for some $T \in \Syl_3(M)$. These are given below:
        \begin{table}[H]
            \centering
            \resizebox{\textwidth}{!}{\begin{tabular}{lll}
                $M_1 := 2 \times 3^{1+10}_+ : \Sp_{10}(3)$, & $M_2 := 3^{3+16} : (\GL_2(3) \times \Sp_8(3))$, & {\color{black} $M_3 := 3^{6+18} : (\GL_3(3) \times \Sp_6(3))$}, \\
                {\color{black} $M_4 := 3^{10+16} : (\GL_4(3) \times \Sp_4(3))$}, & {\color{black} $M_5 := 3^{15+10} : (\GL_5(3) \times \Sp_2(3))$}, & {\color{black} $M_6 := 3^{21} : \GL_6(3)$}, \\
                {\color{black} $M_7 := \Sp_4(3) \wr \Sym(3)$}, & {\color{black} $M_8 := \Sp_4(3^3) : 3$} & {\color{black} $M_9 := \Sp_2(3) \circ \GO_6^+(3)$}, \\
                {\color{black} $M_{10} := \Sp_2(3) \circ \GO_6^-(3)$}, & & {\color{black} $M_{11} := 2 \ldotp \Suz$}.
            \end{tabular}}
        \end{table}
        
        We start by ruling out the maximal subgroups $M_i$ for $1 \leq i \leq 10$. To do so, we iteratively compute relevant subgroups of $M_i$ and show that $G$ is not isomorphic to a subgroup of $M_i$. We only list the base cases here, where every maximal subgroup is invalid (the subgroups $A_i$), or where a Sylow $3$-subgroup has order $|S| = 3^7$, but is not isomorphic to $S$ (the subgroups $B_i$).

        Using \cite{maximals} and MAGMA, we see that the group $A$ given below is such that any maximal subgroup $A_0$ of $A$ such that $|A_0|_3 \geq |S|$ satisfies $O_3(A_0) \neq 1$.
        \begin{table}[H]
            \centering
            \begin{tabular}{c|c}
                Group & Reference \\
                \hline
                $A_1 := \Sp_4(3^2)$ & \cite[Tables 8.12 and 8.13]{maximals} \\
                $A_2 := \SL_5(3)$ & \cite[Tables 8.18 and 8.19]{maximals} \\
                $A_3 := \SU_5(3)$ & \cite[Tables 8.20 and 8.21]{maximals} \\
                $A_4 := \Sp_6(3)$ & \cite[Tables 8.28 and 8.29]{maximals} \\
                $A_5 := \Sp_4(3) \times \Sp_4(3)$ & MAGMA \\
            \end{tabular}
        \end{table}

        The following are some relevant groups with a Sylow $3$-subgroup of order $|S| = 3^7$.
        \begin{table}[H]
            \centering
            \begin{tabular}{lll}
                $B_1 := \Sp_2(3) \circ \SO^+_6(3)$ & $B_2 := \Sp_2(3) \circ \SO^-_6(3)$ & $B_3 := \SL_2(3) \times \SL_4(3)$ \\
                $B_4 := \SL_2(3) \times \SU_4(3)$ & $B_5 := \SL_2(3) \times \SL_2(3^2) \times \Sp_4(3)$ & $B_6 := \SL_2(3)^3 \times \Sp_4(3)$ \\
                $B_7 := \SL_2(3) \wr 3 \times \SL_3(3)$ & $B_8 := (Q_8 \times Q_8) : 3 \times \SL_4(3)$ & $B_9 := \SL_2(5) \times \SL_4(3)$ \\
                $B_{10} := \SL_2(3^3) \times \Sp_4(3)$ & $B_{11} := \SL_3(3) \times \Sp_4(3)$ & $B_{12} := \SU_3(3) \times \Sp_4(3)$ \\
                & $B_{13} := \SL_4(3) \times 2^{1+4}_- \ldotp \Alt(5)$
            \end{tabular}
        \end{table}
        \noindent As $S$ is not indecomposable as a group, we deduce that $G$ is not isomorphic to a subgroup of $B_i$ for $3 \leq i \leq 13$. Moreover, we compute using GAP that $G$ is not isomorphic to a subgroup of $B_1 = O^{3'}(M_{9})$ and $B_2 = O^{3'}(M_{10})$ either. Using the groups $A_i$ and $B_j$, along with \cite{maximals} and MAGMA, we can show that $G$ is not isomorphic to a subgroup of $M_i$ for $1 \leq i \leq 10$ (see \texttt{m/sp-12.m}). 
        
        We further note that $M_{11} = O^{3'}(M_{11})$ satisfies all the assumptions listed above. We also compute that $M_{11}$ acts irreducibly on $V$. We appeal to the ATLAS \cite{atlas} to find that every maximal subgroup $D$ of $M_{11}$ containing a Sylow $3$-subgroup of $M_{11}$ satisfies $O_3(D) \neq 1$. Thus, $G$ cannot be isomorphic to a proper subgroup of $M_{11}$. We conclude that $G \cong 2 \ldotp \Suz$.
    \end{proof}
    
    \section{Fusion Systems on a Sylow $3$-subgroup of $\Fi_{24}'$} \label{sec:fi24}
    In this section, we classify all corefree fusion systems on a Sylow $3$-subgroup of $\Fi_{24}'$. From the ATLAS \cite{atlas}, we find that the following $3$-local subgroups are maximal in $\Fi_{24}'$:
    \begin{align*}
        M_1 &\cong 3^{1+10}_+ : (\U_5(2) : 2) \\
        M_2 &\cong 3^{2+4+8} : (\Alt(5) \times \SL_2(3)) : 2 \\
        M_3 &\cong 3^{3+4+3+3} : \GL_3(3) \\
        M_4 &= 3^7 \ldotp \SO_7(3).
    \end{align*}
    
    Fix a Sylow $3$-subgroup $S$ of $G := \Fi_{24}'$. We can choose a $G$-conjugate of $M_i$ so that $S \in \Syl_3(M_i)$ for $1 \leq i \leq 4$. We now construct further $3$-local subgroups of $G$:
    \begin{align*}
        N_1 := M_1 \cap M_3 &\cong 3^{1+10}_+ : (3 \times 3^{1+2}_+) : (2 \times \GL_2(3)) \\
        N_2 = M_1 \cap M_2 &\cong 3^{1+10}_+ : 3^4 : (2 \times \Sym(5)) \\
        N_3 = M_2 \cap M_3 &\cong 3^{2+4+8} : 3 : (2 \times \GL_2(3)) 
    \end{align*}

    \begin{notation}
        Using the $3$-local subgroups of $G$ given above, we can define the following subgroups of $S$:
        \begin{enumerate}
            \item $\bfW := O_3(M_1)$;
            \item $\bfU := O_3(M_2)$;
            \item $\bfT := O_3(M_3)$;
            \item $\bfV_1 := O_3(M_4)$;
            \item $P := O_3(N_1)$;
            \item $Q := O_3(N_2)$; and
            \item $R := O_3(N_3)$.
        \end{enumerate}
    \end{notation}
    The group $\bfV_1$ has a second $N_G(S)$-conjugate that we shall label $\bfV_2$. These subgroups can also be characterized based on the structure of $S$:
    \begin{enumerate}
        \item $\bfW$ is the preimage of $J(S/Z(S))$ in $S$;
        \item $\bfV_1$ and $\bfV_2$ are the two elementary abelian subgroups in $S$ of maximal order;
        \item $\bfT = C_S(\gamma_7(S))$;
        \item $P = C_S(Z_2(S/\bfV_1)) = C_S(Z_2(S/\bfV_2))$;
        \item $Q = C_S(Z_2(S/\bfW))$;
        \item $R = C_S(Z_2(S))$; and
        \item $\bfU = Q \cap R$. 
    \end{enumerate}
    Except for $\bfV_i$, we see that every subgroup given above is characteristic in $S$. Moreover, the subgroups $\bfV_i$ are normal in $S$.
    
    We set $H_i := N_G(\bfV_i) \cong M_4$. We find that 
    \begin{align*}
        O^{3'}(\Out_{H_i}(P)) &= O^{3'}(\Out_G(P)) \cong \SL_2(3), \\
        \Alt(4) \cong O^{3'}(\Out_{H_i}(Q)) &< O^{3'}(\Out_G(Q)) \cong \Alt(5), \\
        O^{3'}(\Out_{H_i}(R)) &= O^{3'}(\Out_G(R)) \cong \SL_2(3).
    \end{align*}
    We note that $P/\bfV_i$, $Q/\bfV_i$ and $R/\bfV_i$ are precisely the unipotent radicals of the minimal parabolics in $H_i/\bfV_i \cong \SO_7(3)$. We further have that
    \[H_1 \cap H_2 = O^{3'}(N_G(\bfT)) \cong 3^{3+4+3+3} : \SL_3(3).\]

    \begin{lemma}
        Let $\calF := \calF_S(\Fi_{24}')$. Then $\calE(\calF) = \{P, Q, R\}$ and $\calF^{cr} = \calE(\calF) \cup \{S, \bfT, \bfU, \bfW, \bfV_1^{\Aut(S)}\}$.
    \end{lemma}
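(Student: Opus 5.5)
The approach is computational, supported by the local structure of $\Fi_{24}'$ described above. The first step is to pin down $\calF^{c}$ up to $S$-conjugacy. For each $\calF$-centric subgroup $X$ we would locate $N_G(X)$ inside one of the $3$-local maximal subgroups $M_1, \dots, M_4$. By Borel--Tits, every $3$-radical subgroup $X$ of $G$ has $N_G(X)$ contained in a $3$-local subgroup. For the radicals that $S$ normalizes, we can take $X = O_3(N_G(X))$ with $N_S(X)$ Sylow in $N_G(X)$. This reduces the search for $\calF$-centric radical subgroups to computing, inside each $M_i$ (with $S \in \Syl_3(M_i)$), the $3$-radical subgroups $X \leq S$ with $C_G(X) \leq X$. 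That is a finite computation in permutation representations of the $M_i$ using GAP/MAGMA, of the kind done in \texttt{fi24/}.

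Alternatively, and more in the spirit of Section \ref{sec:alg}, we would obtain the candidates for $\calE(\calF)$ from Algorithm \ref{alg:all-proto} and Corollary \ref{cor:main-alg}. These give $\mathfrak{P}(S)$, and $\calE(\calF) \subseteq \mathfrak{P}(S)$. For each proto-essential $E$ we then compute $\Out_\calF(E) = \Out_G(E)$, for instance from the local subgroups $N_1, N_2, N_3, H_i$ listed above. Finally we test whether $\Out_\calF(E)$ has a strongly $3$-embedded subgroup and whether $E$ is fully normalized and centric.

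With the candidates in hand, the verification splits into two parts.

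\textbf{Essential subgroups.} The text already records $O^{3'}(\Out_G(P)) \cong \SL_2(3)$, $O^{3'}(\Out_G(Q)) \cong \Alt(5)$ and $O^{3'}(\Out_G(R)) \cong \SL_2(3)$, each with a Sylow $3$-subgroup of order $3$. Since $\Out_S(E) \cong N_S(E)/E$ has order $3$ in each case and the $3$-rank of these groups is $1$, each $\Out_\calF(E)$ has a strongly $3$-embedded subgroup. Hence $P$, $Q$ and $R$ are essential, being centric as $O_3$ of $3$-local subgroups containing their centralizers.

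\textbf{Radical but not essential.} For $S$, $\bfT$, $\bfU$, $\bfW$ and $\bfV_i$ we read off the automizers: $\Out_G(\bfW) \cong \U_5(2){:}2$, $\Out(\bfV_i) \supseteq \SO_7(3)$, $\GL_3(3)$ for $\bfT$, and $(\Alt(5) \times \SL_2(3)){:}2$ for $\bfU$. Each has $O_3 = 1$, so these subgroups are radical. None of these automizers has a strongly $3$-embedded subgroup, because each contains commuting $3$-elements generating a subgroup of rank at least $2$ that is not contained in a unique Sylow. So they lie in $\calF^{cr} \setminus \calE(\calF)$. The $\Aut(S)$-class of $\bfV_1$ is $\{\bfV_1, \bfV_2\}$.

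The main obstacle is completeness: showing that no other subgroup is centric radical (respectively essential). For $\calE(\calF)$, Corollary \ref{cor:alg} reduces this to the preimages of centralizers in $\mathfrak{C}$, and we would check that these are exactly $P$, $Q$ and $R$ up to conjugacy. For $\calF^{cr}$ we would rely on the radical-subgroup enumeration inside the $M_i$. This step is heavy because $|S| = 3^{16}$. We would first run it in $M_1$ and $M_4$, where $S$ has a large normal subgroup ($\bfW$ or $\bfV_i$), so that radicals containing it correspond to radical $3$-subgroups of $\U_5(2){:}2$ or $\SO_7(3)$, which are known. After that, only the few remaining radicals in $M_2$ and $M_3$ need to be handled separately.
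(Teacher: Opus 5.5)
Your treatment of $\calE(\calF)$ is sound, though it takes a different route from the paper. The Appendix computation gives $\calE(\calF)\subseteq\{P,Q,R\}$ for every saturated $\calF$ on $S$. Your check that $P,Q,R$ are centric, with $\Out_\calF(E)$ having a non-normal Sylow $3$-subgroup of order $3$, then gives equality. The paper instead reads both equalities off the published list of radical $3$-subgroups of $\Fi_{24}'$ in \cite{fi24-radicals}. Your reason why $\bfW,\bfU,\bfT,\bfV_i$ are not essential is not a valid criterion as stated: an abelian subgroup of rank at least $2$ lying in two Sylow $3$-subgroups does not by itself exclude a strongly $3$-embedded subgroup. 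Argue instead that these automizers involve $\U_5(2)$, $\SO_7(3)$, $\SL_3(3)$, or a direct product $\Alt(5)\times\SL_2(3)$ of groups of order divisible by $3$, none of which has a strongly $3$-embedded subgroup.

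The genuine gap is the completeness of $\calF^{cr}$. The algorithmic route cannot supply it, since the paper notes that Theorem \ref{thm:algo} has no analogue for radical subgroups. So everything rests on your reduction to radical subgroups inside $M_1,\dots,M_4$, and that reduction is not justified. Borel--Tits concerns groups of Lie type in defining characteristic and says nothing about $\Fi_{24}'$; all that holds in general is that $N_G(X)$ is itself $3$-local. For a fully normalized $X\in\calF^{cr}$ you only know $N_S(X)\in\Syl_3(N_G(X))$, not $S\le N_G(X)$. Hence a maximal $3$-local subgroup containing $N_G(X)$ need not contain $S$, and need not be conjugate to any $M_i$. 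Indeed $\Fi_{24}'$ has further maximal $3$-locals, e.g.\ $(3\times\mathrm{O}_8^+(3){:}3){:}2$ and $(3^2{:}2\times\mathrm{G}_2(3)).2$, whose Sylow $3$-subgroups are proper in $S$. Your search never examines candidates whose normalizers lie in these. It therefore cannot certify that no subgroup beyond $S,P,Q,R,\bfT,\bfU,\bfW,\bfV_1,\bfV_2$ is centric radical. To close this you have two options:
\begin{itemize}
\item run the enumeration over all maximal $3$-local subgroups, which is what \cite{fi24-radicals} does; or
\item first prove that every member of $\calF^{cr}$ is, up to conjugacy, normal in $S$, and that every $3$-local subgroup containing $S$ lies in one of the $M_i$ or in $N_G(\bfV_2)$.
\end{itemize}
Two smaller repairs are needed in the same step. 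A subgroup radical in $M_i$ need not be radical in $G$, so $N_G(X)$ must be recomputed in $G$ for each candidate. Also, $\calF$-centricity means $C_G(X)=Z(X)\times O_{3'}(C_G(X))$, not $C_G(X)\le X$.
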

    \begin{proof}
        This follows from \cite{fi24-radicals}.
    \end{proof}

    We now let $\calF$ be a saturated fusion system on $S$. We can construct $G = \Fi_{24}'$ in GAP, and work with an efficient representation of $S$. We also construct the quotient $\Out_G(\bfW)$ in MAGMA by finding the relevant maximal subgroup in $O^{3'}(\Out(\bfW)) \cong \Sp_{10}(3)$. Throughout this section, we refer the reader less familiar with the structure of $S$ to different pieces of code in MAGMA and GAP to verify the relevant assertions.

    \begin{lemma} 
        We have $\calE(\calF) \subseteq \{P, Q, R\}$.
    \end{lemma}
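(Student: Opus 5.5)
We will prove the lemma computationally, using the machinery of Section \ref{sec:alg}. Since every essential subgroup of any saturated fusion system on $S$ is proto-essential, it suffices to show that $\mathfrak{P}(S) = \{P, Q, R\}$. The inclusion $\{P,Q,R\} \subseteq \mathfrak{P}(S)$ is immediate, because $\calE(\calF_S(\Fi_{24}')) = \{P,Q,R\}$. For the reverse inclusion we run Algorithm \ref{algorithm} and then Algorithm \ref{alg:all-proto}, and apply Corollary \ref{cor:main-alg}. The implementation is in \texttt{find-proto-essentials.g}, run on the efficient representation of $S$ obtained from $\Fi_{24}'$.

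Concretely, we first fix a chief-type series
\[1 = Z_0 < Z_1 < \dots < Z_n = S'\]
of subgroups normal in $S$ with each index equal to $3$; for instance we refine the lower central series of $S$. For each $0 \leq i < n$, we run over representatives of elements $x \in S/Z_i$ of order $3$. To keep the count small, we take representatives up to the action of $\Aut(S)$, or at least of $N_G(S)$ and of $S/Z_i$-conjugacy. For each such $x$ we form the preimage $\pi_i^{-1}(C_{S/Z_i}(x))$. These preimages are automatically centric, so we then apply the remaining proto-essential tests from \cite[Appendix A]{paper:paper-1}. These tests say that $\Out_S(E)$ must be a Sylow subgroup of some candidate $\Out_\calF(E)$ having a strongly $3$-embedded subgroup, so in practice we check that $\Out_S(E)$ has the structure needed inside $\Out(E)$. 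We expect the surviving set $\mathfrak{C}$ to consist of $P$, $Q$ and $R$ only.

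Next we run Algorithm \ref{alg:all-proto}. Here we use the optimisations noted after Corollary \ref{cor:main-alg}. The subgroups $P$, $Q$ and $R$ are characteristic in $S$, so each $\Aut(S)$-orbit is a singleton. Moreover, $P$, $Q$ and $R$ are normal in $S$, so in step (2) only normal subgroups of $S$ need to be considered. Finally, $Q$ and $R$ have the same order and neither contains the other, and $P$ is not contained in $Q$ or $R$. Hence, when we check containments among the members of $\mathfrak{C}^*$, step (2) produces no new normal subgroups beyond $P$, $Q$ and $R$. This gives $\mathfrak{C}^* = \{P,Q,R\}$, so $\mathfrak{P}(S) = \{P,Q,R\}$ and the lemma follows.

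The main obstacle is the proto-essential test for large candidates $E$ of order up to $3^{15}$. The test requires information about $\Out(E)$, and computing $\Aut(E)$ directly may be infeasible. For such subgroups we would first try the cheaper necessary condition described after Corollary \ref{cor:main-alg}: we check whether $E/Z_j$ is proto-essential in $S/Z_j$ for a suitable $j$. If this fails, $E$ is not proto-essential. Where that does not settle the question, we would argue by hand using coprime action and Lemma \ref{lm:burnside}. The idea is to show that $\Out_S(E)$ is either not of order $3$ with $E$ having the required module structure, or is normal in any $3'$-overgroup acting on $E/\Phi(E)$. This rules out a strongly $3$-embedded subgroup, and so shows that $E$ is not proto-essential.
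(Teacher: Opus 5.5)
Your proposal follows the paper's proof. The paper (Appendix \ref{sec:algorithm-appendix}) runs Algorithm \ref{algorithm}, finds that only $P$, $Q$ and $R$ pass the proto-essential tests, and notes that these three are maximal in $S$, so Algorithm \ref{alg:all-proto} adds nothing; this is exactly your containment argument for step (2). Two small points: your fallback for large candidates is not needed for $\Fi_{24}'$, since the paper resorts to hand arguments only for $\Mn$, and the $E/Z_j$ pruning remark you cite appears after Corollary \ref{cor:alg}, not Corollary \ref{cor:main-alg}.
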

    \begin{proof}
        See Appendix \ref{sec:algorithm-appendix}.
    \end{proof}

    The code relevant to Lemmas \ref{lm:fi24-w} to \ref{lm:fi24-t} can be found in the file \texttt{fi24/weak-closure.g}.
    
    \begin{lemma} \label{lm:fi24-w}
        The subgroup $\bfW$ is weakly closed in $\calF$. In particular, $\calE(N_\calF(\bfW)) = \calE(\calF) \cap \{P,Q\}$.
    \end{lemma}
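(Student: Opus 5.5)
To show $\bfW$ is weakly closed in $\calF$, I would reduce to automorphisms of the essential subgroups via Alperin-Goldschmidt. Since $\bfW$ is characteristic in $S$, every element of $\Aut_\calF(S)$ normalizes $\bfW$. By the previous lemma, $\calE(\calF) \subseteq \{P, Q, R\}$. It therefore suffices to show the following: for each $E \in \calE(\calF)$ and each $\alpha \in \Aut_\calF(E)$, either $\bfW \nleq E$, or $\bfW \leq E$ and $\bfW\alpha = \bfW$.

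Indeed, suppose $\phi \colon \bfW \to S$ is in $\calF$. Write $\phi$ as a composite of restrictions of such automorphisms, $\phi = \theta_1 \circ \dots \circ \theta_n$. Inductively, each intermediate image of $\bfW$ equals $\bfW$. So if some $\theta_i$ is defined on $\bfW$ with $\theta_i \in \Aut_\calF(E)$, then $\bfW \leq E$ and $\theta_i$ fixes $\bfW$. Hence $\bfW\phi = \bfW$.

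The case analysis on $E$ runs as follows.

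\textbf{The case $E = R$.} I expect $\bfW \nleq R$. In $\Fi_{24}'$ the subgroup $R = O_3(N_3)$ lies in $M_2 \cap M_3$ and not in $M_1$, and $R = C_S(Z_2(S))$. I would verify in GAP that $\bfW \nleq C_S(Z_2(S))$, i.e.\ that $[\bfW, Z_2(S)] \neq 1$.

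\textbf{The cases $E \in \{P, Q\}$.} Here $\bfW \leq E$ (both are $O_3$ of subgroups of $M_1$). I would characterize $\bfW$ inside $E$ by a property that is invariant under all of $\Aut(E)$, not just $\Aut_\calF(E)$. This is needed because we do not yet know $\Aut_\calF(E)$.

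\emph{Candidate characterizations.} The natural candidates are:
\begin{itemize}
    \item $\bfW$ is the unique extraspecial subgroup of $E$ of order $3^{11}$ and exponent $3$ with $C_E(\bfW) \leq \bfW$;
    \item $\bfW$ is the preimage of $J(E/Z(E))$;
    \item $\bfW = C_E([E,E] \cap \dots)$, or some other characteristic-subgroup expression.
\end{itemize}
Using the characterization of $\bfW$ as the preimage of $J(S/Z(S))$, together with Lemma \ref{lm:grp-thompson}(2), one might try to show $Z(E) = Z(S)$ and $J(E/Z(E)) = J(S/Z(S))$. This would make $\bfW$ characteristic in $E$ for $E = P$ and $E = Q$. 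Concretely, I would compute in GAP the characteristic subgroups of $P$ and of $Q$, or directly check that $\bfW$ is $\Aut(E)$-invariant. If the Thompson subgroup approach works, it avoids an expensive $\Aut(E)$ computation.

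\textbf{The "in particular" clause.} Once $\bfW$ is weakly $\calF$-closed, Lemma \ref{lm:essentials-in-normalizer} gives $\calE(N_\calF(\bfW)) = \{E \in \calE(\calF) \mid \bfW \leq E\}$. Since $\bfW \leq P$ and $\bfW \leq Q$ but $\bfW \nleq R$, this set is $\calE(\calF) \cap \{P, Q\}$.

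\textbf{Main obstacle.} The hard step is proving $\Aut(E)$-invariance of $\bfW$ for $E = P, Q$, since $\Aut(P)$ and $\Aut(Q)$ are large. I would first try the Thompson subgroup argument on $E/Z(E)$. If that fails, I would look for $\bfW$ as the unique subgroup of its isomorphism type in $E$: a search among normal subgroups of $E$ of index $|E/\bfW|$ containing $Z_2$-type characteristic subgroups.
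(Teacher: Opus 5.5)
Your proposal is correct and follows essentially the paper's proof: you reduce via Alperin--Goldschmidt and observe $\bfW \nleq R$. For $E \in \{P,Q\}$ you use $Z(E)=Z(S)$ with Lemma \ref{lm:grp-thompson}(2) to get $\bfW/Z(E) = J(E/Z(E))$, so $\bfW$ is characteristic in $E$ and no $\Aut(E)$ computation or fallback is needed. You then finish with Lemma \ref{lm:essentials-in-normalizer}, exactly as the paper does. The only minor difference is that the paper gets $\bfW \nleq R = C_S(Z_2(S))$ directly from $Z(\bfW) = Z(S)$ rather than from a GAP check.
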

    \begin{proof}
        We note that $P$ and $Q$ contain $\bfW$. Since $Z(\bfW) = Z(S)$, $\bfW$ is not a subgroup of $R = C_S(Z_2(S))$. As $Z(S) = Z(P) = Z(Q)$ and $\bfW/Z(S) = J(S/Z(S))$, we deduce by Lemma \ref{lm:grp-thompson} that $\bfW/Z(Q) = J(Q/Z(Q))$ and $\bfW/Z(P) = J(P/Z(P))$. We infer that $\bfW$ is normalized by $\Aut_\calF(P)$, $\Aut_\calF(Q)$ and $\Aut_\calF(S)$. By Alperin-Goldschmidt, we deduce that $\bfW$ is weakly closed in $\calF$. The second part follows from Lemma \ref{lm:essentials-in-normalizer}.
    \end{proof}

    \begin{lemma} \label{lm:fi24-u}
        The subgroup $\bfU$ is weakly closed in $\calF$. In particular, $\calE(N_\calF(\bfU)) = \calE(\calF) \cap \{Q, R\}$.
    \end{lemma}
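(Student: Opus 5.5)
We follow the pattern of Lemma \ref{lm:fi24-w}. By the preceding lemma, $\calE(\calF) \subseteq \{P, Q, R\}$. By Alperin-Goldschmidt, $\calF$ is generated by $\Aut_\calF(S)$ together with $\Aut_\calF(E)$ for $E \in \calE(\calF)$. Hence it suffices to show two things: that $\bfU$ is characteristic in $S$ (so normalized by $\Aut_\calF(S)$), and that $\bfU$ is characteristic in each of $Q$ and $R$ (so normalized by $\Aut_\calF(Q)$ and $\Aut_\calF(R)$ whenever these are essential). We must also handle $P$. Since $\bfU = Q \cap R$ and $Q$, $R$ are characteristic in $S$ by the structural description, the first point is immediate.

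\textbf{Step 1: the subgroup $P$.} The map-by-map argument fails at $P$ if $\bfU \leq P$, so we check that $\bfU \nleq P$ (by computation in \texttt{fi24/weak-closure.g}, or by orders and the characterizations $P = C_S(Z_2(S/\bfV_i))$, $\bfU = Q\cap R$). Then no $\calF$-map in $\Aut_\calF(P)$ is applied to $\bfU$, and a decomposition of any $\phi \in \Hom_\calF(\bfU, S)$ via Alperin-Goldschmidt only involves automorphisms of overgroups of (images of) $\bfU$. By induction along the decomposition, every intermediate image equals $\bfU$. So only $S$, $Q$, $R$ matter.

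\textbf{Step 2: $\bfU$ is characteristic in $Q$ and in $R$.} We look for an intrinsic characterization. A natural first attempt is to show that $\bfU$ is the centralizer in $Q$ (respectively $R$) of a suitable characteristic section. For example, try showing that $\bfU = C_Q(Z_2(Q))$ or $C_Q(Z_3(Q))$, or that it is $C_R(Z_2(R))$, or that it is the preimage of $J(Q/Z(Q))$ or of $Z(Q/\Phi(Q))$-type objects. One might also use $\bfU = O_3(M_2)$, so that $\bfU$ is the unique normal subgroup of $Q$ of its isomorphism type with a given centralizer property. These candidates are checked in GAP on the explicit copy of $S$. Once such characterizations are found, $\bfU$ is $\Aut_\calF(Q)$- and $\Aut_\calF(R)$-invariant.

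\textbf{Step 3: conclusion.} Combining Steps 1 and 2 gives weak closure. Lemma \ref{lm:essentials-in-normalizer} then yields $\calE(N_\calF(\bfU)) = \{E \in \calE(\calF) \mid \bfU \leq E\} = \calE(\calF) \cap \{Q, R\}$, using $\bfU \leq Q \cap R$ and $\bfU \nleq P$.

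The main obstacle is Step 2: finding characteristic descriptions of $\bfU$ inside both $Q$ and $R$. These groups are large, so computing $\Aut(Q)$ directly is expensive. We would instead verify candidate descriptions, such as centralizers of terms of the upper or lower central series and Thompson subgroups of quotients, by direct computation.
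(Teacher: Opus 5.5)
Your plan is essentially the paper's proof: $\bfU = Q \cap R$ gives $\Aut_\calF(S)$-invariance, and you show $\bfU$ is characteristic in $Q$ and in $R$ via intrinsic descriptions verified by computation. Alperin--Goldschmidt then gives weak closure, and Lemma~\ref{lm:essentials-in-normalizer} gives the essentials of $N_\calF(\bfU)$, with $\bfU \nleq P$ used implicitly. Your first candidate $\bfU = C_Q(Z_2(Q))$ is exactly what the paper uses; in $R$ the paper uses $\bfU = C_R(Z_4(R)/Z_2(R))$, which is not among the candidates you list, so your Step 2 search must include it.
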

    \begin{proof}
        By construction, we have $\bfU = Q \cap R$. Since $Q$ and $R$ are $\Aut_\calF(S)$-invariant, it follows that $\bfU$ is $\Aut_\calF(S)$-invariant. We also find that $\bfU = C_Q(Z_2(Q))$ and that $\bfU = C_R(Z_4(R)/Z_2(R))$. As such, $\bfU$ is normalized by $\Aut_\calF(A)$ for every $A \in \calE(\calF) \cup \{S\}$ containing $\bfU$. Therefore, $\bfU$ must be weakly closed in $\calF$. The second part follows from Lemma \ref{lm:essentials-in-normalizer}.
    \end{proof}

    \begin{lemma} \label{lm:fi24-t}
        The subgroup $\bfT$ is weakly closed in $\calF$. In particular, $\calE(N_\calF(\bfT)) = \calE(\calF) \cap \{P, R\}$.
    \end{lemma}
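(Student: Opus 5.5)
I will follow the same pattern as Lemmas \ref{lm:fi24-w} and \ref{lm:fi24-u}. By Alperin-Goldschmidt together with $\calE(\calF) \subseteq \{P, Q, R\}$, it suffices to show that $\bfT$ is invariant under $\Aut_\calF(S)$ and under $\Aut_\calF(E)$ for each $E \in \calE(\calF)$ that contains $\bfT$. It is also enough to check that $\bfT$ is not contained in any essential subgroup that fails to normalize it. The second statement will then follow directly from Lemma \ref{lm:essentials-in-normalizer}.

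\emph{Invariance under $\Aut_\calF(S)$.} We have $\bfT = C_S(\gamma_7(S))$, and $\gamma_7(S)$ is characteristic in $S$, so $\bfT$ is characteristic in $S$.

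\emph{Which essential candidates contain $\bfT$.} Using GAP (in \texttt{fi24/weak-closure.g}) I will verify that $\bfT \leq P$ and $\bfT \leq R$, but $\bfT \nleq Q$. This matches the group picture: $N_1 = M_1 \cap M_3$ and $N_3 = M_2 \cap M_3$ lie in $M_3 = N_G(\bfT)$, whereas $N_2$ does not. It also matches the fact that $P/\bfV_i$ and $R/\bfV_i$ are radicals of parabolics of $H_1 \cap H_2$-type. Consequently $\Aut_\calF(Q)$ imposes no condition, since it only moves subgroups of $Q$.

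\emph{Invariance under $\Aut_\calF(P)$ and $\Aut_\calF(R)$.} For each $E \in \{P, R\}$, I will find an intrinsic characterization of $\bfT$ inside $E$ that is characteristic in $E$. This is the analogue of $\bfU = C_Q(Z_2(Q))$ in Lemma \ref{lm:fi24-u}. Natural first candidates are $\bfT = C_E(\gamma_k(E))$ or $\bfT = C_E(Z_j(E))$ for a suitable $k$ or $j$, or $\bfT = C_E(Z_{j}(E)/Z_{i}(E))$. Another option is to take $\bfT$ as the preimage of a Thompson subgroup: for example, $\bfT$ might equal $\bfV_1\bfV_2 = J(E)$-type data. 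If $\bfV_1\bfV_2 = \bfT$ or $J(P) = J(S)$, Lemma \ref{lm:grp-thompson} would give the invariance immediately, since $\bfV_1, \bfV_2 \leq \bfT \leq E$ forces $J(E) = J(S)$. I would try this Thompson-subgroup route first and check by computer whether $J(S) = \bfT$. If it fails, I will search the lower and upper central series of $P$ and $R$ in GAP for such a characteristic description.

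The main obstacle is this last step: finding, for both $P$ and $R$, a characteristic-subgroup description of $\bfT$. Once such a description exists it is a routine computer check, but it must be found separately for each of the two groups.
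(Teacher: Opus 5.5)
Your framework is the paper's: Alperin--Goldschmidt with $\calE(\calF)\subseteq\{P,Q,R\}$, the facts $\bfT\le P\cap R$ and $\bfT\nleq Q$ (so $\Aut_\calF(Q)$ is irrelevant), $\Aut_\calF(S)$-invariance from $\bfT=C_S(\gamma_7(S))$, and then Lemma \ref{lm:essentials-in-normalizer}. The gap is the one step with real content. You never produce a characteristic description of $\bfT$ inside $P$ and inside $R$, so invariance under $\Aut_\calF(P)$ and $\Aut_\calF(R)$ is not actually established.

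The paper fills this with the computation $\gamma_7(S)=\gamma_5(P)=\gamma_6(R)$. The reason this suffices is as follows. Since $\bfT=C_S(\gamma_7(S))$ and $\bfT\le E$ for $E\in\{P,R\}$, you get $C_E(\gamma_7(S))=E\cap\bfT=\bfT$. So all you need is that $\gamma_7(S)$ is characteristic in $E$, and that holds because it is a term of the lower central series of $E$. This gives $\bfT=C_P(\gamma_5(P))=C_R(\gamma_6(R))$. Your first candidate $C_E(\gamma_k(E))$ is therefore the right one, with $k=5$ for $P$ and $k=6$ for $R$.

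Do not lead with the Thompson-subgroup route, because it cannot work. Here $J(S)=\bfV_1\bfV_2$, and $\bfV_1\bfV_2/\bfV_1\cong\bfV_2/(\bfV_1\cap\bfV_2)$ is elementary abelian. On the other hand, $\bfT/\bfV_1=O_3((H_1\cap H_2)/\bfV_1)$ is the radical $3^{3+3}$ of the totally singular plane stabilizer in $H_1/\bfV_1\cong\SO_7(3)$, which is nonabelian in odd characteristic. Hence $J(S)<\bfT$.

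Your alternative condition $J(P)=J(S)$ also does not help. It holds automatically by Lemma \ref{lm:grp-thompson}, since $J(S)\le\bfT\le P$, but it only shows that $J(S)$ is $\Aut_\calF(P)$-invariant. It says nothing about $\bfT$ unless you separately express $\bfT$ characteristically in terms of $J(S)$ inside $P$ and $R$.
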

    \begin{proof}
        By construction, we have that $\bfT \leq P \cap R$, with $S/\bfT \cong 3^{1+2}_+$. Moreover, $\bfT$ is not contained in $Q$. As $\gamma_7(S) = \gamma_6(R) = \gamma_5(P)$, we infer that
        \[\bfT = C_S(\gamma_7(S)) = C_R(\gamma_6(R)) = C_P(\gamma_5(P)).\]
        As such, $\bfT$ is normalized by $\Aut_\calF(S)$, $\Aut_\calF(P)$ and $\Aut_\calF(R)$. This implies that $\bfT$ is weakly closed in $\calF$. The second part follows from Lemma \ref{lm:essentials-in-normalizer}.
    \end{proof}

    The code relevant to Lemmas \ref{lm:fi24-outfp} and \ref{lm:fi24-outfr} are given in \texttt{fi24/automizer.g}.

    \begin{lemma} \label{lm:fi24-outfp}
        If $P \in \calE(\calF)$, then $P/W_0$, $P/\bfT$ and $Z_2(P)/Z(P)$ are natural modules for $O^{3'}(\Out_\calF(P)) \cong \SL_2(3)$, where $W_0 := C_P(P/\bfW)$.
    \end{lemma}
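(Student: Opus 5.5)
We argue via the structure of $\Out_\calF(P)$ forced by essentiality, and then use computation in $S$. Since $P \in \calE(\calF)$, $\Out_\calF(P)$ has a strongly $3$-embedded subgroup. Also $\Out_S(P) \cong S/P$ is a Sylow $3$-subgroup of $\Out_\calF(P)$, because $P \normalIn S$ is fully normalized. From the structure of $S$ (as in $N_1$, or directly in GAP), $S/P$ has order $3$. Hence $L := O^{3'}(\Out_\calF(P))$ is generated by conjugates of an element of order $3$, acting on $P$ with $O_3(L)=1$. The subgroups $W_0$, $\bfT$, $Z(P)$ and $Z_2(P)$ are all $\Aut_\calF(P)$-invariant:
\begin{itemize}
\item $\bfT = C_P(\gamma_5(P))$, by Lemma \ref{lm:fi24-t};
\item $\bfW$ is weakly closed and characteristic in $P$, by Lemma \ref{lm:fi24-w}, so $W_0 = C_P(P/\bfW)$ is invariant;
\item $Z(P)$ and $Z_2(P)$ are characteristic in $P$.
\end{itemize}
Thus $L$ acts on the sections $P/W_0$, $P/\bfT$ and $Z_2(P)/Z(P)$. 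We check in GAP that each of these has order $3^2$.

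The first step is to show that $L$ acts faithfully on $P/\bfT$, or at least nontrivially. The element of order $3$ in $\Out_S(P)$ acts nontrivially on $P/\bfT$, as can be computed in $S$ since $S/\bfT \cong 3^{1+2}_+$ and $[S,P] \not\le \bfT$. So $L$ induces on this $2$-dimensional $\GF(3)$-space a subgroup $\overline{L}$ of $\GL_2(3)$ with $O^{3'}(\overline{L}) = \overline{L}$ and nontrivial Sylow $3$-subgroup. This forces $\overline{L} = \SL_2(3)$, since $O^{3'}$ of any subgroup of $\GL_2(3)$ with order divisible by $3$ is $\SL_2(3)$. The same argument applies to $P/W_0$ and $Z_2(P)/Z(P)$, once we verify by computation that $S$ acts nontrivially on each.

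The main obstacle is identifying $L$ itself as $\SL_2(3)$, rather than merely having quotients isomorphic to $\SL_2(3)$. The kernel $K$ of the action of $L$ on $P/\bfT \oplus P/W_0 \oplus Z_2(P)/Z(P)$ must be shown to be small. My plan is to choose a characteristic series of $P$ refining these sections, so that every factor is either one of the three natural modules or has order $3$. By the lemma on $[O^{3'}(G),A] \le B$ for index $p$, $L$ acts trivially on the order-$3$ factors. So $K$ stabilizes a series of $P$ on which it acts trivially on each factor, hence is a $3$-group by coprime action. It is also normal in $L$, so $K \le O_3(L) = 1$, by Lemma \ref{lm:centric-faithful} and the fact that $\Out_\calF(P)$ has a strongly $3$-embedded subgroup.

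Hence $L$ embeds in $\SL_2(3)^3$ with each projection surjective. Since $|L|_3 = 3$, $L$ is a diagonal-type subgroup isomorphic to $\SL_2(3)$: a subdirect product with Sylow $3$ of order $3$ cannot contain a second $Q_8$ factor independently generated, because $\langle$ $3$-elements $\rangle$ of $\SL_2(3)$ is all of it. This makes each section a natural $\SL_2(3)$-module. The computational inputs, namely the orders of the sections and the nontrivial action of $S$, are the parts to be checked in \texttt{fi24/automizer.g}.
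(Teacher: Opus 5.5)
There is a genuine gap, at the step you flag as the main obstacle. To conclude that $K$ is a $3$-group you need $K$ to act trivially on every $L$-chief factor of $P$. The characteristic series you propose does not exist.

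Characteristic subgroups are $\Aut_\calF(P)$-invariant for every $\calF$. In $\calF_S(\Fi_{24}')$ the group $L$ has five nontrivial chief factors on $P$: one in $P/\bfW$ and four in $\bfW/Z(S)$. For the latter, take an involution of $O^{3'}(\Out_{\Fi_{24}'}(\bfW)) \cong \U_5(2)$ inducing the central involution of $L$. It centralizes $W_0/\bfW$ and $L$, so its centralizer has $3$-part at least $3^3$. This forces it to be a unitary transvection, and the $10$-dimensional Weil character takes the value $-6$ there. So it inverts an $8$-dimensional subspace of $\bfW/Z(S)$, which gives four natural factors.

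A series whose nontrivial factors are among your three sections has at most three nontrivial factors, so it cannot exist. Two of the natural factors lie in $C_{\bfW}(Z_2(P))/Z_2(P)$, and nothing in your argument forces $K$ to act trivially on them. Proving that is essentially the content of the lemma.

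The same missing ingredient undermines your first step. Nontrivial action of $\Out_S(P)$ on a $2$-dimensional section only shows that the image of $L$ in $\GL_2(3)$ is either $\SL_2(3)$ or cyclic of order $3$. Your claim about $O^{3'}$ of subgroups of $\GL_2(3)$ fails for a Borel subgroup. Ruling out the cyclic image is exactly showing that some $3'$-element of $L$ acts nontrivially on that section.

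The final step is also false as stated. The group $(Q_8 \times Q_8):3$ embeds subdirectly in $\SL_2(3)^2$ and is generated by elements of order $3$. It has Sylow $3$-subgroups of order $3$ and $O_3 = 1$, so it even has a strongly $3$-embedded subgroup. It is a factor of $A_6$ in the proof of Lemma \ref{lm:aut-sp10}. So faithfulness on the sum of the sections would give neither $L \cong \SL_2(3)$ nor faithfulness on each individual section, and the latter is what ``natural module'' requires.

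The missing idea is the one the paper uses: treat each section separately. Show that a $3'$-element $r \in O^{3'}(\Aut_\calF(P))$ centralizing that section must centralize $P/\Phi(P)$, so $r = 1$.
\begin{itemize}
\item For $P/W_0$, apply the Three Subgroups Lemma using $[W_0,W_0] \le Z_4(P)$, $[[P,W_0] : Z_4(P) \cap [P,W_0]] = 3$ and $[Z_5(P) : \Phi(P)] = 3$.
\item For $Z_2(P)/Z(P)$, one gets that $r$ centralizes $\gamma_4(P)$, and hence $P/C_P(\gamma_4(P))$.
\item For $P/\bfT$, the equality $Z(\bfT) = Z_2(P)$ reduces to the previous case.
\end{itemize}
The kernel of $O^{3'}(\Aut_\calF(P))$ on each section then has no nontrivial $3'$-elements. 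Its image in $\Out_\calF(P)$ is therefore a normal $3$-subgroup, hence trivial. Finally, a nontrivial subgroup of $\GL_2(3)$ generated by $3$-elements with $O_3 = 1$ is $\SL_2(3)$, acting naturally.
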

    \begin{proof}
        Assume that $r \in O^{3'}(\Aut_\calF(P))$ is a $3'$-element that centralizes $P/W_0$. Then we find that $[r, P, W_0] \leq [W_0, W_0] \leq Z_4(P)$. Moreover, since $[[P, W_0] : Z_4(P) \cap [P, W_0]] = 3$, we also have that $[P, W_0, r] \leq Z_4(P)$. By the Three Subgroups Lemma, we deduce that $[W_0, r, P] \leq Z_4(P)$. As such, we find that $r$ centralizes $W_0/Z_5(P)$. Since $[Z_5(P) : \Phi(P)] = 3$, we find that $O^{3'}(\Aut_\calF(P))$ centralizes $Z_5(P)/\Phi(P)$. By coprime action, we deduce that $r$ centralizes $W_0/\Phi(P)$. But then $r$ centralizes $P/\Phi(P)$, meaning that $r = 1$. Thus, $O^{3'}(\Out_\calF(P))$ acts faithfully on $P/W_0$ of order $3^2$. This means that $P/W_0$  is a natural module for $O^{3'}(\Out_\calF(P)) \cong \SL_2(3)$.

        Now let $r \in O^{3'}(\Aut_\calF(P))$ be a $3'$-element that centralizes $Z_2(P)/Z(P)$. Since $|Z(P)| = [\gamma_4(P) : Z_2(P)] = 3$, we deduce by coprime action that $r$ centralizes $\gamma_4(P)$. This implies that $[r, \gamma_4(P), P] = 1 = [\gamma_4(P), P, r]$. By the Three Subgroups Lemma, we deduce that $[P, r, \gamma_4(P)] = 1$. Thus, $r$ centralizes $P/C_P(\gamma_4(P))$. Since $[C_P(\gamma_4(P)) \Phi(P) : C_P(\gamma_4(P)] = 3$, we deduce by coprime action that $r$ centralizes $(C_P(\gamma_4(P)) \Phi(P))/\Phi(P)$. But then $r$ centralizes $P/\Phi(P)$, meaning that $r = 1$. Thus, $Z_2(P)/Z(P)$, which has order $3^2$, is a natural module for $O^{3'}(\Out_\calF(P)) \cong \SL_2(3)$.

        Finally, let $r \in O^{3'}(\Aut_\calF(P))$ be a $3'$-element that centralizes $P/\bfT$ of order $3^2$. As $Z(\bfT) = Z_2(P)$, we infer that $[r, P, Z(\bfT)] = 1 = [P, Z(\bfT), r]$. By the Three Subgroups Lemma, we deduce that $[Z(\bfT), r, P] = 1$, meaning that $[Z(\bfT), r] \leq Z(P)$. In particular, $r$ centralizes $Z(\bfT)/Z(P) = Z_2(P)/Z(P)$. By the previous paragraph, it follows that $r = 1$. Thus, $P/\bfT$ is also a natural module for $O^{3'}(\Out_\calF(P)) \cong \SL_2(3)$.
    \end{proof}

        

        

    \begin{lemma} \label{lm:fi24-outfr}
        If $R \in \calE(\calF)$, then both $R/\bfT$ and $Z(R)$ are natural modules for $O^{3'}(\Out_\calF(R)) \cong \SL_2(3)$.
    \end{lemma}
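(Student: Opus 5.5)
Since $R$ is essential, $\Out_\calF(R)$ has a strongly $3$-embedded subgroup and $\Out_S(R) = N_S(R)/R$ has order $3$, because $R$ has index $3$ in $S$. I would first show that $L := O^{3'}(\Out_\calF(R))$ is isomorphic to $\SL_2(3)$. This comes from the faithful action established below: once $L$ acts faithfully on a module of order $3^2$, it is a subgroup of $\GL_2(3)$ generated by its elements of order $3$ and having a non-normal Sylow $3$-subgroup of order $3$, and hence it equals $\SL_2(3)$. So the plan mirrors Lemma \ref{lm:fi24-outfp}. For each of $R/\bfT$ and $Z(R)$, I take a $3'$-element $r \in O^{3'}(\Aut_\calF(R))$ that centralizes the module and show that $r$ centralizes $R/\Phi(R)$. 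Burnside (Lemma \ref{lm:burnside}) together with coprime action then gives $r = 1$, i.e.\ the action is faithful. Before doing this I would record, from the code in \texttt{fi24/automizer.g}, the facts that $[R : \bfT] = 3^2$ (from $S/\bfT \cong 3^{1+2}_+$) and $|Z(R)| = 3^2$ (as $R = C_S(Z_2(S))$ and $Z(R) = Z_2(S)$). Both $\bfT$ and $Z(R)$ are $\Aut_\calF(R)$-invariant: $\bfT$ by Lemma \ref{lm:fi24-t}, where $\bfT = C_R(\gamma_6(R))$, and $Z(R)$ because it is characteristic.

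\emph{Module $Z(R)$.} Suppose $r$ centralizes $Z(R)$. I would pick a characteristic term $X$ of the upper or lower central series of $R$ such that $[R,X] \leq Z(R)$ with $X$ as large as possible. The natural candidate is $X = Z_2(R)$, or some $\gamma_k(R)$ lying in $Z_2(R)$. I would then run the Three Subgroups Lemma in the same way as in the second paragraph of the proof of Lemma \ref{lm:fi24-outfp}. First, coprime action on $X/Z(R)$ shows that $r$ centralizes $X$; this needs $[X : Z(R)]$ to be $3$, or the quotient to be otherwise forced trivial by $O^{3'}$. Then $[r,X,R] = 1 = [X,R,r]$, so $[R,r,X] = 1$, and therefore $r$ centralizes $R/C_R(X)$. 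Next I check with GAP that $C_R(X)\Phi(R)$ has index $3$ over $C_R(X)$, or more generally that $[C_R(X)\Phi(R) : \Phi(R)] = 3$. Since $O^{3'}$ centralizes any invariant section of order $3$, $r$ then centralizes $R/\Phi(R)$, and so $r=1$.

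\emph{Module $R/\bfT$.} Suppose instead that $r$ centralizes $R/\bfT$. As in the third paragraph of Lemma \ref{lm:fi24-outfp}, I would use a subgroup $Y$ with $[\bfT, Y] = 1$ and $Y \leq Z(R)$ modulo a smaller central piece. Here $Y = Z(\bfT) \cap Z_2(R)$ or $Y = \gamma_6(R)$ works, since $\bfT = C_R(\gamma_6(R))$. We have $[r,R,Y] \leq [\bfT, Y] = 1$, and $[R,Y,r]$ is trivial or lies in a section of order $3$. The Three Subgroups Lemma then gives $[Y,r,R] \leq$ (a small subgroup), which forces $r$ to centralize a section that the previous paragraph showed to be faithful. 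Concretely, I expect $r$ to centralize $Z(R)$, or at least $Z(R)/\gamma_6(R)$ together with $\gamma_6(R)$. By coprime action $r$ then centralizes $Z(R)$, and the first part gives $r=1$.

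The main obstacle is identifying the correct characteristic subgroups of $R$ and verifying the index-$3$ conditions that make the Three Subgroups arguments close up. This depends on the fine structure of $R$ (its upper and lower central series, and the identity $\gamma_7(S) = \gamma_6(R)$), which I would check computationally. If one choice of $X$ fails, I would first try $X = Z_4(R)$ with the quotient $Z_4(R)/Z_2(R)$, since that section already appeared in describing $\bfU = C_R(Z_4(R)/Z_2(R))$.
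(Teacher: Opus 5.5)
Your strategy differs from the paper's and can be made to work. However, the $R/\bfT$ half fails as written.

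\textbf{The gap.} In the $R/\bfT$ half you transplant the Three Subgroups arrangement from the third paragraph of Lemma~\ref{lm:fi24-outfp}: from $[r,R,Y]=1$ and $[R,Y,r]=1$ you deduce $[Y,r,R]=1$. For $P$ this works because $[P,Z(\bfT)]\le Z(P)$ has order $3$. For $R$, take $Y=Z(\bfT)$. It contains $\gamma_6(R)$, so $C_R(Y)=\bfT$. One has $Z(R)=Z_2(S)<Y\le Z_3(S)=Z_2(R)$ with $[Y:Z(R)]=3$. The map $g\mapsto(y\mapsto[y,g])$ embeds $R/\bfT$ into $\Hom(Y/Z(R),Z(R))$, and both groups have order $3^2$. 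Hence $[R,Y]=Z(R)$ is the whole module. So your hypothesis $[R,Y,r]=1$ is exactly what you are trying to prove. Your conclusion $[Y,r,R]=1$ only says that $r$ centralizes $Y/Z(R)$, which is automatic since that section has order $3$.

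\textbf{The fix.} Swap the roles. Because $[Y:Z(R)]=3$, you have $[r,Y]\le Z(R)$, so $[r,Y,R]=1$. You also have $[R,r,Y]\le[\bfT,Z(\bfT)]=1$. The Three Subgroups Lemma then gives $[Y,R,r]=1$, i.e.\ $r$ centralizes $[Y,R]=Z(R)$, and your $Z(R)$ half yields $r=1$.

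\textbf{The $Z(R)$ half.} This closes with $X=Z_2(R)$, but not via $[X:Z(R)]=3$. In fact $Z_2(R)/Z(R)=Z_3(S)/Z_2(S)$ has order larger than $3$. To see this, note that $S$ acts on $Z_2(\bfU)/Z(\bfU)$, of order $3^4$, through $S/Q$ of order $3$. Its fixed points there therefore have order at least $3^2$, and $Z(\bfU)=Z_2(S)$. What you actually need is that this section is centralized by $\Aut_S(R)$, hence by $O^{3'}(\Aut_\calF(R))$. Also, $\Phi(R)\le C_R(Z_2(R))$ automatically: $R/C_R(Z_2(R))$ embeds in the elementary abelian group $\Hom(Z_2(R)/Z(R),Z(R))$. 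So the only computational input is $[C_R(Z_2(R)):\Phi(R)]\le 3$, and this holds (the index is $3$). The fallback choices such as $Z_4(R)$ or $\gamma_k(R)$ are unnecessary.

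\textbf{Comparison with the paper.} The paper proves the two faithfulness statements independently. For $R/\bfT$ it passes through $\bfT/Z_6(R)$, of order $3$, and then runs a Three Subgroups argument modulo $Z_5(R)$, using $\Phi(R)Z_5(R)=Z_6(R)$, to reach $R/\Phi(R)$. For $Z(R)$ it never uses $R/\Phi(R)$. Instead it climbs from $Z_2(R)=B_2$ to $B_4$, where $B_i=Z_i(R)\cap\Phi(R)$, in steps of index $3$, and uses that $B_4$ is self-centralizing in $R$. Your repaired route exploits the fact that the two modules are linked by commutation with $Z(\bfT)\le Z_2(R)$. As a result only one genuine faithfulness argument is needed, and it rests on a single computed index rather than several. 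The paper's version keeps each half self-contained and parallel to Lemma~\ref{lm:fi24-outfp}.
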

    \begin{proof}
        Let $r \in O^{3'}(\Aut_\calF(R))$ be a $3'$-element that centralizes $R/\bfT$. Since $[\bfT : Z_6(R)] = 3$, we deduce by coprime action that $r$ centralizes $\bfT/Z_6(R)$. This implies that $[r, R, R] = [R, r, R] \leq Z_5(R)$, meaning that $[R, R, r] \leq Z_5(R)$ by the Three Subgroups Lemma. In other words, $r$ centralizes $\Phi(R)/(\Phi(R) \cap Z_5(R))$. Thus, $r$ also centralizes $\Phi(R) Z_5(R)/\Phi(R)$. We compute that $\Phi(R) Z_5(R) = Z_6(R)$. By coprime action, it follows that $r$ centralizes $R/\Phi(R)$. We conclude that $r = 1$. Thus, $O^{3'}(\Out_\calF(R))$ acts faithfully on $R/\bfT$ of order $3^2$. This means that $R/\bfT$ is a natural module for $O^{3'}(\Out_\calF(R)) \cong \SL_2(3)$.
        
        Let $r \in O^{3'}(\Aut_\calF(R))$ be a $3'$-element that centralizes $Z(R)$ of order $3^2$. As $Z_2(R) = Z_3(S)$ and $Z(R) = Z_2(S)$, we deduce that $r$ centralizes $Z_2(R)$ as well by coprime action. Let $B_i := Z_i(R) \cap \Phi(R)$ for $2 \leq i \leq 4$. We compute that $[B_4 : B_3] = [B_3 : B_2] = 3$. As such, $r$ centralizes $B_4/B_2$. However, we have $B_2 = Z_2(R)$, which implies that $r$ centralizes $B_2$ as well. But then $r$ centralizes $B_4$ by coprime action. As $B_4$ is self-centralizing in $R$, we infer that $r = 1$. This implies that $Z(R)$ is a natural $\SL_2(3)$-module for $O^{3'}(\Out_\calF(R))$.
    \end{proof}
    
    \begin{proposition} \label{prp:fi24-outfw}
        Assume that $O_3(N_\calF(\bfW)) = \bfW$. Then $O^{3'}(\Out_\calF(\bfW)) \cong \U_5(2)$ acts irreducibly on $\bfW/Z(S)$. Moreover, $\calE(N_\calF(\bfW)) = \{P, Q\}$, $O^{3'}(\Out_\calF(Q)) \cong \Alt(5)$ and $|\Out_{O^{3'}(N_\calF(\bfW))}(S)| = 2^2$.
    \end{proposition}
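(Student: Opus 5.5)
The strategy is to apply Lemma \ref{lm:aut-sp10} to $G := O^{3'}(\Out_\calF(\bfW))$, with $\bfW \cong 3^{1+10}_+$ and $V := \bfW/Z(S)$.

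\textbf{Setting up the hypotheses of Lemma \ref{lm:aut-sp10}.} By Lemma \ref{lm:fi24-w}, $\bfW$ is weakly closed, so it is fully normalized and $N_\calF(\bfW)$ is saturated by Lemma \ref{lm:nfa-sat}. Since $\bfW$ is normal and centric in $N_\calF(\bfW)$, the Model Theorem gives a model $L$ with $F^*(L) = O_3(L) = \bfW$ under our assumption $O_3(N_\calF(\bfW)) = \bfW$.

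Now set $G := O^{3'}(\Out_\calF(\bfW)) = O^{3'}(L/\bfW)$, after factoring out the inner part. We check the four hypotheses in turn.

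\begin{enumerate}
\item $O_3(G) = 1$. A normal $3$-subgroup of $\Out_L(\bfW)$ would give a normal $3$-subgroup of $L$ properly containing $\bfW$, contradicting $O_3(L) = \bfW$. Here Lemma \ref{lm:centric-faithful} lets us pass between $L/\bfW$ and $\Out_\calF(\bfW)$.
\item $S/\bfW \in \Syl_3(G)$, and $S/\bfW \cong 3 \times 3 \wr 3$. This is a structural fact about $S$, checked by computer in \texttt{fi24/weak-closure.g}. The fact that it equals $\Out_S(\bfW)$ follows because $C_S(\bfW) \le \bfW$.
\item $|C_V(S)| = 3$. This is a direct computation in $S$.
\item There is a normal $3^{1+2}_+$ in $S/\bfW$ with a complement $A$ satisfying $|C_V(A)| = 3^4$. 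This can be read off from $S$ as well; the natural candidate is the image of $P$ or $Q$ in $S/\bfW$.
\end{enumerate}

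All of these are intrinsic to $S$ and verifiable in GAP, so Lemma \ref{lm:aut-sp10} yields $G \cong \U_5(2)$ acting irreducibly on $V$.

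\textbf{Essentials and $\Out_\calF(Q)$.} Since $G \cong \U_5(2)$ has $3$-rank data forcing a nontrivial fusion system, we consider $N_\calF(\bfW) = \calF_S(L)$ with $L/\bfW$ containing $\U_5(2)$. Its essential subgroups are the preimages of the $3$-radicals of the parabolic-type subgroups of $\U_5(2)$ normalizing $S/\bfW$. As $\U_5(2)$ has two minimal parabolics over $S$, we get two essentials. By Lemma \ref{lm:fi24-w} they lie in $\{P,Q\}$, hence $\calE(N_\calF(\bfW)) = \{P,Q\}$.

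For $Q$, $\Out_\calF(Q)$ is computed inside $L$ as $N_L(Q)/Q$. In $\U_5(2)$ the corresponding maximal $3$-local subgroup contributes $O^{3'}(\Out_\calF(Q)) \cong \Alt(5)$. This matches $N_2$, so we can compute it directly in the explicit copy of $\U_5(2) \le \Sp_{10}(3)$ built in MAGMA.

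\textbf{The order of $\Out_{O^{3'}(N_\calF(\bfW))}(S)$.} The subsystem $O^{3'}(N_\calF(\bfW))$ is realized by $\bfW.\U_5(2)$ by the Model Theorem applied to the subgroup of $L$. Hence $\Out(S)$ there equals $N_{\U_5(2)}(T)/T$ for $T \in \Syl_3$, which is a torus of order $2^2$.

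\textbf{Main obstacle.} The main difficulty is verifying hypothesis (4), and, more seriously, justifying that the model's outer automizer meets the Sylow and $O_3$ conditions exactly. I would handle this by doing explicit GAP computations with $S$.
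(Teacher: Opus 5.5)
Your route is the paper's. The first assertion is exactly Lemma~\ref{lm:aut-sp10} applied to $O^{3'}(\Out_\calF(\bfW))$, and the rest is read off from the structure of $\bfW.\U_5(2)$ by computation. Your check of hypotheses (1)--(3) is fine: you use a model $L$ with $O_3(L)=O_3(N_\calF(\bfW))=\bfW$, and $\Out_S(\bfW)=S/\bfW$ is Sylow. However, three of your supporting justifications are wrong as written.

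First, in hypothesis (4), $P/\bfW\cong 3\times 3^{1+2}_+$ and $Q/\bfW\cong 3^4$ both have order $3^4$. So neither can be the normal $3^{1+2}_+$ (order $3^3$) or its complement (order $3^2$). The pair $(N,A)$ has to be found by direct computation in $S/\bfW$.

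Second, ``$\U_5(2)$ has two minimal parabolics over $S$'' proves nothing here. $\U_5(2)$ is of Lie type in characteristic $2$, so its parabolics are $2$-local and say nothing about the $3$-local subgroups containing $S/\bfW$. The count of essentials needs no computation:
\begin{itemize}
\item By Lemma~\ref{lm:fi24-w} and $\calE(\calF)\subseteq\{P,Q,R\}$, you have $\calE(N_\calF(\bfW))\subseteq\{P,Q\}$.
\item If $P$ (resp.\ $Q$, resp.\ both) were missing, then the characteristic subgroup $Q$ (resp.\ $P$, resp.\ $S$) would be weakly closed in $N_\calF(\bfW)$ by Alperin--Goldschmidt.
\item It would also be contained in every essential subgroup, hence normal by Lemma~\ref{lm:weak-closure-to-normality}.
\item This contradicts $O_3(N_\calF(\bfW))=\bfW$.
\end{itemize}

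Third, the Model Theorem does not give $O^{3'}(N_\calF(\bfW))=\calF_S(O^{3'}(L))$. In general $O^{p'}(\calF_S(G))$ can be strictly smaller than $\calF_S(O^{p'}(G))$. The equality does hold here, for a different reason:
\begin{itemize}
\item $\calF_S(O^{3'}(L))$ has $3'$-index in $\calF_S(L)$.
\item Since $\bfW$ is normal and centric in $L$, every element of $\Aut_{O^{3'}(L)}(S)$ restricts into $\Aut_{O^{3'}(L)}(\bfW)=O^{3'}(\Aut_L(\bfW))$.
\item Hence $\Aut_{O^{3'}(L)}(S)=\Inn(S)\Aut^0(S)$, which forces the equality.
\end{itemize}
With that, $|\Out_{O^{3'}(N_\calF(\bfW))}(S)|=|N_{\U_5(2)}(T)/T|=2^2$ for $T\in\Syl_3(\U_5(2))$ is a finite check; there is no ``torus'' involved.
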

    \begin{proof}
        The first part follows from Lemma \ref{lm:aut-sp10}. The second statement follows by analysing the structure of $\Out_\calF(\bfW)$ (see, for example, the file \texttt{fi24/automizer-w.m}).
    \end{proof}
    
    The code for Lemmas \ref{lm:fi24-vnorm-p} and \ref{lm:fi24-vnorm-r} can be found in the file \texttt{fi24/v-norm.g}.
    
    \begin{lemma} \label{lm:fi24-vnorm-p}
        If $P \in \calE(\calF)$, then both $\bfV_i$ are normalized by $O^{3'}(\Aut_\calF(P))$.
    \end{lemma}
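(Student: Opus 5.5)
We need to show that if $P \in \calE(\calF)$, then $O^{3'}(\Aut_\calF(P))$ normalizes each $\bfV_i$. Our plan is to express $O^{3'}(\Aut_\calF(P))$ as generated by $\Aut_S(P)$ together with $3'$-elements lifted from $\SL_2(3)$, and then to show directly that each of these generators normalizes $\bfV_1$ and $\bfV_2$.

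First, $\Aut_S(P)$ normalizes both $\bfV_i$, since they are normal in $S$. By Lemma~\ref{lm:fi24-outfp}, $O^{3'}(\Out_\calF(P)) \cong \SL_2(3)$. This group is generated by its Sylow $3$-subgroups, and each Sylow $3$-subgroup is conjugate to $\Out_S(P)$ under the quaternion subgroup $Q_8$. It therefore suffices to show that $O^{3'}(\Aut_\calF(P))$ leaves the set $\{\bfV_1, \bfV_2\}$ invariant, and then that the induced permutation is trivial.

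For invariance we use the structure of $P$. The $\bfV_i$ are the two elementary abelian subgroups of maximal order in $S$, and both lie in $P$ (as $P = C_S(Z_2(S/\bfV_i))$ contains $\bfV_i$). Hence $J(S) = \bfV_1\bfV_2 \leq P$, and by Lemma~\ref{lm:grp-thompson} we get $J(P) = J(S)$. So the elementary abelian subgroups of $P$ of maximal order are exactly $\bfV_1$ and $\bfV_2$; we would confirm with GAP (\texttt{fi24/v-norm.g}) that $P$ contains no others of order $3^7$. Since $\Aut_\calF(P)$ permutes the maximal-order elementary abelian subgroups of $P$, it permutes $\{\bfV_1, \bfV_2\}$. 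This gives a homomorphism $\Aut_\calF(P) \to \Sym(2)$.

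The image of $O^{3'}(\Aut_\calF(P))$ under this homomorphism is a $2$-group quotient of a group generated by $3$-elements, so it is trivial: $O^{3'}(X)$ has no nontrivial $3'$-quotient, and the image of $O^{3'}(X)$ is generated by images of $3$-elements, which are trivial in $\Sym(2)$. Hence $O^{3'}(\Aut_\calF(P))$ fixes each $\bfV_i$. We note that in $\Fi_{24}$ an element of $N_G(S)$ swaps $\bfV_1$ and $\bfV_2$, so the full $\Aut_\calF(P)$ need not normalize them; this is consistent with taking $O^{3'}$.

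The main obstacle is the computational verification that $\bfV_1$ and $\bfV_2$ are the only elementary abelian subgroups of $P$ of order $3^7$. If $J(P)$ contains further such subgroups, we would instead identify $\{\bfV_1, \bfV_2\}$ by an extra characteristic property inside $P$, for instance being abelian subgroups normal in $P$ with a prescribed $Z(P)$-intersection or commutator structure. The permutation argument then goes through unchanged.
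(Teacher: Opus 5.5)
Your proof is correct, and it takes a genuinely different and simpler route than the paper.

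\textbf{The paper's route.} It works inside $P$. It sets $W_0 := C_P(P/\bfW)$ and notes that $O^{3'}(\Aut_\calF(P))$ centralizes $W_0/\bfW$, which has order $3^2$. Hence $O^{3'}(\Aut_\calF(P))$ fixes each of the four subgroups strictly between $\bfW$ and $W_0$. A GAP computation then recovers the two $\bfV_i$ canonically from two of these intermediate subgroups, via centralizers of their second centres.

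\textbf{Your route.} You use only the stated characterization of $\bfV_1, \bfV_2$ as the two elementary abelian subgroups of maximal order in $S$. Both lie in $P$ (for instance, $\bfV_i \leq P = C_S(Z_2(S/\bfV_i))$), so they are exactly the elementary abelian subgroups of maximal order in $P$. Therefore $\Aut(P)$ permutes $\{\bfV_1,\bfV_2\}$. Any $3$-element fixes both points of a $2$-element set, and $O^{3'}(\Aut(P))$ is generated by $3$-elements, so it normalizes each $\bfV_i$.

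\textbf{Streamlining.} The GAP check you flag as the main obstacle is unnecessary. Any elementary abelian subgroup of $P$ of order $3^7$ is an elementary abelian subgroup of $S$ of maximal order, hence is $\bfV_1$ or $\bfV_2$; you do not even need $J(P)=J(S)$. Similarly, your opening paragraph on $\SL_2(3)$ and Lemma~\ref{lm:fi24-outfp} plays no role, and the hypothesis $P \in \calE(\calF)$ is never used.

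\textbf{What each approach buys.} Yours is computer-free, given the paper's description of the $\bfV_i$. It proves the stronger statement for $O^{3'}(\Aut(P))$. It also applies verbatim to $R$ in Lemma~\ref{lm:fi24-vnorm-r}, since $\bfV_i \leq \bfT \leq R$. The paper's argument instead identifies the $\bfV_i$ explicitly in terms of $O^{3'}(\Aut_\calF(P))$-invariant sections of $P$, but it depends on machine computation.

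A minor point: $\bfV_1$ and $\bfV_2$ are already interchanged by $N_{\Fi_{24}'}(S)$, not only in $\Fi_{24}$.
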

    \begin{proof}
        Let $W_0 := C_P(P/\bfW) = C_S(P/\bfW)$. Then $O^{3'}(\Aut_\calF(P))$ centralizes $W_0/\bfW$ of order $3^2$. In particular, the four intermediate subgroups of $\bfW$ and $W_0$, which we label $C_i$ for $1 \leq i \leq 4$, are $O^{3'}(\Aut_\calF(P))$-invariant. We compute that, up to relabelling, $C_R(Z_2(C_1))$ and $C_R(Z_2(C_2))$ are the two $\Aut(S)$-conjugates of $\bfV_1$. Thus, both $\bfV_i$ are normalized by $O^{3'}(\Aut_\calF(P))$.
    \end{proof}
    
    \begin{lemma} \label{lm:fi24-vnorm-r}
        If $R \in \calE(\calF)$, then both $\bfV_i$ are normalized by $O^{3'}(\Aut_\calF(R))$.
    \end{lemma}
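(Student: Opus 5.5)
We follow the template of Lemma \ref{lm:fi24-vnorm-p}. The aim is to show that, inside $R$, each $\bfV_i$ can be recovered from a chain of $O^{3'}(\Aut_\calF(R))$-invariant subgroups of $R$ by taking centralizers, so that $O^{3'}(\Aut_\calF(R))$ must normalize it. Throughout, the relevant properties of the specific group $S$ would be checked computationally in \texttt{fi24/v-norm.g}.

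The starting point is Lemma \ref{lm:fi24-outfr}. Since $R/\bfT$ is a natural $\SL_2(3)$-module, $O^{3'}(\Aut_\calF(R))$ acts irreducibly on it, so it does not preserve any particular intermediate subgroup there. We therefore look further down the upper central series of $R$, where the quotients are small enough to be centralized. Concretely, we locate a characteristic section $X/Y$ of $R$ of order $3^2$ (for instance between consecutive terms $Z_j(R)$, or within $\bfT/Z_k(R)$) such that $[X : Y] = 3^2$ and both $X$ and $Y$ are characteristic in $R$. We then want $O^{3'}(\Aut_\calF(R))$ to centralize $X/Y$.

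To obtain this centrality we use the earlier lemma for index $3$: the image of $O^{3'}(\Aut_\calF(R))$ centralizes any characteristic section of order $3$. So it suffices to find a filtration $Y < X_0 < X$ in which the middle term $X_0$ is also characteristic in $R$. Then both factors have order $3$ and are centralized, and coprime action shows that $O^{3'}(\Aut_\calF(R))$ centralizes $X/Y$ itself. Once $X/Y$ is centralized, each of its four intermediate subgroups $C_1,\dots,C_4$ is $O^{3'}(\Aut_\calF(R))$-invariant.

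The final step, which we expect to be the main obstacle, is computational. We must verify in GAP that, after relabelling, some canonical construction applied to $C_1$ and $C_2$ (for example $C_R(Z_2(C_i))$, $C_R(C_i)$, or $C_R(C_i/Y)$) yields exactly $\bfV_1$ and $\bfV_2$. The difficulty is choosing the right section and the right centralizer construction so that both $\Aut(S)$-conjugates $\bfV_1$ and $\bfV_2$ appear. If no such section of order $3^2$ exists, a fallback is to use $Z(R)$, which is a natural module, together with the orbits of $O^{3'}(\Out_\calF(R))$ on its four subgroups of order $3$. One would then show that $\bfV_1\bfV_2$ and $\bfV_1 \cap \bfV_2$ are characteristic, and that the two $\bfV_i$ are the only elementary abelian subgroups of maximal order in $R$. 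The latter holds because $\bfV_i \leq R$ and, as $R \leq S$, any elementary abelian subgroup of $R$ of maximal order is one of $\bfV_1, \bfV_2$. Hence $O^{3'}(\Aut_\calF(R))$ permutes $\{\bfV_1,\bfV_2\}$, and since it is generated by $3$-elements, which have odd order and so cannot swap two objects, it normalizes each $\bfV_i$.
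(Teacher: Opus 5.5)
\paragraph{Main line.} This is the paper's strategy, but you leave it as a template, and the specific data you leave open is the entire content of the paper's proof. The paper uses the characteristic chain $\gamma_5(R) < \Phi(\Phi(R)) < Z_2(\Phi(R))$, both of index $3$. It then checks in GAP that, for two of the four intermediate subgroups $C_i$, the subgroups $J(C_R(Z_2(C_1)))$ and $J(C_R(Z_2(C_2)))$ are $\bfV_1$ and $\bfV_2$. Note the Thompson subgroup there, which none of your candidate constructions includes.

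Your inference that $O^{3'}(\Aut_\calF(R))$ centralizes $X/Y$ because both factors of $Y<X_0<X$ have order $3$ is not valid as stated. Coprime action only shows that the $3'$-elements act trivially. So the image of $O^{3'}(\Aut_\calF(R))$ in $\Aut(X/Y)$ is merely a $3$-group fixing $X_0/Y$, and a nontrivial transvection of this kind permutes the other three intermediate subgroups. What has to be verified is $[X,S]\le Y$. Then $C_{\Aut_\calF(R)}(X/Y)$ is a normal subgroup containing the Sylow $3$-subgroup $\Aut_S(R)$, hence contains $O^{3'}(\Aut_\calF(R))$, and no middle term is needed.

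\paragraph{Fallback.} This is the only part of your proposal presented as a complete argument, and it is wrong. $\bfV_1$ and $\bfV_2$ are not the only elementary abelian subgroups of maximal order in $R$ (nor in $S$), despite how the characterization preceding the lemma is phrased; it must be read as ``the two such subgroups normal in $S$''.

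Your argument proves too much. Since $\bfV_1,\bfV_2\le Q$ as well, it would show that $O^{3'}(\Aut_\calF(Q))$ always normalizes $\bfV_1$. But in $\Fi_{24}'$ one has $O^{3'}(\Out_{H_1}(Q))\cong\Alt(4)<\Alt(5)\cong O^{3'}(\Out_G(Q))$, which is exactly the fact exploited in the proof of Theorem \ref{thm:fi24}. Hence the $O^{3'}(N_G(Q))$-orbit of $\bfV_1$ consists of $5$ elementary abelian subgroups of $Q$ of order $3^7$.

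Each of these contains $Z_2(Q)=Z_2(S)$, because $Z_2(S)\le C_S(\bfV_1)=\bfV_1$ (as $\bfV_1\le R=C_S(Z_2(S))$). So all five lie in $R$. Moreover $Q$ normalizes each of them, and the group induced on the orbit is $\Alt(5)$ on five points. So any element of $R\setminus Q$ acts as a $3$-cycle on them. Thus already $\Inn(R)$ permutes the maximal-order elementary abelian subgroups of $R$ nontrivially, and your parity argument collapses.

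This is why the paper first produces subgroups that are individually $O^{3'}(\Aut_\calF(R))$-invariant and only then applies $J(\cdot)$. To salvage your idea, you would have to work with the $\Aut(R)$-invariant set of maximal-order elementary abelian subgroups that are \emph{normal} in $R$, and verify computationally that this set is exactly $\{\bfV_1,\bfV_2\}$.
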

    \begin{proof}
        We compute that $\gamma_5(R) \leq \Phi(\Phi(R)) \leq Z_2(\Phi(R))$, with $|\Phi(\Phi(R)) : \gamma_5(R)| = |Z_2(\Phi(R)) : \Phi(\Phi(R))| = 3$. We infer that $O^{3'}(\Aut_\calF(R))$ centralizes $Z_2(\Phi(R))/\gamma_5(R)$. In particular, the four intermediate subgroups of $\gamma_5(R)$ and $Z_2(\Phi(R))$, which we label $C_i$ for $1 \leq i \leq 4$, are $O^{3'}(\Aut_\calF(R))$ invariant. We again appeal to GAP to compute that, up to relabelling, $J(C_R(Z_2(C_1)))$ and $J(C_R(Z_2(C_2)))$ are the two $\Aut(S)$-conjugates of $\bfV_1$.
    \end{proof}

    \begin{lemma} \label{lm:fi24-outfq}
        Assume that $\{P, Q\} \subseteq \calE(\calF)$. Then one of the following holds:
        \begin{enumerate}
            \item $O^{3'}(\Out_\calF(Q)) \cong \Alt(4)$, $A := O_3(N_\calF(\bfW))$ satisfies $[\bfW : A] = 3$, and $\bfV_i$ is normal in $N_\calF(\bfW)$ (for a unique $i \in \{1,2\}$), or
            \item $O^{3'}(\Out_\calF(Q)) \cong \Alt(5)$, and $O_3(N_\calF(\bfW)) = \bfW$.
        \end{enumerate}
    \end{lemma}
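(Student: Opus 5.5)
We work inside the saturated subsystem $\calN := N_\calF(\bfW)$, which is saturated by Lemma \ref{lm:nfa-sat} since $\bfW$ is weakly closed (Lemma \ref{lm:fi24-w}). By Lemma \ref{lm:fi24-w} its essential subgroups are $\calE(\calF) \cap \{P,Q\} = \{P,Q\}$. Set $A := O_3(\calN)$, so $\bfW \leq A$. By Lemma \ref{lm:weak-closure-to-normality}, $A \leq P \cap Q$. We first pin down $A$. If $A = \bfW$, then Proposition \ref{prp:fi24-outfw} applies directly and gives $O^{3'}(\Out_\calF(Q)) \cong \Alt(5)$, which is alternative (2). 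So the work is to show that if $A > \bfW$, then $[\bfW : A] = 3$ in the sense of the statement (that is, $A$ has index $3$ over $\bfW$, so $A$ is one of the $\Aut_\calF(P)$-invariant intermediate subgroups $C_i$ of $\bfW < W_0$ from the proof of Lemma \ref{lm:fi24-vnorm-p}). We also need that the conclusion (1) holds in that case.

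\textbf{Step 1: index of $A$.} Suppose $A > \bfW$. Since $A$ is normal in $\calN$, it is $\Aut_\calF(P)$-invariant and $\Aut_\calF(Q)$-invariant and lies in $P \cap Q$. By Lemma \ref{lm:fi24-outfp}, $O^{3'}(\Out_\calF(P))$ acts as $\SL_2(3)$ naturally on $P/W_0$. So any invariant subgroup strictly between $W_0$ and $P$ is impossible, and $A \leq W_0$ would follow once we check $P \cap Q \leq W_0$ or use $A < P$. I expect to verify $P \cap Q \leq W_0$ by a GAP computation in the style of \texttt{fi24/weak-closure.g}. Since $W_0/\bfW$ has order $3^2$ and is centralized by $O^{3'}(\Aut_\calF(P))$, the subgroup $A$ is either one of the $C_i$ or equals $W_0$. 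To exclude $A = W_0$ (and the $C_i$ not equal to $C_1, C_2$), I would use $Q$: the group $O^{3'}(\Out_\calF(Q))$ must normalize $A$. I would show by computation that $W_0 \not\leq Q$, or that $W_0$ is not invariant under the $\Alt(4)$-type action on $Q/\bfW$. This leaves $A = C_j$ with $[A:\bfW] = 3$, and $C_j$ must lie in $Q$.

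\textbf{Step 2: consequences when $[A:\bfW]=3$.} Then $\bfV_i = C_R(Z_2(C_i))$ for the appropriate $i \in \{1,2\}$ (the relabelling of Lemma \ref{lm:fi24-vnorm-p}). Since $A = C_i$ is normal in $\calN$, the group $\bfV_i$ is characteristically determined from $A$ inside $P$, $Q$ and $S$. Hence it is invariant under $\Aut_\calF(P)$, $\Aut_\calF(Q)$ and $\Aut_\calF(S)$ (the last may require the characterization via the automizer fixing $A$). By Alperin--Goldschmidt and Lemma \ref{lm:weak-closure-to-normality}, $\bfV_i \normalIn \calN$. Uniqueness of $i$ holds because $C_1 \neq C_2$ cannot both be normal: if they were, then $W_0 = C_1C_2 \normalIn \calN$, which was excluded in Step 1. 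Finally, $O^{3'}(\Out_\calF(Q))$ normalizes $\bfV_i$, so it embeds in $O^{3'}(\Out_{N_\calF(\bfV_i)}(Q))$. Comparing with $H_i/\bfV_i \cong \SO_7(3)$, whose minimal parabolic gives $\Alt(4)$, suggests the answer. I would argue directly: since $A/\bfW$ is a proper nontrivial invariant subspace, the action on $Q/A$-type sections forces a $2$-dimensional module over $\GF(3)$ or a reducible one. Among the candidates for $O^{3'}(\Out_\calF(Q))$, which lie in $\Alt(5)$ via the $\U_5(2)$ picture, a proper subgroup that is generated by $3'$-conjugates of a Sylow $3$ and has strongly $3$-embedded structure is $\Alt(4)$.

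\textbf{Main obstacle.} The hardest part is identifying $O^{3'}(\Out_\calF(Q))$ as exactly $\Alt(4)$ or $\Alt(5)$ without assuming knowledge of $\Out_\calF(\bfW)$ when $A \neq \bfW$. I would try to prove that $\Out_\calF(Q)$ acts faithfully on $Q/\bfW$ of order $3^4$, with Sylow $3$-subgroup of order $3$ and strongly $3$-embedded. Then $O^{3'}(\Out_\calF(Q))$ is a subgroup of $\GL_4(3)$ of this type, compatible with the $\Alt(4)$-stabilized flag. I would then reduce via a MAGMA enumeration, analogous to the Section \ref{sec:automizer} lemmas, to $\Alt(4)$ (when a $1$-dimensional invariant subspace $A/\bfW$ exists) or $\Alt(5)$ (otherwise).
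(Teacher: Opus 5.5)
\textbf{There is a genuine gap.} Your reduction to the case $A := O_3(N_\calF(\bfW)) > \bfW$ is fine. In that case, however, the central claim $O^{3'}(\Out_\calF(Q)) \cong \Alt(4)$ is never established.

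Your appeal to ``the $\U_5(2)$ picture'' is circular. Proposition \ref{prp:fi24-outfw} assumes $O_3(N_\calF(\bfW)) = \bfW$, which is exactly what fails here. The plan in your last paragraph also cannot single out $\Alt(4)$. Both $\SL_2(3)$ and $13:3$ have Sylow $3$-subgroups of order $3$ and strongly $3$-embedded subgroups. Both embed in $\SL_3(3)$, so each acts faithfully on a $4$-dimensional space while centralizing a line.

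The missing argument, which the paper gives, runs as follows.
\begin{enumerate}
\item Since $A \le W_0$ and $S$ centralizes $W_0/\bfW$, the group $O^{3'}(\Out_\calF(Q))$ centralizes $A/\bfW$. It also centralizes $\bfW/\Phi(Q)$, which has order $3$. By coprime action it therefore acts faithfully on $Q/A$, of order at most $3^3$. This leaves only $\SL_2(3)$, $\Alt(4)$ and $13:3$.
\item $\SL_2(3)$ is excluded as follows. With $Y := O_2(O^{3'}(\Out_\calF(Q)))$, one gets $Q/\bfW = [Q/\bfW, Y] \times C_{Q/\bfW}(Y)$ with both factors of order $3^2$. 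This would force $[Q,S,S] \le \bfW$, which is false by computation.
\item $13:3$ is excluded using the involution $t \in \Aut_\calF(S)$ extending an element of $N_{O^{3'}(\Aut_\calF(P))}(\Aut_S(P))$. By Lemma \ref{lm:fi24-outfp}, $t$ acts as $+1$ on $W_0/\bfW$ and as $-1$ on $(P \cap Q)/W_0$. So it acts on $Q/A$ nontrivially but not fixed-point-freely. On the other hand, $N_{\GL_3(3)}(13:3) = (13:3) \times Z(\GL_3(3))$ forces $t$ to act as $-1$, a contradiction.
\item Only after this does $[A:\bfW] = 3$ follow, because $\Alt(4)$ has no faithful $\GF(3)$-module of dimension less than $3$.
\end{enumerate}

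\textbf{Smaller steps that need repair.}
\begin{enumerate}
\item Both computations you propose in Step 1 would come out the opposite way: in fact $W_0 \le P \cap Q$ and $P \cap Q \not\le W_0$. The inclusion $A \le W_0$ still holds, since $AW_0 \le P \cap Q < P$ and $P/W_0$ is irreducible for $O^{3'}(\Out_\calF(P))$.
\item $A = W_0$ cannot be excluded by any inclusion argument. It is excluded only through the automizer analysis: it would give $[Q:A] = 3^2$ and hence $\SL_2(3)$.
\item Discarding two of the four subgroups between $\bfW$ and $W_0$ needs irreducibility of $\Alt(4)$ on $Q/A$. The paper notes that $\langle A, C_Q(\gamma_3(A)) \rangle$ is $O^{3'}(\Aut_\calF(Q))$-invariant and properly contains $A$, so it must equal $Q$. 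This holds for only two choices of $A$. You give no substitute for this step.
\item For normality of $\bfV_i$, use $\bfV_i = C_A(Z_2(A))$, which is characteristic in $A \normalIn N_\calF(\bfW)$. Your description $C_R(Z_2(C_i))$ involves $R$ and is not visibly invariant under $\Aut_\calF(P)$ or $\Aut_\calF(Q)$.
\item Your uniqueness argument only shows that $A$ is unique, which is automatic since $A = O_3(N_\calF(\bfW))$. What must be shown is that $\bfV_{3-i}$ is not also normal. The paper does this by showing that if both were normal, then $\langle J(S), \Phi(Q) \rangle$ would be normal in $N_\calF(Q)$, which is incompatible with the action of $O^{3'}(\Aut_\calF(Q)) \cong \Alt(4)$ on $Q$.
\end{enumerate}
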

    \begin{proof}
        All code used in this proof can be found in the file \texttt{fi24/automizer-q.g}. If $O_3(N_\calF(\bfW)) = \bfW$, then we know by Proposition \ref{prp:fi24-outfw} that $O^{3'}(\Out_\calF(Q)) \cong \Alt(5)$.

        We now assume that $\bfW < A = O_3(N_\calF(\bfW))$. Since $\{P, Q\} \subseteq \calE(\calF)$, we know that $O_3(N_\calF(\bfW)) \leq P \cap Q$ by \cite[Proposition I.4.5]{ako}. Let $W_0 := C_S(S/\bfW)$. We know by Lemma \ref{lm:fi24-outfp} that $O^{3'}(\Out_\calF(P))$ acts irreducibly on $P/W_0$. This means that $A \leq W_0$. 
        
        We know that $\bfW$ is weakly closed in $\calF$ by Lemma \ref{lm:fi24-w}. This implies that $A$ is normalized by $\Aut_\calF(Q)$. Since $A \leq W_0$, we deduce that $O^{3'}(\Out_\calF(Q))$ acts trivially on $A/\bfW$. We compute that $\bfW/\Phi(Q)$ has order $3$, meaning that $O^{3'}(\Out_\calF(Q))$ acts trivially on $\bfW/\Phi(Q)$ as well. By coprime action, $O^{3'}(\Out_\calF(Q))$ must then act faithfully on $Q/A$. Since $\bfW < A$, we further know that $[Q : A] \leq 3^3$. This implies that $O^{3'}(\Out_\calF(Q))$ is a subgroup of $\SL_3(3)$. We appeal to \cite[Tables 8.3 and 8.4]{maximals} to find that there are three choices for $O^{3'}(\Out_\calF(Q))$ -- $\SL_2(3)$, $\Alt(4)$ and $13 : 3$.

        If $O^{3'}(\Out_\calF(Q)) \cong \SL_2(3)$, then $Q/\bfW$ has a unique non-central chief factor for the action of $O^{3'}(\Out_\calF(Q))$, which is a natural $\SL_2(3)$-module. Set $X := O_2(O^{3'}(\Out_\calF(Q)))$. By coprime action, we have
        \[Q/\bfW = [Q/\bfW, X] \times C_{Q/\bfW}(X),\]
        with $A/\bfW \leq C_{Q/\bfW}(X)$. As $[Q/\bfW, X]$ is a natural $\SL_2(3)$-module, we infer both direct factors given above have order $3^2$. This implies that $[Q, \Out_S(Q), \Out_S(Q)] \leq \bfW$. But we calculate that $[Q, S, S] \nleq \bfW$, which is a contradiction. 
        
        Suppose now that $O^{3'}(\Out_\calF(Q)) \cong 13 : 3$. By Lemma \ref{lm:fi24-outfp}, we know that $O^{3'}(\Out_\calF(P)) \cong \SL_2(3)$. By the extension axiom, there exists an involution $t \in \Aut_\calF(S)$ such that $t|_P \in N_{O^{3'}(\Aut_\calF(P))}(\Aut_S(P))$. In Lemma \ref{lm:fi24-outfp}, we saw that $t$ acts as $1$ on $W_0/\bfW$, but as $-1$ on $(P \cap Q)/W_0$. Moreover, we have $A \leq W_0 \leq P \cap Q$, meaning that $t$ acts non-trivially on $Q/A$. On the other hand, since $[A : W_0] = 3$, we know that $t$ cannot act fixed point freely on $Q/A$. Let $B := C_{Q/\bfW}(O^{3'}(\Out_\calF(Q)))$. Then $B$ is an $O^{3'}(\Aut_\calF(Q))$-invariant subgroup of $Q$ such that $[B : \bfW] = 3$. This forces $B = A$. In particular, $O^{3'}(\Out_\calF(Q))$ acts irreducibly on $Q/A$. Inside $D := \GL_3(3)$, we compute using GAP that that any subgroup $T$ isomorphic to $13 : 3$ satisfies $N_D(T) = \langle T, Z(D) \rangle$. This means that $t$ must correspond to the central involution in $D$. But then $t$ acts fixed point freely on $Q/A$, a contradiction.

        We have shown that if $\bfW < A = O_3(N_\calF(\bfW))$, then $O^{3'}(\Out_\calF(Q)) \cong \Alt(4)$. As the smallest faithful $\GF(3)$-module of $\Alt(4)$ has dimension $3$, we infer that $[A : \bfW] = 3$. We recall that $\bfW \leq A \leq W_0$. Since $W_0/\bfW$ is elementary abelian of order $3^2$, there are precisely four choices for $A$. We show that only two of the four choices are valid. To see this, consider the subgroup $\hat{C} := \langle A, C_Q(\gamma_3(A)) \rangle$. As $A$ is invariant under $O^{3'}(\Aut_\calF(Q))$, $\hat{C}$ must also be invariant under $O^{3'}(\Aut_\calF(Q))$. We compute that $\hat{C}$ properly contains $A$ in each case, so it must equal $Q$. But this only holds for two choices for $A$, which we label $A_1$ and $A_2$. We observe that $A_1$ and $A_2$ are $\Aut(S)$-conjugate. Then, up to relabelling, we have that $\bfV_i = C_{A_i}(Z_2(A_i))$. In particular, as some $A_i$ is normal in $N_\calF(\bfW)$, we deduce that some $\bfV_i$ is also normal in $N_\calF(\bfW)$.

        Assume now that both $\bfV_i$ are normal in $N_\calF(\bfW)$. Then $T := \langle J(S), \Phi(Q) \rangle$ must also be normal in $N_\calF(Q)$. As $[Q : T] = 3^3$ and $[T : \Phi(Q)] = 3^2$, we infer that $O^{3'}(\Aut_\calF(Q)) \cong \Alt(4)$ must act irreducibly on $Q/T$. But we find that $T < \langle T, \bfW \rangle < Q$ is also normal in $N_\calF(Q)$, a clear contradiction. Thus, exactly one of $\bfV_1$ or $\bfV_2$ is normal in $N_\calF(\bfW)$.
    \end{proof}

    \begin{lemma} \label{lm:fi24-outfu}
        Assume that $\{Q, R\} \subseteq \calE(\calF)$.
        \begin{enumerate}
            \item If $O^{3'}(\Out_\calF(Q)) \cong \Alt(4)$, then $O^{3'}(\Out_\calF(\bfU)) \cong \SL_2(3) \times \Alt(4)$;
            \item If $O^{3'}(\Out_\calF(Q)) \cong \Alt(5)$, then $O^{3'}(\Out_\calF(\bfU)) \cong \SL_2(3) \times \Alt(5)$.
        \end{enumerate}
    \end{lemma}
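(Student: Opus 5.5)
The plan is to work inside the normalizer subsystem $N_\calF(\bfU)$. By Lemma \ref{lm:fi24-u}, $\bfU$ is weakly closed in $\calF$, so $N_\calF(\bfU)$ is saturated with $\calE(N_\calF(\bfU)) = \{Q, R\}$. Since $\bfU = Q \cap R$ is normal in $S$ and is $\calF$-centric, the model theorem gives a finite group $L$ with $S \in \Syl_3(L)$, $F^*(L) = \bfU = O_3(L)$ and $\calF_S(L) = N_\calF(\bfU)$. Then $\Out_\calF(\bfU) = L/\bfU$, and its $3$-radical is trivial because $\bfU$ is radical in $N_\calF(\bfU)$, being $O_3(L)$. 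It remains to identify $X := O^{3'}(L/\bfU)$.

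First, $X$ is generated by the images of $O^{3'}(\Aut_L(Q))$ and $O^{3'}(\Aut_L(R))$, as $L = \langle N_L(Q), N_L(R), N_L(S)\rangle$ by Alperin--Goldschmidt. Set $Y_Q := O^{3'}(N_L(Q))\bfU/\bfU$ and $Y_R$ analogously. Modulo the $3$-groups $Q/\bfU$ and $R/\bfU$, the first induces $O^{3'}(\Out_\calF(Q))$ (which is $\Alt(4)$ or $\Alt(5)$) and the second induces $O^{3'}(\Out_\calF(R)) \cong \SL_2(3)$ by Lemma \ref{lm:fi24-outfr}. Since $S/\bfU$ is elementary abelian of order $3^2$, we have $Q/\bfU$ and $R/\bfU$ of order $3$ with $S/\bfU = Q/\bfU \times R/\bfU$. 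The key claim is that $Y_Q$ centralizes $Q/\bfU$'s complement direction: $R/\bfU$ centralizes the $\Alt$-part and $Q/\bfU$ centralizes the $\SL_2(3)$-part. Thus $X$ has Sylow subgroup $3 \times 3$, generated by two commuting components.

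To prove the commutation, I would use the structure of $\bfU/\Phi(\bfU)$, or a suitable chief series of $\bfU$, as an $\Aut_\calF(\bfU)$-module. One shows that $O^{3'}(\Aut_\calF(Q))$ acts (via its restriction to $\bfU$) on the natural pieces attached to $Z(R) = Z_2(S)$ and $R/\bfT$, where $\SL_2(3)$ lives, only through a $3$-element. Using the Three Subgroups Lemma and coprime action as in Lemmas \ref{lm:fi24-outfp} and \ref{lm:fi24-outfr}, a $3'$-element of $O^{3'}(\Aut_\calF(Q))$ centralizes $Z_2(S)$. Similarly, $3'$-elements of $O^{3'}(\Aut_\calF(R))$ centralize $\bfW \cap \bfU$ modulo appropriate terms. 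Equivalently, one can identify two normal subgroups $K_1, K_2$ of $X$: $K_1 = C_X(V_1)$ and $K_2 = C_X(V_2)$ for two $X$-chief factors $V_1$ and $V_2$ of $\bfU$ (for instance $Z(R)$ and a $\bfW$-related factor). Then $X/K_1 \cong \SL_2(3)$ and $X/K_2 \cong \Alt(4)$ or $\Alt(5)$, and $K_1 \cap K_2$ is a $3'$-group acting trivially on $\bfU/\Phi(\bfU)$, hence trivial by Lemma \ref{lm:burnside} together with $O_3(X) = 1$. This embeds $X$ in $\SL_2(3) \times \Alt(n)$ as a subdirect product. Since $|X|_3 = 3^2$ equals the $3$-part of the product, and both factors are $3'$-generated with no common nontrivial quotient of order prime to $3$ in the relevant form, $X$ is the full direct product.

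The main obstacle is the concrete module computation: pinning down which sections of $\bfU$ are acted on faithfully by each factor, and verifying that the relevant centralizers are as claimed. As with the preceding lemmas, I expect to verify the commutator and centralizer facts in GAP on the explicit $S$, with the group-theoretic argument (Three Subgroups Lemma, coprime action, Burnside) reducing everything to those checks. In case (2), the subdirect product step must also rule out a diagonal $\Alt(4)$-type identification, which follows from the orders.
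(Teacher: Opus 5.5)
Your overall shape is the right one: two normal subgroups of $O^{3'}(\Out_\calF(\bfU))$ with trivial intersection, followed by a direct-product argument. However, the two steps that carry the proof are missing, and what you put in their place does not work as stated.

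\textbf{The $\Alt$-quotient is never identified.} You assert $X/K_1 \cong \SL_2(3)$ and $X/K_2 \cong \Alt(4)$ or $\Alt(5)$, and then argue via subdirect products and orders. But nothing in your setup identifies $X/K_2$. Your only link to $Q$ is $Y_Q/\overline{Q} \cong O^{3'}(\Out_\calF(Q))$. The image of $X$ on $V_2$, however, is generated by the images of both $Y_Q$ and $Y_R$, and you have no control over the image of $Y_R$.

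\textbf{The key claim is reversed.} We have $\Out_S(R) \cong S/R \cong Q/\bfU$ and $\Out_S(Q) \cong R/\bfU$. So $\overline{Q}$ is a Sylow $3$-subgroup of the $\SL_2(3)$-factor and $\overline{R}$ is one of the $\Alt$-factor. Hence it is $\overline{Q}$ that centralizes the $\Alt$-factor and $\overline{R}$ that centralizes the $\SL_2(3)$-factor, not the other way round.

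\textbf{The faithfulness step uses the wrong sections.} $Z(R) = Z(\bfU)$ lies in $\Phi(\bfU)$. So centralizing it together with an unspecified further factor gives no information about $\bfU/\Phi(\bfU)$, and Burnside's lemma is not the relevant tool.

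\textbf{A minor point.} $O_3(N_\calF(\bfU)) = \bfU$ should be deduced from $O_3(N_\calF(\bfU)) \le Q \cap R$ via Lemma \ref{lm:weak-closure-to-normality}, not from $\bfU$ ``being $O_3(L)$''.

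\textbf{How the paper does it.} The paper needs only two centralizer computations. Set $H := O^{3'}(\Out_\calF(\bfU))$ (your $X$), $H_1 := C_H(Z(\bfU))$ and $H_2 := C_H(Z_2(\bfU)/Z(\bfU))$.
\begin{enumerate}
\item Since $\Phi(\bfU) = Z_2(\bfU)$ is self-centralizing in $\bfU$, coprime action gives $H_1 \cap H_2 \le O_3(H) = 1$.
\item Since $C_S(Z(\bfU)) = R$ and $C_S(Z_2(\bfU)/Z(\bfU)) = Q$, we have $\Out_R(\bfU) \le H_1$ and $\Out_Q(\bfU) \le H_2$. So $H_1 \times H_2$ is a normal subgroup of $H$ containing a Sylow $3$-subgroup, hence equals $H = O^{3'}(H)$. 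The commutation you were trying to prove comes for free.
\item $H_2$ acts faithfully on $Z(\bfU)$ of order $3^2$, so $H_2 \cong \SL_2(3)$.
\item $H_1$ is identified with $O^{3'}(\Out_\calF(Q))$. The $3'$-elements of $O^{3'}(\Aut_\calF(Q))$ centralize $Z_2(Q) = Z(\bfU)$, so their restrictions land in $H_1\Out_S(\bfU)$. Conversely, every element of $H_1$ commutes with $\Out_Q(\bfU) \le H_2$, hence normalizes $\Aut_Q(\bfU)$ and extends to $Q$ by the extension axiom.
\end{enumerate}
In your model-group setting, step 4 is simply $Y_Q = O^{3'}(N_X(\overline{Q})) = H_1 \times \overline{Q}$. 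So your framework would go through once the decomposition in steps 1--2 is established this way.
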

    \begin{proof}
        We know by Lemma \ref{lm:fi24-u} that $\Aut_\calF(Q)$ and $\Aut_\calF(R)$ normalize $\bfU$. Since $Q, R \in \calE(\calF)$, we know by Lemma \ref{lm:weak-closure-to-normality} that 
        \[\bfU \leq O_3(N_\calF(\bfU)) \leq Q \cap R = \bfU.\]
        In particular, $O_3(\Out_\calF(\bfU)) = 1$.
        
        We recall that $\bfU$ has shape $3^{2+4+8}$, with $\Phi(\bfU)$ elementary abelian subgroup of order $3^6$. Moreover, $\Phi(\bfU) = Z_2(\bfU)$ is self-centralizing in $\bfU$. As such, $H := O^{3'}(\Out_\calF(\bfU))$ acts faithfully on $\Phi(\bfU)$. Consider now the two subgroups $H_1 := C_H(Z(\bfU))$ and $H_2 := C_H(Z_2(\bfU)/Z(\bfU))$. By coprime action, if $r \in H_1 \cap H_2$ is a $3'$-element, then $r$ centralizes $\Phi(\bfU)$, meaning that $r = 1$. This implies that $H_1 \cap H_2 \leq O_3(\Out_\calF(\bfU)) = 1$. 
        
        We compute that $C_S(Z(\bfU)) = R$ and $C_S(Z_2(\bfU)/Z(\bfU)) = Q$. Since $|H|_{3} = 3^2$, we infer that $|H_1|_{3} = |H_2|_{3} = 3$. Moreover, $H_1H_2$ is a normal subgroup of $H$ containing $\Out_S(\bfU)$. As $O^{3'}(H) = H$ and $\Out_S(\bfU)$ is a Sylow $3$-subgroup of $\Out_\calF(\bfU)$, we deduce that $H = H_1 \times H_2$.
        
        Let $r \in \Aut_\calF(\bfU)$ be a $3'$-element that centralizes $Z_2(\bfU)/Z(\bfU)$. By coprime action, we infer that $r$ must act faithfully on $Z(\bfU)$ as $Z_2(\bfU)$ is self-centralizing in $\bfU$. This implies that $H_2$ acts faithfully on $Z(\bfU)$ of order $3^2$. Since $O_3(H_2) = 1$ and $|H_2|_3 = 3$, we deduce that $H_2 \cong \SL_2(3)$.
    
        Consider now the restriction map $\pi \colon \Aut_\calF(Q) \to \Aut_\calF(\bfU)$. Since $\bfU$ is self-centralizing in $Q$, we know that $\ker(\pi)$ is a $3$-group. Since $Z(S) = Z(Q)$ and $Z_2(S) = Z_2(Q)$, we know that any $3'$-element $r \in O^{3'}(\Aut_\calF(Q))$ centralizes $Z_2(Q) = Z(\bfU)$. This implies that $O^{3'}(\Aut_\calF(Q))\pi/\Inn(\bfU) \leq \langle \Out_S(\bfU), H_1 \rangle$.
    
        Since $\Out_Q(\bfU) \leq H_2$, we deduce that $H_1$ centralizes $\Out_Q(\bfU)$. As such, any map $\alpha \in \Aut_\calF(\bfU)$ that centralizes $Z(\bfU)$ normalizes $\Aut_Q(\bfU)$. By the extension axiom, we deduce that $\alpha$ has an extension $\hat{\alpha} \in \Aut_\calF(Q)$. This implies that $\langle H_1, \Out_S(\bfU) \rangle \leq O^{3'}(\Aut_\calF(Q))\pi/\Inn(\bfU)$. We conclude that $H_1 \cong O^{3'}(\Out_\calF(Q))$, as desired.
    \end{proof}

    \begin{lemma} \label{lm:fi24-outfw-alt4}
        Assume that $\{P, Q\} \subseteq \calE(\calF)$, with $O^{3'}(\Out_\calF(Q)) \cong \Alt(4)$. Set $A := O_3(N_\calF(\bfW))$. Then $O^{3'}(\Out_\calF(A)) \cong \SO_5(3)$ acts irreducibly on $A/\bfV$, where $\bfV := C_A(Z_2(A))$.
    \end{lemma}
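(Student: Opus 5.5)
By Lemma \ref{lm:fi24-outfq}(1), we are in the case $[\bfW : A] = 3$ and $\bfV := \bfV_i = C_A(Z_2(A))$ is normal in $N_\calF(\bfW)$ for a unique $i$. The plan is to show that $A = \bfV_i$ is impossible to improve upon: first, identify $A$ concretely as one of the two subgroups $A_1, A_2$ found in the proof of Lemma \ref{lm:fi24-outfq}. Then check by computation (GAP, as in \texttt{fi24/automizer-q.g}) that $\bfV$ is elementary abelian of order $3^7$ with $|A/\bfV| = 3^5$ and $C_A(\bfV) = \bfV$.

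Next, I will set up the relevant fusion system. Since $A$ is characteristic in $N_\calF(\bfW)$ and $\bfW$ is weakly closed (Lemma \ref{lm:fi24-w}), $A$ is normalized by $\Aut_\calF(P)$, $\Aut_\calF(Q)$ and $\Aut_\calF(S)$. Hence $N_\calF(\bfW) \subseteq N_\calF(A)$, and $A$ is fully normalized. I consider $H := O^{3'}(\Out_\calF(A))$, which contains images of $O^{3'}(\Out_\calF(P)) \cong \SL_2(3)$ and $O^{3'}(\Out_\calF(Q)) \cong \Alt(4)$ acting on $A/\bfV$. These restrict injectively modulo $3$-groups, because $A$ is centric in $P$ and $Q$ (Lemma \ref{lm:centric-faithful}). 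Since $P, Q \in \calE(\calF)$ contain $A$ with $O_3(N_\calF(\bfW)) = A$, the group $O_3(\Out_\calF(A))$ is trivial: any normal $3$-subgroup would give a larger normal subgroup of $N_\calF(\bfW)$. The Sylow $3$-subgroup of $H$ is $\Out_S(A) \cong S/A$, of order $3^{16-13} \cdot$ (computed) — I expect it to be isomorphic to a Sylow $3$-subgroup of $\SO_5(3)$, of order $3^4$.

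The core of the argument is to show that $H$ acts faithfully on $A/\bfV$ of order $3^5$, so that $H \leq \SL_5(3)$. For this, take a $3'$-element $r$ centralizing $A/\bfV$, and use the Three Subgroups Lemma together with coprime action, in the style of Lemmas \ref{lm:fi24-outfp} and \ref{lm:fi24-outfr}. Since $[A, \bfV] \leq \bfV$ and $\bfV$ is abelian, the relation $[r, A, \bfV] \leq [\bfV, \bfV] = 1$ together with $[A, \bfV, r]$ forces $r$ to centralize $\bfV/C_\bfV(A)$ modulo a small piece. One then pushes this down to $A/\Phi(A)$ and applies Lemma \ref{lm:burnside}. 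Once faithfulness holds, I mimic Lemma \ref{lm:aut-sl7}: run through the maximal subgroups of $\SL_5(3)$ in \cite[Tables 8.18, 8.19]{maximals}. I keep those with $O^{3'}(M) = M$-type quotients containing subgroups isomorphic to both $\SL_2(3)$ and $\Alt(4)$ normalizing the right parabolic structure, and with Sylow $3$-subgroup isomorphic to $\Out_S(A)$. Parabolics are excluded by $O_3(H)=1$ and nilpotency class, leaving $\SO_5(3)$. The maximal subgroups of $\SO_5(3)$ with full $3$-part have nontrivial $O_3$, so $H \cong \SO_5(3)$. Irreducibility on $A/\bfV$ is then a direct computation, since the orthogonal $5$-dimensional module is irreducible.

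The main obstacle is the faithfulness step, or equivalently ruling out $H$ acting with a kernel on $A/\bfV$ while acting only on $\bfV$. If the commutator argument is awkward, a fallback is the following. Note that $A/\bfV$ is $\Aut_\calF(Q)$-invariant, and that the images of $O^{3'}(\Out_\calF(P))$ and $O^{3'}(\Out_\calF(Q))$ already act faithfully on $A/\bfV$; this can be computed from their known actions on $P/W_0$ and $Q/\bfW$. The kernel $K$ of $H$ on $A/\bfV$ is normal and meets $\Out_S(A)$ trivially, hence is a $3'$-group, and is normalized by both. I would then apply a coprime action argument to $K$ on $\bfV$.
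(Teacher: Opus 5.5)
Your outline is the paper's: show $H := O^{3'}(\Out_\calF(A))$ acts faithfully on $A/\bfV$, so $H$ embeds in $\SL_5(3)$ with Sylow $3$-subgroup $\Out_S(A) \cong 3 \wr 3$, then use the $\Alt(4)$ hypothesis. However, the faithfulness step, which you correctly call the crux, is not established. The Three Subgroups Lemma gives $[\bfV, r, A] = 1$ only if you already know $[A, \bfV, r] = 1$, and nothing you have provides that; ``modulo a small piece'' is exactly where the argument is missing. Your fallback also stops before explaining why the kernel must centralize $\bfV$.

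The missing idea is an order count. We have $[A : \Phi(A)] = 3^6$, and $A/\bfV$ is elementary abelian of order $3^5$. Hence $\Phi(A) \leq \bfV$ with $[\bfV : \Phi(A)] = 3$. Since $\bfV$ and $\Phi(A)$ are characteristic in $A$, $O^{3'}(\Aut_\calF(A))$ centralizes $\bfV/\Phi(A)$. So a $3'$-element of $O^{3'}(\Aut_\calF(A))$ centralizing $A/\bfV$ centralizes $A/\Phi(A)$ by coprime action, and is trivial by Lemma \ref{lm:burnside}. The kernel of $H$ on $A/\bfV$ is therefore a normal $3$-subgroup, hence trivial because $O_3(\Out_\calF(A)) = 1$. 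For that last fact, your inclusion $N_\calF(\bfW) \subseteq N_\calF(A)$ does not by itself turn a normal subgroup of $N_\calF(A)$ into one of $N_\calF(\bfW)$. The paper instead notes $Z(A) = Z(S)$, so $A \nleq R$; hence $A$ is weakly closed and $N_\calF(A) = N_\calF(\bfW)$.

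Second, your claim that parabolic maximal subgroups are excluded by $O_3(H) = 1$ and nilpotency class is false. $O_3(H) = 1$ only makes $H$ isomorphic to a subgroup of a Levi factor, for instance $\SL_4(3)$. But $\Sp_4(3)$ is generated by $3$-elements, has $O_3 = 1$, and has Sylow $3$-subgroup isomorphic to $3 \wr 3$. It is exactly the second candidate the paper's computation produces ($\SO_5(3)$ or $\Sp_4(3)$), and only the hypothesis on $Q$ removes it. Since $A \normalIn S$, the extension axiom makes $N_{\Aut_\calF(A)}(\Aut_Q(A))$ the restriction image of $\Aut_\calF(Q)$. So $O^{3'}(\Out_\calF(Q))$ is read off from the parabolic $N_H(\Out_Q(A))$ modulo its $O_3$. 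In $\Sp_4(3)$ both minimal parabolics give $\SL_2(3)$. In $\SO_5(3)$ the stabilizer of a singular point has Levi involving $\SO_3(3) \cong \Sym(4)$, which gives $\Alt(4)$. Your filter about $\Alt(4)$ ``normalizing the right parabolic structure'' gestures at this, but it must be stated in this form; it is this, not class or $O_3$, that leaves only $\SO_5(3)$.

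Finally, for irreducibility you need not identify $A/\bfV$ as the orthogonal module. A faithful $5$-dimensional module for a group with $O_3 = 1$ and no nontrivial $\GF(3)$-module of smaller dimension is irreducible, which is the paper's argument. Also note $|A| = 3^{12}$, so $|S/A| = 3^4$.
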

    \begin{proof}
        All code used in this proof can be found in the file \texttt{fi24/automizer-a.m}. As $A = O_3(N_\calF(\bfW))$, we infer that 
        \[Z(S) \leq Z(A) \leq Z(\bfW) = Z(S).\]
        In particular, $R$ does not contain $A$. By Lemmas \ref{lm:weak-closure-to-normality} and \ref{lm:fi24-w}, we infer that $N_\calF(A) = N_\calF(\bfW)$. This implies that $O_3(N_\calF(A)) = A$, so that $O_3(\Out_\calF(A)) = 1$.
        
        We recall that $\bfV = C_A(Z_2(A))$ is an $\Aut(S)$-conjugate of $\bfV_i$ by Lemma \ref{lm:fi24-outfq}. We compute that $[A : \Phi(A)] = 3^6$ and that $A/\bfV$ is an elementary abelian group of order $3^5$. As such, $O^{3'}(\Out_\calF(A))$ acts faithfully on $A/\bfV$ of order $3^5$. This implies that $O^{3'}(\Out_\calF(A))$ is isomorphic to a subgroup of $\SL_5(3)$. Since $\Out_S(A) \cong 3 \wr 3$, we compute using MAGMA that $O^{3'}(\Out_\calF(A))$ is isomorphic to either $\SO_5(3)$ or $\Sp_4(3)$. As $O^{3'}(\Out_\calF(Q)) \cong \Alt(4)$, we are forced to have that $O^{3'}(\Out_\calF(A)) \cong \SO_5(3)$. Since $O^{3'}(\Out_\calF(A))$ acts faithfully on $A/\bfV$, and $\SO_5(3)$ has no faithful module of dimension less than $5$, it follows that $O^{3'}(\Out_\calF(A))$ acts irreducibly on $A/\bfV$.
    \end{proof}

    \begin{proposition} \label{prp:fi24-outfv}
        Assume that $\calE(\calF) = \{P, Q, R\}$, and let $\calG_i := N_\calF(\bfV_i)$ for $1 \leq i \leq 2$. If $Q \in \calE(\calG_i)$, then $O^{3'}(\Out_\calF(\bfV_i)) = O^{3'}(\Out_{\calG_i}(\bfV_i)) \cong \SO_7(3)$.
    \end{proposition}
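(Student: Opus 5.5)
The plan is to apply Lemma \ref{lm:aut-sl7} to $G := O^{3'}(\Out_\calF(\bfV_i))$ acting on $\bfV_i$, an elementary abelian group of order $3^7$. This needs three inputs: $\bfV_i$ is fully normalized and $\Aut_\calF(\bfV_i) = \Aut_{\calG_i}(\bfV_i)$; $O_3(G) = 1$; and a Sylow $3$-subgroup of $G$ is isomorphic to one of $\SO_7(3)$.

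First, $\bfV_i \normalIn S$, so $\bfV_i$ is fully normalized and $\calG_i$ is saturated by Lemma \ref{lm:nfa-sat}. Also $\Aut_{\calG_i}(\bfV_i) = \Aut_\calF(\bfV_i)$ by the definition of the normalizer subsystem, which gives the stated equality once we identify the group. Since $\bfV_i$ is abelian and self-centralizing in $S$, we have $\Out_\calF(\bfV_i) = \Aut_\calF(\bfV_i)$ and $\Out_S(\bfV_i) \cong S/\bfV_i$. The latter is a Sylow $3$-subgroup of $\Aut_\calF(\bfV_i)$, hence of $G$. In $\Fi_{24}'$ we have $S/\bfV_i \in \Syl_3(H_i/\bfV_i) \cong \Syl_3(\SO_7(3))$, and this is a property of $S$ alone, so hypothesis (2) of Lemma \ref{lm:aut-sl7} holds.

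Second, we need $O_3(\Aut_\calF(\bfV_i)) = 1$, equivalently $O_3(\calG_i) = \bfV_i$, using Lemma \ref{lm:centric-faithful}. We determine $\calE(\calG_i)$. By Lemmas \ref{lm:fi24-vnorm-p} and \ref{lm:fi24-vnorm-r}, $O^{3'}(\Aut_\calF(P))$ and $O^{3'}(\Aut_\calF(R))$ normalize $\bfV_i$. Since $\Aut_\calF(E) = O^{3'}(\Aut_\calF(E))\,N_{\Aut_\calF(E)}(\Aut_S(E))$ and the normalizer part extends to $\Aut_\calF(S)$ by Lemma \ref{lm:autfe-decomposition}, it remains to check that $\Aut_\calF(S)$ normalizes (or at least a subgroup of index prime to $3$ normalizes) $\bfV_i$. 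Then $P, R \in \calE(\calG_i)$, and $Q \in \calE(\calG_i)$ by hypothesis. An element of $O_3(\calG_i)$ is contained in $P \cap Q \cap R$ by Lemma \ref{lm:weak-closure-to-normality}, and it is normalized by each essential automizer. We then argue modulo $\bfV_i$. The quotients $P/\bfV_i$, $Q/\bfV_i$ and $R/\bfV_i$ are the unipotent radicals of the three minimal parabolics of $\SO_7(3)$, and their intersection is $\bfV_i$. Any $O_3(\calG_i)$ strictly containing $\bfV_i$ would give a nontrivial subgroup of $(P\cap Q\cap R)/\bfV_i$ invariant under all three $\SL_2(3)$-type automizers. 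We rule this out by a computation in \texttt{fi24/v-norm.g}, or by an irreducibility argument using the natural-module structures from Lemmas \ref{lm:fi24-outfp} and \ref{lm:fi24-outfr}, together with the fact that $O^{3'}(\Out_\calF(Q))$ contains an $\Alt(4)$ acting nontrivially. Hence $O_3(G) = 1$.

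Third, we need $O^{3'}(G) = G$, which holds by definition. Lemma \ref{lm:aut-sl7} then gives $G \cong \SO_7(3)$.

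The main obstacle is the second step: showing that no nontrivial normal $3$-subgroup survives. Equivalently, the subgroup generated by the essential automizers inside $\Aut(\bfV_i)$ must not stabilize a nontrivial flag arising from $O_3$. I expect this to require an explicit GAP check of the invariant subgroups of $(P \cap Q \cap R)/\bfV_i$ under the automizers, where the hypothesis $Q \in \calE(\calG_i)$ is essential. Without it, $O_3(\calG_i)$ would contain $(P \cap R)$-level subgroups such as $\bfT$.
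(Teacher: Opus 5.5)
Your frame matches the paper's: apply Lemma \ref{lm:aut-sl7} to $O^{3'}(\Out_{\calG_i}(\bfV_i))$, read off the Sylow hypothesis from $S/\bfV_i \in \Syl_3(H_i/\bfV_i)$, and first prove $O_3(\calG_i) = \bfV_i$. However, your argument for that crucial step does not work.

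\begin{itemize}
\item The assertion $P \cap Q \cap R = \bfV_i$ is false. $P$, $Q$, $R$ are maximal in $S$, so $|P \cap Q \cap R| \geq 3^{13}$, whereas $|\bfV_i| = 3^7$. In $\SO_7(3)$ the radicals of the three minimal parabolics intersect in the product of the root groups of the non-simple positive roots, a group of order $3^6$.
\item Your fallback is not an argument either. For an arbitrary $\calF$ the automizers are unknown, so a GAP search over invariant subgroups has nothing concrete to run against.
\item The natural-module facts of Lemmas \ref{lm:fi24-outfp} and \ref{lm:fi24-outfr} at best force $O_3(\calG_i) \leq \bfT$ (via $P/\bfT$). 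That leaves the large section $(\bfT \cap Q)/\bfV_i$ uncontrolled.
\item You never determine $O^{3'}(\Out_{\calG_i}(Q))$, nor the subgroup on which it acts irreducibly.
\end{itemize}
Separately, your worry about $\Aut_\calF(S)$ is unnecessary, and indeed $\Aut_\calF(S)$ need not normalize $\bfV_i$. For $E \in \{P, R\}$, the inclusion $O^{3'}(\Out_\calF(E)) \leq \Out_{\calG_i}(E)$ already gives $\Out_{\calG_i}(E)$ a non-normal Sylow $3$-subgroup of order $3$, so $E \in \calE(\calG_i)$.

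The missing idea is to pass through $N_{\calG_i}(\bfW)$ and use Lemmas \ref{lm:fi24-outfq} and \ref{lm:fi24-outfw-alt4}.
\begin{enumerate}
\item Put $A := O_3(N_{\calG_i}(\bfW))$. A subgroup normal in $\calG_i$ stays normal in the normalizer subsystem $N_{\calG_i}(\bfW)$, so $\bfV_i \leq Y := O_3(\calG_i) \leq A$.
\item Since $\bfW \cong 3^{1+10}_+$ has no elementary abelian subgroup of order $3^7$, we get $A \neq \bfW$.
\item Apply Lemma \ref{lm:fi24-outfq} to $\calG_i$. This is exactly where $Q \in \calE(\calG_i)$ is needed, alongside $P \in \calE(\calG_i)$. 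Since $A \neq \bfW$, you are in case (1): $O^{3'}(\Out_{\calG_i}(Q)) \cong \Alt(4)$ and $[A : \bfW] = 3$. By the proof of that lemma, $\bfV_i = C_A(Z_2(A))$.
\item As $R \in \calE(\calG_i)$ and $\bfW \not\leq R$, you get $\bfV_i \leq Y \leq A \cap R < A$.
\item Lemma \ref{lm:fi24-outfw-alt4}, applied to $\calG_i$, says $O^{3'}(\Out_{\calG_i}(A)) \cong \SO_5(3)$ acts irreducibly on $A/\bfV_i$. Since $Y$ is $\Aut_{\calG_i}(A)$-invariant and proper in $A$, this forces $Y = \bfV_i$.
\end{enumerate}
Only then do you have $O_3(\Out_{\calG_i}(\bfV_i)) = 1$, and Lemma \ref{lm:aut-sl7} applies.
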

    \begin{proof}
        We know by Lemmas \ref{lm:fi24-vnorm-p} and \ref{lm:fi24-vnorm-r} that $P, Q \in \calE(\calG_i)$ for each $i$. As $Q \in \calE(\calG_i)$ and $\bfV_i$ is not isomorphic to a subgroup of $\bfW$, we deduce by Lemma \ref{lm:fi24-outfq} that $O^{3'}(\Out_{\calG_i}(Q)) \cong \Alt(4)$. Set $A_i := O_3(N_{\calG_i}(\bfW))$. As only one $\Aut(S)$-conjugate of $\bfV_i$ can be normal in $N_{\calG_i}(\bfW)$, we infer that $\bfV_i = C_A(Z_2(A))$ (up to relabelling) by the proof of Lemma \ref{lm:fi24-outfq}. 
        
        Let $Y := O_3(\calG_i)$, which contains $\bfV_i$. We know that $O^{3'}(\Out_{\calG_i}(Q))$ acts irreducibly on $Q/A$, meaning that $\bfV_i \leq Y \leq A \cap R < A$. Moreover, Lemma \ref{lm:fi24-outfw-alt4} tells us that $O^{3'}(\Out_\calF(A))$ acts irreducibly on $A/\bfV_i$. This allows us to conclude that $Y = \bfV_i$. As such, we find that $O_3(\Out_\calF(\bfV_i)) = O_3(\Out_{\calG_i}(\bfV_i)) = 1$. We now apply Lemma \ref{lm:aut-sl7} to conclude that $O^{3'}(\Out_{\calG_i}(\bfV_i)) \cong \SO_7(3)$, as desired.
    \end{proof}
    
    \begin{proposition} \label{prp:fi24-trivcore}
        Let $\calF$ be a saturated fusion system on $S$. Then either $\calF$ is constrained or we have $O_3(\calF) = 1$. Moreover, $O_3(\calF) = 1$ if and only if $\calE(\calF) = \{P, Q, R\}$ and $O^{3'}(\Out_\calF(Q)) \cong \Alt(5)$.
    \end{proposition}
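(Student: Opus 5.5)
We split into cases according to $\calE(\calF) \subseteq \{P, Q, R\}$. If $\calF$ has at most one essential subgroup, or if $\calE(\calF)$ is one of $\{P,Q\}$, $\{Q,R\}$, $\{P,R\}$, we produce a nontrivial normal subgroup directly. When $Q \notin \calE(\calF)$, the subgroup $\bfT$ is weakly closed by Lemma \ref{lm:fi24-t} and is contained in $P$, in $R$ and in $S$. Hence Lemma \ref{lm:weak-closure-to-normality} gives $\bfT \normalIn \calF$. Since $C_S(\bfT) \leq \bfT$, the subgroup $\bfT$ is centric, so $\calF$ is constrained. The same argument applies with $\bfU$ when $P \notin \calE(\calF)$ (Lemma \ref{lm:fi24-u}), and with $\bfW$ when $R \notin \calE(\calF)$ (Lemma \ref{lm:fi24-w}). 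Both subgroups are self-centralizing in $S$, so $\calF$ is again constrained. If $\calE(\calF) = \varnothing$, then $S \normalIn \calF$.

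It remains to treat $\calE(\calF) = \{P, Q, R\}$, where Lemma \ref{lm:fi24-outfq} splits the argument in two. In case (1), where $O^{3'}(\Out_\calF(Q)) \cong \Alt(4)$, some $\bfV_i$ is normal in $N_\calF(\bfW)$, so $Q \in \calE(\calG_i)$ with $\calG_i = N_\calF(\bfV_i)$. By Lemmas \ref{lm:fi24-vnorm-p} and \ref{lm:fi24-vnorm-r}, $O^{3'}(\Aut_\calF(P))$ and $O^{3'}(\Aut_\calF(R))$ normalize $\bfV_i$. We also need $\Aut_\calF(Q)$ to normalize $\bfV_i$. This holds because $\bfV_i = C_A(Z_2(A))$ with $A = O_3(N_\calF(\bfW))$, and $A$ is $\Aut_\calF(Q)$-invariant. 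Using a Frattini argument, $\Aut_\calF(E) = O^{3'}(\Aut_\calF(E)) N_{\Aut_\calF(E)}(\Aut_S(E))$, and Lemma \ref{lm:autfe-decomposition}, each automizer splits into a part that normalizes $\bfV_i$ and a part that extends to $\Aut_\calF(S)$. The main obstacle is showing that the $\Aut_\calF(S)$-part fixes $\bfV_i$ rather than swapping $\bfV_1$ and $\bfV_2$. If an element of $\Aut_\calF(S)$ swapped them, then both $\bfV_i$ would be normal in $N_\calF(\bfW)$, because that subsystem contains $\Aut_\calF(S)$. This contradicts the uniqueness clause in Lemma \ref{lm:fi24-outfq}(1). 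Consequently $\bfV_i$ is weakly closed and lies in every essential subgroup, so $\bfV_i \normalIn \calF$ by Lemma \ref{lm:weak-closure-to-normality}. As $\bfV_i$ is self-centralizing, $\calF$ is constrained.

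In case (2), where $O^{3'}(\Out_\calF(Q)) \cong \Alt(5)$ and $O_3(N_\calF(\bfW)) = \bfW$, we show $O_3(\calF) = 1$. Since $O_3(\calF) \leq O_3(N_\calF(\bfW)) = \bfW$, Proposition \ref{prp:fi24-outfw} shows that $O^{3'}(\Out_\calF(\bfW)) \cong \U_5(2)$ acts irreducibly on $\bfW/Z(S)$. So either $O_3(\calF) \leq Z(S)$ or $O_3(\calF) = \bfW$. Since $R \in \calE(\calF)$ does not contain $\bfW$, the second option is impossible. For the first, $Z(S)$ is not normalized by $\Aut_\calF(R)$: by Lemma \ref{lm:fi24-outfr}, $Z(R) = Z_2(S)$ is a natural $\SL_2(3)$-module, and $Z(S)$ is a $1$-dimensional subspace of it, which that action moves. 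Thus $O_3(\calF) = 1$. In this case $\calF$ is not constrained, because $O_3(\calF) = 1$ is not centric.

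Putting the cases together, $\calF$ is always either constrained or satisfies $O_3(\calF) = 1$, and $O_3(\calF) = 1$ occurs exactly when $\calE(\calF) = \{P, Q, R\}$ and $O^{3'}(\Out_\calF(Q)) \cong \Alt(5)$. This proves both statements of the proposition.
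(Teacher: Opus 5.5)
Your proposal is correct and follows essentially the same route as the paper. The weak-closure lemmas for $\bfW$, $\bfU$ and $\bfT$ handle the case where one of $P$, $Q$, $R$ is not essential. In the $\Alt(4)$ case, Lemma \ref{lm:fi24-outfq} together with Lemmas \ref{lm:fi24-vnorm-p} and \ref{lm:fi24-vnorm-r} and the Frattini/extension argument gives $\bfV_i \normalIn \calF$. In the $\Alt(5)$ case, irreducibility of $\U_5(2)$ on $\bfW/Z(S)$ and of $\SL_2(3)$ on $Z(R)$ forces $O_3(\calF) = 1$.

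Your ``main obstacle'' about $\Aut_\calF(S)$ swapping $\bfV_1$ and $\bfV_2$ is in fact immediate. Since $\Aut_\calF(S) = \Aut_{N_\calF(\bfW)}(S)$ and $\bfV_i \normalIn N_\calF(\bfW)$, the subgroup $\bfV_i$ is already $\Aut_\calF(S)$-invariant.
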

    \begin{proof}
        If $R \not\in \calE(\calF)$, then we know by Lemma \ref{lm:fi24-w} that $\bfW \normalIn \calF$. If $P \not\in \calE(\calF)$, then Lemma \ref{lm:fi24-u} tells us that $\bfU \normalIn \calF$. Also, if $Q \not\in \calE(\calF)$, then by Lemma \ref{lm:fi24-t}, we have that $\bfT \normalIn \calF$. Assume now that $O^{3'}(\Out_\calF(Q)) \cong \Alt(4)$. By Lemma \ref{lm:fi24-outfq}, we know that some $\bfV_i$ is normal in $N_\calF(\bfW)$. We further know that $O^{3'}(\Aut_\calF(R))$ normalizes $\bfV_i$ by Lemma \ref{lm:fi24-vnorm-r}. By the Frattini Argument and Lemma \ref{lm:autfe-decomposition}, we infer that $\calF = \langle N_\calF(\bfW), O^{3'}(\Aut_\calF(R)) \rangle$. This implies that $\bfV_i \normalIn \calF$. As $\bfW, \bfU, \bfT$ and $\bfV_i$ are all self-centralizing in $S$, we deduce that either $\calF$ is constrained or $\calE(\calF) = \{P, Q, R\}$ with $O^{3'}(\Out_\calF(Q)) \cong \Alt(5)$. If $O_3(\calF) = 1$, then $\calF$ cannot be constrained. Thus, if $O_3(\calF) = 1$, then we must have $\calE(\calF) = \{P, Q, R\}$ with $O^{3'}(\Out_\calF(Q)) \cong \Alt(5)$.
    
        Now assume that $\calE(\calF) = \{P, Q, R\}$ and $O^{3'}(\Out_\calF(Q)) \cong \Alt(5)$. Then by Lemma \ref{lm:fi24-outfq}, we deduce that $O_3(\calF) \leq O_3(N_\calF(\bfW)) = \bfW$. By Proposition \ref{prp:fi24-outfw}, we have that $O^{3'}(\Out_\calF(\bfW)) \cong \U_5(2)$ acts irreducibly on $\bfW/Z(S)$. Also, we know by Lemma \ref{lm:fi24-outfr} that $O^{3'}(\Out_\calF(R))$ acts irreducibly on $Z(R)$, which properly contains $Z(S)$. As $O_3(\calF) \leq R \cap \bfW < \bfW$, we conclude that $O_3(\calF) = 1$.
    \end{proof}

    We recall that $G = \Fi_{24}'$. Set $\calH$ to be the fusion system on $S$ realized by $\Aut(G)$, and $\calH_0 := O^{3'}(\calH)$ to be the one realized by $G$. Then $\Out_{\calH_0}(S)$ and $\Out_{\calH}(S)$ are both elementary abelian groups of order $2^3$ and $2^4$ respectively.
    \begin{lemma} \label{lm:fi24-autfs}
        Assume that $O_3(\calF) = 1$. Then $\Aut_\calF(S)$ is $\Aut(S)$-conjugate to either $\Aut_\calH(S)$ or $\Aut_{\calH_0}(S)$.
    \end{lemma}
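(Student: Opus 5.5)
Since $O_3(\calF) = 1$, Proposition \ref{prp:fi24-trivcore} gives $\calE(\calF) = \{P, Q, R\}$ and $O^{3'}(\Out_\calF(Q)) \cong \Alt(5)$. Consequently $O_3(N_\calF(\bfW)) = \bfW$ by Lemma \ref{lm:fi24-outfq}. Proposition \ref{prp:fi24-outfw} then gives $O^{3'}(\Out_\calF(\bfW)) \cong \U_5(2)$ and $|\Out_{O^{3'}(N_\calF(\bfW))}(S)| = 2^2$. The approach is to pin $\Out_\calF(S)$ down from two sides. A lower bound comes from the $3'$-parts of $N_{\Aut_\calF(E)}(\Aut_S(E))$ for $E \in \{P,Q,R,\bfW\}$, lifted to $S$ via Lemma \ref{lm:autfe-decomposition}. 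An upper bound comes from the structure of $\Aut(S)$ itself, computed in GAP/MAGMA.

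First, let $K := \Out_{O^{3'}(\calF)}(S)$, the subgroup generated by the lifts to $S$ of the $3'$-elements of $N_{O^{3'}(\Aut_\calF(E))}(\Aut_S(E))$ for $E \in \{P,Q,R\}$; by the Frattini argument these are what generate $O^{3'}(\calF)$ together with $O^{3'}(\Aut_\calF(S))$. From $\U_5(2)$ we already get a subgroup of order $2^2$. The $\SL_2(3)$ in $\Out_\calF(R)$ from Lemma \ref{lm:fi24-outfr} contributes a further involution: its central involution acts as $-1$ on $Z(R) = Z_2(S)$, whereas the $\bfW$-contributions centralize $Z_2(S)/Z(S)$ or act on it in a controlled way. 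Hence $K$ is a $2$-group of order at least $2^3$, and I aim to show it is elementary abelian of order $2^3$. I would then compute a Hall $3'$-subgroup $D$ of $\Aut(S)$ (which is solvable, and a $\{2\}$-group modulo $O_3(\Aut(S))$ in practice) and check that every $2$-subgroup of $\Out(S)$ isomorphic to $2^3$ that induces the required actions is $\Out(S)$-conjugate to $\Out_{\calH_0}(S)$. In particular this checks that the element-wise actions on $Z(S)$, $Z_2(S)/Z(S)$, $P/W_0$ and $R/\bfT$ (natural modules, from Lemmas \ref{lm:fi24-outfp} and \ref{lm:fi24-outfr}) determine the subgroup up to conjugacy. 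Since $\Aut_\calF(S)$ is a $3'$-extension of $\Inn(S)\Aut_S(\ldots)$, i.e. $\Aut_\calF(S) = \Inn(S) \rtimes \Out_\calF(S)$ up to $O_3$ pieces, it is determined by its image in $\Out(S)$ together with its Sylow $3$-subgroup $\Inn(S)$. Sylow/Hall conjugacy in $\Aut(S)$ then reduces everything to comparing $3'$-Hall subgroups.

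Second, I would bound $\Out_\calF(S)$ from above. A Hall $3'$-subgroup of $\Out(S)$ should be a $2$-group, say elementary abelian of order $2^4$, equal to $\Out_\calH(S)$ up to conjugacy; this would be verified by computation. Since $\Out_\calF(S)$ is a $3'$-group normalizing (indeed containing) $K \cong \Out_{\calH_0}(S)$, it lies between $K$ and a Hall $3'$-subgroup containing it. Index $2$ leaves exactly two options, $K$ or the full $2^4$, and these correspond to $\Aut_{\calH_0}(S)$ and $\Aut_\calH(S)$ after conjugating by $\Aut(S)$.

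The main obstacle is the computational identification. One part is that the $3'$-part of $\Out(S)$ is exactly $2^4$ with a unique class of relevant $2^3$-subgroups. The other is that the lifted generators genuinely generate a subgroup of order $2^3$, rather than collapsing to $2^2$ or giving a nonconjugate $2^3$. I would first compute $\Aut(S)$ in MAGMA (via an efficient pc-presentation of $S$), find $O_3(\Aut(S))$ and a complement, and enumerate its subgroups up to conjugacy. For each candidate I would test whether it normalizes $\bfW,\bfU,\bfT$ and acts on $Z_2(S)$, $P/W_0$, $R/\bfT$ compatibly with Lemmas \ref{lm:fi24-outfp} and \ref{lm:fi24-outfr}. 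This should isolate exactly the two classes.
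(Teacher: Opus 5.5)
Your skeleton matches the paper's. $\Out_\calH(S)\cong 2^4$ is a Sylow $2$-subgroup of $\Out(S)$, so the case $|\Out_\calF(S)|=2^4$ is Sylow's theorem. One then shows $|\Out_\calF(S)|\geq 2^3$, and for $|\Out_\calF(S)|=2^3$ one must pin the subgroup down inside a fixed Sylow $2$-subgroup $T$.

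Your lower bound works once the invariant is corrected. The elements of $\Out_{O^{3'}(N_\calF(\bfW))}(S)$ centralize $Z(S)=Z(\bfW)$, while the involution $t_2$ with $t_2|_R\in O^{3'}(\Aut_\calF(R))$ inverts $Z(R)=Z_2(S)\geq Z(S)$. They do not centralize $Z_2(S)/Z(S)$: the involution $t_3$ with $t_3|_P\in O^{3'}(\Aut_\calF(P))$ lies in $\Out_{O^{3'}(N_\calF(\bfW))}(S)$ and inverts $Z_2(P)/Z(P)\geq Z_2(S)/Z(S)$.

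The genuine gap is in the identification step, which is the real content of the lemma. The only constraints your test uses come from Lemmas \ref{lm:fi24-outfp} and \ref{lm:fi24-outfr}. They force $\Out_\calF(S)$ to contain $t_2$ and $t_3$, and nothing more; normalizing $\bfW,\bfU,\bfT$ is vacuous, since these are characteristic in $S$.

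Every subgroup of order $2^3$ of $T$ containing $\langle t_2,t_3\rangle\cong 2^2$ passes such a test, and there are three of them. Take $T=\Out_\calH(S)$ and $e\in T\setminus\Out_{\calH_0}(S)$. The three subgroups are $\Out_{\calH_0}(S)=\langle t_1,t_2,t_3\rangle$, $\langle e,t_2,t_3\rangle$ and $\langle t_1e,t_2,t_3\rangle$.

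These are not $\Out(S)$-conjugate:
\begin{itemize}
\item Since $T$ is abelian, $N_{\Out(S)}(T)$ controls conjugacy of its subgroups.
\item $N_{\Out(S)}(T)$ acts on $T$ through a group of odd order.
\item It preserves the characters by which $T$ acts on the characteristic sections $Z(S)$, $Z_2(S)/Z(S)$ and $S/Q$.
\item These three characters take the values $(+,+,-)$, $(-,-,+)$, $(+,-,-)$ on $t_1,t_2,t_3$. So their common kernel in $T$ has order at most $2$.
\item Hence $N_{\Out(S)}(T)$ centralizes $T$.
\end{itemize}
So your computation cannot isolate a single class.

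The missing input is a constraint on the third generator, and it has to come from $Q$. There is an involution $t_1\in\Aut_\calF(S)$ with $t_1|_Q\in O^{3'}(\Aut_\calF(Q))$. By Lemma \ref{lm:fi24-outfu} and the weak closure of $\bfU$ (Lemma \ref{lm:fi24-u}), $t_1$ centralizes $Z(\bfU)=Z_2(S)$ and $Q/\bfU\cong S/R$. Since $O^{3'}(\Aut_\calF(Q))$ is generated by $3$-elements, $t_1$ also has determinant $1$ on $Z_2(\bfU)/Z(\bfU)$.

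The paper uses exactly these conditions to single out $t_1$ in $T$. With analogous characterizations of $t_2$ and $t_3$, it concludes that $\langle t_1,t_2,t_3\rangle$ is the only admissible subgroup of $T$, and Sylow's theorem finishes. Equivalently, you would need to determine which $2^2$ inside $C_T(Z(S))$ is the $\U_5(2)$-contribution, and your proposal gives no handle on that.
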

    \begin{proof}
        All code used in this proof can be found in the file \texttt{fi24/autfs.g}. We compute that $\Out_{\calH}(S)$ is a Sylow $2$-subgroup of $\Out(S)$. As such, if $|\Out_\calF(S)| = 2^4$, then $\Aut_\calF(S)$ and $\Aut_\calH(S)$ are $\Aut(S)$-conjugate by Sylow's Theorems.

        Assume now that $|\Out_\calF(S)| \leq 2^3$. As $O_3(\calF) = 1$, we know by Proposition \ref{prp:fi24-trivcore} that $\calE(\calF) = \{P, Q, R\}$ with $O^{3'}(\Out_\calF(Q)) \cong \Alt(5)$. By the extension axiom, there exists an involution $t_1 \in \Aut_\calF(S)$ such that $t_1|_{Q} \in O^{3'}(\Aut_\calF(Q))$. As $O^{3'}(\Out_\calF(R)) \cong \SL_2(3)$, we can find another involution $t_2 \in \Aut_\calF(S)$ such that $t_2|_R \in O^{3'}(\Aut_\calF(R))$. As $\bfU = Q \cap R$ is weakly closed in $\calF$, we know that $t_1$ centralizes $Q/\bfU$ and $t_2$ centralizes $R/\bfU$. As $QR = S$, we infer that $t_1$ centralizes $S/R$ and $t_2$ centralizes $S/Q$. We further know by Lemma \ref{lm:fi24-outfu} that $|\Out_\calF(S)|_{3'} \geq 2^2$.
        
        As $O^{3'}(\Out_\calF(P)) \cong \SL_2(3)$, we know by the extension axiom that there exists a $t_3 \in \Aut_\calF(S)$ such that $t_3|_P \in O^{3'}(\Aut_\calF(P))$. In Lemma \ref{lm:fi24-outfp}, we showed that $P/\bfT$ and $P/W_0$ are natural $\SL_2(3)$-modules with respect to the action of $O^{3'}(\Out_\calF(P))$, where $W_0 := C_P(P/\bfW)$. We note that $P \cap R$ contains $\bfT$, meaning that $t_3$ does not centralize $P/(P \cap R) \cong S/R$. Similarly, $P \cap Q$ contains $W_0$, so $t_3$ cannot centralize $P/(P \cap Q) \cong S/Q$ either. We deduce that $|\Out_\calF(S)| = 2^3$. We shall further assume that $\langle t_1, t_2, t_3 \rangle$ is a Sylow $2$-subgroup of $\Aut_\calF(S)$.

        We now show that $\Aut_\calF(S)$ and $\Aut_{\calH_0}(S)$ are $\Aut(S)$-conjugate. Let $H$ be a Sylow $2$-subgroup of $\Aut(S)$ containing $H_0 := \langle t_1, t_2, t_3 \rangle$. It suffices to show that the choice of $H_0$ is unique in $H$. We recall that $t_1$ centralizes $Q/\bfU$ and $t_2$ centralizes $R/\bfU$. By Lemma \ref{lm:fi24-outfu}, we further know that $t_1$ centralizes $Z(\bfU)$. As $Z_2(\bfU)$ is self-centralizing, $t_1$ can be seen as a non-trivial element in $\Aut(Z_2(\bfU)/Z(\bfU)) \cong \GL_4(3)$. As $t_1|_{Z_2(\bfU)/Z(\bfU)}$ lies inside $\Aut_{O^{3'}(N_\calF(Q))}(Z_2(\bfU)/Z(\bfU)) \cong \Alt(5)$, which is isomorphic to a subgroup of $\SL_4(3)$. As such, the element $t_1|_{Z_2(\bfU)/Z(\bfU)}$ has determinant $1$, and $t_1$ centralizes $S/R$. This uniquely determines $t_1$. Similarly, $t_2$ is uniquely determined as an element of $\Aut(Z_2(\bfU)) \cong \GL_2(3)$ that acts as $-1$ on both $Z(\bfU)/Z(S)$ and $Z(S)$. Finally, $t_3$ is uniquely determined as an element that does not centralize $S/R$ and acts on $Z_2(\bfU)/Z(\bfU)$  with  determinant $1$. Thus, $H_0$ is uniquely determined inside $H$. This result also holds in $\Aut_{\calH_0}(S)$. We conclude that $\Aut_\calF(S)$ and $\Aut_{\calH_0}(S)$ are $\Aut(S)$-conjugate by Sylow's Theorems.
    \end{proof}
    
    Using all the information about we have gathered about the local structure of $\calF$, we will classify all fusion systems $\calF$ on $S$ such that $O_3(\calF) = 1$.
       
    \begin{theorem} \label{thm:fi24}
        Let $\calF$ be a saturated fusion system on $S$ with $O_3(\calF) = 1$. Then $\calF$ is realized by $\Fi_{24}'$ or $\Fi_{24}$.
    \end{theorem}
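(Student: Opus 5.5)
By Proposition \ref{prp:fi24-trivcore}, the hypothesis $O_3(\calF) = 1$ forces $\calE(\calF) = \{P, Q, R\}$ with $O^{3'}(\Out_\calF(Q)) \cong \Alt(5)$. By Lemma \ref{lm:fi24-autfs}, after replacing $\calF$ by an $\Aut(S)$-conjugate we may assume that $\Aut_\calF(S)$ equals $\Aut_{\calH_0}(S)$ or $\Aut_\calH(S)$. Let $\calH_1$ denote the corresponding realizable system, $\calH_0$ or $\calH$. The aim is to show $\calF = \calH_1$. By Alperin--Goldschmidt, it suffices to show that $\Aut_\calF(E) = \Aut_{\calH_1}(E)$ for $E \in \{P, Q, R\}$. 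Since $O^{3'}(\Aut_\calF(E))\Aut_\calF(S)$ contains $\Aut_\calF(E)$ by the Frattini argument together with Lemma \ref{lm:autfe-decomposition}, it is enough to compare the $O^{3'}$ parts.

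I would organise the comparison through the weakly closed subgroups $\bfW$, $\bfU$ and $\bfT$ (Lemmas \ref{lm:fi24-w}--\ref{lm:fi24-t}). Their normalizers $N_\calF(\bfW)$, $N_\calF(\bfU)$ and $N_\calF(\bfT)$ are saturated and constrained, with essentials $\{P, Q\}$, $\{Q, R\}$ and $\{P, R\}$ respectively, and these pairs cover all of $\calE(\calF)$.

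\textbf{Step 1: $N_\calF(\bfW) = N_{\calH_1}(\bfW)$.} Here $O_3(N_\calF(\bfW)) = \bfW$, and Proposition \ref{prp:fi24-outfw} gives $O^{3'}(\Out_\calF(\bfW)) \cong \U_5(2)$ acting irreducibly on $\bfW/Z(S)$. The plan is to show that $\Aut_\calF(\bfW)$ and $\Aut_{\calH_1}(\bfW)$ are conjugate by an element of $\Aut(\bfW)$ normalizing $\Aut_S(\bfW)$, and hence equal up to an automorphism of $S$ that is trivial on $\bfW$. This uses the fact that $\Out(\bfW)$ has $\Sp_{10}(3)$ as its $O^{3'}$ part, together with a check (MAGMA) that $\U_5(2)$-subgroups of $\Sp_{10}(3)$ with the given Sylow $3$-subgroup are unique up to the relevant normalizer. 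The model theorem then identifies $N_\calF(\bfW)$ with the $3$-fusion system of $M_1$ or $M_1 \cap \Fi_{24}'$. I would use \cite[Proposition 2.11]{todd-modules}, or equivalently the uniqueness part of the model theorem, with $\Aut_\calF(S)$ already fixed. The aim is to conclude that $\Aut_\calF(P) = \Aut_{\calH_1}(P)$ and $\Aut_\calF(Q) = \Aut_{\calH_1}(Q)$ without further conjugation.

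\textbf{Step 2: the same for $\bfU$ and $\bfT$, keeping $\Aut_\calF(S)$ fixed.} For $\bfU$, Lemma \ref{lm:fi24-outfu} gives $O^{3'}(\Out_\calF(\bfU)) \cong \SL_2(3) \times \Alt(5)$. Since the $\Alt(5)$ factor is determined by $\Aut_\calF(Q)$, already matched in Step 1, only the $\SL_2(3)$ factor $H_2$ remains. It acts faithfully on $Z(\bfU)$ and is generated by $\Out_S(\bfU)$-conjugates of Sylow subgroups. I would argue that $O^{3'}(\Aut_\calF(R))$ is generated by $\Aut_S(R)$ and its conjugate under an element of order $4$. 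That element is forced, modulo $\Aut_S(R)$, by the natural-module structure in Lemma \ref{lm:fi24-outfr} and by $t_2 \in \Aut_\calF(S)$. Concretely, apply \cite[Proposition 2.11]{todd-modules} to the constrained systems $N_\calF(\bfU)$ and $N_{\calH_1}(\bfU)$. They share $\Aut(S)$, $\Aut(Q)$ and the model's $O_3$, so their models are isomorphic via an isomorphism fixing $N_{\calF}(Q)$. Then use Lemma \ref{lm:weak-closure-equality} inside the model to get equality rather than mere conjugacy. Hence $\Aut_\calF(R) = \Aut_{\calH_1}(R)$.

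\textbf{Main obstacle.} The hard part is this rigidity step: turning ``conjugate by some automorphism'' into equality compatibly across the three essentials. A priori, the conjugating automorphisms chosen for $\bfW$ and for $\bfU$ may differ. I would handle this by fixing $\Aut_\calF(S)$ and $\Aut_\calF(Q)$ first, via Step 1. I would then show by computation that the automorphisms of $S$ centralizing $\Aut_\calF(S)$ and normalizing $\Aut_\calF(Q)$ also normalize the candidate $\Aut_{\calH_1}(R)$ (a GAP check on $C_{\Aut(S)}$), so that no further freedom arises. With $\Aut_\calF(E) = \Aut_{\calH_1}(E)$ for all $E \in \{P, Q, R, S\}$, Alperin--Goldschmidt gives $\calF = \calH_1$. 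Thus $\calF$ is realized by $\Fi_{24}'$ when $|\Out_\calF(S)| = 2^3$, and by $\Fi_{24}$ when it is $2^4$.
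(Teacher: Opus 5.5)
There is a genuine gap in Step 2, the determination of $\Aut_\calF(R)$. None of the mechanisms you offer there closes it.

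First, your claim that the order-$4$ element generating $O^{3'}(\Aut_\calF(R))$ with $\Aut_S(R)$ is forced modulo $\Aut_S(R)$ by Lemma \ref{lm:fi24-outfr} and $t_2$ is false. The paper computes that once $\Aut_{\Aut_\calF(S)}(R)$ is fixed, there are exactly three candidates for $\Out_\calF(R)$ in $\Out(R)$, and they are permuted by some $\alpha_0 \in \Aut(S)$.

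Second, invoking \cite[Proposition 2.11]{todd-modules} for $N_\calF(\bfU)$ and $N_{\calH_1}(\bfU)$ is circular. That result needs $\Aut_\calF(\bfU) = \Aut_{\calH_1}(\bfU)$ as a hypothesis. The undetermined factor $H_2 \cong \SL_2(3)$ of $O^{3'}(\Out_\calF(\bfU))$ is precisely the contribution of $R$ you are trying to pin down. The weak-closure step ``inside the model'' likewise needs a normalizer inclusion you have not verified.

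Third, your closing GAP check runs the wrong way. Showing that the automorphisms of $S$ preserving $\Aut_\calF(S)$ and $\Aut_\calF(Q)$ normalize $\Aut_{\calH_1}(R)$ says nothing about whether $\Aut_\calF(R)$ is that subgroup. You would need either that only one candidate is compatible with the fixed data, or that this stabilizer acts transitively on the compatible candidates.

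In fact the plan as stated is self-contradictory. The paper applies $\alpha_0$ while keeping $\Aut_\calF(S)$ fixed. So $\calF' := \calH_1^{\alpha_0}$ is corefree with $\Aut_{\calF'}(S) = \Aut_{\calH_1}(S)$ but $\Aut_{\calF'}(R) \neq \Aut_{\calH_1}(R)$. If Step 1 really yields $N_{\calF'}(\bfW) = N_{\calH_1}(\bfW)$ with no further conjugation, Step 2 cannot also yield $\Aut_{\calF'}(R) = \Aut_{\calH_1}(R)$. Some conjugation has to be spent on $R$.

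The paper spends that conjugation first. After matching $\Aut_\calF(S)$, it uses the three-candidate computation to arrange $\Aut_\calF(R) = \Aut_{\calH_1}(R)$, and the Model Theorem to get $N_\calF(R) = N_{\calH_1}(R)$. Only then does it prove rigidity, and it does so through $\bfV_1$ rather than $\bfW$ or $\bfU$.
\begin{itemize}
\item Proposition \ref{prp:fi24-outfv} gives $O^{3'}(\Out_\calF(\bfV_1)) \cong \SO_7(3)$.
\item Since $N_\calF(R)$ is now fixed and $\Aut_R(\bfV_1)$ is weakly closed in $\Aut_S(\bfV_1)$, Lemma \ref{lm:weak-closure-equality} gives $\Aut_\calF(\bfV_1) = \Aut_{\calH_1}(\bfV_1)$.
\item $H^1$-vanishing together with \cite[Proposition 2.11]{todd-modules} then gives $N_\calF(\bfV_1) = N_{\calH_1}(\bfV_1)$.
\item This subsystem contains $O^{3'}(\Aut_\calF(P))$ and $O^{3'}(\Aut_\calF(R))$ (Lemmas \ref{lm:fi24-vnorm-p} and \ref{lm:fi24-vnorm-r}). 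It also contains the maximal subgroup $\Alt(4)$ of $O^{3'}(\Out_\calF(Q)) \cong \Alt(5)$.
\item An involution of $\Aut_\calF(S)$ then generates the full $\Alt(5)$, so $\calF = \langle N_\calF(R), N_\calF(\bfV_1) \rangle_S = \calH_1$.
\end{itemize}

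If you want to keep your $\bfW$-first order, you need two repairs.
\begin{itemize}
\item[(i)] Actually prove Step 1, via the analogue of the paper's $\Mn$ argument. This needs $N_{\Out(\bfW)}(N_{\Out_{\calH_1}(\bfW)}(\Out_S(\bfW))) \leq N_{\Out(\bfW)}(\Out_{\calH_1}(\bfW))$ and $H^1(\Aut_{N_\calF(S)}(\bfW); Z(\bfW)) = 0$, neither of which you check. Also, ``equal up to an automorphism of $S$ trivial on $\bfW$'' is vacuous, since such an automorphism does not move $\Aut_\calF(\bfW)$.
\item[(ii)] Then use that $\alpha_0$ preserves $N_{\calH_1}(\bfW)$ (which follows from (i) applied to $\calH_1^{\alpha_0}$) to conjugate $\Aut_\calF(R)$ into place, rather than claiming equality outright.
\end{itemize}
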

    \begin{proof}
        All code in this proof can be found in the files \texttt{fi24/uniqueness-r.g} and \texttt{fi24/uniqueness.m}. We recall that $G = \Fi_{24}'$. As $O_3(\calF) = 1$, we know by Proposition \ref{prp:fi24-trivcore} that $\calE(\calF) = \{P, Q, R\}$ and $O^{3'}(\Out_\calF(Q)) \cong \Alt(5)$. We set $\calG := \calF_S(H)$, where
        \[H := \begin{cases}
            G, & \textrm{if } |\Out_\calF(S)| = 2^3 \\
            \Aut(G), & \textrm{if } |\Out_\calF(S)| = 2^4.
        \end{cases}\]
         We know by Lemma \ref{lm:fi24-autfs} that there exists some $\alpha \in \Aut(S)$ such that $\Aut_\calF(S)^\alpha  = \Aut_\calG(S)$. Conjugating $\calG$ by $\alpha$, we shall assume that $\Aut_\calF(S) = \Aut_\calG(S)$.

        We now consider the choices for $\Aut_\calG(R)$. We compute using GAP that, for a fixed choice of $\Aut_{\Aut_\calG(S)}(R)$, there are precisely three choices for $\Out_\calG(R)$ in $\Out(R)$. Moreover, these choices are conjugate by some $\alpha_0 \in \Aut(S)$. As $\Aut_\calF(S) = \Aut_\calG(S)$, we know that
        \[\Aut_{\Aut_\calF(S)}(R) = \Aut_{\Aut_\calG(S)}(R).\]
        As such, we may arrange so that $\Aut_\calF(R) = \Aut_\calG(R)$. By the Model Theorem, there exists some $\beta \in \Aut(S)$ such that $N_{\calF^\beta}(R) = N_\calG(R)$. Since $\calF^{\beta} \cong \calF$, we may as well assume that $N_\calF(R) = N_\calG(R)$.

        Set $\calF_0 := N_\calF(\bfV_1)$ and $\calG_0 := N_\calG(\bfV_1)$. Since $N_\calF(R) = N_\calG(R)$, we find that that $\Aut_{N_{\calF_0}(\bfV_1)}(R) = \Aut_{N_\calF(\bfV_1)}(R)$. Indeed, we have that
        \[N_{\Aut_\calG(\bfV_1)}(\Aut_R(\bfV_1)) = N_{\Aut_\calF(\bfV_1)}(\Aut_R(\bfV_1)).\]
        Let $X := O^{3'}(\Out_\calF(\bfV_1))$ and $Y := O^{3'}(\Out_\calG(\bfV_1))$. By Proposition \ref{prp:fi24-outfv}, we know that $X$ and $Y$ are isomorphic to $\SO_7(3)$. Moreover, $X$ and $Y$ are $\Out(\bfV_1)$-conjugate by \cite[Table 8.35]{maximals}. We compute that
        \[N_{\Aut(\bfV_1)}(N_{\Aut_\calG(\bfV_1)}(\Aut_R(\bfV_1))) \leq N_{\Aut(\bfV_1)}(\Aut_\calF(\bfV_1)).\]
        As $R = C_S(Z_2(S))$ and $|Z_2(S)| = 3^2$, we further see that $\Aut_R(\bfV_1)$ is the unique maximal subgroup $M$ of $\Aut_S(\bfV_1)$ satisfying $|C_{\bfV_1}(M)| = 3^2$. This implies that $\Aut_R(\bfV_1)$ is weakly closed in $\Aut_S(\bfV_1)$ with respect to $\Aut(\bfV_1)$. By the Frattini Argument and Lemma \ref{lm:weak-closure-equality}, we deduce that $\Aut_{\calF}(\bfV_1) = \Aut_\calG(\bfV_1)$. We next compute that $H^1(\Aut_{N_{\calF_0}(R)}(\bfV_1); \bfV_1) = 0$. Given that we have the subsystems $\calF_0 \geq N_{\calF_0}(R) \leq \calG_0$, \cite[Proposition 2.11]{todd-modules} implies that $\calF_0 = \calG_0$. 
        
        We now set that $\calF_1 := \langle N_\calF(R), N_\calF(\bfV_1) \rangle_S$. By Lemmas \ref{lm:fi24-vnorm-p} and \ref{lm:fi24-vnorm-r}, we know that $O^{3'}(\Aut_\calF(P))$ and $O^{3'}(\Aut_\calF(R))$ are contained inside $N_\calF(\bfV_1)$. By Lemma \ref{lm:fi24-outfq}, we know that $O^{3'}(\Out_{N_\calF(\bfV_1)}(Q)) \cong \Alt(4)$. This is a maximal subgroup of $O^{3'}(\Out_\calF(Q)) \cong \Alt(5)$. By the extension axiom, there exists an involution $\gamma \in \Aut_\calF(S)$ such that $\gamma|_Q \in O^{3'}(\Aut_\calF(Q))$. Since $\gamma$ does not lie inside $O^{3'}(\Aut_{N_\calF(\bfV_1)}(Q))$, we infer that $O^{3'}(\Out_{\calF_1}(Q)) \cong \Alt(5)$. As $R$ is $\Aut_\calF(S)$-invariant, we further have that $\Aut_\calF(S) \subseteq N_\calF(R)$. By Alperin-Goldschmidt, we deduce that $\calF = \calF_1$. Using the same argument on $\calG$, we conclude that
        \[\calF = \langle N_\calF(R), N_\calF(\bfV_1) \rangle_S = \langle N_\calG(R), N_\calG(\bfV_1)\rangle_S = \calG.\]
    \end{proof}

    The following table summarizes the structure of the automizers in the two corefree fusion systems on $S$.
    \begin{table}[H]
        \centering
        \begin{tabular}{c|c|c|c|c}
            $\Out_\calF(P)$ & $\Out_\calF(Q)$ & $\Out_\calF(R)$ & $\Out_\calF(S)$ & $G$ \\
            \hline
            $2 \times \Sym(5)$ & $2 \times \GL_2(3)$ & $2 \times \GL_2(3)$ & $2 \times 2 \times 2$ & $\Fi_{24}'$ \\
            $2 \times 2 \times \Sym(5)$ & $2 \times 2 \times \GL_2(3)$ & $2 \times 2 \times \GL_2(3)$ & $2 \times 2 \times 2 \times 2$ & $\Fi_{24}$
        \end{tabular}
        \caption{Automizer structure for essential subgroups and $S$ in each corefree fusion system on a Sylow $3$-subgroup $S$ of $\Fi_{24}'$.}
    \end{table}
    
    \section{Fusion Systems on a Sylow $3$-subgroup of $\Mn$} \label{sec:m}
    In this section, we classify all corefree fusion systems on a Sylow $3$-subgroup of $\Mn$. From the ATLAS \cite{atlas}, we find that the following $3$-local subgroups are maximal in $\Mn$:
    \begin{align*}
        M_1 &= 3^{1+12}_+ \ldotp (2 \ldotp \Suz : 2) \\
        M_2 &= 3^{2+5+10} : (\Mt_{11} \times \GL_2(3)) \\
        M_3 &= 3^{3+2+6+6} : (\SL_3(3) \times \SDih(16)) \\
        M_4 &= 3^8 \ldotp (\SO_8^-(3) : 2)
    \end{align*}
    
    Fix a Sylow $3$-subgroup $S$ of $G := \Mn$. We can choose a $G$-conjugate of $M_i$ so that $S \in \Syl_3(M_i)$ for $1 \leq i \leq 4$. We can now construct further $3$-local subgroups of $G$:
    \begin{align*}
        N_1 = M_1 \cap M_3 &\cong 3^{1+12}_+ \ldotp 3^{2+4} : (\GL_2(3) \times \SDih(16)) \\
        N_2 = M_1 \cap M_2 &\cong 3^{1+12}_+ \ldotp 3^5 : (2^2 \times \Mt_{11}) \\
        N_3 = M_2 \cap M_3 &\cong 3^{2+5+10} : 3^2 : (\GL_2(3) \times \SDih(16)) 
    \end{align*}

    \begin{notation}
        Using the $3$-local subgroups of $G$ given above, we can define the following subgroups of $S$:
        \begin{enumerate}
            \item $\bfW := O_3(M_1)$;
            \item $\bfU := O_3(M_2)$;
            \item $\bfT := O_3(M_3)$;
            \item $\bfV_1 := O_3(M_4)$;
            \item $P := O_3(N_1)$;
            \item $Q := O_3(N_2)$; and
            \item $R := O_3(N_3)$.
        \end{enumerate}
    \end{notation}
    The group $\bfV_1$ has a second $N_G(S)$-conjugate that we shall label $\bfV_2$. We set $H_i := N_G(\bfV_i) \cong M_4$. We find that 
    \begin{align*}
        O^{3'}(\Out_{H_i}(P)) &= O^{3'}(\Out_G(P)) \cong \SL_2(3), \\ 
        \Alt(6) \cong O^{3'}(\Out_{H_i}(Q)) &< O^{3'}(\Out_G(Q)) \cong \Alt(6), \\
        O^{3'}(\Out_G(R)) &= O^{3'}(\Out_{H_i}(R)) \cong \SL_2(3). \\ 
    \end{align*}
    We note that $P/\bfV_i$, $Q/\bfV_i$ and $R/\bfV_i$ are precisely the unipotent radicals of the minimal parabolics in $H_i/\bfV_i \cong \SO^-_8(3)$.
    
    We now define the subgroup $X$ of $S$. Appealing to the ATLAS \cite{atlas}, we find that the group $O^{3'}(M_1/O_3(M_1)) \cong 2 \ldotp \Suz$ has a maximal subgroup isomorphic to $(\SL_2(9) \times 3^2) : Q_8$. Let $M_5 \leq M_1$ be the preimage of the maximal subgroup inside $\Mn$. We set $X := O_3(M_4)$. Then we have
    \[M_5 = N_{G}(X) \cong 3^{1+12}_+ \ldotp 3^2 : (\SL_2(9) : Q_8).\]
    We have $|X^S| = [S : N_S(X)] = 3^3$ and $|X^{N_G(S)}| = |X^{\Aut(S)}| = 2 \cdot 3^3$. Note further that $X$ is a subgroup of $P$, with $X/\bfW \cap Z(S/\bfW) = 1$. Moreover, no $\Aut(S)$-conjugate of $X$ contains either $\bfV_i$.

    These subgroups can also be characterized based on the structure of $S$:
    \begin{enumerate}
        \item $\bfW$ is the preimage of $J(S/Z(S))$ in $S$;
        \item $\bfT := C_S(Z_3(S))$;
        \item $\bfV_1$ and $\bfV_2$ are the two elementary abelian subgroups in $S$ of maximal order;
        \item $P = C_S(Z_3(S)/Z(S))$;
        \item $Q = C_S(Z_5(S)/Z_2(S))$;
        \item $R = C_S(Z_2(S))$; 
        \item $X$ is the unique subgroup of $S$ (up to $\Aut(S)$-conjugacy) containing $\bfW$ such that $[X : \bfW] = 3^2$, $N_S(N_S(X)) = P$, $[N_S(X), C_X(\gamma_3(X))] = \Phi(X)$, and every maximal subgroup $M$ of $X$ satisfies $\Phi(M) = \Phi(X)$; and
        \item $\bfU = Q \cap R$.
    \end{enumerate}
    Except for $\bfV_i$ and $X$, we see that every subgroup given above is characteristic in $S$. Moreover, the subgroups $\bfV_i$ are normal in $S$.

    \begin{lemma}
        Let $\calF := \calF_S(\Mn)$. Then $\calE(\calF) = \{P, Q, R, X^{\Aut(S)}\}$ and $\calF^{cr} = \calE(\calF) \cup \{\bfW, \bfT, \bfU, \bfV_1^{\Aut(S)}\}$.
    \end{lemma}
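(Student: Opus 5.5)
The plan mirrors the analogous lemma for $\Fi_{24}'$: the statement is about the $3$-fusion category of $\Mn$, so the natural route is to import the classification of $3$-radical subgroups of $\Mn$ from the literature and then sort the radical centric subgroups into essential and non-essential ones. First I would identify the conjugacy classes of $\calF$-centric radical subgroups. Concretely, I would take the list of $3$-radical subgroups of $\Mn$ (in the style of the work cited for $\Fi_{24}'$, i.e.\ the radical subgroup classification for $\Mn$), keep those that are $3$-centric, and match each class with a subgroup of $S$. The candidates are $O_3$ of the maximal $3$-locals $M_1,\dots,M_4$ and $M_5$, of the intersections $N_1,N_2,N_3$, and $S$ itself. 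These give $\bfW,\bfU,\bfT,\bfV_1,X,P,Q,R,S$. Each is then moved by $N_G(S)$ into a normal form in $S$ using the intrinsic characterizations listed above (for instance $R=C_S(Z_2(S))$, $\bfT=C_S(Z_3(S))$, and $\bfV_i$ the maximal elementary abelian subgroups). This step also pins down the $\Aut(S)$-orbits $\bfV_1^{\Aut(S)}$ and $X^{\Aut(S)}$.

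Second, I would decide essentiality. A centric radical subgroup $E<S$ is essential precisely when $\Out_\calF(E)=\Out_G(E)$ has a strongly $3$-embedded subgroup. For $P$, $Q$, $R$, $X$, the groups $O^{3'}(\Out_G(E))$ are read off from the shapes of $N_1,N_2,N_3,M_5$: they are $\SL_2(3)$, $\Alt(6)$ (from $2^2\times\Mt_{11}$ over $3^5$, the relevant quotient has Sylow $3$ of order $3^2$?), $\SL_2(3)$, and $\SL_2(9)$. Each of these has $3$-rank $1$, or is of Lie rank one in characteristic $3$ (for $\SL_2(9)$, and $\Alt(6)\cong\L_2(9)$). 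So each has a strongly $3$-embedded subgroup, and these subgroups are essential. For $\bfW,\bfU,\bfT,\bfV_1$ the automizers are $2\ldotp\Suz$, $\Mt_{11}\times\GL_2(3)$, $\SL_3(3)\times\SDih(16)$ and $\SO_8^-(3)$. Each has $3$-rank at least $2$ with a $3$-local structure generated by several parabolics, and none has a strongly $3$-embedded subgroup. This is checked via the standard criterion that $O_3$-free groups with strongly $3$-embedded subgroups have $3$-rank $1$ or $O^{3'}$ of Lie rank one. Hence these subgroups are radical but not essential.

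The main obstacle is the completeness of the radical list, together with the correct identification of $X$: one must make sure that $N_S(X)$ and the $\Aut(S)$-conjugacy class of $X$ match the stated characterization. If the literature list is not directly usable, I would fall back on computation. I would run Algorithm \ref{alg:all-proto} on $S$ to get $\mathfrak{P}(S)$, which bounds $\calE(\calF)$ by Corollary \ref{cor:main-alg}. I would then verify in GAP/MAGMA, on an explicit representation of $S\le\Mn$, that $\Aut_G(E)$ for each proto-essential $E$ is as claimed. Radicality of the remaining groups can be checked by computing $O_3(N_G(E))=E$ directly from the maximal subgroup structure.
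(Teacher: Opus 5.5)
Your overall strategy is the paper's own: its proof of this lemma is a single sentence importing the classification of radical $3$-subgroups of $\Mn$ from \cite{m-radicals}. There is, however, an error in the step where you sort the centric radical subgroups into essential and non-essential ones.

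First, you misidentify the automizer of $Q$. From $N_2 \cong 3^{1+12}_+ \ldotp 3^5 : (2^2 \times \Mt_{11})$ one gets $O^{3'}(\Out_G(Q)) \cong \Mt_{11}$, not $\Alt(6)$. This agrees with Proposition \ref{prp:m-outfw} and the final table. The display in the setup with $\Alt(6)$ on both sides of a strict inclusion is a misprint for $\Alt(6) < \Mt_{11}$.

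Second, the criterion you rely on is false. It is not true that an $O_3$-free group with a strongly $3$-embedded subgroup has $3$-rank $1$ or $O^{3'}$ of Lie rank one in characteristic $3$. At $p=3$, the classification of groups with strongly $p$-embedded subgroups also contains $\Mt_{11}$ and $\L_3(4)$. So with the correct automizer your criterion would exclude $Q$ from $\calE(\calF)$. Your reasons for excluding $\bfW,\bfU,\bfT,\bfV_1$ also rest on this false statement.

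Both points are easy to repair.

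\begin{itemize}
\item \emph{$Q$ is essential.} In $\Mt_{11}$ all elements of order $3$ are conjugate, with centralizer $3 \times \Sym(3)$. Its Sylow $3$-subgroup $T \in \Syl_3(\Mt_{11})$ is normal in that centralizer. Hence $N_{\Mt_{11}}(U) \leq N_{\Mt_{11}}(T) \cong 3^2 : \SDih(16)$ for every $1 \neq U \leq T$. So $N_{\Mt_{11}}(T)$ is strongly $3$-embedded, and $Q$ is essential.
\item \emph{$\bfU$ is not essential.} In $\Out_G(\bfU) \cong \Mt_{11} \times \GL_2(3)$, subgroups of order $3$ taken from the two factors commute. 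So the commuting graph on subgroups of order $3$ is connected, and there is no strongly $3$-embedded subgroup.
\item \emph{$\bfT$ is not essential.} In $\Out_G(\bfT) \cong \SL_3(3) \times \SDih(16)$, all elements of order $3$ lie in the factor $\SL_3(3)$, which has Lie rank $2$.
\item \emph{$\bfW$ and $\bfV_1$ are not essential.} $2\ldotp\Suz$ and the Lie rank $3$ orthogonal group $O^{3'}(\Out_G(\bfV_1))$ do not appear on the correct list.
\end{itemize}

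Finally, your computational fallback only controls $\calE(\calF)$. Checking $O_3(N_G(E)) = E$ for the listed subgroups shows that they are radical. It does not show that $\calF^{cr}$ has no further members, and that is exactly the part of the statement that needs the literature classification.
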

    \begin{proof}
        This follows from \cite{m-radicals}.
    \end{proof}

    We now let $\calF$ be a saturated fusion system on $S$. The online ATLAS \cite{online-atlas} has a permutation representation for $M_2$, which we can use to find an efficient way to represent $S$. We also construct the quotient $\Out_G(\bfW)$ in MAGMA by finding the relevant maximal subgroup in $O^{3'}(\Out(\bfW)) \cong \Sp_{12}(3)$.

    \begin{lemma}
        We have $\calE(\calF) \subseteq \{S, P, Q, R, X^{\Aut(S)}\}$.
    \end{lemma}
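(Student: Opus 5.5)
This is a computational statement, so the plan is to apply the machinery of Section \ref{sec:alg} to the Sylow $3$-subgroup $S$ of $\Mn$, of order $3^{20}$, exactly as for the analogous lemma for $\Fi_{24}'$. The point is to avoid the Parker--Semeraro algorithm, which would need all subgroups of $S/Z(S)$ and is hopeless at this order.

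\textbf{Step 1: compute the candidate set $\mathfrak{C}$.} Using the efficient representation of $S$ obtained from the permutation representation of $M_2$, fix a chief series $1 = Z_0 \leq \dots \leq Z_n = S'$ of $S'$ through $S$-normal subgroups with factors of order $3$. A natural choice is one refining the upper central series, which the paper already uses to characterize $\bfT$, $P$, $Q$ and $R$. Run Algorithm \ref{algorithm} up to $\Aut(S)$-conjugacy. For each $i$ and each class of elements $x$ of order $3$ in $S/Z_i$, take the preimage of $C_{S/Z_i}(x)$. This produces a manageable list $\mathfrak{C}'$, since the number of such centralizers is tiny compared with the number of subgroups.

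\textbf{Step 2: prune $\mathfrak{C}'$ to $\mathfrak{C}$.} Test each member of $\mathfrak{C}'$ for proto-essentiality using the tests of \cite[Appendix A]{paper:paper-1}, omitting the self-centralizing test since it holds automatically. The tests are applied in order of cost: cheap necessary conditions first (for example, $\Out_S(E)$ must be able to be a Sylow $3$-subgroup of a group with a strongly $3$-embedded subgroup, which forces it to be cyclic or of the restricted shapes). For large $E$ where constructing $\Aut(E)$ is too expensive, first check whether $E/Z_j$ is proto-essential in $S/Z_j$ for suitable $j$, as in the remark after Algorithm \ref{alg:all-proto}, to discard it. The expectation is that the survivors are, up to $\Aut(S)$-conjugacy, exactly $P$, $Q$, $R$ and $X$, together with $S$ itself if the algorithm's bookkeeping includes it.

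\textbf{Step 3: close up under conjugation (Algorithm \ref{alg:all-proto}).} Process the survivors in decreasing order of size. Since $P$, $Q$ and $R$ are characteristic in $S$ and pairwise non-contained, they are maximal among proto-essential subgroups, so step (2) of that algorithm is unnecessary for them. For $X$, the only proto-essential overgroup is $P$. The needed check is then that every $\Aut(P)$-conjugate of $X$ lying in $S$ is either $\Aut(S)$-conjugate to $X$ or fails the proto-essential test. Here the optimisation for normal subgroups does not apply, since $X$ is not normal. Instead, use the characterization of $X$ (index $3^2$ over $\bfW$, $N_S(N_S(X)) = P$, and the stated Frattini conditions) to restrict which $\Aut(P)$-images can occur. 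By Corollary \ref{cor:main-alg}, the result is $\mathfrak{P}(S) \subseteq \{S, P, Q, R, X^{\Aut(S)}\}$, and hence so is $\calE(\calF)$.

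\textbf{Main obstacle.} The hardest step is the proto-essential test on large candidates, especially $Q$ and $P$ of order about $3^{19}$ and $3^{18}$, together with the $\Aut(P)$-orbit computation for $X$, because $\Aut(P)$ is enormous. For these, the first thing to try is working modulo $Z_j$ for suitable $j$ and using characteristic subgroups such as $\bfW$, whose quotient is $J(S/Z(S))$, to restrict $\Aut(P)$ to the stabilizer of a filtration before computing orbits.
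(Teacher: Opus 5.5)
Your overall route is the paper's. You run Algorithm \ref{algorithm} on $S$ and apply the proto-essential tests of \cite[Appendix A]{paper:paper-1}, omitting the centric test. You then avoid step (2) of Algorithm \ref{alg:all-proto}, because $P$, $Q$, $R$ are maximal among proto-essential subgroups and $X$ is unique up to $\Aut(S)$-conjugacy with the property listed in Section \ref{sec:m} (the remark after Corollary \ref{cor:main-alg}).

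The gap is in your Step 2. It assumes every candidate can be decided by machine, if necessary after passing to $S/Z_j$; the remark you mean follows Corollary \ref{cor:alg}, not Algorithm \ref{alg:all-proto}. Appendix \ref{sec:algorithm-appendix} reports that this fails at order $3^{20}$. After the cheap tests and the quotient test, six candidates $E$ remain for which neither $\Aut(E)$ nor $\Aut(N_S(E))$ can be computed, so the radical and lift tests cannot be run on them. Your plan has no step that disposes of these, so as written it does not reach the conclusion. Your expectation that the survivors are exactly $P$, $Q$, $R$, $X$ is precisely what cannot be checked by computer.

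The missing ingredient is eliminating these residual candidates by hand, using coprime action along characteristic filtrations as in Lemmas \ref{lm:m-outfp} and \ref{lm:m-outfr}.

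\emph{First example.} Take a candidate $A_2$ of order $3^{17}$. The characteristic subgroup $Y := \langle \Phi(A_2), Z_4(A_2) \rangle$ satisfies $[N_S(A_2), A_2] \leq Y$ and $[Y : \Phi(A_2)] = 3$. Hence every $\Aut_\calF(A_2)$-conjugate of $\Aut_S(A_2)$ centralizes $A_2/Y$ and $Y/\Phi(A_2)$. By coprime action and Burnside, $O^{3'}(\Aut_\calF(A_2)) = \Aut_S(A_2)$, contradicting $O_3(\Out_\calF(A_2)) = 1$.

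\emph{Second example.} Take a candidate $A_1$ of order $3^{15}$ with $\Phi(A_1) = Z_2(A_1)$ self-centralizing. One forces $O^{3'}(\Out_\calF(A_1)) \cong \SL_2(9)$ with $\Phi(A_1)/Z(A_1)$ a natural module. The decomposition of $A_1/\Phi(A_1)$ under an involution is then incompatible with $[[N_S(A_1), A_1] : \Phi(A_1)] = 3^4$.

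Relatedly, you have placed the main obstacle in the wrong spot. The lemma is only an upper bound, and $P$, $Q$, $R$, $X$ are essential in $\calF_S(\Mn)$, so they never need to be tested; the paper deliberately skips the radical and lift tests for them. Likewise, the $\Aut(P)$-orbit computation you propose for $X$ is replaced by the uniqueness characterization of $X$.
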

    \begin{proof}
        See Appendix \ref{sec:algorithm-appendix}.
    \end{proof}

    The code for Lemmas \ref{lm:m-w} to \ref{lm:m-u} can be found in the file \texttt{m/weak-closure.g}.
    \begin{lemma} \label{lm:m-w}
        The subgroup $\bfW$ is weakly closed in $\calF$. In particular, $\calE(N_\calF(\bfW))  = \calE(\calF) \cap \{P,Q,X^{\Aut(S)}\}$.
    \end{lemma}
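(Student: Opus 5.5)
The argument follows the template of Lemma \ref{lm:fi24-w}, with a computer-assisted check at each candidate essential subgroup. The goal is to show that every automorphism in $\calF$ of a subgroup containing $\bfW$ normalizes $\bfW$. By Alperin--Goldschmidt and the preceding lemma, $\calF$ is generated by $\Aut_\calF(S)$ together with $\Aut_\calF(E)$ for $E \in \calE(\calF) \subseteq \{P, Q, R, X^{\Aut(S)}\}$. So it suffices to check that $\bfW$ is invariant under $\Aut(E)$ for each such $E$ containing $\bfW$, and that no such $E$ other than those listed can receive $\bfW$ under a morphism.

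First, $\bfW$ is characteristic in $S$, being the preimage of $J(S/Z(S))$, so $\Aut_\calF(S)$ normalizes it. Next I would verify, in GAP on the explicit copy of $S$, the containments $\bfW \leq P$, $\bfW \leq Q$ and $\bfW \leq X$. Together with $Z(\bfW) = Z(S)$ and $R = C_S(Z_2(S))$, the last of these shows $\bfW \not\leq R$, since $\bfW$ does not centralize $Z_2(S)$.

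For $E \in \{P, Q\}$, I would check that $Z(E) = Z(S)$. Then $\bfW/Z(E)$ is the preimage of $J(S/Z(S))$ inside $E/Z(E)$. Since $J(S/Z(S)) \leq E/Z(S) \leq S/Z(S)$, Lemma \ref{lm:grp-thompson}(2) gives $J(E/Z(E)) = \bfW/Z(S)$. Hence $\bfW$ is characteristic in $E$ and is normalized by $\Aut_\calF(E)$.

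For $X$ and its $\Aut(S)$-conjugates, the same argument applies if I can verify by computer that $Z(X) = Z(S)$; here $\bfW \leq X$ with $[X : \bfW] = 3^2$. Lemma \ref{lm:grp-thompson} then again makes $\bfW/Z(S) = J(X/Z(X))$ characteristic in $X/Z(X)$. The conjugates $X^\alpha$ with $\alpha \in \Aut(S)$ contain $\bfW^\alpha = \bfW$, so they are handled by transport of structure. This step is the main obstacle. $X$ is not normal in $S$, so the relevant centre and Thompson subgroup calculations must be done for $X$ itself rather than read off from $S$. If $Z(X) \neq Z(S)$, I would instead characterize $\bfW$ inside $X$ some other way. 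The natural candidates are $C_X(\gamma_3(X))\Phi(X)$, or the preimage of $J(X/Z(X))$ together with a check that $J(X) \leq \bfW$; either would be confirmed in \texttt{m/weak-closure.g}.

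Granting this, every generator of $\calF$ whose domain contains $\bfW$ maps $\bfW$ to itself. The generators whose domain does not contain $\bfW$, namely those on $R$, never act on $\bfW$. So for any $\phi \in \Hom_\calF(\bfW, S)$, decomposing $\phi$ via Alperin--Goldschmidt shows inductively that each intermediate image equals $\bfW$, and hence $\bfW$ is weakly closed in $\calF$. The final assertion then follows from Lemma \ref{lm:essentials-in-normalizer}. The essentials of $N_\calF(\bfW)$ are the essentials of $\calF$ containing $\bfW$, and these are exactly $\calE(\calF) \cap \{P, Q, X^{\Aut(S)}\}$, since $R$ does not contain $\bfW$ and $S$ is not essential.
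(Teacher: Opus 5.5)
Your proposal is correct and is essentially the paper's proof. You use $\bfW/Z(S) = J(S/Z(S))$, together with $Z(P)=Z(Q)=Z(X)=Z(S)$ and Lemma \ref{lm:grp-thompson}, to make $\bfW$ characteristic in $S$, $P$, $Q$ and $X$, and you note $\bfW \not\leq R$, so Alperin--Goldschmidt and Lemma \ref{lm:essentials-in-normalizer} finish; the step you flag as the main obstacle is in fact immediate, because $\bfW$ is self-centralizing in $S$ and hence $Z(S) \leq Z(E) \leq C_S(\bfW) = Z(\bfW) = Z(S)$ for every $E$ with $\bfW \leq E \leq S$.
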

    \begin{proof}
        We note that $P, Q$ and $X$ contain $\bfW$. Moreover, since $Z(\bfW) = Z(S)$, $\bfW$ is not a subgroup of $R$. Since $Z(S) = Z(P) = Z(Q) = Z(X)$ and $\bfW/Z(S) = J(S/Z(S))$, we deduce by Lemma \ref{lm:grp-thompson} that $\bfW/Z(Q) = J(Q/Z(Q))$, $\bfW/Z(P) = J(P/Z(P))$ and $\bfW/Z(X) = J(X/Z(X))$. We infer that $\bfW$ is normalized by $\Aut_\calF(P)$, $\Aut_\calF(Q)$, $\Aut_\calF(X)$ and $\Aut_\calF(S)$. By Alperin-Goldschmidt, we deduce that $\bfW$ is weakly closed in $\calF$. The second part follows from Lemma \ref{lm:essentials-in-normalizer}.
    \end{proof}

    \begin{lemma} \label{lm:m-t}
        The subgroup $\bfT$ is weakly closed in $\calF$. In particular, $\calE(N_\calF(\bfT)) = \calE(\calF) \cap \{P,R\}$.
    \end{lemma}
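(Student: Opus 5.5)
We follow the pattern of Lemma \ref{lm:fi24-t}. By Alperin-Goldschmidt together with the previous lemma, $\calF$ is generated by $\Aut_\calF(S)$ and the automizers $\Aut_\calF(E)$ for $E \in \{P, Q, R\} \cup X^{\Aut(S)}$ that are essential. It therefore suffices to show two things: $\bfT$ is normalized by $\Aut_\calF(E)$ for every such $E$ containing $\bfT$, and $\bfT$ is contained in no other candidate. Weak closure then follows exactly as in Lemmas \ref{lm:fi24-w}--\ref{lm:fi24-t}, since any $\calF$-conjugate of $\bfT$ is obtained by applying a chain of such automorphisms, each of which fixes $\bfT$. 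The statement about essential subgroups is then Lemma \ref{lm:essentials-in-normalizer}.

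\textbf{Containment.} Since $\bfT = C_S(Z_3(S))$ and $P = C_S(Z_3(S)/Z(S))$, we have $\bfT \leq P$ immediately. Also $Z_2(S) \leq Z_3(S)$ gives $\bfT \leq C_S(Z_2(S)) = R$. For $Q$ and the conjugates of $X$ we show $\bfT \nleq Q$ and $\bfT \nleq X^\alpha$ for $\alpha \in \Aut(S)$.
\begin{itemize}
\item For the conjugates of $X$, we would argue that $\bfT \leq X^\alpha$ would force $\bfT \leq N_S(X^\alpha)$. We then compare orders or structure: $[X:\bfW] = 3^2$ while $\bfT \nleq \bfW$, since $Z(\bfW) = Z(S)$ is properly contained in $Z(\bfT) \geq Z_3(S)$. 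Alternatively, we use that no $\Aut(S)$-conjugate of $X$ contains $\bfV_i$ while $\bfT$ plausibly contains $J(S) \geq \bfV_i$.
\item For $Q$, we check directly (in GAP, in \texttt{m/weak-closure.g}) that $\bfT \nleq Q$.
\end{itemize}
Both facts are finite computations in the explicit representation of $S$.

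\textbf{Invariance.} As $\bfT$ is characteristic in $S$, it is $\Aut_\calF(S)$-invariant. For $P$ and $R$ we look for a characteristic description of $\bfT$ inside each group, analogous to $\bfT = C_P(\gamma_5(P)) = C_R(\gamma_6(R))$ in the $\Fi_{24}'$ case. The natural guess is $\bfT = C_P(Z_3(P))$ if $Z_3(P) = Z_3(S)$, and $\bfT = C_R(Z_j(R))$ or $C_R(\gamma_k(R))$ for suitable indices $j, k$. We would compute the upper and lower central series of $P$ and $R$ and identify a term equal to $Z_3(S)$, or some subgroup whose centralizer in $S$ is $\bfT$. Any such subgroup is characteristic in $P$, resp.\ $R$, and hence normalized by $\Aut_\calF(P)$, resp.\ $\Aut_\calF(R)$.

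The main obstacle is finding these characteristic descriptions of $\bfT$ inside $P$ and $R$. If no term of the lower or upper central series works directly, we would try combinations such as $C_R(Z_2(R)\cap\Phi(R))$ or Thompson subgroups of quotients, verifying each candidate computationally.
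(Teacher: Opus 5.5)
Your proposal is essentially the paper's proof: show $\bfT \leq P \cap R$ but $\bfT \nleq Q$ and $\bfT$ lies in no $\Aut(S)$-conjugate of $X$, find characteristic descriptions of $\bfT$ inside $P$ and $R$, then apply Alperin--Goldschmidt and Lemma \ref{lm:essentials-in-normalizer}. The paper's computation is $Z_3(S) = Z_2(P) = Z_2(R)$, so $\bfT = C_P(Z_2(P)) = C_R(Z_2(R))$; your first guess $Z_3(P)$ is off by one, since $Z_3(P) = Z_4(S)$, but scanning the upper central series as you propose finds the right term. For the conjugates of $X$, $\bfT \nleq \bfW$ does not by itself rule out $\bfT \leq X^\alpha$, but the orders do: $|\bfT| = 3^{17} > 3^{15} = |X|$.
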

    \begin{proof}
        By construction, we have $\bfT \leq P \cap R$ with $S/\bfT \cong 3^{1+2}_+$. Moreover, $\bfT$ is not a subgroup of $Q$. Also, $\bfT$ is not contained inside any $\Aut(S)$-conjugate of $X$. We compute that $Z_3(S) = Z_2(P) = Z_2(R)$. This means that
        \[\bfT = C_S(Z_3(S)) = C_P(Z_2(P)) = C_R(Z_2(R)).\]
        As such, $\bfT$ is normalized by $\Aut_\calF(S)$, $\Aut_\calF(P)$ and $\Aut_\calF(R)$. The second part follows from Lemma \ref{lm:essentials-in-normalizer}.
    \end{proof}

    \begin{lemma} \label{lm:m-u}
        The subgroup $\bfU$ is weakly closed in $\calF$. In particular, $\calE(N_\calF(\bfU)) = \calE(\calF) \cap \{Q,R\}$.
    \end{lemma}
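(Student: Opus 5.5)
We follow the template of Lemma \ref{lm:fi24-u}, which handled the analogous statement for $\Fi_{24}'$. By Alperin-Goldschmidt it suffices to show that $\bfU$ is normalized by $\Aut_\calF(S)$ and by $\Aut_\calF(E)$ for every $E \in \calE(\calF)$ with $\bfU \leq E$. Then every $\calF$-conjugate of $\bfU$ equals $\bfU$, and the statement about $\calE(N_\calF(\bfU))$ follows from Lemma \ref{lm:essentials-in-normalizer}.

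The first step is $\Aut_\calF(S)$-invariance. Since $\bfU = Q \cap R$, and both $Q = C_S(Z_5(S)/Z_2(S))$ and $R = C_S(Z_2(S))$ are characteristic in $S$, their intersection $\bfU$ is characteristic in $S$. In particular it is normalized by $\Aut_\calF(S)$.

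The second step is to decide which essential candidates contain $\bfU$. Using the list $\calE(\calF) \subseteq \{S, P, Q, R, X^{\Aut(S)}\}$, we check in GAP (in \texttt{m/weak-closure.g}) that $\bfU \leq Q$ and $\bfU \leq R$, while $\bfU \nleq P$ and $\bfU$ is not contained in any $\Aut(S)$-conjugate of $X$. For $P$ this is plausible: $P = C_S(Z_3(S)/Z(S))$ corresponds to a different minimal parabolic from the one giving $O_3(M_2) = \bfU$. For the conjugates of $X$, one checks the single representative $X$ and uses that $\bfU$ is characteristic in $S$. This leaves only $Q$ and $R$ to treat.

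The third step, which I expect to be the main obstacle, is to express $\bfU$ characteristically inside $Q$ and inside $R$. For $\Fi_{24}'$ the paper used $\bfU = C_Q(Z_2(Q))$ and $\bfU = C_R(Z_4(R)/Z_2(R))$. For $\Mn$ I would first try the analogous descriptions $C_Q(Z_i(Q))$ and $C_R(Z_j(R)/Z_k(R))$ over small $i,j,k$, and also $C_Q(Z_j(Q)/Z_k(Q))$, checking each by computer. Since $Z(\bfU)$ has order $3^2$ and $R = C_S(Z_2(S))$, one natural guess is $Z(\bfU) = Z_2(S) = Z(R)$ together with $\bfU = C_R(Z_k(R)/Z(R))$ for a suitable $k$.

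If these centralizer descriptions fail, the fallback is other characteristic constructions: Thompson subgroups of quotients (in the spirit of $\bfW/Z(S) = J(S/Z(S))$), or terms of the lower central series as used for $\bfT$ in Lemma \ref{lm:fi24-t}. Once a description characteristic in $Q$ and one characteristic in $R$ are found, $\Aut_\calF(Q)$ and $\Aut_\calF(R)$ normalize $\bfU$. Weak closure then follows by Alperin-Goldschmidt, and $\calE(N_\calF(\bfU)) = \calE(\calF) \cap \{Q,R\}$ by Lemma \ref{lm:essentials-in-normalizer}.
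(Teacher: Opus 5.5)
Your proposal is correct and follows the paper's proof essentially verbatim: $\Aut_\calF(S)$-invariance from $\bfU = Q \cap R$, then characteristic descriptions of $\bfU$ inside $Q$ and $R$, then Alperin--Goldschmidt and Lemma~\ref{lm:essentials-in-normalizer}. The descriptions the paper finds by computation are $\bfU = C_Q(Z_2(Q))$ and $\bfU = C_R(Z_3(R)/Z(R))$, which have exactly the shapes you planned to test. The conjugates of $X$ are also excluded trivially by order, since $|X| = 3^{15} < 3^{17} = |\bfU|$.
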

    \begin{proof}
        By construction, we have $\bfU = Q \cap R$ . Since $Q$ and $R$ are $\Aut_\calF(S)$-invariant, it follows that $\bfU$ is $\Aut_\calF(S)$-invariant. We also find that $\bfU = C_Q(Z_2(Q))$ and that $\bfU = C_R(Z_3(R)/Z(R))$. As such, $\bfU$ is normalized by $\Aut_\calF(A)$ for every $A \in \calE(\calF) \cup \{S\}$ containing $\bfU$. Therefore, $\bfU$ must be weakly closed in $\calF$. The second part follows from Lemma \ref{lm:essentials-in-normalizer}.
    \end{proof}

    The code for Lemmas \ref{lm:m-outfp} and \ref{lm:m-outfr} can be found in the file \texttt{m/automizer.g}.
    \begin{lemma} \label{lm:m-outfp}
        If $P \in \calE(\calF)$, then $P/\bfT$ and $Z_2(P)/Z(P)$ are both natural modules for $O^{3'}(\Out_\calF(P)) \cong \SL_2(3)$.
    \end{lemma}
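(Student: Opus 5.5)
The plan mirrors the proof of Lemma \ref{lm:fi24-outfp}. First we identify $O^{3'}(\Out_\calF(P))$. Since $P \in \calE(\calF)$, the group $\Out_\calF(P)$ has a strongly $3$-embedded subgroup. From the structure of $S$ (checked in \texttt{m/automizer.g}) we have $[S:P] = 3$, so $\Out_S(P)$ has order $3$. The first step is to show that $O^{3'}(\Out_\calF(P))$ acts faithfully on a section of order $3^2$. Once that is known, $O^{3'}(\Out_\calF(P))$ embeds in $\GL_2(3)$ with Sylow $3$-subgroup of order $3$ and $O_3 = 1$, which forces $O^{3'}(\Out_\calF(P)) \cong \SL_2(3)$ and makes the section a natural module. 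So it suffices to show that any $3'$-element $r \in O^{3'}(\Aut_\calF(P))$ centralizing $P/\bfT$, respectively $Z_2(P)/Z(P)$, is trivial.

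\textbf{The section $Z_2(P)/Z(P)$.} Suppose $r$ centralizes $Z_2(P)/Z(P)$. Since $|Z(P)| = 3$, the group $O^{3'}(\Aut_\calF(P))$ centralizes $Z(P)$, so $r$ centralizes $Z_2(P)$ by coprime action. Recall $Z_2(P) = Z_3(S)$ and $\bfT = C_P(Z_2(P))$. The Three Subgroups Lemma applied to $[r, Z_2(P), P] = 1 = [Z_2(P), P, r]$ gives $[P, r, Z_2(P)] = 1$. Hence $[P,r] \leq \bfT$, so $r$ centralizes $P/\bfT$.

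It then remains to push centralization down to $P/\Phi(P)$. I would look for a chain between $\bfT\Phi(P)$ and $\Phi(P)$ with factors of order $3$, or a further Three Subgroups argument using a term such as $\gamma_i(P)$ or $Z_i(P)$ whose centralizer in $P$ lies in $\Phi(P)$. Coprime action would then show that $r$ centralizes $P/\Phi(P)$, and Burnside (Lemma \ref{lm:burnside}) gives $r = 1$. The precise indices, for example whether $[\bfT\Phi(P) : \Phi(P)] = 3$, must be computed in GAP, as in Lemmas \ref{lm:fi24-outfp} and \ref{lm:fi24-outfr}.

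\textbf{The section $P/\bfT$.} Suppose $r$ centralizes $P/\bfT$. Then $[r, P, Z(\bfT)] = 1 = [P, Z(\bfT), r]$, using that $Z(\bfT)$ contains $Z_2(P)$, or equals it if the computation confirms this. The Three Subgroups Lemma gives $[Z(\bfT), r, P] = 1$, so $[Z(\bfT), r] \leq Z(P)$. Hence $r$ centralizes $Z_2(P)/Z(P)$, and the previous case gives $r = 1$. If instead $Z(\bfT)$ is smaller than $Z_2(P)$, the same argument is run with $Z_2(P)$ directly, since $[P, Z_2(P)] \leq Z(P)$ and $[\bfT, Z_2(P)] = 1$.

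\textbf{Main obstacle.} The hard step is the descent from $P/\bfT$ to $P/\Phi(P)$: it relies on specific subgroup indices in $P$ that can only be verified by computation. If no single index-$3$ step is available, I would insert an intermediate characteristic subgroup, such as $C_P(\gamma_j(P))$ or $Z_k(P)\Phi(P)$, and repeat the Three Subgroups and coprime action steps.
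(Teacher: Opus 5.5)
You have a genuine gap, and it is exactly the step you flag as the ``main obstacle''. Your argument for $P/\bfT$ is the paper's second paragraph verbatim. Your first step for $Z_2(P)/Z(P)$ is also correct: $r$ centralizes $Z(P)$, hence $Z_2(P)$, and the Three Subgroups Lemma gives $[P,r]\leq C_P(Z_2(P))=\bfT$.

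But this only shows that centralizing $Z_2(P)/Z(P)$ forces centralizing $P/\bfT$, while your second paragraph shows the converse. The two halves reduce to each other. The statement carrying all the content is never proved: a $3'$-element $r\in O^{3'}(\Aut_\calF(P))$ centralizing both $Z_2(P)$ and $P/\bfT$ is trivial.

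Nor is this a routine index check. A Sylow $2$-subgroup of $\Out_{\Mn}(P)\cong\GL_2(3)\times\SDih(16)$ has order $2^8$ and acts faithfully on $P/\Phi(P)$ by Burnside. Hence $|P/\Phi(P)|\geq 3^4$, so $\Phi(P)$ is properly contained in $\bfT$, and $[P,r]\leq\bfT$ does not finish.

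Your first suggested repair is an $\Aut_\calF(P)$-invariant chain from $\bfT$ down to $\Phi(P)$ with factors of order $3$. That amounts to showing $r$ centralizes $\bfT/\Phi(P)$. This is how Lemma \ref{lm:m-outfr} handles $R$, where $[S,\bfT]\leq\Phi(R)$. You give no reason it is available for $P$, and the paper does not argue this way there.

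The missing idea is to go one step further up the upper central series of $P$, so that the Three Subgroups Lemma lands in $\Phi(P)$ rather than in $\bfT$.
\begin{itemize}
\item Since $Z_2(P)=Z_3(S)$ and $Z_3(P)=Z_4(S)$, the Sylow $3$-subgroup $\Aut_S(P)$ of $\Aut_\calF(P)$ centralizes $Z_3(P)/Z_2(P)$.
\item The kernel of the action of $\Aut_\calF(P)$ on this characteristic section is normal and contains $\Aut_S(P)$. Hence it contains $O^{3'}(\Aut_\calF(P))$.
\item So $r$ centralizes $Z(P)$, $Z_2(P)/Z(P)$ and $Z_3(P)/Z_2(P)$, and therefore all of $Z_3(P)$ by coprime action.
\item Now $[P,Z_3(P),r]=1=[Z_3(P),r,P]$, so the Three Subgroups Lemma gives $[P,r]\leq C_P(Z_3(P))$.
\item The only computational input is $C_P(Z_3(P))\leq\Phi(P)$, after which Burnside gives $r=1$.
\end{itemize}
With this in place, your treatment of $P/\bfT$ goes through unchanged.
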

    \begin{proof}
        Let $r \in O^{3'}(\Aut_\calF(P))$ be a $3'$-element that centralizes $Z_2(P)/Z(P)$. We compute that $|Z(P)| = 3$, $Z_2(P) = Z_3(S)$ and $Z_3(P) = Z_4(S)$. Then $\Aut_S(P)$ centralizes $Z_3(P)/Z_2(P)$. This means that $r \in O^{3'}(\Aut_\calF(R))$ centralizes $Z_3(P)/Z_2(P)$. Similarly, $r$ centralizes $Z(P)$ and $Z_2(P)/Z(P)$. By coprime action, we deduce that $r$ centralizes $Z_3(P)$. In that case, $[P, Z_3(P), r] = 1 = [Z_3(P), r, P]$. By the Three Subgroups Lemma, we infer that $[r, P, Z_3(P)] = 1$. In other words, $r$ centralizes $P/C_P(Z_3(P))$. We compute that $C_P(Z_3(P)) \leq \Phi(P)$, meaning that $r$ centralizes $P/\Phi(P)$. We conclude that $r = 1$. Thus, $O^{3'}(\Out_\calF(P))$ acts faithfully on $Z_2(P)/Z(P)$ of order $3^2$. This implies that $Z_2(P)/Z(P)$ is a natural module for $O^{3'}(\Out_\calF(P)) \cong \SL_2(3)$.

        Assume now that $r \in O^{3'}(\Aut_\calF(P))$ centralizes $P/\bfT$. Since $Z(\bfT) = Z_2(P)$, we deduce that $[r, P, Z_2(P)] = 1 = [P, Z_2(P), r]$. By the Three Subgroups Lemma, we infer that $[Z_2(P), r, P] = 1$. In other words, $r$ centralizes $Z_2(P)/Z(P)$. By the previous paragraph, we deduce that $r = 1$. Thus, $P/\bfT$ is a natural module for $O^{3'}(\Out_\calF(P)) \cong \SL_2(3)$.
    \end{proof}
    
    \begin{lemma} \label{lm:m-outfr}
        If $R \in \calE(\calF)$, then $R/\bfT$ and $Z(R)$ are natural modules for $O^{3'}(\Out_\calF(R)) \cong \SL_2(3)$.
    \end{lemma}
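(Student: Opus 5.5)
The plan is to follow the template of Lemma \ref{lm:fi24-outfr} (its $\Fi_{24}'$ analogue). Since $R \in \calE(\calF)$ and $|S/R| = 3$, the group $\Out_S(R)$ has order $3$. Hence $O^{3'}(\Out_\calF(R))$ is a group with a Sylow $3$-subgroup of order $3$, generated by its $3$-elements and containing a strongly $3$-embedded subgroup. Once we show that it acts faithfully on a section of order $3^2$, it embeds in $\SL_2(3)$. The condition on $3$-elements then forces it to be $\SL_2(3)$ (not $3$ or $\Sym(3)$-type, since essential subgroups need a strongly $3$-embedded subgroup), and the section is a natural module. So in each case the task reduces to one claim: a $3'$-element $r \in O^{3'}(\Aut_\calF(R))$ centralizing the given section is trivial. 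The tool throughout is Burnside's lemma (Lemma \ref{lm:burnside}): it suffices to show that $r$ centralizes $R/\Phi(R)$.

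\textbf{Case $R/\bfT$.} Recall $\bfT = C_S(Z_3(S)) = C_R(Z_2(R))$, so $\bfT$ is $\Aut_\calF(R)$-invariant by Lemma \ref{lm:m-t}, and $[R:\bfT]=3^2$.
\begin{enumerate}
\item Suppose $r$ centralizes $R/\bfT$. Then $[r,R,Z_2(R)] = 1$, since $[R,r]\le \bfT$ centralizes $Z_2(R)$.
\item Next, $[R,Z_2(R),r] \le [Z(R),r]$. Here we need $r$ to centralize $Z(R)$, or at least to argue via $R/C_R(Z_2(R)/Z(R))$.
\item A cleaner route: as in the $\Fi_{24}'$ case, locate a term $Z_k(R)$ or $\gamma_k(R)$ of index $3$ in $\bfT$. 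Coprime action gives that $r$ centralizes $\bfT/Z_k(R)$, hence $[r,R,R]\le Z_{k-1}(R)$.
\item The Three Subgroups Lemma then gives $[R,R,r]\le Z_{k-1}(R)$, so $r$ centralizes $\Phi(R)Z_{k-1}(R)/Z_{k-1}(R)$.
\item A GAP check that $\Phi(R)Z_{k-1}(R)$ has index at most $3$ in $\bfT$ (or equals $\bfT$), followed by coprime action, shows that $r$ centralizes $R/\Phi(R)$, so $r=1$.
\end{enumerate}

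\textbf{Case $Z(R)$.} We have $Z(R)=Z_2(S)$ of order $3^2$.
\begin{enumerate}
\item If $r$ centralizes $Z(R)$, use that the upper central terms $Z_i(R)$ coincide with suitable $Z_j(S)$ (computed), so the successive quotients between $Z(R)$ and some self-centralizing characteristic subgroup $B$ of $R$ have order $3$.
\item Alternatively, intersect with $\Phi(R)$ as in Lemma \ref{lm:fi24-outfr}: set $B_i := Z_i(R)\cap\Phi(R)$ and check that consecutive indices are $3$. Then $O^{3'}$ centralizes each order-$3$ factor, and coprime action shows that $r$ centralizes $B$.
\item Since $C_R(B)\le B$, Lemma \ref{lm:centric-faithful} (or the standard argument for a $3'$-automorphism centralizing a self-centralizing normal subgroup) gives $r=1$.
\item Possibly more efficiently, use $\bfU = C_R(Z_3(R)/Z(R))$ (Lemma \ref{lm:m-u}) together with a Three Subgroups argument with $Z_3(R)$.
\end{enumerate}

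\textbf{Main obstacle.} The group-theoretic skeleton is routine. The real difficulty is finding the right characteristic chain in $R$ whose factors have order $3$ and which ends in a self-centralizing subgroup (for $Z(R)$), or covers $R/\Phi(R)$ (for $R/\bfT$). This depends on explicit computations of the central series of $R \le \Mn$, to be verified in \texttt{m/automizer.g}. If the naive chain has a factor of order $3^2$ somewhere, I would first try refining it by intersecting with $\Phi(R)$ or $\gamma_i(R)$, as done for $\Fi_{24}'$.
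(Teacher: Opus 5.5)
There is a genuine gap. Your skeleton is the right one: show that a $3'$-element $r\in O^{3'}(\Aut_\calF(R))$ centralizing the section is trivial, by climbing an $\Aut_\calF(R)$-invariant chain with coprime action. However, the only mechanism you give for $O^{3'}(\Aut_\calF(R))$ centralizing the intermediate factors is that they have order $3$. In this $R$ the relevant factors have order $3^2$.

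\textbf{Case $R/\bfT$.} One has $\Phi(R)=Z_6(R)$ with $[\bfT:\Phi(R)]=3^2$. Since $R/\Phi(R)$ is abelian, $Z_7(R)=R$, and every $\gamma_k(R)$ with $k\ge 2$ lies in $\Phi(R)$. So no term of the upper or lower central series has index $3$ in $\bfT$, and your step (3) cannot be carried out. Intersecting with $\Phi(R)$ or $\gamma_i(R)$ cannot produce anything strictly between $\Phi(R)$ and $\bfT$, so your fallback does not help either.

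\textbf{Case $Z(R)$.} You correctly anticipate that the upper central terms of $R$ are upper central terms of $S$: $Z(R)=Z_2(S)$, $Z_2(R)=Z_3(S)$, $Z_3(R)=Z_4(S)$ and $Z_4(R)=Z_5(S)$. But this does not give factors of order $3$, since $[Z_4(S):Z_3(S)]=3^2$ (see the proof of Lemma \ref{lm:m-vnorm-r}). Moreover $Z_i(R)\le Z_6(R)=\Phi(R)$, so $B_i=Z_i(R)\cap\Phi(R)=Z_i(R)$ refines nothing.

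\textbf{Missing idea.} The order of the factor is irrelevant. Let $X\le Y$ be $\Aut_\calF(R)$-invariant with $[Y,S]\le X$. Then $C_{\Aut_\calF(R)}(Y/X)$ is a normal subgroup of $\Aut_\calF(R)$ containing the Sylow $3$-subgroup $\Aut_S(R)$. Hence it contains every Sylow $3$-subgroup, and so contains $O^{3'}(\Aut_\calF(R))$.

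\textbf{Finishing with it.} The paper computes $[S,\bfT]\le\Phi(R)$, so $O^{3'}(\Aut_\calF(R))$ centralizes $\bfT/\Phi(R)$. Thus an $r$ centralizing $R/\bfT$ centralizes $R/\Phi(R)$ by coprime action, and so $r=1$. Likewise, the identifications $Z_i(R)=Z_{i+1}(S)$ show that $O^{3'}(\Aut_\calF(R))$ centralizes $Z_{i+1}(R)/Z_i(R)$ for $i=1,2,3$. Hence an $r$ centralizing $Z(R)$ centralizes the self-centralizing subgroup $Z_4(R)$, and so $r=1$. This is exactly the paper's argument, and the same principle underlies Lemma \ref{lm:m-vnorm-r}. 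The rest of your write-up is fine: the reduction to $3'$-elements and the identification of $O^{3'}(\Out_\calF(R))$ with $\SL_2(3)$ once faithfulness is known.
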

    \begin{proof}
        We compute that $[R : \Phi(R)] = 3^4$, $\Phi(R) = Z_6(R)$ and $[R : \bfT] = 3^2 = [\bfT : \Phi(R)]$. Moreover, we find that $[S, \bfT] \leq \Phi(R)$, so that $[O^{3'}(\Aut_\calF(R)), \bfT] \leq \Phi(R)$. As such, $O^{3'}(\Aut_\calF(R))$ must act faithfully on $R/\bfT$ of order $3^2$. Thus, $R/\bfT$ is a natural module for $O^{3'}(\Out_\calF(R)) \cong \SL_2(3)$.

        Let $r \in O^{3'}(\Aut_\calF(R))$ be a $3'$-element that centralizes $Z(R)$. We compute that $Z_2(R) = Z_3(S)$, $Z_3(R) = Z_4(S)$ and $Z_4(R) = Z_5(S)$. This implies that $r$ centralizes $Z_4(R)/Z_3(R)$, $Z_3(R)/Z_2(R)$ and $Z(R)$. By coprime action, we infer that $r$ centralizes $Z_4(R)$. As $Z_4(R)$ is self-centralizing in $S$, we deduce that $r = 1$. As such, $O^{3'}(\Out_\calF(R))$ must act faithfully on $Z(R)$ of order $3^2$. In other words, $Z(R)$ is a natural module for $O^{3'}(\Out_\calF(R)) \cong \SL_2(3)$.
    \end{proof}
    
    \begin{lemma} \label{lm:m-x-w-rad}
        If $X \in \calE(\calF)$, then $O_3(N_\calF(\bfW)) = \bfW$.
    \end{lemma}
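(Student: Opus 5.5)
We want: if $X \in \calE(\calF)$ (or an $\Aut(S)$-conjugate), then $A := O_3(N_\calF(\bfW))$ equals $\bfW$. The plan is to assume $\bfW < A$ and derive a contradiction from the action of $O^{3'}(\Aut_\calF(X))$.

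First I record what $A$ must satisfy. By Lemma \ref{lm:m-w}, $\bfW$ is weakly closed in $\calF$ and $X \in \calE(N_\calF(\bfW))$. Hence Lemma \ref{lm:weak-closure-to-normality} (equivalently \cite[Proposition I.4.5]{ako}) applied inside $N_\calF(\bfW)$ gives
\[\bfW \leq A \leq X.\]
The same conclusion holds for every essential conjugate of $X$, so $A$ lies in the intersection of those that are essential.

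Since $X$ is essential and $A$ is normal in $N_\calF(\bfW)$, $A$ is $\Aut_\calF(X)$-invariant. Write $H := O^{3'}(\Out_\calF(X))$; because $X$ is essential, $H$ has a strongly $3$-embedded subgroup. As $[X : \bfW] = 3^2$ and $X/\bfW \cap Z(S/\bfW) = 1$, I expect $X/\bfW$ to be elementary abelian of order $3^2$, with $\Out_S(X)$ (of order $3$, since $N_S(N_S(X)) = P$ and $[N_S(X):X]=3$) acting nontrivially on it. The key step is to show that $H$ acts faithfully and irreducibly on $X/\bfW$, so that $H \cong \SL_2(3)$ with $X/\bfW$ a natural module. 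Irreducibility then forces $A \in \{\bfW, X\}$.

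To prove faithfulness, take a $3'$-element $r \in O^{3'}(\Aut_\calF(X))$ centralizing $X/\bfW$ and show $r = 1$. I would imitate Lemmas \ref{lm:m-outfp} and \ref{lm:m-outfr}:
\begin{enumerate}
\item Compute in GAP a characteristic series of $X$ through $\Phi(X)$, $\gamma_3(X)$, $C_X(\gamma_3(X))$ and $Z(X) = Z(S)$.
\item Use the Three Subgroups Lemma with the given identity $[N_S(X), C_X(\gamma_3(X))] = \Phi(X)$ to push centralization of $X/\bfW$ down to $\bfW/\Phi(X)$.
\item Conclude by coprime action that $r$ centralizes $X/\Phi(X)$, hence $r = 1$ by Burnside (Lemma \ref{lm:burnside}).
\end{enumerate}
The hypothesis that every maximal subgroup $M$ of $X$ satisfies $\Phi(M) = \Phi(X)$ should be what prevents $H$ from fixing a line of $X/\bfW$.

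It remains to rule out $A = X$. In that case $X$ would be weakly closed in $\calF$, since it would be normal in $N_\calF(\bfW)$ and $\bfW$ is weakly closed. But $X$ is not normal in $S$: $|X^S| = 3^3$. This contradicts $X$ being an $\Aut_\calF(S)$-invariant member of $\calE$. More simply, $A$ is normal in $S$ while $X$ is not. Therefore $A = \bfW$.

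The main obstacle is the faithfulness and irreducibility computation on $X/\bfW$. If $H$ does not act faithfully there, the fallback is to show that $H$ acts on a larger section such as $X/\Phi(X)$. From its isomorphism type we would then show that no nontrivial $H$-invariant subgroup strictly between $\bfW$ and $X$ is normal in $S$.
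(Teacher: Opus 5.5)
Your reduction to $\bfW \leq A \leq X$ with $A \normalIn S$ is correct. The step you single out as the key one, however, is false, not just difficult. Since $|X| = 3^{15}$ and $|X^S| = [S : N_S(X)] = 3^3$, we get $|N_S(X)| = 3^{17}$. As $X$ is centric, $\Out_S(X) \cong N_S(X)/X$ has order $3^2$, not $3$. Now $\Out_S(X) \leq H = O^{3'}(\Out_\calF(X))$ and $|X/\bfW| = 3^2$. The automorphism group of a group of order $3^2$ has Sylow $3$-subgroups of order $3$, so $H$ can never act faithfully on $X/\bfW$, and $H \cong \SL_2(3)$ is impossible. Your steps (1)--(3) must therefore break down: there are always nontrivial $3'$-elements centralizing $X/\bfW$. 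Concretely, in $\calF_S(\Mn)$ one has $O^{3'}(\Out_\calF(X)) \cong \SL_2(9)$, which is perfect and hence centralizes $X/\bfW$; this is the $3^2$ in $(\SL_2(9) \times 3^2):Q_8$. So no irreducibility on $X/\bfW$ is available, and every subgroup between $\bfW$ and $X$ may be $H$-invariant.

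The missing observation is that you do not need the action of $\Aut_\calF(X)$ at all. You already have $A \normalIn S$, and $A \neq X$ because $X$ is not normal in $S$. If $\bfW < A < X$, then $A/\bfW$ is a normal subgroup of order $3$ of the $3$-group $S/\bfW$. It therefore lies in $Z(S/\bfW)$, contradicting $X/\bfW \cap Z(S/\bfW) = 1$. This is the paper's whole argument. Your fallback paragraph aims at ``no subgroup strictly between $\bfW$ and $X$ is normal in $S$'' but routes it through the $H$-module structure, instead of seeing that it follows immediately from $X/\bfW \cap Z(S/\bfW) = 1$, a fact you had already written down.
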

    \begin{proof}
        Let $N := O_3(N_\calF(\bfW))$. As $\bfW \normalIn S$, we find that $N \normalIn S$. Moreover, since $X \in \calE(\calF)$, we find that $\bfW \leq N \leq X$ by Lemmas \ref{lm:weak-closure-to-normality} and \ref{lm:m-w}. We note that $X$ is not normal in $S$, with $[X : \bfW] = 3^2$. This implies that $[N : \bfW] \leq 3$. If $[N : \bfW] = 3$, then since $N \normalIn S$, we find that $N/\bfW$ is also normal in $S/\bfW$. As $|N/\bfW| = 3$, we are forced to have that $N/\bfW \leq Z(S/\bfW)$. But we recall that $X/\bfW \cap Z(S/\bfW) = 1$, which gives us a contradiction. We conclude that $N = \bfW$.
    \end{proof}
    
    \begin{proposition} \label{prp:m-outfw}
        Assume that $O_3(N_\calF(\bfW)) = \bfW$. Then $O^{3'}(\Out_\calF(\bfW)) \cong 2 \ldotp \Suz$ acts irreducibly on $\bfW/Z(S)$. Moreover, $\calE(N_\calF(\bfW)) = \{P, Q, X^{\Aut(S)}\}$, $O^{3'}(\Out_\calF(Q)) \cong \Mt_{11}$, $O^{3'}(\Out_\calF(X)) \cong \SL_2(9)$, and $|\Out_{O^{3'}(N_\calF(\bfW))}(S)| = 2^5$.
    \end{proposition}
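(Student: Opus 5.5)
The plan mirrors Proposition \ref{prp:fi24-outfw}. The first part will come from Lemma \ref{lm:aut-sp12}, applied to $Q_0 := \bfW \cong 3^{1+12}_+$ (of exponent $3$) and $G_0 := O^{3'}(\Out_\calF(\bfW)) \leq \Out(\bfW)$. We must check its three hypotheses.

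\emph{Hypothesis (1).} $O^{3'}(G_0) = G_0$ holds by construction. For $O_3(G_0) = 1$, note that $O_3(\Out_\calF(\bfW))$ pulls back to an $\Aut_\calF(\bfW)$-invariant subgroup of $\Aut_S(\bfW)$. Since $\bfW$ is weakly closed (Lemma \ref{lm:m-w}), $N_\calF(\bfW)$ is saturated and constrained with model $L$, where $\bfW = F^*(L)$ is centric. The preimage in $S$ of $O_3(L/\bfW)$ is then $O_3(N_\calF(\bfW)) = \bfW$, which forces $O_3(\Out_\calF(\bfW)) = 1$. The same then holds for the normal subgroup $G_0$.

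\emph{Hypothesis (3).} Here $\Out_S(\bfW) \cong S/\bfW$ is a Sylow $3$-subgroup of $G_0$. The requirement $|C_{\bfW/Z(S)}(S)| = 3$ is equivalent to $|Z_2(S)/Z(S)| = 3$. This is a direct computation in $S$, to be read off from the characterization $R = C_S(Z_2(S))$ and the earlier GAP computations.

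\emph{Hypothesis (2).} We need $S/\bfW$ to be isomorphic to a Sylow $3$-subgroup of $2 \ldotp \Suz$. This is again a computation (of order $3^7$), checked in GAP/MAGMA on the explicit representation of $S$ coming from $M_2$. Lemma \ref{lm:aut-sp12} then gives $G_0 \cong 2\ldotp\Suz$ acting irreducibly on $\bfW/Z(S)$.

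\emph{The second part.} This is a computation inside the now-determined group $\Out_\calF(\bfW)$. It is normalized by $\Out_\calF(S)$-data and has $O^{3'}$ equal to the unique (up to $\Out(\bfW)$-conjugacy, by \cite{maximals}) copy of $2\ldotp\Suz$ in $\Sp_{12}(3)$, so we may identify it with $\Out_{M_1}(\bfW)$ or its index-$2$ subgroup. By the model theorem, $N_\calF(\bfW)$ is then the fusion system of $3^{1+12}_+ \ldotp (2\ldotp\Suz)$, possibly extended by the outer $2$. Its essential subgroups are the preimages of the $3$-radical subgroups $U$ of $2\ldotp\Suz$ containing a Sylow $3$-subgroup of $N(U)$ with strongly $3$-embedded $N(U)/U$. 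These are the $3$-local maximal subgroups $3^{2+4}:2(\Alt(4)\times 2^2).2$, $3^5:\Mt_{11}$ and $(\SL_2(9)\times 3^2):Q_8$-type, which correspond to $P$, $Q$ and $X$. From them we read off $O^{3'}(\Out_\calF(Q)) \cong \Mt_{11}$ and $O^{3'}(\Out_\calF(X)) \cong \SL_2(9)$. The order $2^5$ of $\Out_{O^{3'}(N_\calF(\bfW))}(S)$ is read off from $N_{2\ldotp\Suz}(S/\bfW)$ (together with the central involution) in MAGMA, as in the file analogous to \texttt{fi24/automizer-w.m}. Combined with Lemma \ref{lm:m-w}, this gives $\calE(N_\calF(\bfW)) = \{P,Q,X^{\Aut(S)}\}$.

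\emph{Main obstacle.} The first part reduces cleanly to Lemma \ref{lm:aut-sp12}; the second part carries the difficulty. It requires identifying all essentials of the $2\ldotp\Suz$-model and matching them with $P$, $Q$ and the $\Aut(S)$-class of $X$. I would do this by machine using the ATLAS list of $3$-local subgroups of $\Suz$, checking which have strongly $3$-embedded automizers.
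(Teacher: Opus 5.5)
Your proposal follows the paper's proof. The first part is exactly Lemma \ref{lm:aut-sp12}; you helpfully verify its hypotheses, which the paper leaves implicit, though $S/\bfW \in \Syl_3(2\ldotp\Suz)$ is immediate from $M_1/\bfW \cong 2\ldotp\Suz:2$ and needs no computation. The second part is read off from the structure of $\Out_\calF(\bfW)$, which the paper also does by machine. One refinement: for the list of essentials, it is cleaner to test only the candidates $P$, $Q$, $X^{\Aut(S)}$ already produced by the appendix computation, rather than scanning the maximal $3$-locals of $\Suz$ (for example, $3\ldotp\U_4(3):2$ contributes no essential).
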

    \begin{proof}
        The first part follows from Lemma \ref{lm:aut-sp12}. The second statement follows by analysing the structure of $\Out_\calF(\bfW)$ (see, for example, the file \texttt{m/automizer-w.m}.)
    \end{proof}

    \begin{lemma} \label{lm:m-outfx}
        If $X \in \calE(\calF)$, then $O^{3'}(\Out_\calF(X)) \cong \SL_2(9)$ acts irreducibly on $X/C_X(Z_2(X))$.
    \end{lemma}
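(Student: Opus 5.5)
The plan is to combine Lemma \ref{lm:m-x-w-rad} with Proposition \ref{prp:m-outfw}, and then prove irreducibility by a faithfulness-and-dimension argument. Since $X \in \calE(\calF)$, Lemma \ref{lm:m-x-w-rad} gives $O_3(N_\calF(\bfW)) = \bfW$. Proposition \ref{prp:m-outfw} then tells us that $O^{3'}(\Out_\calF(X)) \cong \SL_2(9)$. What remains is the module statement: $O^{3'}(\Out_\calF(X))$ acts irreducibly on $X/C_X(Z_2(X))$.

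First I check invariance. The subgroup $C_X(Z_2(X))$ is characteristic in $X$, so $\Aut_\calF(X)$ acts on the quotient $X/C_X(Z_2(X))$. With GAP (in the spirit of \texttt{m/automizer.g}) I would compute the order of this quotient. I expect it to be $3^4$, i.e.\ the natural $\SL_2(9)$-module viewed over $\GF(3)$. I would also compute a chain of characteristic subgroups of $X$ between $C_X(Z_2(X))$ and $\Phi(X)$, using the structure of $X$ recorded earlier: $[X:\bfW]=3^2$, every maximal subgroup $M$ of $X$ has $\Phi(M)=\Phi(X)$, and $[N_S(X), C_X(\gamma_3(X))] = \Phi(X)$.

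The key step is faithfulness. Let $r \in O^{3'}(\Aut_\calF(X))$ be a $3'$-element centralizing $X/C_X(Z_2(X))$. Following the pattern of Lemmas \ref{lm:m-outfp} and \ref{lm:m-outfr}, the argument runs as follows.
\begin{enumerate}
\item I use the Three Subgroups Lemma with $Z_2(X)$. Since $[X, Z_2(X), r] \leq [Z_2(X), r]$ and $[r, X, Z_2(X)] = 1$, I deduce that $r$ centralizes $Z_2(X)/Z(X)$.
\item The centre $Z(X) = Z(S)$ has order $3$, and $O^{3'}$ centralizes it. So by coprime action $r$ centralizes $Z_2(X)$.
\item I then climb the upper central series as in Lemma \ref{lm:m-outfr}. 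At each stage I use that $\Aut_S(X)$ centralizes the successive quotients of order $3$ (or that they are generated by commutators already centralized), until $r$ centralizes a self-centralizing characteristic subgroup. Alternatively I show that $C_X(Z_2(X))\Phi(X) = X$ fails in a controlled way that puts $r$ in the centralizer of $X/\Phi(X)$.
\item Lemma \ref{lm:burnside} then forces $r = 1$.
\end{enumerate}
Hence $O^{3'}(\Out_\calF(X))$ acts faithfully on $X/C_X(Z_2(X))$.

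Irreducibility then follows from dimensions. $\SL_2(9)$ has no nontrivial $\GF(3)$-modules of dimension below $4$; the smallest are the natural module of dimension $4$ and the $\Omega_4^-(3)$-type module for $\Alt(6)$. If the quotient has order $3^4$, a faithful action forces irreducibility, because a proper submodule or quotient of dimension at most $2$ or $3$ would carry trivial action. Coprime action for the central involution, or the perfectness of $\SL_2(9)$, would then give a nontrivial $3'$-element acting trivially on the whole module, contradicting faithfulness.

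I expect the main obstacle to be the faithfulness step: finding the right characteristic series in $X$ along which $r$ is forced to act trivially. I would first try the upper central series of $X$ intersected with $\Phi(X)$, as in Lemma \ref{lm:fi24-outfr}, and verify the needed index-$3$ steps computationally.
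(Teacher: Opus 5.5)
Your opening step (Lemma \ref{lm:m-x-w-rad} and Proposition \ref{prp:m-outfw} give $\SL_2(9)$) and your closing dimension argument match the paper. The faithfulness step, however, carries the lemma and is not proved.

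\textbf{Steps 1--2 are circular.} The commutator map $X/C_X(Z_2(X)) \times Z_2(X)/Z(X) \to Z(X)$ is bilinear with left kernel trivial and right kernel trivial. It is equivariant for $O^{3'}(\Aut_\calF(X))$, which centralizes $Z(X)$ of order $3$. So $Z_2(X)/Z(X)$ is just the dual of the module you are studying, of the same order. Hence ``$r$ centralizes $Z_2(X)$'' is equivalent to your hypothesis that $r$ centralizes $X/C_X(Z_2(X))$. Separately, $[X,Z_2(X),r]=1$ holds because $[X,Z_2(X)]\le Z(X)$ and $[Z(X),r]=1$, not because of the inequality you wrote.

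\textbf{Step 3 specifies no chain, and your suggested chains do not work.} The upper central series is a poor candidate: its first layer above $Z(X)$ already has order $3^4$ with nontrivial $\SL_2(9)$-action. So ``$\Aut_S(X)$ centralizes each layer'' fails immediately. Your alternative does not work either. The duality shows $X/C_X(Z_2(X))$ is elementary abelian, so $\Phi(X)\le C_X(Z_2(X))$ and $C_X(Z_2(X))\Phi(X)=C_X(Z_2(X))$.

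\textbf{The missing idea.} Look below $C_X(Z_2(X))$ rather than above $Z_2(X)$. Use the chain $\Phi(X)\le C_X(Z_2(X))\le X$ together with the computed fact $[C_X(Z_2(X)),N_S(X)]=\Phi(X)$. Compare the property $[N_S(X),C_X(\gamma_3(X))]=\Phi(X)$, which you listed but never used.
\begin{enumerate}
\item Since $X$ is essential, hence fully normalized, $\Aut_S(X)$ is a Sylow $3$-subgroup of $\Aut_\calF(X)$, and it centralizes the characteristic section $C_X(Z_2(X))/\Phi(X)$.
\item The kernel of $\Aut_\calF(X)$ on that section is normal and contains a Sylow $3$-subgroup. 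So it contains all $3$-elements, hence $O^{3'}(\Aut_\calF(X))$.
\item A $3'$-element of $O^{3'}(\Aut_\calF(X))$ centralizing $X/C_X(Z_2(X))$ therefore centralizes $X/\Phi(X)$ by coprime action. It is then trivial by Lemma \ref{lm:burnside}.
\item So the kernel of $O^{3'}(\Out_\calF(X))$ on $X/C_X(Z_2(X))$ is a $3$-group, normal in $\Out_\calF(X)$. It is trivial because $O_3(\Out_\calF(X))=1$.
\end{enumerate}
This gives faithfulness on a section of order $3^4$ directly, with no Three Subgroups argument needed. It is the paper's argument, and your dimension count then finishes the proof.
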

    \begin{proof}
        All code used in this proof can be found in \texttt{m/automizer.g}. We know by Lemma \ref{lm:m-x-w-rad} that $O_3(N_\calF(\bfW)) = \bfW$. By Proposition \ref{prp:m-outfw}, we deduce that $O^{3'}(\Out_\calF(X)) \cong \SL_2(9)$. We compute that $[C_X(Z_2(X)), N_S(X)] = \Phi(X)$. This implies that $O^{3'}(\Out_\calF(X))$ acts trivially on $C_X(Z_2(X))/\Phi(X)$. By coprime action, we conclude that $O^{3'}(\Out_\calF(X))$ acts faithfully on $X/C_X(Z_2(X))$ of order $3^4$. Since $\SL_2(9)$ has no faithful $\GF(3)$-module of dimension less than $4$, we conclude that $O^{3'}(\Out_\calF(X))$ acts irreducibly on $X/C_X(Z_2(X))$.
    \end{proof}

    
    \begin{lemma} \label{lm:m-vnorm-r}
        If $R \in \calE(\calF)$, then both $\bfV_i$ are normalized by $O^{3'}(\Aut_\calF(R))$.
    \end{lemma}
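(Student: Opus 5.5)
I will mirror the proof of Lemma \ref{lm:fi24-vnorm-r} for $\Fi_{24}'$. The idea is to find a section of $R$, characteristic in $R$, of order $3^2$ on which $O^{3'}(\Aut_\calF(R))$ must act trivially. Then the four intermediate subgroups of that section are individually $O^{3'}(\Aut_\calF(R))$-invariant, and one recovers $\bfV_1$ and $\bfV_2$ from two of them by a characteristic construction.

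First I will locate a characteristic chain $A < B < C$ in $R$ (built from $\gamma_i(R)$, $Z_i(R)$, $\Phi(R)$, $\Phi(\Phi(R))$ and their intersections) with $[C:B] = [B:A] = 3$. Since $\Aut_\calF(R)$ normalizes each characteristic subgroup and $O^{3'}$ centralizes every invariant section of order $3$, coprime action shows that a $3'$-element of $O^{3'}(\Aut_\calF(R))$ centralizes $C/A$. As $O^{3'}(\Aut_\calF(R))$ is generated by such elements together with $\Aut_S(R)$, I also need $[C,S]\le A$, so that $C/A$ is centralized by the whole group. This is the same step as in the $\Fi_{24}'$ case and will be checked in GAP. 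Alternatively, take $C/A$ to be a section of order $3^2$ that is central in $S/A$ and lies in a chain of order-$3$ characteristic factors. The natural candidates are between $\gamma_k(R)$ and $Z_2(\Phi(R))$, or inside $Z_4(R) = Z_5(S)$ above $Z_3(R)$, using the computations $Z_i(R) = Z_{i+1}(S)$ from Lemma \ref{lm:m-outfr}.

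Given such a section, let $C_1,\dots,C_4$ be the subgroups strictly between $A$ and $C$. Each $C_j$ is then $O^{3'}(\Aut_\calF(R))$-invariant. I will compute in GAP (in \texttt{m/v-norm.g} style) that, after relabelling, an expression such as $J(C_R(Z_2(C_1)))$ and $J(C_R(Z_2(C_2)))$ gives exactly $\bfV_1$ and $\bfV_2$. These are characteristic constructions from invariant subgroups, so $\bfV_1$ and $\bfV_2$ are normalized by $O^{3'}(\Aut_\calF(R))$. If $J$ of the centralizer does not give them directly, I would fall back on $\bfV_i$ being the maximal-order elementary abelian subgroups of a suitable characteristic subgroup such as $C_R(Z_2(C_j))$.

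The main obstacle is finding the right section $C/A$ in $R$: one on which $O^{3'}(\Aut_\calF(R))$ provably acts trivially, and whose intermediate subgroups actually separate $\bfV_1$ from $\bfV_2$. The natural $\SL_2(3)$-modules $R/\bfT$ and $Z(R)$ from Lemma \ref{lm:m-outfr} are exactly where the action is nontrivial, so the section must avoid these. I would first test $\gamma_5(R) \le \Phi(\Phi(R)) \le Z_2(\Phi(R))$ as in $\Fi_{24}'$, adjusting the indices to the larger group.
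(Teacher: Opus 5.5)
Your overall strategy is the paper's: find a characteristic section of $R$ of order $3^2$ centralized by $O^{3'}(\Aut_\calF(R))$, so that its four intermediate subgroups $C_1,\dots,C_4$ are invariant, and then check in GAP that $J(C_R(Z_2(C_1)))$ and $J(C_R(Z_2(C_2)))$ are $\bfV_1$ and $\bfV_2$. As written, though, the proposal is incomplete at the one conceptual step, which you yourself flag as the main obstacle: choosing the section. Neither candidate you list is the one the paper uses. The $\Fi_{24}'$ chain $\gamma_5(R) \leq \Phi(\Phi(R)) \leq Z_2(\Phi(R))$ is not known to carry over to this $R$. The layer $Z_4(R)/Z_3(R)$ is different from the paper's, and you have checked neither its order nor how it behaves in GAP.

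The paper takes $A = Z_2(R) = Z_3(S)$ and $C = Z_3(R) = Z_4(S)$, which are exactly the identities you quote from Lemma \ref{lm:m-outfr}, and computes $[Z_3(R) : Z_2(R)] = 3^2$. For this section you need no commutator computation and no chain of order-$3$ factors. By definition of the upper central series, $[Z_4(S), S] \leq Z_3(S)$, so $\Aut_S(R)$ centralizes $Z_3(R)/Z_2(R)$. Since this section is characteristic in $R$, the kernel of the action of $\Aut_\calF(R)$ on it is a normal subgroup containing the Sylow $3$-subgroup $\Aut_S(R)$. It therefore contains $O^{3'}(\Aut_\calF(R))$. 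The same kernel argument shows that in your chain version, once $[C,S] \leq A$ holds, the coprime-action step is redundant. With this section fixed, the rest of your plan, namely invariance of the $C_i$ followed by the GAP identification of $\bfV_1$ and $\bfV_2$, goes through exactly as in the paper.
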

    \begin{proof}
        All code used in this lemma can be found in the file \texttt{m/v-norm.g}. We compute that $Z_4(S) = Z_3(R)$ and $Z_3(S) = Z_2(R)$, with $[Z_3(R) : Z_2(R)] = 3^2$. As such, $O^{3'}(\Aut_\calF(R))$ centralizes $Z_3(R)/Z_2(R)$ of order $3^2$. In particular, the four intermediate subgroups of $Z_2(R)$ and $Z_3(R)$, which we label $C_i$ for $1 \leq i \leq 4$, are $O^{3'}(\Aut_\calF(R))$ invariant. We appeal to GAP to compute that, up to relabelling, the subgroups $J(C_R(Z_2(C_1)))$ and $J(C_R(Z_2(C_2)))$ are the two $\Aut(S)$-conjugates of $\bfV_1$. Thus, both $\bfV_i$ are normalized by $O^{3'}(\Aut_\calF(R))$.
    \end{proof}
    
    \begin{lemma} \label{lm:m-outfq}
        Assume that $\{P, Q\} \subseteq \calE(\calF)$. Then one of the following holds:
        \begin{enumerate}
            \item $O_3(N_\calF(\bfW)) = \bfW$, and $O^{3'}(\Out_\calF(Q)) \cong \Mt_{11}$; or
            \item $O_3(N_\calF(\bfW)) \neq \bfW$, in which case some $\bfV_i$ is normal in $N_\calF(\bfW)$.
        \end{enumerate}
    \end{lemma}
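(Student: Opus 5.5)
The plan is to follow the proof of Lemma \ref{lm:fi24-outfq}, adapted to the Monster setting. If $O_3(N_\calF(\bfW)) = \bfW$, then Proposition \ref{prp:m-outfw} gives $O^{3'}(\Out_\calF(Q)) \cong \Mt_{11}$ at once, so we are in case (1). Hence we assume $\bfW < A := O_3(N_\calF(\bfW))$ and aim to show that some $\bfV_i$ is normal in $N_\calF(\bfW)$.

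Since $\{P, Q\} \subseteq \calE(\calF)$ and both contain $\bfW$, Lemmas \ref{lm:m-w} and \ref{lm:essentials-in-normalizer} make $P, Q$ essential in $N_\calF(\bfW)$. Thus $A \leq P \cap Q$ by \cite[Proposition I.4.5]{ako}. By Lemma \ref{lm:m-x-w-rad}, no conjugate of $X$ is essential here. Lemma \ref{lm:m-outfp} says that $O^{3'}(\Out_\calF(P)) \cong \SL_2(3)$ acts irreducibly on $P/\bfT$, and $A$ is $\Aut_\calF(P)$-invariant. Since $A \cap \bfT$ need not be all of $A$, the natural target is $A \leq W_0 := C_S(S/\bfW) \cap P$, or some analogous characteristic subgroup of $S$ lying between $\bfW$ and $P \cap Q$. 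I would check in GAP that $A(\bfT)/\bfT$ cannot be a proper nontrivial invariant subgroup of $P/\bfT$. This forces $A \leq \bfT\bfW$, or $A \leq$ the relevant $W_0$.

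Next, $A$ is $\Aut_\calF(Q)$-invariant. The quotient $A/\bfW$ lies in a part of $Q/\bfW$ centralized by $\Out_S(Q)$-commutators, and $\bfW/\Phi(Q)$ is small. So by coprime action, $O^{3'}(\Out_\calF(Q))$ acts faithfully on $Q/A$. This group is small: from the structure $Q = O_3(N_2)$ with $Q/\bfW \cong 3^5$, it has order at most $3^4$. So $O^{3'}(\Out_\calF(Q))$ embeds in $\SL_4(3)$ or smaller, with a Sylow $3$-subgroup isomorphic to $\Out_S(Q)$ of order $3^2$.

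Since $A \normalIn S$ is $\Aut_\calF(Q)$- and $\Aut_\calF(P)$-invariant with $\bfW < A \leq W_0$, there are only finitely many candidates for $A$, namely normal subgroups of $S$ in this range. For each candidate I would test in GAP whether some characteristic construction, such as $\langle A, C_Q(\gamma_3(A))\rangle$, stays proper in $Q$ while being forced to equal $Q$ by irreducibility. This should leave only $A_1, A_2$, conjugate under $\Aut(S)$, with $\bfV_i = C_{A_i}(Z_2(A_i))$ or $J(A_i)$. Since $A_i \normalIn N_\calF(\bfW)$ and $\bfV_i$ is characteristic in $A_i$, the subgroup $\bfV_i$ is normal in $N_\calF(\bfW)$. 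This gives case (2).

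The main obstacle is the middle step: bounding $A$ and eliminating candidates. The groups are large, and the possible subgroups of $\SL_4(3)$ with the right Sylow $3$-subgroup must be excluded. The approach is to use the extension-axiom involution from $O^{3'}(\Aut_\calF(P))$, as in the $13:3$ argument of Lemma \ref{lm:fi24-outfq}. This is combined with explicit computation of the candidate $A$'s. Those $A$'s are what really pin down $\bfV_i$.
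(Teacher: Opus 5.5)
There is a genuine gap in the middle step. Your outer frame matches the paper's:
\begin{itemize}
\item Proposition \ref{prp:m-outfw} handles the case $O_3(N_\calF(\bfW)) = \bfW$.
\item Otherwise one narrows $A := O_3(N_\calF(\bfW))$ down to a few normal subgroups of $S$ and discards two of them via $\langle A, C_Q(\gamma_3(A))\rangle$.
\item Finally, $\bfV_i = C_{A_i}(Z_2(A_i))$ is characteristic in $A_i$.
\end{itemize}

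The problem is how you bound $A$. Your use of Lemma \ref{lm:m-outfp} carries no information. We have $[P : \bfT] = 3^2$ and $\bfW \leq P$, while $\bfW \nleq \bfT$ because $\bfT \leq R$ but $\bfW \nleq R$. So $A\bfT/\bfT \geq \bfW\bfT/\bfT \neq 1$, and irreducibility of $P/\bfT$ only gives $A\bfT = \bfW\bfT = P$. Nothing here yields $A \leq C_S(S/\bfW)$; that inclusion is true in the end, but it does not follow from your argument.

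Without it, you have no reason for $O^{3'}(\Out_\calF(Q))$ to centralize $A/\bfW$, and hence no faithfulness on $Q/A$. More seriously, you never show that $O^{3'}(\Out_\calF(Q))$ acts irreducibly on $Q/A$, which is exactly what the $\langle A, C_Q(\gamma_3(A))\rangle$ elimination needs. A priori a subgroup of $\SL_4(3)$ with Sylow $3$-subgroup of order $3^2$ can act reducibly, for example $\SL_2(3) \times \SL_2(3)$. The $13:3$-style involution argument you gesture at is not carried out.

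The missing idea is to work with $Q/\bfW$ before knowing anything about $A$.
\begin{enumerate}
\item Since $[\bfW : \Phi(Q)] = 3$, coprime action shows that $O^{3'}(\Out_\calF(Q))$ acts faithfully on $Q/\bfW$, which has order $3^5$.
\item Since $\Out_S(Q)$ is elementary abelian of order $3^2$, the paper uses the maximal subgroups of $\SL_5(3)$ to reduce to $\Alt(6)$, $\SL_2(9)$ or $\Mt_{11}$.
\item $\Mt_{11}$ has no faithful $\GF(3)$-module of dimension below $5$, so it acts irreducibly on $Q/\bfW$. This forces $A = \bfW$.
\item For $\Alt(6)$ or $\SL_2(9)$, which have minimal faithful dimension $4$, first exclude $A = P \cap Q$. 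In that case $Z_2(A) = Z_2(S)$ would give an $\Aut_\calF(P)$-invariant subgroup of order $3$ in the natural module $Z_2(P)/Z(P)$, contradicting Lemma \ref{lm:m-outfp}. This yields $3 \leq [A:\bfW] \leq 3^3$.
\item The unique non-trivial composition factor of $Q/\bfW$ has dimension $4$, so it must lie in $Q/A$. Hence $[Q:A] = 3^4$ with irreducible action, and $A/\bfW$ has order $3$ and is centralized.
\item As $A \normalIn S$, this gives $A/\bfW \leq Z(S/\bfW)$, leaving four candidates. Only at this point does your $\langle A, C_Q(\gamma_3(A))\rangle$ test become available.
\end{enumerate}
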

    \begin{proof}
        All code used in this proof can be found in the file \texttt{m/automizer-q.g}. We know that $[\bfW : \Phi(Q)] = 3$, so $O^{3'}(\Aut_\calF(Q))$ centralizes $\bfW/\Phi(Q)$. By coprime action, we deduce that $O^{3'}(\Out_\calF(Q))$ acts faithfully on $Q/\bfW$ of order $3^5$. In particular, $O^{3'}(\Out_\calF(Q))$ is a subgroup of $\SL_5(3)$. Since $\Out_S(Q)$ is elementary abelian of order $3^2$, we appeal to \cite[Tables 8.18 and 8.19]{maximals} to see that there are three possible choices for $O^{3'}(\Out_\calF(Q))$ -- $\Alt(6)$, $\SL_2(9)$ or $\Mt_{11}$. 
        
        We know that $\Mt_{11}$ has no faithful $\GF(3)$-module of dimension less than $5$. As such, if $O^{3'}(\Out_\calF(Q)) \cong \Mt_{11}$, then $O^{3'}(\Out_\calF(Q))$ acts irreducibly on $Q/\bfW$. Since $P \in \calE(\calF)$, we deduce that $O_3(N_\calF(\bfW)) = \bfW$.
    
        Assume now that $O^{3'}(\Out_\calF(Q))$ is isomorphic to either $\Alt(6)$ or $\SL_2(9)$. Let $A := O_3(N_\calF(\bfW)) \leq P \cap Q$. If $A = P \cap Q$, then $O^{3'}(\Out_\calF(Q))$ normalizes $Z_2(A) = Z_2(S)$, which contradicts Lemma \ref{lm:m-outfp}. Combining with Proposition \ref{prp:m-outfw}, we deduce that $3^2 \leq [Q : A] \leq 3^4$ and $3 \leq [A : \bfW] \leq 3^3$. Since $\Alt(6)$ or $\SL_2(9)$ has no faithful $\GF(3)$-module of dimension less than $4$, we infer that $O^{3'}(\Out_\calF(Q))$ acts irreducibly on $Q/A$ of order $3^4$ and centralizes $A/\bfW$ of order $3$. As $A/\bfW$ is normal in $S/\bfW$ and has order $3$, we infer that $A/\bfW \leq Z(S/\bfW)$. Then there are precisely four choices for $A$. We show that only two of the four choices are valid. To see this, consider the subgroup $\hat{C} := \langle A, C_Q(\gamma_3(A)) \rangle$. As $A$ is invariant under $O^{3'}(\Aut_\calF(Q))$, $\hat{C}$ must also be invariant under $O^{3'}(\Aut_\calF(Q))$. We compute that $\hat{C}$ properly contains $A$ in each case, so it must equal $Q$. This allows us to deduce that there are precisely two choices for $A$, which we label $A_1$ and $A_2$. We observe that $A_1$ and $A_2$ are $\Aut(S)$-conjugate. Then, up to relabelling, we have that $\bfV_i = C_{A_i}(Z_2(A_i))$. In particular, as some $A_i$ is normal in $N_\calF(\bfW)$, we deduce that some $\bfV_i$ is also normal in $N_\calF(\bfW)$.
    \end{proof}

    \begin{proposition} \label{prp:m-trivcore}
        Let $\calF$ be a saturated fusion system on $S$. Then either $\calF$ is constrained or we have $O_3(\calF) = 1$. Moreover, $O_3(\calF) = 1$ if and only if $\calE(\calF) = \{P, Q, R, X^{\Aut(S)}\}$ and $O^{3'}(\Out_\calF(Q)) \cong \Mt_{11}$.
    \end{proposition}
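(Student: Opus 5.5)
I would follow the template of Proposition \ref{prp:fi24-trivcore}: first show that whenever one of the listed conditions fails, some self-centralizing characteristic (or normal) subgroup of $S$ is normal in $\calF$, so that $\calF$ is constrained. After that, I would show that when all the conditions hold, $O_3(\calF)=1$.

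\textbf{Step 1: if a condition fails, $\calF$ is constrained.} By the preceding lemma, $\calE(\calF) \subseteq \{P,Q,R,X^{\Aut(S)}\}$. I would split into cases according to which essential subgroup is missing.
\begin{enumerate}
\item If $R \notin \calE(\calF)$, then $\bfW$ is contained in every essential subgroup. It is weakly closed by Lemma \ref{lm:m-w}, so $\bfW \normalIn \calF$ by Lemma \ref{lm:weak-closure-to-normality}.
\item If $P \notin \calE(\calF)$, I would argue that $X \notin \calE(\calF)$ as well. Otherwise $X \in \calE(\calF)$ would give $O_3(N_\calF(\bfW))=\bfW$ by Lemma \ref{lm:m-x-w-rad}, and then Proposition \ref{prp:m-outfw} would force $P \in \calE(N_\calF(\bfW)) \subseteq \calE(\calF)$. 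Hence $\calE(\calF) \subseteq \{Q,R\}$, and Lemma \ref{lm:m-u} gives $\bfU \normalIn \calF$.
\item If $Q \notin \calE(\calF)$, the same argument shows that $X \notin \calE(\calF)$. Then $\calE(\calF) \subseteq \{P,R\}$, and Lemma \ref{lm:m-t} gives $\bfT \normalIn \calF$.
\end{enumerate}

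Now suppose $P,Q,R \in \calE(\calF)$ but $O^{3'}(\Out_\calF(Q)) \not\cong \Mt_{11}$. By Lemma \ref{lm:m-outfq}, $O_3(N_\calF(\bfW)) \ne \bfW$ and some $\bfV_i$ is normal in $N_\calF(\bfW)$. By Lemma \ref{lm:m-x-w-rad}, no conjugate of $X$ is essential. Then $\calF = \langle N_\calF(\bfW), \Aut_\calF(R)\rangle$. By the Frattini argument and Lemma \ref{lm:autfe-decomposition}, $\Aut_\calF(R)$ is generated by $O^{3'}(\Aut_\calF(R))$ together with restrictions of maps in $\Aut_\calF(S)$, which lie in $N_\calF(\bfW)$. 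Lemma \ref{lm:m-vnorm-r} shows that $O^{3'}(\Aut_\calF(R))$ normalizes $\bfV_i$, so $\bfV_i \normalIn \calF$.

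\textbf{Step 2: the essential set.} If $O^{3'}(\Out_\calF(Q)) \cong \Mt_{11}$, then Lemma \ref{lm:m-outfq} gives $O_3(N_\calF(\bfW))=\bfW$. Proposition \ref{prp:m-outfw} then forces $\calE(\calF) \supseteq \{P,Q,X^{\Aut(S)}\}$, so $\calE(\calF) = \{P,Q,R,X^{\Aut(S)}\}$. Since $\bfW,\bfU,\bfT,\bfV_i$ are all self-centralizing in $S$, Step 1 proves the dichotomy: either $\calF$ is constrained, or the stated conditions hold. In particular $O_3(\calF)=1$ implies those conditions, because a corefree fusion system is not constrained.

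\textbf{Step 3: the converse.} Assume the conditions hold. Then $O_3(\calF) \le O_3(N_\calF(\bfW)) = \bfW$. By Proposition \ref{prp:m-outfw}, $O^{3'}(\Out_\calF(\bfW)) \cong 2\ldotp\Suz$ acts irreducibly on $\bfW/Z(S)$, so $O_3(\calF) \in \{Z(S), \bfW\}$. Since $R \in \calE(\calF)$ and $\bfW \not\le R$, we have $O_3(\calF) \le R \cap \bfW < \bfW$, and hence $O_3(\calF) \le Z(S)$. But $O_3(\calF)$ is normalized by $\Aut_\calF(R)$. By Lemma \ref{lm:m-outfr}, $O^{3'}(\Out_\calF(R))$ acts irreducibly on $Z(R) > Z(S)$, so $Z(S)$ is not $\Aut_\calF(R)$-invariant. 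Therefore $O_3(\calF)=1$.

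\textbf{Main obstacle.} The delicate point is the constrained case of Step 1, where $O_3(N_\calF(\bfW)) \ne \bfW$. There one must confirm that no $\Aut(S)$-conjugate of $X$ can be essential, and that $\Aut_\calF(S)$ preserves the particular $\bfV_i$, so that the generation argument really places $\bfV_i$ in $O_3(\calF)$. Lemma \ref{lm:m-x-w-rad} supplies the first fact. For the second, I would use that $\Aut_\calF(S) \le N_\calF(\bfW)$ and that $\bfV_i$ is normal in $N_\calF(\bfW)$.
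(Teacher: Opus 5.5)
Your proposal is correct and follows the paper's proof essentially step for step: the same case split produces $\bfW$, $\bfU$, $\bfT$ or some $\bfV_i$ normal in $\calF$, and the converse uses the same irreducibility of $2\ldotp\Suz$ on $\bfW/Z(S)$ and of $O^{3'}(\Out_\calF(R))$ on $Z(R)$. Your extra step, using Lemma \ref{lm:m-x-w-rad} and Proposition \ref{prp:m-outfw} to rule out essential $\Aut(S)$-conjugates of $X$ when $P$ or $Q$ is missing, is genuinely needed for Lemmas \ref{lm:m-u} and \ref{lm:m-t} to give normality of $\bfU$ and $\bfT$, since neither lies in a conjugate of $X$; the paper's proof passes over this point silently.
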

    \begin{proof}
        If $R \not\in \calE(\calF)$, then we know by Lemma \ref{lm:m-w} that $\bfW \normalIn \calF$. If $Q \not\in \calE(\calF)$, then Lemma \ref{lm:m-t} tells us that $\bfT \normalIn \calF$. If $P \not\in \calE(\calF)$, then we apply Lemma \ref{lm:m-u} to find that $\bfU \normalIn \calF$. If $O^{3'}(\Out_\calF(Q)) \not\cong \Mt_{11}$, then we deduce by Lemma \ref{lm:m-outfq} that some $\bfV_i$ is normal in $N_\calF(\bfW)$. Appealing to Lemma \ref{lm:m-vnorm-r}, we further see that $\bfV_i$ is normalized by $O^{3'}(\Aut_\calF(R))$. By the Frattini Argument and Lemma \ref{lm:autfe-decomposition}, we have that $\calF = \langle N_\calF(\bfW), O^{3'}(\Aut_\calF(R)) \rangle$. This implies that $\bfV_i \normalIn \calF$. Instead, if $O^{3'}(\Out_\calF(Q)) \cong \Mt_{11}$, then Lemma \ref{lm:m-outfq} tells us that $O_3(N_\calF(\bfW)) = \bfW$. Moreover, by Proposition \ref{prp:m-outfw}, we deduce that $X \in \calE(\calF)$. As $\bfW, \bfT, \bfU$ and $\bfV_i$ are self-centralizing in $S$, we infer that either $\calF$ is constrained or $\calE(\calF) = \{P, Q, R, X^{\Aut(S)}\}$ with $O^{3'}(\Out_\calF(Q)) \cong \Mt_{11}$. If $O_3(\calF) = 1$, then $\calF$ cannot be constrained. Thus, if $O_3(\calF) = 1$, then we must have $\calE(\calF) = \{P, Q, R, X^{\Aut(S)}\}$, with $O^{3'}(\Out_\calF(Q)) \cong \Mt_{11}$.
        
        Assume now that $\calE(\calF) = \{P, Q, R, X^{\Aut(S)}\}$ and $O^{3'}(\Out_\calF(Q)) \cong \Mt_{11}$. Then by Lemma \ref{lm:m-outfq}, we deduce that $O_3(N_\calF(\bfW)) = \bfW$. By Proposition \ref{prp:m-outfw}, we have that $O^{3'}(\Out_\calF(\bfW)) \cong 2 \ldotp \Suz$ acts irreducibly on $\bfW/Z(S)$. Also, we know by Lemma \ref{lm:m-outfr} that $O^{3'}(\Out_\calF(R))$ acts irreducibly on $Z(R) = Z_2(S)$. As $\bfW \nleq R$, we conclude that $O_3(\calF) = 1$.
    \end{proof}
    
    \begin{lemma} \label{lm:m-autfs}
        If $O_3(\calF) = 1$, then $|\Aut_\calF(S)|_{3'} = 2^6$.
    \end{lemma}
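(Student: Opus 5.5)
The plan is to get the lower bound $2^6$ from the extension axiom, and the upper bound from the structure of $\Out_\calF(\bfW)$. Since $O_3(\calF) = 1$, Proposition \ref{prp:m-trivcore} gives $\calE(\calF) = \{P, Q, R, X^{\Aut(S)}\}$ with $O^{3'}(\Out_\calF(Q)) \cong \Mt_{11}$. Then Lemma \ref{lm:m-outfq} gives $O_3(N_\calF(\bfW)) = \bfW$, so Proposition \ref{prp:m-outfw} applies. In particular $|\Out_{O^{3'}(N_\calF(\bfW))}(S)| = 2^5$, and this subgroup of $\Out_\calF(S)$ already accounts for $2^5$ of the $3'$-part.

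Next I would exhibit one more involution coming from $R$. By Lemma \ref{lm:m-outfr}, $O^{3'}(\Out_\calF(R)) \cong \SL_2(3)$, acting on $Z(R) = Z_2(S)$ as a natural module. By the extension axiom (Lemma \ref{lm:autfe-decomposition}, as $R$ is characteristic in $S$), there is an involution $t \in \Aut_\calF(S)$ whose restriction to $R$ lies in $O^{3'}(\Aut_\calF(R))$ and normalizes $\Aut_S(R)$. On the natural module $Z_2(S)$ it acts with determinant $1$ and is diagonal with respect to $Z(S) < Z_2(S)$, so it acts as $-1$ on $Z(S)$ and as $-1$ on $Z_2(S)/Z(S)$.

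To see that $t$ does not lie in the $2^5$ subgroup coming from $O^{3'}(N_\calF(\bfW))$, I would compare actions on $Z_2(S)$. Elements of $O^{3'}(\Aut_\calF(\bfW))$ that normalize $S$ induce on $Z(S)$ the determinant/central character of $2\ldotp\Suz \le \Sp_{12}(3)$. Since $2\ldotp\Suz$ is perfect, it preserves the symplectic form exactly and hence centralizes $Z(\bfW) = Z(S)$. So every element of the $2^5$ subgroup centralizes $Z(S)$, whereas $t$ inverts $Z(S)$. This gives $|\Out_\calF(S)|_{3'} \geq 2^6$.

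For the upper bound, let $\Gamma$ be a Hall $3'$-subgroup of $\Aut_\calF(S)$ (which is 3-closed with $\Inn(S)$ Sylow). Then $C_\Gamma(Z(S))$ lies in $\Aut_{N_\calF(\bfW)}(S)$ and centralizes $Z(\bfW)$. I would argue it lies in $O^{3'}(\Aut_\calF(\bfW))$ modulo $3$-elements. This uses $\Out_\calF(\bfW) \le N_{\Out(\bfW)}(2\ldotp\Suz)$, which is $2\ldotp\Suz{:}2 \times 2$ inside $\GSp_{12}(3)$ ($\Out(\bfW) \cong \GSp_{12}(3)$ for exponent 3). The outer $2$ of $\Suz$ must be excluded: it would add another $2$ centralizing $Z(S)$, or more precisely its extension acts on $Z(S)$ by a scalar $-1$. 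I would settle this by computation, which I expect to be the main obstacle: the normalizer of $2\ldotp\Suz$ in $\GSp_{12}(3)$ is $(2\ldotp\Suz \circ 4)\ldotp 2$-type, and one must check which part centralizes $Z(S)$. The fallback is the paper's usual approach: compute in MAGMA that a Sylow $2$-subgroup of $N_{\Out(\bfW)}(\Out_\calF(\bfW)) \cap N(\Out_S(\bfW))$ has order $2^6$, giving at most $2^5 \cdot 2$ in total. Alternatively I would compute directly in GAP (a file like \texttt{m/autfs.g}) that $\Out(S)$ has Sylow $2$-subgroup of order $2^6$, matching $\Aut_{\Mn}(S)$. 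In that case the bound $|\Out_\calF(S)|_{3'} \le |\Out(S)|_{2} \cdot (\text{odd }3'\text{-part}=1) = 2^6$ is immediate, and combined with the lower bound this finishes the proof.
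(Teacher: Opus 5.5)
Your proposal follows the paper's proof. The lower bound comes from the $2^5$ subgroup $\Out_{O^{3'}(N_\calF(\bfW))}(S)$, which centralizes $Z(S)$, together with an involution $t \in \Aut_\calF(S)$ whose restriction to $R$ induces the central involution of $O^{3'}(\Out_\calF(R)) \cong \SL_2(3)$ and so inverts $Z(S) \leq Z(R)$; the upper bound is the computation $|\Aut(S)|_{3'} = 2^6$, which is your fallback. You can drop the detour through the normalizer of $2\ldotp\Suz$ in $\mathrm{GSp}_{12}(3)$, which is $2\ldotp\Suz\ldotp 2$ and not of shape $(2\ldotp\Suz \circ 4)\ldotp 2$, since $2\ldotp\Suz$ is absolutely irreducible on $\bfW/Z(\bfW)$ and so its centralizer in $\GL_{12}(3)$ is just $\{\pm 1\}$.
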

    \begin{proof}
        Since $O_3(\calF) = 1$, we know by Proposition \ref{prp:m-trivcore} that $\calE(\calF) = \{P, Q, R, X^S\}$ and $O^{3'}(\Out_\calF(Q)) \cong \Mt_{11}$. In that case, Proposition \ref{prp:m-outfw} tells us that $O^{3'}(\Out_\calF(\bfW)) \cong 2 \ldotp \Suz$, with $|\Aut_{O^{3'}(N_\calF(\bfW))}(S)|_{3'} = 2^5$. Also, Lemma \ref{lm:m-outfr} tells us that $O^{3'}(\Aut_\calF(R))$ acts irreducibly on $Z(R)$. In that case, the extension axiom tells us that there exists an involution $t \in \Aut_\calF(S)$ such that $t|_{R} \in O^{3'}(\Aut_\calF(R))$ and $t$ doesn't centralize $Z(S)$. Since $O^{3'}(\Out_\calF(\bfW))$ centralizes $Z(S)$, we infer that $t \not\in \Aut_{O^{3'}(N_\calF(\bfW))}(S)$. We deduce that $|\Aut_\calF(S)|_{3'} \geq 2^6$. We compute that $|\Aut(S)|_{3'} = 2^6$, and so the result follows.
    \end{proof}
    
    Using all the information we have gathered in this section, we will classify all fusion systems $\calF$ on $S$ such that $O_3(\calF) = 1$.
    
    \begin{theorem} \label{thm:m}
        Let $\calF$ be a saturated fusion system on $S$ with $O_3(\calF) = 1$. Then $\calF$ is realized by $\Mn$.
    \end{theorem}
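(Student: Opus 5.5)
The proof will follow the same template as the proof of Theorem \ref{thm:fi24}. Set $\calG := \calF_S(\Mn)$. Since $O_3(\calF) = 1$, Proposition \ref{prp:m-trivcore} gives $\calE(\calF) = \{P, Q, R, X^{\Aut(S)}\}$ with $O^{3'}(\Out_\calF(Q)) \cong \Mt_{11}$. The same holds for $\calG$.

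\textbf{Step 1: match the automizers of $S$.} By Lemma \ref{lm:m-autfs}, $|\Aut_\calF(S)|_{3'} = 2^6 = |\Aut(S)|_{3'}$, and likewise for $\calG$. Hence $\Aut_\calF(S)$ and $\Aut_\calG(S)$ both contain Hall $3'$-subgroups of $\Aut(S)$ (complements to $O_3(\Aut(S))$). By Sylow/Hall conjugacy we may replace $\calF$ by an $\Aut(S)$-conjugate so that $\Aut_\calF(S) = \Aut_\calG(S)$.

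\textbf{Step 2: match $N_\calF(R)$.} By Lemma \ref{lm:m-outfr}, $O^{3'}(\Out_\calF(R)) \cong \SL_2(3)$. For the fixed $\Aut_{\Aut_\calF(S)}(R)$, I would compute in GAP the candidates for $\Out_\calF(R)$ in $\Out(R)$, and check that they are all conjugate under elements of $\Aut(S)$ normalizing $\Aut_\calF(S)$. This lets us arrange $\Aut_\calF(R) = \Aut_\calG(R)$. Since $R$ is normal and centric in $N_\calF(R)$, the Model Theorem gives, after a further conjugation by some $\beta\in\Aut(S)$ with $\beta|_R = 1$, that $N_\calF(R) = N_\calG(R)$.

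\textbf{Step 3: match $N_\calF(\bfV_1)$.} First I would prove an analogue of Proposition \ref{prp:fi24-outfv}: $O^{3'}(\Out_\calF(\bfV_1)) \cong \SO_8^-(3)$. The ingredients are these.
\begin{enumerate}
\item $\bfV_1$ is normalized by $O^{3'}(\Aut_\calF(R))$ (Lemma \ref{lm:m-vnorm-r}), by $O^{3'}(\Aut_\calF(P))$ (a $P$-analogue of Lemma \ref{lm:fi24-vnorm-p}, checked computationally), and by the $\Alt(6)$ inside $\Out_\calF(Q)$ coming from the $\Mt_{11}$-action on $Q/\bfW$.
\item One shows $O_3(N_\calF(\bfV_1)) = \bfV_1$ using irreducibility on suitable sections.
\item One identifies the automizer as an $\SL_8(3)$-subgroup with the correct Sylow $3$-subgroup, via maximal subgroups as in Lemma \ref{lm:aut-sl7}.
\end{enumerate}
Then, as in the $\Fi_{24}'$ case, $\Aut_R(\bfV_1)$ is weakly closed in $\Aut_S(\bfV_1)$ with respect to $\Aut(\bfV_1)$, since it is the unique maximal subgroup with $|C_{\bfV_1}(M)| = 3^2$. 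Lemma \ref{lm:weak-closure-equality} together with Step 2 then gives $\Aut_\calF(\bfV_1) = \Aut_\calG(\bfV_1)$. Finally, computing $H^1(\Aut_{N_\calF(R)}(\bfV_1);\bfV_1) = 0$ and applying \cite[Proposition 2.11]{todd-modules} yields $N_\calF(\bfV_1) = N_\calG(\bfV_1)$.

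\textbf{Step 4: generate.} Set $\calF_1 := \langle N_\calF(R), N_\calF(\bfV_1), N_\calF(\bfW)\rangle_S$; the same expression defines $\calG_1$. Here $\bfW$ is needed because $X$ contains neither $\bfV_i$ and $\Out_\calF(Q) \cong \Mt_{11}$ is not generated inside $N_\calF(\bfV_1)$.

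To handle $N_\calF(\bfW)$, I would use Lemma \ref{lm:aut-sp12}: $O^{3'}(\Out_\calF(\bfW)) \cong 2\ldotp\Suz$. The aim is to show it equals $\Out_\calG(\bfW)$, using that $N_{\Aut_\calF(\bfW)}(\Aut_{R \cap \bfW}(\bfW))$-type subgroups are already determined by $N_\calF(R) = N_\calG(R)$. The argument again goes through Lemma \ref{lm:weak-closure-equality}, via uniqueness of $2\ldotp\Suz$ up to conjugacy in $\Sp_{12}(3)$ \cite{maximals}. The Model Theorem applied to $N_\calF(\bfW)$ (note $\bfW$ is centric and normal there) then gives $N_\calF(\bfW) = N_\calG(\bfW)$.

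Since $\Aut_\calF(S) \subseteq N_\calF(R)$, and every essential automizer lies in one of these three normalizers, Alperin--Goldschmidt gives $\calF = \calF_1 = \calG_1 = \calG$.

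The main obstacle is Step 4, specifically matching $\Aut_\calF(\bfW)$ with $\Aut_\calG(\bfW)$. This is an extraspecial situation, so the Model Theorem only pins down $N_\calF(\bfW)$ once the outer automizer is fixed. The required weak-closure subgroup of $\Out_S(\bfW)$, and the $\Aut(\bfW)$-normalizer computation, must be carried out in MAGMA inside $\Sp_{12}(3)$, possibly with an extra $H^1$/$H^2$ vanishing check to rule out twisted extensions.
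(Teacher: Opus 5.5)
Your overall architecture (match $\Aut_\calF(S)$, identify the $R$-piece and the $\bfW$-piece, then apply Alperin--Goldschmidt) is the paper's. However, Step 3 is an unnecessary detour, and Step 4 has a real gap.

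\textbf{Step 3 is not needed.} Since $\bfW$ is characteristic in $S$ and weakly closed (Lemma \ref{lm:m-w}), and every essential subgroup other than $R$ contains $\bfW$, the groups $\Aut_\calF(S)$, $\Aut_\calF(P)$, $\Aut_\calF(Q)$ and $\Aut_\calF(X')$ (for each $\Aut(S)$-conjugate $X'$ of $X$) all lie in $N_\calF(\bfW)$. Hence $\calF = \langle N_\calF(\bfW), \Aut_\calF(R)\rangle_S$, which is exactly how the paper finishes. Step 3 needs several things the paper never proves:
\begin{itemize}
\item a $P$-analogue of Lemma \ref{lm:fi24-vnorm-p};
\item a decision as to which of the two $\Alt(6)$-subgroups of $\Mt_{11}$ containing $\Out_S(Q)$ normalizes $\bfV_1$;
\item the equality $O_3(N_\calF(\bfV_1)) = \bfV_1$;
\item an $\SL_8(3)$ version of Lemma \ref{lm:aut-sl7}.
\end{itemize}
All of this can simply be dropped. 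Likewise, you only need $\Aut_\calF(R) = \Aut_\calG(R)$, not $N_\calF(R) = N_\calG(R)$.

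\textbf{The gap is the last sentence of Step 4.} From $\Aut_\calF(\bfW) = \Aut_\calG(\bfW)$, the Model Theorem only gives $N_\calF(\bfW)^\beta = N_\calG(\bfW)$ for some $\beta \in \Aut(S)$ with $\beta|_{\bfW} = 1$. Passing to $\calF^\beta$ can undo Step 2. Your Step 2 shows only that the candidates for $\Out_\calF(R)$ are conjugate, not that they are unique, so nothing forces $\Aut_\calG(R)^\beta = \Aut_\calG(R)$.

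The paper avoids every such re-conjugation, as follows.
\begin{itemize}
\item For $R$ it proves uniqueness. $\Out(R)$ has a unique class of subgroups isomorphic to $\GL_2(3) \times \SDih(16)$. Also $N_{\Aut_\calF(R)}(\Aut_S(R)) = N_{\Aut_\calG(R)}(\Aut_S(R))$, because $\Aut_\calF(S) = \Aut_\calG(S)$. Together with the computed inclusion $N_{\Aut(R)}(N_{\Aut_\calG(R)}(\Aut_S(R))) \leq N_{\Aut(R)}(\Aut_\calG(R))$, Lemma \ref{lm:weak-closure-equality} gives $\Aut_\calF(R) = \Aut_\calG(R)$ outright.
\item For $\bfW$, the relevant weakly closed subgroup is $\Out_S(\bfW)$ itself. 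Your $\Aut_{R \cap \bfW}(\bfW)$ lies in $\Inn(\bfW)$ and is of no use. The normalizer of $\Out_S(\bfW)$ in $\Out_\calF(\bfW)$ is the image of $\Aut_\calF(S)$, by Lemma \ref{lm:autfe-decomposition}. This, together with the $\Out(\bfW)$-conjugacy of the two $2\ldotp\Suz$ subgroups and the computed inclusion $N_{\Out(\bfW)}(N_{\Out_\calG(\bfW)}(\Out_S(\bfW))) \leq N_{\Out(\bfW)}(\Out_\calG(\bfW))$, gives $\Aut_\calF(\bfW) = \Aut_\calG(\bfW)$.
\item Instead of the bare Model Theorem, the paper then computes $H^1(\Aut_{N_\calG(S)}(\bfW); Z(\bfW)) = 0$. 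It applies \cite[Proposition 2.11]{todd-modules} to $N_\calF(\bfW) \geq N_\calF(S) \leq N_\calG(\bfW)$ and obtains $N_\calF(\bfW) = N_\calG(\bfW)$ with no further conjugation.
\end{itemize}
So the $H^1$ check you list as ``possibly'' needed is not optional. It is the step that makes the identification hold exactly rather than only up to conjugation.
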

    \begin{proof}
        All code in this proof can be found in the files \texttt{m/uniqueness.m} and \texttt{m/uniqueness.g}. We set $\calG := \calF_S(\Mn)$. By Lemma \ref{lm:m-autfs}, we find that $\Aut_\calF(S)$ and $\Aut_\calG(S)$ are both Sylow $2$-subgroups of $\Aut(S)$. By the Sylow's Theorems, we infer that $\Aut_\calG(S)^\alpha  = \Aut_\calF(S)$ for some $\alpha \in \Aut(S)$. We may replace $\calG$ by the isomorphic fusion system $\calG^\alpha$, and apply the Model Theorem to deduce that $N_\calF(S) = N_\calG(S)$. We now show that $\calF = \calG$.

        Now, we note that $\Aut_\calF(R) \geq \Aut_S(R) \leq \Aut_\calG(R)$ with
        \[N_{\Aut(R)}(N_{\Aut_\calG(R)}(\Aut_S(R))) \leq N_{\Aut(R)}(\Aut_\calG(R)).\]
        We compute that $\Out(R)$ has a unique conjugacy class of subgroups isomorphic to $\Out_\calF(R) \cong \GL_2(3) \times \SDih(16)$. Since $\Aut_\calF(S) = \Aut_\calG(S)$, we have that
        \[N_{\Aut_\calF(R)}(\Aut_S(R)) = N_{\Aut_\calG(R)}(\Aut_S(R)).\]
        Then Lemma \ref{lm:weak-closure-equality} implies that $\Aut_\calF(R) = \Aut_\calG(R)$. 

        We next consider $N_\calF(\bfW)$. We note that $\Aut_S(\bfW)$ is a Sylow $3$-subgroup of both $\Aut_\calF(\bfW)$ and $\Aut_\calG(\bfW)$. Let $X := O^{3'}(\Out_\calF(\bfW))$ and $Y := O^{3'}(\Out_\calG(\bfW))$. By Proposition \ref{prp:m-outfw}, we know that $X$ and $Y$ are isomorphic to $2 \ldotp \Suz$. Moreover, $X$ and $Y$ are $\Out(\bfW)$-conjugate by \cite[Table 8.80]{maximals}. We further compute that
        \[N_{\Out(\bfW)}(N_{\Out_\calG(\bfW)}(\Out_S(\bfW))) = N_{\Out_\calG(\bfW)}(\Out_S(\bfW)) \leq N_{\Out(\bfW)}(\Out_\calG(\bfW)).\]
        By the Frattini Argument and Lemma \ref{lm:weak-closure-equality}, we infer that $\Aut_\calF(\bfW) = \Aut_\calG(\bfW)$. Moreover, we compute that $H^1(\Aut_{N_\calG(S)}(\bfW); Z(\bfW)) = 0$. Since $N_\calF(\bfW) \geq N_\calF(S) \leq N_\calG(\bfW)$, we apply \cite[Proposition 2.11]{todd-modules} to deduce that $N_\calF(\bfW) = N_\calG(\bfW)$. 
        
        We have now found that $\Aut_\calF(R) = \Aut_\calG(R)$ and $N_\calF(\bfW) = N_\calG(\bfW)$. Since $\Aut_\calF(S)$, $\Aut_\calF(X)$, $\Aut_\calF(P)$ and $\Aut_\calF(Q)$ normalize $\bfW$, we conclude by Alperin-Goldschmidt that 
        \[\calF = \langle N_\calF(\bfW), \Aut_\calF(R) \rangle_S = \langle N_\calG(\bfW), \Aut_\calG(R) \rangle_S = \calG.\]
    \end{proof}
    
    The following table summarizes the structure of the automizers in the unique corefree fusion system on $S$.
    \begin{table}[H]
        \centering
        \begin{tabular}{c|c|c|c|c|c}
            $\Out_\calF(P)$ & $\Out_\calF(Q)$ & $\Out_\calF(R)$ & $\Out_\calF(X)$ & $\Out_\calF(S)$ & $G$ \\
            \hline
            $\GL_2(3) \times \SDih(16)$ & $2^2 \times \Mt_{11}$ & $\GL_2(3) \times \SDih(16)$ & $\SL_2(9) : Q_8$ & $2^2 \times \SDih(16)$ & $\Mn$
        \end{tabular}
        \caption{Automizer structure for essential subgroups and $S$ in the unique corefree fusion system on a Sylow $3$-subgroup $S$ of $\Mn$.}
    \end{table}

    \section*{Acknowledgments}
    This work took place during the author's PhD at the University of Manchester under the supervision of Prof. Charles Eaton and Dr. Martin van Beek. The author acknowledges the support received from EPSRC (EP/W524347/1) during this period. 
    
    The author thanks Dr. Martin van Beek for his supervision and constant support. The author also thanks Prof. Charles Eaton for proofreading the paper.

    \appendix

    \section{Performance of the algorithm} \label{sec:algorithm-appendix}
    In this section, we compare the performance of Algorithm \ref{algorithm} to the previous algorithm described in \cite[Appendix A]{paper:paper-1}. The algorithm presented in that paper is itself an implementation in GAP of the Parker-Semeraro MAGMA package \cite{ps-magma}. We refer the reader to \cite[Appendix A]{paper:paper-1} for the definition of proto-essential subgroups and the specific tests involved. The file \texttt{find-proto-essentials.g} has an implementation of Algorithms \ref{algorithm} and \ref{alg:all-proto} in GAP that we use here.
    
    The table below compares how many subgroups the two algorithms consider when finding all the proto-essential subgroups on a Sylow $3$-subgroup of $\Fi_{22}$ (of order $3^9$) and of $\Fi_{23}$ (of order $3^{13}$). The count of subgroups is listed up to $\Aut(S)$-conjugacy.
    \begin{table}[H]
        \centering
        \resizebox{\textwidth}{!}{\begin{tabular}{c|c|c|c|c}
            \multirow{2}{*}{$G$} & \multicolumn{2}{|c|}{$\Fi_{22}$} &  \multicolumn{2}{|c}{$\Fi_{23}$} \\
            \cline{2-5}
            & Previous Algorithm & New Algorithm & Previous Algorithm & New Algorithm \\
            \hline
            Subgroups Considered & $986$ & $38$ & $177 \ 864$ & $158$  \\
            Centric Test & $233$ & $38$ & $11 \ 894$ & $158$ \\
            Rank Test & $128$ & $30$ & $2 \ 620$ & $124$\\
            Frattini Test & $50$ & $17$ & $324$ & $48$ \\
            Radical Test & $7$ & $5$ & $6$ & $6$ \\
            Lift Test & $3$ & $3$ & $6$ & $6$
        \end{tabular}}
        \caption{Comparison of the number of subgroups passing a particular test of proto-essential subgroups in the two algorithms for a Sylow $3$-subgroup of $\Fi_{22}$ and $\Fi_{23}$.}
    \end{table}
    \noindent We recall in Section \ref{sec:alg}, we demonstrated that all the subgroups considered by the new algorithm pass the centric test. It is clear that the new algorithm is a major improvement to the previous algorithm, especially as the size of the group grows. In practice, the new algorithm is much faster than the previous one as well.
    
    The following table lists the number of subgroups passing each test as we compute the proto-essential subgroups in a Sylow $3$-subgroup of $\Fi_{24}'$ and $\Mn$, starting with subgroups found as a result of running Algorithm \ref{algorithm}.
    \begin{table}[H]
        \centering
        \begin{tabular}{c|c|c}
            $G$ & $\Fi_{24}'$ & $\Mn$ \\
            \hline
            Subgroups Considered & $572$ & $787$ \\
            Rank Test & $332$ & $285$ \\
            Frattini Test & $76$ & $62$ \\
            Radical Test & $4$ & $\leq 13$ \\
            Lift Test & $3$ & $\leq 10$ \\
            \hline
            Union of $\calE(\calF)$ & $3$ & $4$ 
        \end{tabular}
        \caption{Number of subgroups passing a particular test of proto-essential subgroups for a Sylow $3$-subgroup of $\Fi_{24}'$ and $\Mn$.}
    \end{table}
    \noindent Because the radical and lift tests are rather expensive, we do not run those tests on the subgroups that are known to be essential in some fusion system on $S$. As a Sylow $3$-subgroup of $\Mn$ has order $3^{20}$, we are unable to compute either $\Aut(E)$ or $\Aut(N_S(E))$ for six subgroups $E$ of $S$ that we consider. This excludes subgroups ruled out by the test mentioned in the remark after Corollary \ref{cor:alg}. We rule these subgroups out manually using GAP, which is given in \texttt{m/essentials.g}. We provide two examples here.
    \begin{enumerate}
        

        \item There is $A_1 \leq S$ such that $|A_1| = 3^{15}$ with $|N_S(A_1)| = 3^{17}$, and
        \[Z(A_1) = Z_3(S) = Z(N_S(A_1))\]
        of order $3^3$. Moreover, $\Phi(A_1) = Z_2(A_1)$ is self-centralizing in $S$ and has order $3^3$. By coprime action, we see that any $3'$-element $r \in O^{3'}(\Aut_\calF(A_1))$ centralizes $Z(A_1)$. As $\Phi(A_1) = Z_2(A_1)$ is self-centralizing in $S$, we infer that $O^{3'}(\Out_\calF(A_1))$ acts faithfully on $\Phi(A_1)/Z(A_1)$ of order $3^4$. We compute that $\Out_S(A_1)$ is elementary abelian of order $3^2$, and that $|C_{\Phi(A_1)/Z(A_1)}(S)| = 3^2$. We deduce that $\Phi(A_1)/Z(A_1)$ is a natural module for $O^{3'}(\Out_\calF(A_1)) \cong \SL_2(9)$.

        Let $\tau \in O^{3'}(\Aut_\calF(A))$ be an involution. We first assume that $\tau$ inverts $A_1/\Phi(A_1)$. We have found that $\Phi(A_1)/Z(A_1)$ is a natural $\SL_2(9)$-module. Thus, $\tau$ inverts $\Phi(A_1)/Z(A_1)$ as well. This implies that $\tau$ inverts $A_1/Z(A_1)$. but then $A_1/Z(A_1)$ is abelian, which is a contradiction. Let $V := A_1/\Phi(A_1)$, which has order $3^8$. By coprime action, we know that
        \[V = [V, \tau] \times C_V(\tau).\]
        We have found that $[V, \tau] \neq V$. As $|V| = 3^8$, and the minimal faithful dimension of an $\SL_2(9)$-module over $\GF(3)$ is $4$, we deduce that $[V, \tau]$ is a natural $\SL_2(9)$-module. On the other hand, $\tau$ centralizes $C_V(\tau)$. Thus, $C_V(\tau)$ is either a natural $\O_4^-(3)$-module, where $\O_4^-(3) \cong \L_2(9) \cong \Alt(6)$, or $C_V(\tau) = C_V(O^{3'}(\Out_\calF(A_1)))$. We compute that $[[N_S(A_1), A_1] : \Phi(A_1)] = 3^4$. But this does not match the module structure in either case, and we have a contradiction.

        \item There is $A_2 \leq S$ such that $|A_2| = 3^{17}$ with $N_S(A_2)$ of order $3^{19}$, and $Q' \leq \bfW \leq A_2$. We recall that $\bfW$ is an extraspecial subgroup of $S$ of order $3^{13}$ of exponent $3$. Moreover, $X := \langle \Phi(A_2), Z_4(A_2) \rangle$ is such that $[N_S(A_2), A_2] \leq X$ and $[X : \Phi(A_2)] = 3$. Thus, $\Aut_S(A_2)$, and so $O^{3'}(\Aut_\calF(A_2))$, centralizes the quotients $A_2/X$ and $X/\Phi(A_2)$. Therefore, any $3'$-element $r \in O^{3'}(\Aut_\calF(A_2))$ centralizes $A_2/\Phi(A_2)$ by coprime action, so that $r = 1$. As such, $O^{3'}(\Aut_\calF(A_2)) = \Aut_S(A_2)$, contradicting that $O_3(\Out_\calF(A_2)) = 1$.
    \end{enumerate}
    
    For Sylow $3$-subgroups of $\Fi_{24}'$ and $\Mn$, we do not need to make use of Algorithm \ref{alg:all-proto} to find all the proto-essentials. The essential subgroups found in the $3$-fusion category of $\Fi_{24}'$ are all found after running Algorithm \ref{algorithm}. Moreover, the three subgroups are all maximal in $S$, meaning that these must precisely be the proto-essentials (cf. the remark after Corollary \ref{cor:main-alg}). In the $3$-fusion category of $\Mn$, there are three essential subgroups that are not contained in any other essential subgroup. The final essential subgroup $X$ was uniquely determined up to $\Aut(S)$-conjugacy in Section \ref{sec:m}, so there cannot be any further proto-essential subgroups.

    We end the section by considering some consequences of Corollary \ref{cor:opf}. The corollary gives us an incredibly efficient way of checking whether a $p$-group can support corefree fusion systems. For relatively small groups, this typically involves checking less than $5$ groups, as can be seen below.
    \begin{table}[H]
        \centering
        \begin{tabular}{c|c|c|c}
            \multirow{2}{*}{$p^n$} & \multicolumn{3}{|c}{Subgroups considered} \\
            & Mean & Median & Maximum \\
            \hline
            $2^4$ & $0.7$ & $1$ & $1$ \\
            $p^4$ ($p$ odd) & $1.8$ & $2$ & $2$ \\
            \hline
            $2^5$ & $1.2$ & $1$ & $3$ \\
            $3^5$ & $1.8$ & $2$ & $5$ \\
            $5^5$ & $2.0$ & $2$ & $4$ \\
            $7^5$ & $1.9$ & $2$ & $4$ \\
            $11^5$ & $1.8$ & $1$ & $4$ \\
            \hline
            $2^6$ & $1.4$ & $1$ & $4$ \\
            $3^6$ & $2.5$ & $2$ & $8$ \\
            $5^6$ & $2.7$ & $2$ & $9$ \\
            $7^6$ & $2.7$ & $2$ & $10$ \\
            \hline
            $2^7$ & $1.9$ & $2$ & $7$ \\
            $3^7$ & $3.9$ & $4$ & $19$ \\
            \hline
            $2^8$ & $2.3$ & $2$ & $12$
        \end{tabular}
        \caption{Aggregate number of subgroups for groups order $p^n$ (up to $\Aut(S)$-conjugacy) for which the proto-essentials test needs to be called in order to check whether the group can support corefree fusion systems.}
        \label{tbl:csx}
    \end{table}
    \noindent Groups of class $2$ are not included -- all $p$-groups of class $2$ that support corefree fusion systems are classified in \cite{paper:class2}. We see that, on average, one needs to only check whether an incredibly small number of subgroups can be proto-essential to conclude whether it can support corefree fusion systems. This is a major improvement to having to first find all proto-essential subgroups, and then construct possible corefree fusion systems to find such $p$-groups.
    
    Note that we do not have a classification of corefree fusion systems on a $p$-group of order $7^6$. Assuming we want to classify all corefree fusion systems on groups of this order, we can:
    \begin{enumerate}
        \item check whether the $p$-group $S$ can support corefree fusion systems using Corollary \ref{cor:opf};
        \item if so, use Algorithm \ref{alg:all-proto} to find all the proto-essential subgroups of $S$; and
        \item construct fusion systems $\calF$ on $S$ and see if they are corefree.
    \end{enumerate}
    Currently, we have only implemented steps (1) and (2) in GAP.
    
    Being more optimistic, one could use this method to find all corefree fusion systems on groups of order $2^{10}$. We note that the smallest Benson--Solomon system is supported on a group of order $2^{10}$, making this an important project in the theory of fusion systems. Looking at $2$-groups in Table \ref{tbl:csx}, we predict that for any group $S$ of order $2^{10}$, one only needs to check whether around $4$ subgroups of $S$ can be proto-essential to see if the group can support corefree fusion systems. Combining with the tests given in \cite[Proposition 2.2]{aov}, we believe that the problem is now tractable (see also \cite[Chapter 11]{thesis:2-10} for an attempt on this project).

    \bibliographystyle{alpha}
    \bibliography{nat}

\end{document}